\author{Peter Keevash \thanks{Mathematical Institute, University of Oxford, UK. Supported partially by ERC Consolidator Grant 647678.}
\qquad Noam Lifshitz \thanks{Einstein Institute of Mathematics, Hebrew University, Jerusalem, Israel.}
\qquad Eoin Long \thanks{School of Mathematics, University of Birmingham, Birmingham, UK.}
\qquad Dor Minzer \thanks{Department of Mathematics, Massachusetts Institute of Technology, Cambridge, USA.}}
\date{\vspace{-5ex}}
\newtheorem{thm}{Theorem}[section]
\newtheorem{lemma}[thm]{Lemma}
\newtheorem{corollary}[thm]{Corollary}
\newtheorem{claim}[thm]{Claim}
\newtheorem{proposition}[thm]{Proposition}
\theoremstyle{definition}
\newtheorem{definition}[thm]{Definition}
\newtheorem{remark}[thm]{Remark}
\newtheorem{example}[thm]{Example}
\newtheorem{fact}[thm]{Fact}
\newcommand\E{\mathop{\mathbb{E}}}
\newcommand\Var{\mathop{\text{Var}}}
\newcommand\card[1]{\left| {#1} \right|}
\newcommand\sett[2]{\left\{ \left. #1 \;\right\vert #2 \right\}}
\newcommand\set[1]{{\left\{ #1 \right\}}}
\newcommand\Prob[2]{{\Pr_{#1}\left[ {#2} \right]}}
\newcommand\Expect[2]{{\mathop{\mathbb{E}}_{#1}\left[ {#2} \right]}}
\newcommand\cExpect[3]{{\mathop{\mathbb{E}}_{#1}\left[ \left. #3 \;\right\vert #2 \right]}}
\newcommand\norm[1]{\left\| #1 \right\|}
\newcommand\power[1]{\set{0,1}^{#1}}
\newcommand\ceil[1]{\lceil{#1}\rceil}
\newcommand\round[1]{\left\lfloor{#1}\right\rfloor}
\newcommand\half{{1\over2}}
\newcommand\skipi{{\vskip 10pt}}
\newcommand\inner[2]{\langle{#1},{#2}\rangle}
\newcommand\eps{\varepsilon}
\renewcommand\geq{\geqslant}
\renewcommand\leq{\leqslant}
\newcommand{\T}{\mathrm{T}}
\renewcommand{\L}{\mathrm{L}}
\newtheorem*{rep@theorem}{\rep@title}
\newcommand{\newreptheorem}[2]{%
\newenvironment{rep#1}[1]{%
\def\rep@title{\bf #2 \ref*{##1} \text{(Restated)} }%
\begin{rep@theorem} }%
{\end{rep@theorem} } }
\newtheorem*{rep@claim}{\rep@title}
\newcommand{\newrepclaim}[2]{%
\newenvironment{rep#1}[1]{%
\def\rep@title{\bf #2 \ref*{##1} \text{(Restated)} }%
\begin{rep@claim} }%
{\end{rep@claim} } }
\newtheorem*{rep@lemma}{\rep@title}
\newcommand{\newreplemma}[2]{%
\newenvironment{rep#1}[1]{%
\def\rep@title{\bf #2 \ref*{##1} \text{(Restated)} }%
\begin{rep@lemma} }%
{\end{rep@lemma} } }
\newcommand{\mc}[1]{\mathcal{#1}}
\newcommand{\mb}[1]{\mathbb{#1}}
\newcommand{\sub}{\subset}
\newcommand{\subn}{\subsetneq}
\newcommand{\ua}{\uparrow}
\newcommand{\ra}{\rightarrow}
\newcommand{\sm}{\setminus}
\newcommand{\es}{\emptyset}
\newcommand{\pl}{\partial}
\newcommand{\ov}{\overline}
\newcommand{\wt}{\widetilde}
\newcommand{\aA}{\alpha}
\newcommand{\bB}{\beta}
\newcommand{\gG}{\gamma}
\newcommand{\dD}{\delta}
\newcommand{\lL}{\lambda}
\newcommand{\sS}{\sigma}
\newcommand{\OO}{\Omega}
\title{Forbidden intersections for codes}
\begin{document}
\maketitle
\begin{abstract}
Determining the maximum size of a $t$-intersecting
code in $[m]^n$ was a longstanding open problem
of Frankl and F\"uredi, solved independently
by Ahlswede and Khachatrian
and by Frankl and Tokushige.
We extend their result to the setting
of forbidden intersections, by showing that for any $m>2$
and $n$ large compared with $t$ (but not necessarily $m$)
that the same bound holds for codes with the weaker
property of being $(t-1)$-avoiding, i.e.\ having
no two vectors that agree on exactly $t-1$ coordinates.
Our proof proceeds via a junta approximation result
of independent interest, which we prove via a development
of our recent theory of global hypercontractivity:
we show that any $(t-1)$-avoiding code
is approximately contained in a $t$-intersecting junta
(a code where membership is determined
by a constant number of coordinates).
In particular, when $t=1$ this gives an alternative proof
of a recent result of Eberhard, Kahn, Narayanan and Spirkl
that symmetric intersecting codes in $[m]^n$
have size $o(m^n)$.
\end{abstract}

\section{Introduction}

Many intersection problems for finite sets
(see the survey \cite{frankl2016survey})
have natural generalisations to a setting
variously described as codes, vectors or integer sequences.
For example, any intersecting family
of subsets of $[n]$ has size at most $2^{n-1}$,
and more generally any intersecting code in $[m]^n$
has size at most $m^{n-1}$, where we say a code
${\cal F} \sub [m]^n$ is intersecting if
for any $x,y$ in ${\cal F}$ there is some $i$
with $x_i=y_i$. However, these settings
are quite different, in that there are
many maximum intersecting families of sets,
including very symmetric examples
such as the family of all sets of size $>n/2$,
whereas in $[m]^n$ for $m>2$ the only example
is obtained by fixing one coordinate to have a fixed value.
A more substantial difference was recently demonstrated
by Eberhard, Kahn, Narayanan and Spirkl \cite{EKNS},
who showed that adding a symmetry assumption
reduces the maximum size to $o(m^n)$.

A longstanding open problem of Frankl and F\"uredi \cite{FF}
posed the corresponding question for codes
${\cal F} \sub [m]^n$ that are \emph{$t$-intersecting},
in that any $x,y$ in ${\cal F}$ have \emph{agreement}
${\sf agr}(x,y)=|\{i:x_i=y_i\}| \ge t$.
From the perspective of coding theory,
one may think of such ${\cal F}$ as an `anti-code',
in that we are imposing an upper bound
on the Hamming distance between any two of its vectors.
From a combinatorial perspective, the natural analogy
is with $t$-intersecting $k$-graphs ($k$-uniform hypergraphs),
for which the extremal question was also a longstanding
open problem, posed by Erd{\H o}s, Ko and Rado \cite{EKR}
and finally resolved by the Complete Intersection Theorem
of Ahlswede and Khachatrian \cite{AKcomplete}.
The analogous result for codes, resolving the problem
of Frankl and F\"uredi, was also obtained by
Ahlswede and Khachatrian \cite{AKcodes},
and independently by Frankl and Tokushige \cite{FTcodes}.
They showed that the maximum size
of a $t$-intersecting code in $[m]^n$
is achieved by one of the following natural examples,
which can be thought of as Hamming balls
on a subset of the coordinates,
and which we will simply call `balls'
(following \cite{PachTardos}): let
\[ {\cal S}_{t,r}[m]^n = \{ x\in[m]^n
: |\{ j\in [1,t+2r] : x_j = 1 \}| \geq t+r \}.\]

We show for any $m>2$
and $n$ large compared with $t$ (but not necessarily $m$)
that the same conclusion holds under the weaker assumption
that ${\cal F}$ is \emph{$(t-1)$-avoiding}, i.e.\
no $x,y$ in ${\cal F}$ have agreement $t-1$.

\begin{thm}\label{thm:main}
For all $t\in\mathbb{N}$ there is $n_0 \in\mathbb{N}$ such that
if $\mathcal{F} \sub [m]^n$ is a $(t-1)$-avoiding code
with $m\geq 3$ and $n\geq n_0$ then
$|\mathcal{F}| \leq \max_{r \ge 0} |{\cal S}_{t,r}[m]^n|$
with equality only when ${\cal F}$ is isomorphic to a ball.
\end{thm}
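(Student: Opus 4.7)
The plan follows the two-stage strategy announced in the abstract: first show that any $(t-1)$-avoiding code $\mathcal{F} \sub [m]^n$ is well-approximated by a $t$-intersecting junta $\mathcal{J}$, and then combine this with the Ahlswede--Khachatrian / Frankl--Tokushige extremal bound for $t$-intersecting codes.

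For the junta approximation, I would fix a constant $d = d(t)$ and, for each $S \sub [n]$ with $|S| \le d$ and each partial assignment $\aA \in [m]^S$, study the restricted density $p_{S,\aA} = \Pr_x[x \in \mathcal{F} \mid x|_S = \aA]$. The authors' global hypercontractivity machinery should bound suitable two-sample correlations of $\mathbbm{1}_\mathcal{F}$ in terms of the $p_{S,\aA}$. Applied to $\Pr[x,y \in \mathcal{F},\ {\sf agr}(x,y) = t-1]$, which vanishes by hypothesis, this forces all coordinates outside some bounded set $J \sub [n]$ to have uniformly small restricted densities. From $J$ one reads off a subfamily $\mathcal{B} \sub [m]^J$ such that the junta $\mathcal{J} = \{x : x|_J \in \mathcal{B}\}$ satisfies $|\mathcal{F} \triangle \mathcal{J}| \le \eps m^n$. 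A short combinatorial argument --- exploiting that inside $\mathcal{J}$, the typical pair achieves any desired agreement pattern on $[n] \sm J$ --- then forces $\mathcal{B}$ to be $t$-intersecting on $J$, since otherwise one could construct a pair in $\mathcal{F}$ of agreement exactly $t-1$.

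To bootstrap from the approximate to exact containment $\mathcal{F} \sub \mathcal{J}$, I would argue that for any putative $x \in \mathcal{F} \sm \mathcal{J}$, a random $y$ drawn from the large set $\mathcal{J} \cap \mathcal{F}$ has nearly uniform behaviour off $J$; adjusting its values outside $J$ one can realise ${\sf agr}(x,y) = t-1$, unless $x$ already has at least $t$ agreements with $y|_J$ on $J$, which would imply $x|_J \in \mathcal{B}$ and contradict $x \notin \mathcal{J}$. With $\mathcal{F} \sub \mathcal{J}$ established, applying AK/FT to $\mathcal{B} \sub [m]^J$ (then trivially lifted to $[m]^n$) yields the upper bound $|\mathcal{F}| \le \max_r |\mathcal{S}_{t,r}[m]^n|$, and chasing the equality case of AK/FT together with the bootstrap step pins down $\mathcal{F}$ as isomorphic to some $\mathcal{S}_{t,r}[m]^n$.

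The main obstacle, as is typical for such problems, is the junta approximation itself. The $(t-1)$-avoiding hypothesis is genuinely weaker than $t$-intersecting --- pairs may agree on anywhere from $0$ up to $t-2$ or on $t$ or more coordinates --- so standard spectral or log-Sobolev arguments cannot directly isolate the restrictions responsible for the forbidden agreement. One really needs hypercontractivity adapted to ``globally large'' functions and uniform in $m$, since the theorem permits $m$ to grow with $n$, which is exactly the regime in which the classical $p$-biased inequalities degrade. A secondary difficulty is the bootstrapping: ruling out a single stray $x \in \mathcal{F} \sm \mathcal{J}$ demands $L^\infty$-type control on the junta approximation over a positive fraction of $\mathcal{J}$ rather than merely $L^1$ control, so the quantitative output of the hypercontractive step has to be calibrated finely enough to survive this promotion.
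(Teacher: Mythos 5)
Your high-level plan (junta approximation, then extremal machinery, then bootstrap) matches the paper's outline, but the second and third steps have genuine gaps. The most serious is your bootstrap to \emph{exact containment} $\mathcal{F} \sub \mathcal{J}$. This is not what the paper proves and in fact cannot be true in general: the junta $\mathcal{J}$ produced by a regularity decomposition is defined by pseudorandomness of restrictions and has no reason to literally contain $\mathcal{F}$ (and if $\mathcal{F}$ is itself a ball $\mathcal{S}_{t,r}[m]^n$ with $r\geq 1$ the decomposition may produce a slightly different set $J$). Your argument for ruling out $x\in\mathcal{F}\sm\mathcal{J}$ also contains a logical error: you write that if $x$ has at least $t$ agreements with $y|_J$ on $J$ then $x|_J\in\mathcal{B}$, but $\mathcal{B}$ is the set of allowed prefixes, and agreeing with one of its members on $t$ coordinates of $J$ does not make $x|_J$ a member of $\mathcal{B}$. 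Without exact containment the concluding move --- ``apply AK/FT to $\mathcal{B}\sub[m]^J$ and lift'' --- gives a bound on $|\mathcal{J}|$ but says nothing about $|\mathcal{F}|$.

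The second gap is that you skip the anticode stability step entirely. After junta approximation one knows only that $\mathcal{J}$ is a $t$-intersecting junta of nearly extremal measure; to proceed one needs a \emph{stability} form of the Ahlswede--Khachatrian code theorem (the paper proves this as Theorem~\ref{thm:stability}, using compression and the Ellis--Keller--Lifshitz $p$-biased stability result), which pins $\mathcal{J}$ down as close to a specific ball $\mathcal{S}=\mathcal{S}_{t,r}[m]^n$. Only then is the bootstrap possible: if $\mathcal{F}\sm\mathcal{S}\neq\es$, one restricts $\mathcal{F}$ on the junta coordinates to a prefix $\alpha$ outside $\mathcal{S}$ (giving a small nonempty restriction $\mathcal{H}$) and to a prefix $\beta$ inside $\mathcal{S}$ with ${\sf agr}(\alpha,\beta)=t-1$ (giving a restriction $\mathcal{G}$ of nearly full measure), and shows via an unbalanced cross-disagreement theorem that $\mathcal{G},\mathcal{H}$ contain a pair agreeing on $0$ coordinates, yielding a forbidden agreement of $t-1$. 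Your sketch attempts something like this but lacks the stability structure that makes it go. Finally, you are right that the central difficulty is a hypercontractive input uniform in $m$, but you do not account for the fact that this tool fails when $m$ is exponential in $n$ (where the cross-agreement statement for uncapturable families is simply false, as the paper's even/odd example shows); the paper handles fixed $m$, moderate $m$, and huge $m$ with three quite different arguments, and any complete proof must face this trichotomy.
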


Theorem \ref{thm:main}
can be viewed as an analogue for codes of the
classical forbidden intersection problem for set systems,
which has a substantial literature, particularly stemming
from the many applications of the celebrated Frankl-R\"odl
theorem \cite{frankl1987forbidden}
(see also \cite{frankl1985forbidding,KeevashLongFranklRodl}).
Our proof (discussed in the next subsection)
proceeds via a junta approximation result
of independent interest,
showing that any $(t-1)$-avoiding code
is approximately contained in a $t$-intersecting junta
(a code where membership is determined
by a constant number of coordinates).
In particular, when $t=1$ this gives an alternative proof
of the result of \cite{EKNS}, as a family that essentially
depends on few coordinates is very far from being symmetric.

\subsection{Overview of the proof}

The proof of Theorem \ref{thm:main} has three steps,
each of which has elements of independent interest.
\begin{enumerate}
\item [(1)] Junta approximation:
any $(t-1)$-avoiding code is approximately
contained in a $t$-intersecting junta.
\item [(2)] Anticode Stability:
a stability version of the
Ahlswede-Khachatrian theorem on anticodes
determines the structure of the junta from (1)
-- it must be a certain ball $\mc{F}$.
\item [(3)] Bootstrapping:
given that the code of maximum size
is close to $\mc{F}$, it must
in fact be equal to $\mc{F}$.
\end{enumerate}

The methods required to implement these three steps
depend considerably on the size of $m$,
and we need a variety of ideas in
Combinatorics and Analysis, some of which are new.
The most significant new idea in this paper
is a random gluing operation, which may be thought of as a natural, more versatile,
analog of the sharp threshold phenomenon from the biased hypercube, as we explain next.

\paragraph{Random gluings.} Often times, when working over
the $p$-biased Boolean hypercube, i.e. $\{0,1\}^n$ along with
the measure $\mu_p(x) = p^{\card{\sett{i\in[n]}{x_i = 1}}}(1-p)^{\card{\sett{i\in[n]}{x_i = 0}}}$,
one is interested in studying the structure of a monotone family $\mathcal{F} \subseteq \{0,1\}^n$
(i.e. a family such that if $x\in\mathcal{F}$ and $x_i\leq y_i$ for all $i$, then $y\in\mathcal{F}$).
One particularly useful idea is to see how much the measure of the family changes when increasing
$p$, i.e. study the behaviour of $\mu_p(\mathcal{F}) = \Prob{x\sim\mu_p}{x\in \mathcal{F}}$ as a function of $p$. It is easy to see
that this is an increasing function of $p$, and the main point of this idea
is that the rate of increase tells us a lot about the structure
$\mathcal{F}$ has. In a nutshell, unless the family $\mathcal{F}$
has some local, junta-like, structure,
\footnote{When $p$ is bounded away from $0$ and $1$ this structure is simply a junta, but when $p = o(1)$ or
$p = 1-o(1)$, this structure may be more complicated and is not fully understood.
The notion of ``local structure'' in this case considered herein,
corresponds to having restrictions of the family $\mathcal{F}$ with significant measure.}
this increase must be sharp. This idea plays significant role is various problems in analysis and extremal combinatorics, but seems
to be specific to the cube: one heavily relies on an ordering of $\{0,1\}^n$ which makes sense with respect to intersection problems,
and such orderings do not exist on many other domains, such as $[m]^n$.

Our random gluing operator may be viewed as a natural extension of the above operator to $[m]^n$, which is also potentially more versatile and may
be relevant in other domains. Given a $k<m$ and a family $\mathcal{F}\subseteq[m]^n$, we think of shrinking the alphabet (in each
coordinate independently) from $m$ to $k$, by identifying each symbol $\sigma\in[m]$ with a symbol from $[k]$. I.e.,
given such identifications $\pi_i\colon [m]\to[k]$ for each $i$, one may consider the family
$\mathcal{F}_{\pi} = \sett{(\pi_1(x_1),\ldots,\pi_n(x_n))}{x\in \mathcal{F}}$. It is clear that such operation is ``friendly''
with respect to intersection problems (e.g., if $\mathcal{F}$ is $t$-intersecting, then so is $\mathcal{F}_{\pi}$). We show
that this operation, when sampling $\pi_1,\ldots,\pi_n$ appropriately and considering an appropriate product measure
on $[k]^n$, also enjoys the second effect of the ``increasing $p$'' idea from above. Namely, we show that unless $\mathcal{F}$
has local structure (i.e, if $\mathcal{F}$ is global as per Definition~\ref{def:global}), one can find a gluing operation that increases the measure
of $\mathcal{F}$ significantly.

The analysis of this gluing operation proceeds via noise stability and a new hypercontractive
inequality in general product spaces, which further extends our recent theory
of global hypercontractivity introduced in \cite{KLLM}.
This part of the argument can also be viewed
as a development of the Junta Method (see \cite{dinur2009intersecting,KellerLifshitz,KLLM}.)

\skipi
The following is a precise statement
of our junta approximation theorem,
which is a stability theorem of independent interest,
describing the approximate structure of any
$(t-1)$-avoiding code with size that is within
a constant factor of the maximum possible.

\begin{thm}\label{thm:junta_approx}
For every $t\in\mathbb{N}$ and $\eta>0$
there are $n_0$ and $J$ in $\mathbb{N}$
such that if $\mathcal{F}\subset[m]^{n}$
is a $(t-1)$-avoiding code with $m\geq 3$ and $n\geq n_0$
then there is a $t$-intersecting $J$-junta
$\mathcal{J}\subset[m]^n$ such that
$|\mathcal{F}\sm\mathcal{J}|\leq \eta |\mathcal{J}|$.
\end{thm}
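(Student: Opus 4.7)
The plan is to argue by contradiction: assume $\mathcal{F}\sub[m]^n$ is $(t-1)$-avoiding yet \emph{not} approximately contained in any $t$-intersecting $J$-junta, and iteratively apply the random gluing operation to drive the density of $\mathcal{F}$ above what a $(t-1)$-avoiding code on a small alphabet can support, yielding a contradiction.

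The first step is a \emph{global/local dichotomy} via a junta-extraction routine. Call a family $\mathcal{G}\sub[m]^n$ global at scale $\eps$ if no restriction $\mathcal{G}_{S\to y}$ with $|S|\leq J$ has density larger than $\eps^{-|S|}\mu(\mathcal{G})$. I iteratively pull out dense restrictions: whenever globalness fails on some small $S$ and value $y$, extract $S$ into a growing candidate junta $\mathcal{J}$ and replace $\mathcal{G}$ by its slices. This halts after extracting $O_{\eps}(1)$ coordinates, after which either the candidate $\mathcal{J}$ already realises the desired junta approximation---in which case we are done---or at least one slice $\mathcal{F}'$ is a global family that is still $(t-1)$-avoiding and still not captured by any $J$-junta; in particular $\mathcal{F}'$ has density bounded below in terms of $\eta$ and $t$.

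Applying random gluing to $\mathcal{F}'$ is the crucial analytic step. Sample independent surjections $\pi_i\colon[m]\to[k]$ with $k<m$ from a suitable distribution and set $\mathcal{F}'_\pi=\{(\pi_1(x_1),\dots,\pi_n(x_n)):x\in\mathcal{F}'\}$; since $\pi_i(x_i)=\pi_i(y_i)$ whenever $x_i=y_i$, the image remains $(t-1)$-avoiding on $[k]^n$. The core claim is that for a global family one has $\mathbb{E}_\pi[\nu_k(\mathcal{F}'_\pi)]\geq (1+c)\mu_m(\mathcal{F}')$ for a definite $c=c(\eps)>0$, where $\nu_k$ is the appropriate pushforward product measure on $[k]^n$. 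This is proved by rewriting the expectation as a noise-type inner product $\langle \mathbf{1}_{\mathcal{F}'},T\mathbf{1}_{\mathcal{F}'}\rangle$, invoking a new hypercontractive inequality for global functions in general product spaces (extending \cite{KLLM}) to show that the high-degree part of $\mathbf{1}_{\mathcal{F}'}$ contributes negligibly, and exploiting a spectral-gap estimate for $T$ to push the low-degree contribution strictly above $\mu_m(\mathcal{F}')^2$.

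Repeating the peel-and-glue loop multiplies the density by $1+c$ per round while preserving $(t-1)$-avoidance, so after $O(1/c)$ rounds the resulting family on a bounded alphabet exceeds the Ahlswede--Khachatrian--Frankl--Tokushige bound---which is a fixed constant less than $1$ on any alphabet of bounded size---giving the required contradiction, since a very dense code on a fixed alphabet with $n$ large must contain pairs with agreement exactly $t-1$. The principal obstacle is establishing the hypercontractive inequality in a form strong enough for the non-uniform product measures produced by gluing, with constants good enough that the iteration terminates in $O(1)$ rounds; a secondary technical point is ensuring that the junta extracted in the peeling phase is genuinely $t$-intersecting rather than merely $(t-1)$-avoiding, which should follow by choosing parameters so that any two almost-full restrictions of $\mathcal{F}$ agreeing in exactly $t-1$ junta coordinates would yield an agreement-$(t-1)$ pair inside $\mathcal{F}$ itself.
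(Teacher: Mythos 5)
Your approach has a fundamental directional error in the key step, so it cannot work as stated. You claim that because $\pi_i(x_i)=\pi_i(y_i)$ whenever $x_i=y_i$, the glued family $\mathcal{F}'_\pi$ remains $(t-1)$-avoiding. That implication is backwards. Gluing can only \emph{increase} the agreement ${\sf agr}(x,y)$: on coordinates where $x_i=y_i$ the images still agree, and on coordinates where $x_i\neq y_i$ the images may now agree as well. So $t$-intersection is preserved by gluing, but $(t-1)$-avoidance is not: a pair with agreement $<t-1$ in $\mathcal{F}'$ can easily be mapped to a pair with agreement exactly $t-1$ in $\mathcal{F}'_\pi$. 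This destroys the iteration entirely, since after a single gluing you no longer have a $(t-1)$-avoiding code, and the final contradiction you want (a dense $(t-1)$-avoiding code on a bounded alphabet) is not available. This is in fact why the paper applies gluing not to the $(t-1)$-avoiding family itself, but to pairs of restrictions of it, exploiting the reverse-preservation of \emph{disagreements}: a cross-disagreement between the glued families lifts to a cross-disagreement between the originals. Combined with a Hoffman-type bound on the glued side, this yields the $t$-intersecting structure of a regularity decomposition, which is how the $t$-intersecting junta is actually produced.

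Two further issues, each secondary to the above but still real. First, the final step ``a very dense code on a fixed alphabet with large $n$ must contain a pair agreeing on exactly $t-1$ coordinates'' needs a citation or a proof; as stated it is essentially the main theorem you are trying to prove, so you risk circularity, and a pigeonhole argument does not give \emph{exact} agreement. (The paper gets this from Mossel's invariance/correlation theorem for pseudorandom sets, which requires more than density.) Second, your peel-and-glue loop would at best give $(t-1)$-avoidance of the extracted junta, not $t$-intersection; you acknowledge this at the end but treat it as a minor parameter choice, whereas it is in fact the heart of the argument and requires a cross-disagreement statement between pieces of the decomposition (and the paper needs three genuinely different arguments for this, depending on whether $m$ is small, moderate, or exponential in $n$, since gluing only helps when $m$ is large but sub-exponential).
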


As mentioned above, Theorem \ref{thm:junta_approx}
implies the result of \cite{EKNS}, as a junta
is far from being symmetric.
The assumption $m \ge 3$ is necessary,
as when $m=2$ we have symmetric examples
as mentioned above. When $m>m_0(t)$ is large
we will in fact obtain a more precise statement:
${\cal J}$ will be a subcube of co-dimension $t$
and we will give effective estimates for
the approximation parameter $\eta$
(see Theorems \ref{thm:approx_moderate_m}
and \ref{thm:approx_huge_m}).

Our first ingredient in the proof of
Theorem \ref{thm:junta_approx}
is a regularity lemma,
showing that any code can be approximately
decomposed into a constant number of pieces,
each of which is pseudorandom,
in a certain sense that depends on the size of $m$.
When $m<m_0(t)$ is fixed and $n>n_0(t,m)$ is large,
each piece is such that constant size restrictions
cannot significantly affect the measure.
This is a strong pseudorandomness condition,
from which the proof can be completed
fairly easily using a result of Mossel
on Markov chains hitting pseudorandom sets~\cite{Mossel}.
The idea is that, if two restrictions defining
the regularity decomposition agree in fewer than $t$
coordinates, then we can impose a further restriction
to make them agree in exactly $t-1$ coordinates,
with no significant loss in measure by pseudorandomness.
If our code is $(t-1)$-avoiding these restrictions
must be cross intersecting, but Mossel's result
implies that this is impossible for pseudorandom
codes of non-negligible measure.

When $m$ is large, one cannot obtain
such a strong pseudorandomness condition
in a regularity lemma, so we settle for
the weaker property of uncapturability.
A family $\mathcal{F}\subseteq[m]^n$ is said to be uncapturable
if it is not approximately contained in a union of constantly many
``dictatorships'', i.e. families of the form $D_{i\rightarrow j} = \sett{x\in [m]^n}{x_i = j}$ for $i\in[n]$ and $j\in [m]$.
We stress here that $m$ is not thought of as constant, so one cannot fix $i$ and take $D_{i\rightarrow j}$ for all $j\in[m]$.
Our regularity lemma in this case shows that any given family $\mathcal{F}$ may be decomposed into
pieces, such that each piece is uncapturable.
This weaker regularity lemma makes it significantly harder
to establish the $t$-intersection property as outlined above
in the case that $m$ is fixed; the main issue is that uncapturability
may not be preserved by further restrictions.

Furthermore, if $m$ is `huge' (by which we will
mean exponential in $n$) then the cross-agreement
statement used for fixed $m$ is false.
To see this, consider the codes ${\cal E}$
having all vectors with all coordinates even, and
${\cal O}$ having all vectors with all coordinates odd.
There is no non-zero agreement
between ${\cal E}$ and ${\cal O}$, yet they are
both highly uncapturable, and have measure $2^{-n}$
(which is non-negligible when $m$ is huge).

The above example naturally suggests a further case:
we say $m$ is `moderate' if it is large but not huge.
In this case, the high-level proof strategy is the same
as for fixed $m$, although the required cross-agreement
statement for uncapturable codes is difficult to prove,
and this is where we need the most
significant new ideas of the paper
(gluing and global hypercontractivity).
On the other hand, when $m$ is huge, the above example
shows that we need a different proof strategy.
Here we draw inspiration from more combinatorial arguments
of Keller and Lifshitz \cite{KellerLifshitz}
which we adapt to the setting of codes
by thinking of ${\cal F} \sub [m]^n$
as an $n$-partite $n$-graph ($n$-uniform hypergraph)
with parts of size $m$. While the high-level strategy
is similar to that in \cite{KellerLifshitz},
the implementation is quite different;
for example, the key to bootstrapping in this case
turns out to be a subtle application of Shearer's
entropy inequality.

We write $\mathcal{S}_{n,m,t}$ for a largest family
among $\{ {\cal S}_{t,r}[m]^n: r \ge 0\}$.
From Theorem~\ref{thm:junta_approx}, we see that
if a $(t-1)$-avoiding code $\mathcal{F} \sub [m]^n$
is at least as large as $\mathcal{S}_{n,m,t}$
then it is close to a $t$-intersecting junta.
This raises the stability question
for $t$-intersecting codes,
which is the second ingredient
in our proof of Theorem~\ref{thm:main}:\
must this junta be close to an extremal result?
When $m$ is large compared with $t$,
it is not hard to show that such a junta
must be close to a subcube of co-dimension $t$,
i.e.\ the ball $\mathcal{S}_{t,0}[m]^n$.
For fixed $m$, the picture is more complex, and the full range
of balls can occur; nevertheless, we are able
to establish the required stability version of
the Ahlswede-Khachatrian anticode theorem.

\begin{thm}\label{thm:stability}
For every $t\in\mathbb{N}$ and $\eps>0$
there is $\delta>0$ such that
if $\mathcal{F} \sub [m]^n$ is $t$-intersecting
with $m \ge 3$ and
$|\mathcal{F}| \ge (1-\delta)|{\cal S}_{n,m,t}|$
then $|\mathcal{F}\sm\mathcal{S}|\leq \eps |\mathcal{S}|$
for some family $\mathcal{S}$ which is isomorphic to ${\cal S}_{n,m,t} = {\cal S}_{t,r}[m]^n$,
where $0 \le r \le t$, and $r=0$ if $m>t+1$.
\end{thm}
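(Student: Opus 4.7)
The plan is to deduce Theorem~\ref{thm:stability} by combining the junta approximation Theorem~\ref{thm:junta_approx} with a stability version of the Ahlswede--Khachatrian anticode theorem on a bounded number of coordinates. First I would apply Theorem~\ref{thm:junta_approx} with a parameter $\eta>0$ to be chosen small in terms of $\eps$: since any $t$-intersecting code is in particular $(t-1)$-avoiding, this yields a $t$-intersecting $J$-junta $\mathcal{J}\subseteq[m]^n$, with $J=J(t,\eta)$, such that $|\mathcal{F}\setminus\mathcal{J}|\le\eta|\mathcal{J}|$. The junta $\mathcal{J}$ is itself $t$-intersecting, so the Ahlswede--Khachatrian/Frankl--Tokushige theorem gives $|\mathcal{J}|\le|\mathcal{S}_{n,m,t}|$, and the chain of inequalities
\[
(1+\eta)|\mathcal{J}|\ge|\mathcal{F}\cap\mathcal{J}|+|\mathcal{F}\setminus\mathcal{J}|=|\mathcal{F}|\ge(1-\delta)|\mathcal{S}_{n,m,t}|
\]
forces $|\mathcal{J}|\ge\tfrac{1-\delta}{1+\eta}|\mathcal{S}_{n,m,t}|$, so $\mathcal{J}$ is itself a near-extremal $t$-intersecting code.

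Next, after enlarging $J$ if necessary so that $J\ge 3t$, I would write $\mathcal{J}=\mathcal{J}_0\times[m]^{n-J}$ up to permuting coordinates, with $\mathcal{J}_0\subseteq[m]^J$ $t$-intersecting. Since the extremal $r$ is at most $t$ and $|\mathcal{S}_{t,r}[m]^n|=|\mathcal{S}_{t,r}[m]^J|\,m^{n-J}$ whenever $J\ge t+2r$, we have $|\mathcal{S}_{n,m,t}|=|\mathcal{S}_{J,m,t}|\cdot m^{n-J}$ and the lower bound on $|\mathcal{J}|$ transfers to $|\mathcal{J}_0|\ge(1-\delta_1)|\mathcal{S}_{J,m,t}|$ for some $\delta_1=O(\delta+\eta)$. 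The whole problem thus reduces to a stability statement in the bounded setting $[m]^J$: any $t$-intersecting $\mathcal{J}_0\subseteq[m]^J$ with $|\mathcal{J}_0|\ge(1-\delta_1)|\mathcal{S}_{J,m,t}|$ is close to an isomorphic copy $\mathcal{S}_0$ of some ball $\mathcal{S}_{t,r}[m]^J$ with $0\le r\le t$ (and $r=0$ whenever $m>t+1$). For large $m$ this follows from a short averaging argument locating $t$ coordinates on which $\mathcal{J}_0$ is almost constant, giving closeness to a subcube of codimension $t$. For $3\le m\le t+1$ I would shift $\mathcal{J}_0$ to a left-compressed family preserving size and the $t$-intersection property, then mimic the extremal step of Ahlswede--Khachatrian while tracking the deficit from optimality in order to identify the correct value of $r$. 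Lifting back via $\mathcal{S}=\mathcal{S}_0\times[m]^{n-J}\cong\mathcal{S}_{t,r}[m]^n$ and combining $|\mathcal{J}\setminus\mathcal{S}|\le\eps_1|\mathcal{S}|$ with $|\mathcal{F}\setminus\mathcal{J}|\le\eta|\mathcal{J}|$ gives $|\mathcal{F}\setminus\mathcal{S}|\le(\eps_1+O(\eta))|\mathcal{S}|$, and a careful choice of $\eta,\delta_1$ in terms of $\eps$ closes the argument.

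The main obstacle is the stability step in the bounded-$m$ regime. Although $J$ depends only on $t$ and $\eta$, the alphabet size $m$ is unbounded, and for $3\le m\le t+1$ the extremal ball can be any $\mathcal{S}_{t,r}[m]^J$ with $1\le r\le t$. The argument must therefore be uniform in $m$ and must correctly pin down the right value of $r$ from the density profile of $\mathcal{J}_0$; I expect the bulk of the work to be a quantitative compression-based analysis paralleling the Ahlswede--Khachatrian proof but carrying a defect term throughout.
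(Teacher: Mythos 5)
Your reduction via Theorem~\ref{thm:junta_approx} is a genuinely different route from the paper, which does not invoke junta approximation at all in its proof of Theorem~\ref{thm:stability}. Instead the paper compresses $\mathcal{F}$ to a monotone family in $\{0,1\}^n$ via the $T_{i,j}$ operators and the map $h^{\otimes n}$ of Definition~\ref{def:compress_reduce}, applies the Ellis--Keller--Lifshitz $p$-biased stability theorem (Theorem~\ref{thm:p_biased_stability}) as a black box, and then carries out a careful decompression (Lemma~\ref{lem: monotone compression control}, Claim~\ref{claim:aux}, \dots) that shows the ``distance to a ball'' degrades by only a bounded factor when undoing each $T_i$. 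This works for all $n$ and all $m \geq 3$ without any auxiliary structural input.

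The main gap in your plan is that the reduction to $[m]^J$ does not actually simplify the problem, and the step that replaces the paper's decompression argument is left as a sketch. After you extract $\mathcal{J}_0 \subseteq [m]^J$ you are left exactly with the task of proving anticode stability over a product domain whose alphabet is unbounded (all $m > t+1$) and whose dimension $J$ is itself a function of $\eta$ and hence of $\eps$ -- structurally the same statement you started with. Your proposed resolution (``mimic the extremal step of Ahlswede--Khachatrian while tracking the deficit from optimality'') is precisely the hard content of the theorem and is not fleshed out; the short averaging argument you invoke for $m>t+1$ is also nontrivial once $J>t+1$. There is additionally a real parameter-tangling issue: you need $\eta \lesssim \eps$ to absorb $|\mathcal{F}\setminus\mathcal{J}|$, but then $J = J(t,\eta)$ and your bounded-domain stability constant $\delta_1(J,m,t,\eps_1)$ must dominate $\eta$; if $\delta_1$ decays in $J$ and $J$ grows in $1/\eta$, the requirement $\eta < \delta_1(J(\eta),\dots)$ may be unsatisfiable, and you do not establish that it is. Finally, the detour through Theorem~\ref{thm:junta_approx} imports an $n \geq n_0$ hypothesis absent from the statement. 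The paper avoids all of this by working directly with $\mathcal{F}$ and localising the stability question to the hypercube where it is already known.
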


The proof of Theorem~\ref{thm:stability}
uses a local stability analysis of the compression operator
of Ahlswede and Khachatrian \cite{AKcodes},
and also the corresponding stability result
for $t$-intersecting families in
the $p$-biased hypercube obtained by
Ellis, Keller and Lifshitz \cite{EKLstability}.

\paragraph{Notation.}
Throughout the paper, we write $[m] = \set{1,\ldots,m}$.
For any $x,y\in[m]^n$ we write
${\sf agr}(x,y) = |\{i \in [n]: x_i=y_i\}|$.
We often identify a code $\mathcal{F}\subset[m]^n$
with its characteristic function $[m]^n \mapsto \{0,1\}$.

Given $x \in [m]^n$ and $R \sub [n]$ we define
$x_R \in [m]^R$ by $(x_R)_i=x_i$.
Given disjoint $R,R' \sub [n]$
and $a \in [m]^R$, $a' \in [m]^{R'}$,
we sometimes denote their concatenation in $[m]^{R \cup R'}$
by $(x_R=a,x_{R'}=a')$.

Given $\alpha\in[m]^R$ for some $R\sub [n]$ we write
${\cal F}[\aA] = \{ x \in {\cal F}: x_R = \aA \}$ and
${\cal F}(\aA) = \{ x \in [m]^{[n]\sm R}: (x,\aA) \in {\cal F} \}$.
We also often denote ${\cal F}(\aA)$ by ${\cal F}_{R \to \aA}$.

For a coordinate $i\in [n]$ and symbol $a\in[m]$,
we write $D_{i\ra a}$ for
the subcube having all $x\in [m]^n$ for which $x_i = a$;
we will also refer to this as a `dictator'.
More generally, for $R \sub [n]$ and $a \in [m]^R$
we write $D_{R \to a} = \cap_{i \in R} D_{i \to a_i}
= \{x \in [m]^n: x_R=a\}$.

Given ${\cal F} \subset [m]^n$ and $J \subset [n]$ we say that ${\cal F}$ is a $J$-junta if there is ${\cal A} \subset [m]^J$ such that ${\cal F} = \{x \in [m]^n: x_J \in {\cal A}\}$. When we do not wish to emphasize the set $J$ itself, we instead refer to such families as $|J|$-juntas.

We will deal with various product domains
$\Omega = \Omega_1\times\ldots\times\Omega_n$,
mostly (but not only) with $\Omega = [m]^n$;
we reserve $\mu$ to denote the uniform distribution
over the domain under discussion
(which will be clear from context).
For any probability measure $\nu$ on $\Omega$
and ${\cal F} \sub \Omega$ we write
$\nu(\mathcal{F}) = \sum_{x \in {\cal F}} \nu(x)$;
similarly for $f:\Omega \to \mb{R}$ we write
$\nu(f) = \mb{E}_{x \sim \nu} f(x)
= \sum_{x \in {\cal F}} \nu(x) f(x)$.

We write $a \ll b$ to mean that there is some $a_0(b)>0$
such that the following statement holds for $0<a<a_0(b)$.

\newpage

\part{Small alphabets}

This paper has two parts. We will consider small alphabets
in this part and large alphabets in the second part.
Here we will prove our main result Theorem \ref{thm:main}
when the alphabet size $m$ is small, i.e.\
$t$ and $m$ are fixed and $n>n_0(t,m)$ is large.
This part of the paper will consist of three sections.
In the next section we prove Theorem \ref{thm:main}
for fixed $m$, assuming three key steps of the proof
(those described in the introduction). These steps
are then proven as separate theorems
in sections 3 and 4.

We start with the junta approximation,
for which the two key ingredients are
(i) a regularity lemma,
which approximately decomposes any code
into pieces which are pseudorandom
(in a sense to be made precise below),
and (ii) a theorem of Mossel \cite{Mossel}
on Markov chains hitting pseudorandom sets
which implies that we can find a pair of vectors
with any fixed agreement between any two pseudorandom
families (of non-negligible measure).

In proving the stability version
of the Ahlswede-Khachatrian anticode theorem,
the first key observation is that for codes
that are compressed (in a sense to be defined below),
there is a natural transformation of the problem
to the $p$-biased hypercube, where the stability
theorem has already been proved by
Ellis, Keller and Lifshitz \cite{EKLstability}.
This may at first not seem helpful
for a general stability result,
as compresssion destroys structure,
but in fact we can make a local stability argument,
that keeps control of the structure under gradual
decompression, and thus deduce the
general stability result.

For the bootstrapping step, the main ingredient is a
`cross disagreement' theorem, where given
two families $\mc{F}$ and $\mc{G}$ we need to find
$x \in  \mc{F}$ and $y \in \mc{G}$
with ${\sf agr}(x,y)=0$.
We need this result in the unbalanced setting
with $\mu(\mc{F}) = 1-\aA$ and $\mu(\mc{G})=\bB$,
where $\aA$ and $\bB$ are small,
but $\aA$ is large compared with $\bB$.
The idea for overcoming this obstacle
is to transform the problem via compressions
to the setting of cross-intersecting families
$\mc{F}'$ and $\mc{G}'$ in the
$p$-biased hypercube, where $p=1/m \le 1/3$.
We then move to the uniform ($1/2$-biased) measure,
where by an isoperimetric lemma of
Ellis, Keller and Lifshitz \cite{EKL}
the measure of the family corresponding to $\mc{G}$
becomes much larger, so that a trivial bound
implies that $\mc{F}'$ and $\mc{G}'$
cannot be cross-intersecting.

\section{Proof summary}

In this section we prove Theorem \ref{thm:main}
for fixed $m$ assuming the three theorems
(junta approximation, anticode stability,
bootstrapping) mentioned in the overview above,
which we now state formally.
The first theorem (junta approximation)
proves Theorem \ref{thm:junta_approx}
when $J$ and $n_0$ can depend on $m$
and replaces the conclusion
$|\mathcal{F}\sm\mathcal{J}|\leq \eta |\mathcal{J}|$
by $\mu({\cal F} \sm {\cal J}) \le \eta$,
which is an equivalent form when $m$ is fixed;
it will then remain to prove Theorem \ref{thm:junta_approx}
for $m>m_0(t,\eta)$ sufficiently large
(which we will do in Part II).

\begin{thm}\label{thm:junta_approx_small_m}
For every $\eta>0$ and $t,m\in\mathbb{N}$ with $m \ge 3$
there are $J$ and $n_0$ in $\mathbb{N}$ such that
if $\mathcal{F} \sub [m]^n$ is a $(t-1)$-avoiding code
with $n \ge n_0$ then there is a $t$-intersecting $J$-junta
$\mathcal{J}\subset[m]^n$ such that
$\mu({\cal F} \sm {\cal J}) \le \eta$.
\end{thm}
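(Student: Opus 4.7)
The plan is to implement the two-ingredient strategy described in the introduction: a regularity decomposition of $\mathcal{F}$ into pieces of constant complexity each of which is strongly pseudorandom, followed by an application of Mossel's theorem to force $t$-intersection. To build the decomposition I would grow a depth-bounded restriction tree rooted at $\mathcal{F}$. At each node, corresponding to a partial assignment $\alpha$ on a current coordinate set, I check whether there exists a restriction $\gamma \in [m]^T$ on a constant-size $T$ disjoint from the currently fixed coordinates that boosts the density of $\mathcal{F}(\alpha)$ by at least some large factor $C=C(\eta,t,m)$. If so, I branch on all $\gamma$; otherwise the leaf is declared \emph{pseudorandom}, meaning no constant-size further restriction can boost its density by more than $C$. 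A density-increment argument bounds the depth (and hence $|R|$) in terms of $C$ and $m$. Extending all leaves to live over a common set $R$ of size $J = J(\eta,t,m)$ yields a partition of $[m]^n$ into cylinders $D_{R\to\alpha}$ in which each $\mathcal{F}(\alpha) \sub [m]^{[n]\sm R}$ is either negligible (measure at most $\eta/m^J$) or pseudorandom.

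Let $\mathcal{A} = \{\alpha\in[m]^R: \mathcal{F}(\alpha) \text{ is non-negligible}\}$ and define $\mathcal{J}$ as the $R$-junta supported on $\mathcal{A}$. Since the union of negligible pieces has total measure at most $m^J \cdot \eta/m^J = \eta$, we immediately get $\mu(\mathcal{F}\sm\mathcal{J}) \le \eta$. To show $\mathcal{J}$ is $t$-intersecting, suppose for contradiction that $\alpha,\beta \in \mathcal{A}$ satisfy ${\sf agr}(\alpha,\beta) = s < t$. I would then choose a set $I \sub [n]\sm R$ of size $t-1-s$ and a common assignment $\gamma \in [m]^I$, producing extensions $\alpha',\beta'$ on $R\cup I$ with ${\sf agr}(\alpha',\beta')=t-1$. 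By pseudorandomness of $\mathcal{F}(\alpha)$ and $\mathcal{F}(\beta)$ (and an averaging argument over the choice of $(I,\gamma)$), some choice keeps both $\mathcal{F}(\alpha')$ and $\mathcal{F}(\beta')$ of non-negligible measure and pseudorandom with slightly worse constants. Any pair $x\in\mathcal{F}(\alpha')$, $y\in\mathcal{F}(\beta')$ lifts to vectors in $\mathcal{F}$ of agreement ${\sf agr}(x,y)+(t-1)$, so $(t-1)$-avoidance gives ${\sf agr}(x,y)\ge 1$; that is, $\mathcal{F}(\alpha')$ and $\mathcal{F}(\beta')$ are cross-intersecting over $[m]^{[n]\sm(R\cup I)}$. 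Mossel's theorem on Markov chains hitting pseudorandom sets, applied to the agreement-coupling on $[m]^{[n]\sm(R\cup I)}$, then produces a pair in $\mathcal{F}(\alpha')\times\mathcal{F}(\beta')$ with agreement exactly $0$, a contradiction. Hence every $\alpha,\beta\in\mathcal{A}$ have ${\sf agr}(\alpha,\beta)\ge t$, so $\mathcal{J}$ is a $t$-intersecting $J$-junta.

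The main obstacle is the calibration of parameters between the three ingredients: the boost factor $C$ in the regularity lemma, the pseudorandomness level required by Mossel's theorem, and the loss incurred when passing from the pseudorandom pieces $\mathcal{F}(\alpha),\mathcal{F}(\beta)$ to the further restrictions $\mathcal{F}(\alpha'),\mathcal{F}(\beta')$ on up to $t-1$ additional coordinates. Since $t$ is fixed and the degradation from a constant number of further restrictions is controlled by a constant power of the original pseudorandomness parameter, it is possible to choose $C$ large enough---depending only on $\eta,t,m$---so that Mossel's hypothesis is satisfied at the leaves after extension. The other subtle point is that the argument silently uses $m$ being bounded: constants like $m^J$ appearing in the measure bound are absorbed only when $m$ is fixed, which is exactly the regime of this theorem (the general $m$ case being deferred to Part~II). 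Once the parameters are set up correctly, the qualitative picture outlined above gives the conclusion in a clean way.
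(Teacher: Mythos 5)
Your high-level strategy --- regularity decomposition followed by an appeal to Mossel's theorem to force $t$-intersection of the junta --- is exactly the paper's approach, and the reduction to cross-intersecting restrictions via fixing a further assignment on $t-1-s$ coordinates is also the right move. The problem is in the pseudorandomness notion you calibrate the decomposition to.

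You declare a leaf pseudorandom when \emph{no constant-size restriction boosts its density by more than a large factor $C$}. This is a multiplicative, one-sided condition --- essentially uncapturability/globalness, which is what Part II of the paper uses for large $m$. It is not what Mossel's theorem requires. Mossel's resilience hypothesis is \emph{additive} and \emph{two-sided}: $\bigl|\mu(\mathcal{F}_{R\to a})-\mu(\mathcal{F})\bigr|\leq\eps$ for all $|R|\leq r$ and $a\in[m]^R$, with $\eps$ small. Your condition does not deliver this: for a piece of non-negligible density $\geq\eta/2$, once $C\geq 2/\eta$ the multiplicative-boost condition is vacuously true (no restriction can exceed density $1$), so no regularity is actually being extracted on the dense pieces; and it says nothing about restrictions that \emph{decrease} the density drastically, which do violate Mossel's hypothesis. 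This also undermines the later step where you extend leaves from their own coordinate sets to a common set $R$ and then take further restrictions of size $t-1-s$: the additive condition degrades gracefully under constant-size restrictions, but the multiplicative condition does not (a single restriction that nearly kills the density makes any subsequent restriction a huge relative boost), so the averaging argument you invoke has nothing to bite on.

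The fix is the one the paper uses: run a mean-square-density (energy) increment on a single growing set $T$, branching on restrictions that change the density by at least $\eps$ \emph{in either direction}. Any such restriction contributes at least $m^{-r}\eps^2$ to the variance, so the process stops after $O(m^r/(\delta\eps^2))$ steps, and all but a $\delta$-fraction of the resulting cylinders $\mathcal{F}_{T\to\alpha}$ are $(r,\eps)$-pseudorandom in the additive sense. That condition survives the further restriction to $\mathcal{F}(\alpha')$, $\mathcal{F}(\beta')$ with only a change of parameters from $(r,\eps)$ to $(r-t,\eps)$ and density loss at most $\eps$, which is exactly what the application of Mossel's theorem needs. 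With that replacement your outline matches the paper's proof.
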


The second theorem (anticode stability)
is equivalent to Theorem \ref{thm:stability}
for fixed $m$ and $t$, as we can bound
$\mu(\mathcal{S}_{n,m,t})$ below by a constant.

\begin{thm}\label{thm:stability'}
For every $t\in\mathbb{N}$, $m \ge 3$ and $\eps>0$
there is $\delta>0$ such that
if $\mathcal{F} \sub [m]^n$ is $t$-intersecting
with $\mu(\mathcal{F})\geq \mu(\mathcal{S}_{n,m,t})-\delta$
then $\mu(\mathcal{F}\sm\mathcal{S}) \leq \eps $
for some $\mathcal{S}$ which is isomorphic to
some ${\cal S}_{n,m,t} = {\cal S}_{t,r}[m]^n$,
where $0 \le r \le t$, and $r=0$ if $m>t+1$.
\end{thm}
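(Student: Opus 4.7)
The plan is to implement the three-step strategy outlined in the proof overview: apply the Ahlswede-Khachatrian compression operators from \cite{AKcodes} to reduce to a structured family, transfer the resulting near-extremality to the $p$-biased Boolean cube, and then lift the structural conclusion back through the compressions via a local stability analysis.

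First I would iteratively apply the Ahlswede-Khachatrian compression operators $C_{ij}$ (which, for each pair of symbols and each coordinate, pointwise shift vectors toward a fixed symbol whenever this is consistent with the $t$-intersection property) to produce a compressed code $\mathcal{F}^*$ that is still $t$-intersecting and satisfies $\mu(\mathcal{F}^*)=\mu(\mathcal{F})$. The compressed family enjoys a strong monotonicity property with respect to the symbol $1$ in each coordinate, which allows an explicit correspondence between $t$-intersection in $[m]^n$ and $t$-intersection in the $p$-biased Boolean cube at $p = 1/m$: under the projection $x \mapsto (\mathbbm{1}_{x_i = 1})_{i=1}^n$, $\mathcal{F}^*$ corresponds to a monotone $t$-intersecting family $\mathcal{G}^*$ in $\{0,1\}^n$ with $\mu_p(\mathcal{G}^*) = \mu(\mathcal{F}^*)$, and the balls ${\cal S}_{t,r}[m]^n$ correspond to the Frankl-type balls. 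The hypothesis $\mu(\mathcal{F}^*) \ge \mu(\mathcal{S}_{n,m,t}) - \delta$ then translates into a near-extremal $t$-intersecting family in the $p$-biased cube, and the stability theorem of Ellis, Keller and Lifshitz \cite{EKLstability} produces a Frankl-type ball $\mathcal{B}_r$ with $\mu_p(\mathcal{G}^* \sm \mathcal{B}_r) \le \eps'$, whence $\mu(\mathcal{F}^* \sm {\cal S}_{t,r}[m]^n) \le \eps'$ after pulling back. The admissible range $0 \le r \le t$, and the forcing $r=0$ when $m > t+1$, follow from directly comparing the measures $\mu({\cal S}_{t,r}[m]^n)$ across $r$, as in the original Ahlswede-Khachatrian optimisation for codes.

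The main obstacle, and the part I expect to require the most delicate work, is lifting the conclusion from $\mathcal{F}^*$ back to $\mathcal{F}$: a single compression can merge non-isomorphic families, so naively reversing the entire compression sequence may destroy the ball structure. The key tool is a \emph{local} stability lemma for a single compression step, which shows that if $C_{ij}(\mathcal{F})$ is $\eta$-close to a ball $\mathcal{S}$, then $\mathcal{F}$ itself is close to some isomorphic copy of $\mathcal{S}$, with the added error quantified by the mass actually moved by $C_{ij}$, namely $\mu(C_{ij}(\mathcal{F}) \triangle \mathcal{F})$. Iterating this estimate in reverse along the compression sequence, the total increase in distance to a ball is controlled by the aggregate compression mass. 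This aggregate is in turn bounded in terms of $\delta$: if the compression had moved significantly more than $O(\delta)$ mass, then combining the $t$-intersection property with the ball approximation for $\mathcal{F}^*$ would force either a violation of $t$-intersection in $\mathcal{F}$ or a measure strictly below $\mu(\mathcal{S}_{n,m,t}) - \delta$, a contradiction. Choosing $\delta \ll \eps$ then yields the desired bound $\mu(\mathcal{F} \sm \mathcal{S}) \le \eps$ for an appropriate isomorphic copy $\mathcal{S}$ of ${\cal S}_{t,r}[m]^n$.
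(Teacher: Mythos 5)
Your high-level plan --- compress, transfer to the $p$-biased cube and invoke Theorem~\ref{thm:p_biased_stability}, then undo the compressions via a local stability argument --- is exactly the route the paper takes. The first two stages are fine; the compression section and Claim~\ref{claim:stability_compressed} match what you describe. The problem is in your decompression step. You propose a local lemma of the form ``if $T_{i,j}(\mathcal{F})$ is $\eta$-close to a ball then $\mathcal{F}$ is $(\eta + \mu(T_{i,j}(\mathcal{F}) \triangle \mathcal{F}))$-close to some isomorphic ball,'' i.e.\ an additive error increment controlled by the compression mass, and then argue the aggregate compression mass is $O(\delta)$. That last claim is simply false: take $m>t+1$, $\delta$ tiny, and $\mathcal{F} = \{x : x_1 = 2,\ x_2 = \ldots = x_t = 1\}$, an exactly extremal $t$-intersecting subcube. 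Compression $T_{1,2}$ moves the entire family, so $\mu(T_{1,2}(\mathcal{F}) \triangle \mathcal{F}) = 2m^{-t}$, which dwarfs $\delta$. There is no ``violation of $t$-intersection'' or measure deficit forced by this large compression mass --- $\mathcal{F}$ is already extremal --- so your compactness-of-compression-mass argument cannot close. More fundamentally, an additive bound in terms of the symmetric difference is hopeless: a single step can move $\Theta(m^{-t})$ mass, i.e.\ the full extremal measure.

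The paper gets around this with two devices you are missing. First, it chooses coordinates WLOG so that the most popular symbol of each coordinate of $\mathcal{F}$ is $1$; this pins down a single ball $\mathcal{S}$ (centred at the all-1's vector) so that \emph{the same} $\mathcal{S}$ is a good approximation at every stage of decompression and there is no drifting isomorphism class to track. Second, the error analysis is multiplicative, not additive: writing $\mu(\mathcal{F}_i \cap \mathcal{S}) = (1-\eps_i)\mu(\mathcal{S})$, one shows $\eps_{i-1} = \eps_i$ whenever $i$ is outside the junta $J$ of $\mathcal{S}$ (because $\mathcal{S}$ is $i$-insensitive), and $\eps_{i-1} \leq 3(t+1)^{3t}\eps_i$ for $i \in J$ via a Hoffman-bound argument (Lemma~\ref{lem: monotone compression control} and Claim~\ref{claim:aux}) applied to cross-intersecting restrictions $\mathcal{F}_{i-1}(y_1), \mathcal{F}_{i-1}(y_2)$ with $\mathsf{agr}(y_1,y_2) \le t-1$. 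Since $|J| = t + 2r \le 3t$ is a constant, iterating gives $\eps_0 \le (3(t+1)^{3t})^{3t}\eps_n$, which is acceptable provided the compressed error $\eps_n$ is chosen $\ll \eps$. Replacing your additive-in-compression-mass lemma with this constant-factor amplification bound (and the pinning WLOG) is the missing content.
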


The third theorem (bootstrapping) is an unbalanced
cross disagreement theorem: it considers codes
${\cal G}, {\cal H} \sub [m]^n$ where ${\cal H}$
is small and ${\cal G}$ is almost complete,
and finds $x \in {\cal F}$ and $y \in {\cal G}$
with ${\sf agr}(x,y)=0$.
We state it in a form that will also be useful
later in the case that $m$ is moderately large.

\begin{thm}\label{thm:boot}
For every $t \in\mathbb{N}$ and $C>0$
there is $\eps_0>0$ such that if $0<\eps<\eps_0$
and ${\cal G}, {\cal H} \sub [m]^n$
with $\mu({\cal H}) = m^{-t} \eps$
and $\mu({\cal G}) > 1- C\eps$
then ${\sf agr}(x,y)=0$ for some
$x \in {\cal G}$ and $y \in {\cal H}$.
\end{thm}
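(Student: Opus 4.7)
The plan is to argue by contradiction: suppose ${\cal G}$ and ${\cal H}$ are cross-intersecting as codes, so ${\sf agr}(x, y) \geq 1$ for every $x \in {\cal G}, y \in {\cal H}$. Following the outline in the introduction, I will transfer the problem via compressions to a pair of cross-intersecting up-sets in the $p$-biased Boolean hypercube $\{0,1\}^n$ with $p = 1/m \leq 1/3$, and then derive a contradiction by comparing measures after passing to $\mu_{1/2}$.

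The compression step should combine Frankl-type shifts toward the symbol $1$ with symmetrizations among the remaining $m-1$ symbols in each coordinate, applied consistently to both ${\cal G}$ and ${\cal H}$, producing ${\cal G}^\ast$ and ${\cal H}^\ast$ with the following properties: (i) individual measures are preserved; (ii) cross-intersection is preserved; (iii) membership in ${\cal G}^\ast$ (resp.\ ${\cal H}^\ast$) depends only on the $0/1$-projection $\phi(x) = \{i : x_i = 1\}$; and (iv) every code-agreement can be rerouted to an agreement on the symbol $1$. Writing ${\cal G}' = \phi({\cal G}^\ast)$ and ${\cal H}' = \phi({\cal H}^\ast)$, we obtain up-sets in $\{0,1\}^n$ with $\mu_{1/m}({\cal G}') = \mu({\cal G}) > 1 - C\eps$, $\mu_{1/m}({\cal H}') = \mu({\cal H}) = m^{-t}\eps$, and ${\cal G}', {\cal H}'$ cross-intersecting in the ordinary set-theoretic sense. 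I would then pass from $\mu_{1/m}$ to $\mu_{1/2}$: monotonicity gives $\mu_{1/2}({\cal G}') \geq 1 - C\eps$, while the Ellis--Keller--Lifshitz isoperimetric inequality \cite{EKL} provides a substantial measure blowup for the small family, yielding a lower bound of the form $\mu_{1/2}({\cal H}') \gtrsim 2^{-t}\eps^{\log_m 2}$. Since $\log_m 2 \leq \log_3 2 \approx 0.63 < 1$ uniformly for $m \geq 3$, choosing $\eps_0(t, C)$ small enough (the worst case being $m = 3$) ensures $\mu_{1/2}({\cal H}') > C\eps$, so $\mu_{1/2}({\cal G}') + \mu_{1/2}({\cal H}') > 1$. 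This contradicts the trivial bound $\mu_{1/2}({\cal G}') + \mu_{1/2}({\cal H}') \leq 1$ for cross-intersecting families in $\mu_{1/2}$ (which follows because $T \in {\cal H}'$ forces $T^c \notin {\cal G}'$ and $\mu_{1/2}$ is invariant under complementation).

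The main obstacle is the compression. Even property (ii) requires care under Frankl-type shifts, since a shift applied to only one family can in principle destroy a code-agreement that occurred on a non-$1$ symbol; combining it with the symmetrization and agreement-rerouting needed for properties (iii) and (iv) demands a carefully chosen sequence of compressions applied simultaneously to both ${\cal G}$ and ${\cal H}$. The remainder of the argument is then a short calculation packaging the monotonicity bound for ${\cal G}'$, the isoperimetric blowup for ${\cal H}'$, and the elementary cross-intersection bound in the symmetric measure, yielding $\eps_0(t,C)$ of order roughly $(C 2^t)^{-1/(1-\log_3 2)}$.
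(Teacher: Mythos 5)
Your plan is essentially the paper's argument: compress toward the symbol~$1$, project to $\{0,1\}^n$ to obtain monotone cross-intersecting families at bias $p=1/m$, then compare measures at $\mu_{1/2}$ using monotonicity for ${\cal G}'$, the Ellis--Keller--Lifshitz isoperimetric lemma (Lemma~\ref{lemma:isoperimetric_bump}) for ${\cal H}'$, and finish with the trivial bound $\mu_{1/2}({\cal A})+\mu_{1/2}({\cal B})\le 1$ for cross-agreeing families (Fact~\ref{fact:cross_int}). The exponent arithmetic you give (worst case $m=3$, $\log_m 2 \le \log_3 2 < 0.7$, threshold $\eps_0 \approx (C2^t)^{-1/(1-\log_3 2)}$) is exactly what the paper computes.

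The one step of your sketch that cannot be carried out as stated is property~(iii). No sequence of shifts and symmetrizations can, in general, produce a family whose membership depends only on $\phi(x) = \{i: x_i=1\}$ while also preserving measure as in~(i): a fiber-invariant family has $\mu$-measure equal to $\mu_{1/m}$ of its projection, which ranges over a strict subset of $\{k/m^n : 0 \le k \le m^n\}$ (for instance $\mu({\cal F})=1/2$ is unattainable when $m=4$, $n=1$). The paper avoids this entirely: it does not aim for fiber-invariance, and does not claim equality of measures. It compresses via $T$ (Fact~\ref{fact:compression}; purely Frankl-type shifts toward the symbol~$1$, no symmetrization), takes the image $\wt{{\cal F}}=h^{\otimes n}(T({\cal F}))$ under $h(1)=1$, $h(a)=0$ for $a\neq 1$ (Definition~\ref{def:compress_reduce}), and proves only the one-sided inequality $\mu_p(\wt{T{\cal F}})\geq \mu({\cal F})$ together with monotonicity and preservation of cross-intersection (Fact~\ref{fact:properties_compressed_reduction}, by a coordinate-by-coordinate telescoping argument). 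Both inequalities $\mu_p({\cal G}')\geq \mu({\cal G})$ and $\mu_p({\cal H}')\geq\mu({\cal H})$ go in the direction your final computation requires, so replacing (iii) and the equality in (i) with this one-sided bound makes the rest of your plan go through unchanged.
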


Assuming these theorems, we now prove our main theorem
for fixed $m$:\ the following is obtained from Theorem
\ref{thm:main} by allowing $n_0$ to depend on $m$.

\begin{thm}\label{thm:exact_small_m}
For all $t\in\mathbb{N}$ and $m \ge 3$
there is $n_0 \in\mathbb{N}$ such that
if $\mathcal{F} \sub [m]^n$ is a $(t-1)$-avoiding code
with $n\geq n_0$ then
$|\mathcal{F}| \leq |\mathcal{S}_{n,m,t}|$,
with equality only when ${\cal F}$ is isomorphic to a ball.
\end{thm}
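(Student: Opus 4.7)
The plan is to apply the three theorems (junta approximation, anticode stability, bootstrapping) in sequence, following the strategy sketched in the introduction. Let $\mathcal{F} \subset [m]^n$ be a $(t-1)$-avoiding code with $|\mathcal{F}| \ge |\mathcal{S}_{n,m,t}|$; the goal is to conclude that $\mathcal{F}$ is isomorphic to a ball, which simultaneously yields both the size bound and the characterization of extremizers. Choose small parameters $\eta \le \delta \ll \eps \ll 1$ depending on $t, m$: pick $\eps$ small enough that $C \eps < \eps_0/2$ for the constants $C = C(t, m)$ and $\eps_0$ associated with the eventual application of Theorem \ref{thm:boot}; let $\delta = \delta(t, m, \eps)$ come from Theorem \ref{thm:stability'}; and let $\eta \le \delta$. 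For $n$ large enough, Theorem \ref{thm:junta_approx_small_m} applied with this $\eta$ produces a $t$-intersecting $J$-junta $\mathcal{J}$ with $\mu(\mathcal{F} \setminus \mathcal{J}) \le \eta$.

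Since $\mu(\mathcal{J}) \ge \mu(\mathcal{F}) - \eta \ge \mu(\mathcal{S}_{n,m,t}) - \delta$, Theorem \ref{thm:stability'} applies to the $t$-intersecting $\mathcal{J}$ and produces a family $\mathcal{S}$ isomorphic to some $\mathcal{S}_{t,r}[m]^n$ (with $0 \le r \le t$, and $r = 0$ when $m > t+1$) such that $\mu(\mathcal{J} \setminus \mathcal{S}) \le \eps$. The triangle inequality then gives $\mu(\mathcal{F} \setminus \mathcal{S}) \le \eta + \eps$, and the hypothesis $\mu(\mathcal{F}) \ge \mu(\mathcal{S}_{n,m,t}) = \mu(\mathcal{S})$ also yields $\mu(\mathcal{S} \setminus \mathcal{F}) \le \eta + \eps$. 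After permuting coordinates and alphabet symbols we may assume $\mathcal{S} = \mathcal{S}_{t,r}[m]^n$, a union of subcubes obtained by freezing $t+r$ of the coordinates $[t+2r]$ to the value $1$.

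The principal obstacle is the third step, upgrading this closeness to the exact identity $\mathcal{F} = \mathcal{S}$ via Theorem \ref{thm:boot}. Suppose for contradiction that some $y \in \mathcal{F} \setminus \mathcal{S}$ exists, so $y$ has $s < t+r$ ones on $[t+2r]$. Analyzing ${\sf agr}(x, y)$ for $x \in \mathcal{F} \cap \mathcal{S}$ restricted to a suitable subcube $K \subset \mathcal{S}$ (one whose frozen coordinates $A$ contain $y^{-1}(1) \cap [t+2r]$) and further restricted by $x_T = y_T$ on an auxiliary set $T \subset [n] \setminus [t+2r]$ of the right size, the $(t-1)$-avoidance property translates into a cross-disagreement constraint on $[m]^U$, where $U = [n] \setminus (A \cup T)$: no pair in $\mathcal{G} \times \mathcal{H}$ can have agreement $0$, where $\mathcal{G}$ is the restricted family in $[m]^U$ and $\mathcal{H}$ is a small family associated with $y$ whose measure is tuned to the $m^{-t}\eps'$ scale required by Theorem \ref{thm:boot} (possibly via aggregation over $y \in \mathcal{F} \setminus \mathcal{S}$ or over choices of $K$ and $T$ to meet the measure requirement). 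Using $\mu(\mathcal{S} \setminus \mathcal{F}) \le \eta + \eps$ one verifies $\mu(\mathcal{G}) > 1 - C(\eta + \eps)$ (as uniform measure on $[m]^U$); Theorem \ref{thm:boot} then produces a pair in $\mathcal{G} \times \mathcal{H}$ with agreement $0$, which lifts to a pair in $\mathcal{F} \times \mathcal{F}$ of agreement exactly $t-1$, contradicting $(t-1)$-avoidance. Hence $\mathcal{F} \subseteq \mathcal{S}$, and $|\mathcal{F}| \ge |\mathcal{S}_{n,m,t}| = |\mathcal{S}|$ forces $\mathcal{F} = \mathcal{S}$ --- a ball, as required.
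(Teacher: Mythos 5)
Your reduction to Theorems~\ref{thm:junta_approx_small_m} and~\ref{thm:stability'} in the first two steps is correct and matches the paper's argument, including the useful observation that $\mu(\mathcal{F})\ge\mu(\mathcal{S})$ together with $\mu(\mathcal{F}\setminus\mathcal{S})\le\eta+\eps$ also controls $\mu(\mathcal{S}\setminus\mathcal{F})$.

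The bootstrapping step, however, is a genuine gap: you identify the right target (reduce to a cross-disagreement contradiction via Theorem~\ref{thm:boot}) but leave the actual construction of $\mathcal{G}$ and $\mathcal{H}$ with the required measure scales unspecified. Starting from a single bad vector $y$, choosing a subcube $K$ and an auxiliary set $T$, and then hoping that ``aggregation over $y$, $K$, $T$'' produces a family $\mathcal{H}$ of measure roughly $m^{-t}\eps'$ is precisely the point at which the argument needs to be pinned down, and the hedged ``possibly via aggregation'' signals that it hasn't been. The paper's construction is cleaner and avoids this entirely: one averages over \emph{all} prefixes $\alpha\in[m]^{[t+2r]}$ with $|\alpha^{-1}(1)|<t+r$ to find one with $\mu(\mathcal{F}_{[t+2r]\to\alpha})\ge\xi:=\mu(\mathcal{F}\setminus\mathcal{S})$, fixes a companion prefix $\beta\in[m]^{[t+2r]}$ with $|\beta^{-1}(1)|\ge t+r$ and ${\sf agr}(\alpha,\beta)=t-1$, and sets $\mathcal{H}=\mathcal{F}_{[t+2r]\to\alpha}$, $\mathcal{G}=\mathcal{F}_{[t+2r]\to\beta}$. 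Since $D_{[t+2r]\to\beta}\subset\mathcal{S}$, one has $\mu(\mathcal{G})\ge 1-m^{t+2r}\mu(\mathcal{S}\setminus\mathcal{F})\ge 1-m^{3t}\xi$, and Theorem~\ref{thm:boot} applies directly with $C=m^{2t}$ and $m^t\xi$ playing the role of $\eps$; a cross-disagreement $(x,y)\in\mathcal{G}\times\mathcal{H}$ then gives $(\alpha,y),(\beta,x)\in\mathcal{F}$ with agreement exactly $t-1$, the desired contradiction. In short: restrict both families at the \emph{same} coordinate set $[t+2r]$, choose the bad prefix by averaging (which automatically produces the needed measure lower bound), choose the good prefix by the agreement condition, and the measure bookkeeping falls out; no separate auxiliary set $T$ or per-$y$ analysis is needed.
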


\begin{proof}
Let $0 < n_0^{-1} \ll
J^{-1}\ll \delta
 \ll \eps \ll t^{-1}, m^{-1}$.
Suppose $\mathcal{F} \sub [m]^n$ is $(t-1)$-avoiding
with $|\mathcal{F}| \ge |\mathcal{S}_{n,m,t}|$.
By Theorem~\ref{thm:junta_approx_small_m}
there is a $t$-intersecting $J$-junta
$\mathcal{J}\subset[m]^n$ such that
$|\mathcal{F}\sm\mathcal{J}|\leq \delta |\mathcal{J}|$.
We have $\mu(\mathcal{J})\geq
\mu(\mathcal{F}) - \mu(\mathcal{F}\sm \mathcal{J})
\ge \mu(\mathcal{S}_{n,m,t}) - \delta$.
By Theorem \ref{thm:stability'}
applied to ${\cal J}$, there is a copy
${\cal S}$ of $\mu(\mathcal{S}_{n,m,t})$
with $\mu(\mathcal{J}\sm\mathcal{S}) \leq \eps$.
Note that
\[ 0 \le \xi := \mu(\mathcal{F}\sm \mathcal{S})
\le \mu(\mathcal{F}\sm \mathcal{J})
+  \mu(\mathcal{J}\sm \mathcal{S})
\le \dD + \eps < 2\eps. \]
Suppose for contradiction that $\xi>0$.
Without loss of generality,
for some $r \le t$ we can write
 \[  \mathcal{S}=
 \{ x\in[m]^n : |\{ i \in [t+2r]: x_i=1 \}| \geq t+r \}. \]
By averaging, there is $\aA \in[m]^{[t+2r]}$
with $|\{i: \aA_i = 1\}| < t+r$ such that
${\cal H}:=\mathcal{F}_{[t+2r]\ra \alpha}$ has
$\mu({\cal H}) \geq \mu(\mathcal{F}\sm \mathcal{S}) = \xi$.
We can fix $\bB\in[m]^{t+2r}$
with $|\{i: \bB_i = 1\}| \ge t+r$ such that
${\sf agr}(\alpha,\beta) = t-1$.
We have
$\mu((\mathcal{F}\sm\mathcal{S})_{[t+2r]\ra \beta})
\leq m^{t+2r} \mu(\mathcal{F}\sm \mathcal{S}) \leq m^{3t}\xi$,
so $\mathcal{G} := \mathcal{F}_{[t+2r]\ra \beta}$
has $\mu(\mathcal{G})\geq 1 - m^{3t}\xi$.

By Theorem \ref{thm:boot}, with $C=m^{2t}$
and $m^t \xi$ in place of $\eps$,
we find $x \in {\cal G}$ and $y \in {\cal H}$
with ${\sf agr}(x,y)=0$.
However, this gives $(\aA,y)$ and $(\bB,x)$ in $\mc{F}$
with ${\sf agr}((\aA,y),(\bB,x))=t-1$,
which is a contradiction.
\end{proof}

\section{Junta approximation} \label{sec:junta-small}

In this section we prove the junta approximation theorem
for fixed $m$, i.e.\ Theorem \ref{thm:junta_approx_small_m}.
Our first ingredient is a regularity lemma,
showing that any code can be approximately
decomposed into a constant number of pieces,
each of which is pseudorandom,
in the sense that restrictions of constant size
do not significantly affect the measure.
This regularity lemma is similar in spirit
to that in \cite[Theorem 1.7]{EKLstability};
we refer the reader to section 1.2
of their paper for discussion how such results
are related to the large literature
on regularity lemmas in Combinatorics.

The second ingredient is a result of Mossel \cite{Mossel}
on Markov chains hitting pseudorandom sets,
which implies that any two pseudorandom codes
${\cal F},{\cal G} \sub [m]^n$ of non-negligible measure
cannot be cross intersecting,
i.e.\ we can find a `disagreement'
$(x,y) \in {\cal F} \times {\cal G}$
with ${\sf agr}(x,y) = 0$.
If ${\cal F}$ is $(t-1)$-avoiding this will imply
 ${\sf agr}(\aA,\bB) \ge t$ for any pieces
${\cal F}_{T \to \aA}$, ${\cal F}_{T \to \bB}$
of the regularity decomposition of ${\cal F}$
that are pseudorandom and of non-negligible measure.
Indeed, if we had ${\sf agr}(\aA,\bB)=t-1-s$ with $s \ge 0$
then we could arbitrarily fix a further restriction
$S \to \gG$ with $|S|=s$ to obtain pseudorandom families
${\cal F}_{(T,S) \to (\aA,\gG)}$,
${\cal F}_{(T,S) \to (\bB,\gG)}$
that are cross intersecting, which is impossible.
Here we are implicitly using the (important) fact that
pseudorandomness is preserved by constant size restrictions.

\subsection{The pseudorandom code regularity lemma}

In this subsection we prove a regularity lemma
which approximately decomposes any code
into pieces that are pseudorandom,
in the sense of the following definition.

\begin{definition}
We say $\mathcal{F}\subset[m]^n$
is $(r,\eps)$-pseudorandom
if for any $R \sub [n]$ with $\card{R}\leq r$
and $a\in[m]^R$ we have
$\card{\mu(\mathcal{F}_{R\ra a})
- \mu(\mathcal{F})}\leq \eps$.
\end{definition}

\begin{lemma}\label{lem:regularity_small_m}
For any $r,m\in\mathbb{N}$ and $\eps,\delta>0$
there is $D\in\mathbb{N}$ such that
for any $\mathcal{F}\subset [m]^n$ with $n \ge D$
there is $T\subset[n]$ with $|T| \le D$ such that
$\Prob{{\bf a}\in[m]^T}{\mathcal{F}_{T\ra {\bf a}}
  \text{ is not $(r,\eps)$-pseudorandom}}  \leq \delta$.
\end{lemma}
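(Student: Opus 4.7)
The plan is an iterative greedy refinement with a second-moment potential, in the spirit of Szemer\'edi-type regularity lemmas. Set
\[
\Phi(T) \;:=\; \Expect{\mathbf{a}\in[m]^T}{\mu(\mathcal{F}_{T\to\mathbf{a}})^2};
\]
this lies in $[\mu(\mathcal{F})^2,1]$ and is monotone non-decreasing in $T$, as for any disjoint $R\sub [n]\sm T$, Jensen's inequality gives $\Phi(T\cup R)-\Phi(T) = \Expect{\mathbf{a}}{\Var_{b\in[m]^R}\mu(\mathcal{F}_{T\cup R\to(\mathbf{a},b)})} \geq 0$. Starting from $T_0=\es$, I would check at each step whether the conclusion already holds for $T_k$; otherwise strictly more than a $\delta$-fraction of $\mathbf{a}\in[m]^{T_k}$ admits some witness $(R_\mathbf{a},b_\mathbf{a})$ with $|R_\mathbf{a}|\leq r$ and $|\mu(\mathcal{F}_{T_k\cup R_\mathbf{a}\to(\mathbf{a},b_\mathbf{a})})-\mu(\mathcal{F}_{T_k\to\mathbf{a}})|>\eps$, which forces local variance $\Var_{b\in[m]^{R_\mathbf{a}}}\mu(\mathcal{F}_{T_k\cup R_\mathbf{a}\to(\mathbf{a},b)})\geq m^{-r}\eps^2$.

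The central step is then to adjoin some $R\sub [n]\sm T_k$ of size at most $r$ so that $\Phi(T_{k+1})-\Phi(T_k)\geq c$ for some $c=c(r,\eps,\delta,m)>0$ depending only on the listed parameters (and not on $n$ or $|T_k|$). Since $\Phi\leq 1$, this forces the algorithm to terminate within $1/c$ iterations, yielding $|T|\leq D:=r/c$ as required. The main difficulty is establishing this $n$-independent increment: the naive choice (adjoining $R_{\mathbf{a}^\ast}$ for a single bad $\mathbf{a}^\ast$) only yields an increment of order $m^{-|T_k|-r}\eps^2$, which decays exponentially in $|T_k|$; and a direct pigeonhole over the $\binom{n-|T_k|}{\leq r}$ candidate $R$'s introduces an $n$-dependent factor.

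To bypass both issues I would pass to the Efron--Stein (ANOVA) decomposition $f=\sum_{S\sub[n]} f^{=S}$ of the indicator $f$ of $\mathcal{F}$, under which $\Phi(T)=\sum_{S\sub T}\|f^{=S}\|_2^2$. Combining the local variance bound with an $L^\infty$-vs-$L^2$ comparison on $[m]^{|S|}$, one extracts the Parseval-type estimate
\[
\sum_{S\,:\,S\not\sub T_k,\;|S\sm T_k|\leq r}\|f^{=S}\|_2^2 \;\geq\; \delta(2m)^{-r}\eps^2,
\]
a concrete pool of low-level Fourier mass available to the refinement step. The remaining and most delicate task is to convert this pool into a bounded-size $R$ that captures a constant fraction of it: if the mass concentrates on some bounded core of coordinates, adjoining that core resolves the step, otherwise one argues by induction on $r$ (reducing to a simpler pseudorandomness requirement) or by a weighted covering of the low-level spectrum to extract an $R$ of size $O_r(1)$ with the desired increment. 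The resulting dependence of $D$ on $(r,\eps^{-1},\delta^{-1},m)$ is likely to be tower-type, which is harmless for the downstream application.
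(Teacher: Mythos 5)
Your choice of potential $\Phi(T)=\Expect{\mathbf{a}}{\mu(\mathcal{F}_{T\to\mathbf{a}})^2}$ and the observation that a bad $\mathbf{a}$ forces local variance at least $m^{-r}\eps^2$ are exactly right, but you then impose a constraint that the paper never needs and that creates all of your subsequent difficulty: you insist that each refinement step adjoin a \emph{single} set $R$ of size at most $r$. That requirement is what pushes you toward the $n$-dependent pigeonhole and then into the vague Efron--Stein territory, where the key step (``convert this pool into a bounded-size $R$ that captures a constant fraction of it'') is asserted but not actually carried out; as written, the proposal does not close.

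The paper's proof sidesteps this entirely. At each iteration it takes $R=\bigcup_{a\in A}R_a$, the union of the witness sets over \emph{all} bad restrictions $a$. This $R$ can be large (its size grows with $|T|$), but that is harmless: the object whose size you must keep $n$-independent is the number of iterations, not the per-step increment in $|T|$. The increment argument then works summand by summand: for each $a\in A$, conditioning on more coordinates only increases the mean square (Jensen / law of total variance), so
\[
\Var_{b\in[m]^{R}}\mu(\mathcal{F}_{(T,R)\to(a,b)})\;\geq\;\Var_{b\in[m]^{R_a}}\mu(\mathcal{F}_{(T,R_a)\to(a,b)})\;\geq\; m^{-r}\eps^2,
\]
and summing over $a\in A$ (with $\mu(A)>\delta$) gives $\Phi(T_{\mathrm{new}})-\Phi(T)\geq \delta m^{-r}\eps^2$, a constant independent of $n$ and of $|T|$. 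Hence the process terminates after at most $m^r/(\delta\eps^2)$ iterations, and although $|T|$ grows roughly like a tower (each iteration can add up to $r\,m^{|T|}$ new coordinates), the final $|T|$ is still bounded in terms of $(r,m,\eps,\delta)$ only, which is all the lemma requires. If you drop the ``bounded-size $R$ per step'' requirement and adopt this union trick, your argument becomes the paper's and the Fourier detour is unnecessary.
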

\begin{proof}
We construct $T$ iteratively. Starting with $T = \emptyset$,
we consider at each step the set $A$ of $a\in[m]^T$
for which $\mathcal{F}_{T\ra a}$
is not $(r,\eps)$-pseudorandom. For any $a \in A$
we fix $b(a)\in [m]^{R_{a}}$ for some $R_{a}\subset[n]$
with $|R_a| \le r$ such that
$\card{\mu(\mathcal{F}_{(T, R_{a})\ra (a, b(a))})
- \mu(\mathcal{F}_{T\ra \alpha})} > \eps$.
If $\mu(A)\leq \delta$ we are done;
otherwise, we replace $T$ by $T_{\sf new} = T \cup R$
where $R = \bigcup_{a\in A}{R_{a}}$ and iterate.

We will argue that this process stops with
$|T|$ bounded by some function depending
on $m$, $r$, $\dD$ and $\eps$, but not on $n$.
To do so, we apply a standard `energy increment' argument
to the mean-square density \[  E(T) =
\Expect{{\bf a}\in [m]^T}{\mu(\mathcal{F}_{T\ra {\bf a}})^2}. \]
Clearly, $E(T)\leq 1$ for any $T\subset[n]$,
and  $E(T_1)\leq E(T_2)$ whenever $T_1\subset T_2$
by Cauchy-Schwarz.

We will show that $E(T)$ increases
significantly at each step of the process.
Indeed, comparing $E(T_{\sf new})$ and $E(T)$
term by term, we have \[ E(T_{\sf new}) - E(T)
  =\Expect{{\bf a} \in [m]^T}{
  \Expect{{\bf b}\in [m]^R}
  {\mu(\mathcal{F}_{(T,R)\ra ({\bf a},{\bf b})})}^2
   -\mu(\mathcal{F}_{T\ra {\bf a}})^2}
  = \Expect{{\bf a} \in [m]^T}
  {\Var Z_{{\bf a}}},  \]
where we consider $Z_a({\bf b})
= \mu({\cal F}_{(T,R)\ra (a,{\bf b})})$
as a random variable determined by
the random choice of ${\bf b}\in [m]^R$.
We have $\Var Z_a \ge 0$ for all $a$,
and for any $a\in A$ we have
$\Var Z_{a} \ge m^{-\card{R_{a}}}\eps^2\geq m^{-r} \eps^2$
in light of the restriction $R_{a}\ra b(a)$.
Therefore, $E(T_{\sf new})\geq E(T)
+ \mu(A) m^{-r}\eps^2\geq E(T) + \delta m^{-r}\eps^2$.

In other words, as long as the process does not terminate,
the energy function increases by at least $\dD m^{-r}\eps^2$.
As the energy is always at most $1$,
the process terminates after at most $m^r/\dD\eps^2$ steps.
Each restriction adds at most $r$ new variables to $T$,
so in each step $\card{T_{{\sf new}}}\leq 2^{\card{T}}\cdot r$,
and so the final size of $T$ is bounded by
some function of $m$, $r$, $\dD$ and $\eps$.
\end{proof}

\subsection{Markov chains hitting pseudorandom sets}

In this subsection we discuss a special case of a result
of Mossel \cite{Mossel}
needed for the proof of our junta approximation
theorem for small alphabets, which can be formulated
in terms of Markov chains hitting pseudorandom sets.
We start by summarising some properties of Markov chains
(see \cite{Levin} for an introduction).
We will consider finite Markov chains,
i.e.\ a sequence of random variables $(X_i)_{i \ge 0}$
taking values in a state space $S$
(some finite set)
described by a transition matrix $T$
with rows and columns indexed by $S$,
where for any event $E$ determined by
$(X_0,\dots,X_i)$ with $X_i=x$ we have
$\mb{P}(X_{i+1}=y \mid X_i=x) = T_{xy}$.
We also view $T$ as an averaging operator
on functions $f: S \to \mb{R}$,
corresponding to matrix multiplication
when we view $f$ as a vector in $\mb{R}^S$:
we have $(Tf)(x) = \mb{E}[f(X_1) \mid X_0=x]
= \sum_y T_{xy} f(y) = (Tf)_x$.

We will suppose $T$ is irreducible
(for any $x,y \in S$ there is
some $k \in \mb{N}$ with $T^k_{xy}>0$)
so there is a unique stationary distribution
(a probability distribution $\nu$ on $S$
such that $\nu T = \nu$).
The stationary chain is obtained
by letting $X_0$ have distibution $\nu$,
and then each $X_i$ has distribution $\nu$.
In the stationary chain we have
$\mb{P}(X_0 = a, X_1=b)
= P_{ab} := \nu_a T_{ab}$.
We say $T$ is reversible if $P$ is symmetric,
i.e.\ $P_{ab}=P_{ba}$ for all $a,b \in S$
(the name corresponds to the observation
that the distribution of the stationary chain
is invariant under time reversal).

When $T$ is reversible, it defines
a self-adjoint operator on $L^2(S,\nu)$,
i.e.\ functions $f: S \to \mb{R}$
with the inner product
$\langle f,g \rangle = \sum_x \nu_x f(x) g(x)$,
so $L^2(S,\nu)$ has an orthonormal basis $B$
of eigenfunctions of $T$.
We can write any $f \in L^2(S,\nu)$
in the form $f = \sum_{b \in B} c_b b$,
and then $\mb{E}f^2 = \langle f,f \rangle
= \sum_{b \in B} c_b^2$.
The largest eigenvalue is $1$,
and the corresponding eigenspace consists
of constant functions on $S$.
If $Tf=\lL f$ with $\lL \ne 1$
then $\mb{E}f := \mb{E}_{x \sim \nu} f(x)
= \sum_x \nu_x f(x) = \langle f,1 \rangle = 0$.
The absolute spectral gap $\lL_*$
is the minimum value of $1-|\lL|$
over all eigenvalues $\lL \ne 1$; equivalently,
\[ (1-\lL_*)^2 = \sup \{ \mb{E}(Tf)^2:
 \mb{E}f=0, \mb{E}f^2=1 \}.\]

Now we describe a special case of \cite[Theorem 4.4]{Mossel}, and for
that we require a basic set-up.
Let $T$ be a reversible, irreducible Markov chain acting on $S=[m]$,
and consider its tensor power $T^{\otimes n}$
acting on $\OO=[m]^n$ independently
in each coordinate, i.e.\ with transition matrix
$T^{\otimes n}_{xy} = \prod_{i=1}^n T_{x_iy_i}$.
In essence, \cite[Theorem 4.4]{Mossel} asserts that
if $T$ has a constant spectral gap, and we have
pseudorandom codes $\mc{F},\mc{G} \sub [m]^n$
of noticeable measure, then sampling
consecutive random states $x,y$ of the stationary chain for
$T^{\otimes n}$, we have that $x\in\mc{F}$, $y\in\mc{G}$ with
significant probability.

\begin{thm}\label{thm:MOSMarkov}
Let $T$ be a reversible Markov chain on $[m]$
with absolute spectral gap $\lL_*>0$.
Let $\nu$ denote the stationary measure
of $T^{\otimes n}$ and $x$ and $y$ be consecutive
random states of the stationary chain.
Then for any $\mu>0$ there are
$\eps,c>0$ and $r \in \mb{N}$ such that
if $\mathcal{F},\mathcal{G}\subset [m]^n$
are $(r,\eps)$-pseudorandom
with $\nu({\cal F}), \nu({\cal G}) > \mu$
then $\mb{P}(x \in \mc{F}, y \in \mc{G})>c$.
\end{thm}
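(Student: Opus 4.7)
The plan is to express $\mathbb P(x\in\mathcal F,\ y\in\mathcal G)$ as the bilinear form $\langle 1_{\mathcal F},T^{\otimes n}1_{\mathcal G}\rangle_\nu$ and perform a spectral decomposition in which the constant Fourier mode gives the desired positive lower bound $\nu(\mathcal F)\nu(\mathcal G)\ge\mu^2$, high-frequency modes are absorbed using the spectral gap, and middle-frequency modes are controlled using the pseudorandomness of $\mathcal F$ and $\mathcal G$. This is essentially the route taken by Mossel in \cite[Theorem~4.4]{Mossel}.

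Let $\phi_1\equiv 1,\phi_2,\ldots,\phi_m$ be an orthonormal eigenbasis of $T$ acting on $L^2$ under its base stationary measure, with real eigenvalues $1=\lambda_1,\lambda_2,\ldots,\lambda_m$ satisfying $|\lambda_j|\le 1-\lambda_*$ for $j\ge 2$. Then $T^{\otimes n}$ is diagonalised by the product basis $\phi_\sigma(x)=\prod_i\phi_{\sigma_i}(x_i)$ with eigenvalues $\lambda_\sigma=\prod_i\lambda_{\sigma_i}$, and writing $f=1_{\mathcal F}$, $g=1_{\mathcal G}$ with Fourier coefficients $\hat f(\sigma)=\langle f,\phi_\sigma\rangle_\nu$ gives
\[
\mathbb P(x\in\mathcal F,\ y\in\mathcal G)=\sum_\sigma\lambda_\sigma\hat f(\sigma)\hat g(\sigma).
\]
Define the weight $|\sigma|:=|\{i:\sigma_i\ne 1\}|$. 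The constant mode $\sigma\equiv 1$ contributes $\nu(\mathcal F)\nu(\mathcal G)\ge\mu^2$. Since $|\lambda_\sigma|\le(1-\lambda_*)^{|\sigma|}$, Cauchy--Schwarz bounds the contribution from $|\sigma|>d$ by $(1-\lambda_*)^{d+1}\|f\|_2\|g\|_2\le(1-\lambda_*)^{d+1}$, so taking $d=d(\mu,\lambda_*)$ large makes this at most $\mu^2/3$. It remains to bound the middle-weight sum $\sum_{1\le|\sigma|\le d}\lambda_\sigma\hat f(\sigma)\hat g(\sigma)$ by $\mu^2/3$.

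For the middle-weight part I would use the Efron--Stein decomposition $f=\sum_{S\subseteq[n]}f_S$, where $f_S$ is the span of $\{\phi_\sigma:\mathrm{supp}(\sigma)=S\}$. For any $|S|\le r$, the identity $\mathbb E[f\mid x_S]-\mathbb E f=\sum_{\emptyset\ne T\subseteq S}f_T$ combined with $(r,\eps)$-pseudorandomness gives $\|f_T\|_2\le\eps$ for every $1\le|T|\le r$; choosing $r=d$ yields this for all $1\le|T|\le d$, and analogously for $g$. Cauchy--Schwarz then bounds the middle-weight sum by $\sum_{1\le|S|\le d}\|f_S\|_2\|g_S\|_2$. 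The main obstacle is that turning the pointwise bound $\|f_S\|_2\le\eps$ into an $n$-\emph{independent} estimate on this sum is delicate: the naive bound loses a factor growing like $n^{d/2}$. This is precisely where Mossel's argument invokes hypercontractivity of the tensor chain $T^{\otimes n}$---the spectral gap of $T$ yields a Bonami--Beckner type inequality with parameters depending only on $\lambda_*$---which when interpolated against the Parseval bound $\sum_S\|f_S\|_2^2\le 1$ produces an $n$-independent estimate of the form $O(C_{d,\lambda_*,m}\,\eps^\alpha)$ for some $\alpha=\alpha(\lambda_*)>0$. Taking $\eps=\eps(\mu,\lambda_*,m)$ small enough drives this below $\mu^2/3$, and summing the three contributions gives $\mathbb P(x\in\mathcal F,\ y\in\mathcal G)\ge\mu^2/3$, so we may take $c=\mu^2/3$.
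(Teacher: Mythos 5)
Your plan has a genuine gap at the middle-frequency step, and the target constant $c=\mu^2/3$ is in fact unattainable in general. You are implicitly proving the stronger statement $\mathbb{P}(x\in\mathcal{F},y\in\mathcal{G})\geq\nu(\mathcal{F})\nu(\mathcal{G})-o_\eps(1)$, and this is false: pseudorandomness controls each $\|f_S\|_2$ for $1\leq|S|\leq r$ individually, but it does \emph{not} control the total intermediate-degree Fourier mass $\sum_{1\leq|S|\leq d}\|f_S\|_2^2$, which can be a constant fraction of $\operatorname{Var}(f)$ no matter how small $\eps$ is (once $n$ is large). Concretely, take $m=2$, let $T$ be the chain on $\{-1,1\}$ that flips with probability $(1-\rho)/2$ for some $\rho\in(-1,0)$, so $\lambda_*=1-|\rho|>0$, and let $\mathcal{F}=\mathcal{G}$ be Majority on $n$ variables. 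Fixing any $r$ coordinates shifts the measure by $O(r/\sqrt{n})$, so Majority is $(r,\eps)$-pseudorandom for any fixed $r,\eps$ once $n$ is large; yet by Sheppard's formula $\mathbb{P}(x\in\mathcal{F},y\in\mathcal{F})=\tfrac14+\tfrac{1}{2\pi}\arcsin\rho$, which tends to $0$ as $\rho\to-1$ and in particular drops below $\mu^2/3\approx 1/12$ already for $\rho\approx -0.99$. So the sum $\sum_{1\le|\sigma|\le d}\lambda_\sigma\widehat f(\sigma)\widehat g(\sigma)$ is a genuine $\Theta(1)$-sized (possibly negative) term, not something that shrinks with $\eps$.

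This is also not the route taken by Mossel. Mossel's Theorem 4.4 does not compare the bilinear form with $\mathbb{E}f\cdot\mathbb{E}g$; instead---as in the paper's derivation---the invariance principle is used to show that $\langle f,T^{\otimes n}g\rangle$ is close (with error controlled by $\eps$ and truncation) to a \emph{Gaussian} bilinear form $\mathbb{E}[\tilde f(\bm z)\tilde g(\bm z')]$ in which the correlation structure of the chain is faithfully reproduced; the intermediate-degree mass is not killed, it is carried over to the Gaussian model. Hypercontractivity enters in Mossel's argument to drive the Lindeberg-type moment estimates of the invariance principle, not to make the middle sum small. The positive lower bound is then a separate step: reverse hypercontractivity for $\tilde f,\tilde g$ in Gaussian space, which yields some $c(\lambda_*,\mu)>0$ of roughly the shape $\mu^{O(1/\lambda_*)}$, much smaller than $\mu^2/3$ when $\lambda_*$ is small. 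Your handling of the constant mode and the high-degree tail is fine; what is missing is the Gaussian transfer and the reverse-hypercontractivity lower bound for the middle part.
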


For convenience of the reader, we outline below the (standard) derivation of Theorem~\ref{thm:MOSMarkov} from
existing results in the literature.
\paragraph{Deriving Theorem~\ref{thm:MOSMarkov} from~\cite[Theorem 4.4]{Mossel}.}
Let $B = \{b_1,\dots,b_m\}$ be an orthonormal basis
for $L^2(S,\nu)$ consisting of eigenvectors of $T$.
We take $b_1$ to be the trivial eigenvector, i.e. $b_1(s)=1$ for all $s \in S$, which
has eigenvalue $1$. We remark that by the spectral gap of $T$, it follows that the eigenvalue of
each $b_j$ for $j\neq 1$ is at most $1-\lambda_{*}$.
We will view each $b_i$ as a random variable on $(S,\nu)$,
and in this language we have that $\mb{E} b_i b_j = 1_{i = j}$.
Using the basis $B$, we may find a basis for $L^2(S^n,\nu^{\otimes n})$ by
tensorizing. Namely, for each $i\in [n]$ we take an independent copy of $B$,
say $b^i=(b^i_j: j \in [m])$, and then our basis is $\bm{b} = (\bm{b}_{j_1,\ldots,j_n})_{j_1,\ldots,j_n\in[m]}$,
where $\bm{b}_{j_1,\ldots,j_n} = \prod\limits_{i=1}^{n} b^i_{j_i}$, denoted by.
We can thus represent any function on $\OO$
as a multilinear polynomial
$P(\bm{b}) = \sum_\aA c_\aA b^\aA$
where $\aA$ ranges over $[m]^n$ and $b^\aA := \prod_i b^i_{\aA_i}$.

This above view allows us to extend the definition of $P$ to $\mathbb{R}^{mn}$. A technical point to note, however, is that even if
our original function $P$ was bounded on $[m]^n$ (in our case, it is even be Boolean valued), the extension to
$\mathbb{R}^{mn}$ may not be bounded. For this reason, one first applies a small noise on the function $P$, i.e.
considers $Q(x) = T_{1-\eta} P(x) = \Expect{x'\sim_{1-\eta} x}{P(x')}$ where for each $i\in[n]$ independently, $x'_i = x_i$ with probability
$1-\eta$ and otherwise $x'_i$ is resampled according to $\nu$ ($\eta > 0$ is to be thought of as a small constant,
much smaller than the spectral gap $\lambda_{*}$ of $T$), and then truncates it. Namely, consider the multi-linear extension of $Q$,
$Q(b)$ as defined above, and let $\tilde{P}(b) = Q(b)$ if $0\leq Q(b)\leq 1$,
$\tilde{P}(b) = 1$ if $Q(b) > 1$, and otherwise $\tilde{P}(b) = 0$.

Let $x$ and $y$ be sampled as consecutive random states of the stationary chain for $T^{\otimes n}$, and
let $f(\bm{b}(x)) = 1_{x\in\mc{F}}$, $g(\bm{b}(y)) = 1_{y\in\mc{G}}$. Our goal is thus to prove a lower bound on $\Expect{x,y}{f(\bm{b}(x))g(\bm{b}(y))}$. The invariance
principles of~\cite{MOO,MosselGeneral,Mossel} allows one to establish non-trivial lower bounds on this quantity by considering its ``analog in Gaussian space'',
provided that $f,g$ are sufficiently random-like.

To be more precise, let us first consider $\bm{b}_{j_1,\ldots,j_n}(x)$ and $\bm{b}_{j'_1,\ldots,j'_n}(y)$
where $x$ and $y$ are sampled as consecutive random states of the stationary chain for $T^{\otimes n}$.
Thus $\mb{E} b^i_{j_i}(x) b^{i'}_{j_{i'}'}(y)$ is zero unless $i=i'$ and $j_i=j_{i'}'$, and then it is equal to
the eigenvalue $\lambda_{j_i}$ such that $T b_{j_i} = \lambda_j b_{j_i}$. We now wish to define the Gaussian analog of
$\bm{b}_{j_1,\ldots,j_n}(x)$ and $\bm{b}_{j_1,\ldots,j_n}(y)$. Let $Z = \{z_1,\dots,z_m,z_1',\ldots,z_m'\}$
be Gaussian variables
with the same covariance matrix. Namely, we take $z_1= z_1' = 1$,
and $z_2,\ldots,z_m$ and $z_2',\ldots,z_m'$ are jointedly distributed standard Gaussian
random variables such that $z_2,\ldots,z_m$ are independent, $z_2',\ldots,z_m'$ are independent,
and $\Expect{}{z_j z'_{j'}} = \Expect{}{b_j(x) b_{j'}(y)} = \lambda_j 1_{j = j'}$.
We take $n$ independent copies of $Z$, $Z^{i}  = \{{z^i}_1,\dots,{z^i}_m,{z^i}_1',\ldots,{z^i}_m'\}$,
and then define $\bm{z}_{j_1,\ldots,j_n} = \prod\limits_{i=1}^{n} {\bm{z}^i}_{j_i}$ and
$\bm{z}'_{j_1,\ldots,j_n} = \prod\limits_{i=1}^{n} {\bm{z}^i}_{j_i}'$.
The random variables $\bm{z}_{j_1,\ldots,j_n}$, $\bm{z}'_{j_1,\ldots,j_n}$ are to be thought of as
the Gaussian analogs of $\bm{b}_{j_1,\ldots,j_n}(x)$ and $\bm{b}_{j_1,\ldots,j_n}(y)$.

Building on~\cite{MOO}, Mossel~\cite{MosselGeneral} showed that
for $f,g\colon [m]^n\to [0,1]$ with ``small enough influences'',\footnote{
We omit the definition of `influences' for now,
as we do not need it here, but it will reappear later
in a more general context when we discuss our theory
of global hypercontractivity.} one has
$\Expect{}{f(\bm{b}(x))\cdot g(\bm{b}(y))}$ is very close
$\Expect{}{\tilde{f}(\bm{z})\tilde{g}(\bm{z}')}$.
The arguments in \cite{Mossel}
establish the same statement with the more relaxed condition that
$f$ and $g$ are $(r,\eps)$-pseudorandom
(the term `resilient' is used therein). More precisely, Mossel showed that
for all $\delta>0$, there are $r\in\mathbb{N}$ and $\eps>0$ (also depending on $m$ and the spectral gap $\lambda_{*}$, which are thought of as constants), such that
$\card{\Expect{}{f(\bm{b}(x))\cdot g(\bm{b}(y))}-\Expect{}{\tilde{f}(\bm{z})\tilde{g}(\bm{z}')}}\leq \delta$.

For $\tilde{f}$, the fact that $f$ has averages
at least $\mu$ implies, by the invariance principle (i.e. the above with $g = 1$), that
$\tilde{f}$ has average at least $\mu/2$; similarly the average of $\tilde{g}$ is at least $\mu/2$.
Thus, $\Expect{}{\tilde{f}(\bm{z}) \tilde{g}(\bm{z}')}>c(\lambda_*, \mu)>0$ by
reverse hypercontractivity (see \cite[Theorem A.78]{Hazla} for example), and
as this is close to $\Expect{}{f(\bm{b}(x))\cdot g(\bm{b}(y))}$, we get that
$\Expect{}{f(\bm{b}(x))\cdot g(\bm{b}(y))}\geq c/2$, establishing Theorem~\ref{thm:MOSMarkov}.

\skipi

The following result is an immediate consequence
of Theorem \ref{thm:MOSMarkov}, applied
with the Markov chain $T$ on $[m]$
which at each step moves to a uniformly random
state different from the current state
(note that $\lL_*>0$ when $m \ge 3$,
but this fails for $m=2$).

\begin{thm}\label{thm:MOS}
For every $m\geq 3$ and $\mu>0$ there are
$\eps,c>0$ and $r \in \mb{N}$ such that
if $\mathcal{F},\mathcal{G}\subset [m]^n$
are $(r,\eps)$-pseudorandom
with $\mu({\cal F}), \mu({\cal G}) > \mu$
and $(x,y)$ is a uniformly random pair
in $[m]^n \times [m]^n$ with ${\sf agr}(x,y) = 0$
then $\mb{P}(x \in \mc{F}, y \in \mc{G})>c$;
in particular, ${\sf agr}(x,y) = 0$
for some $(x,y) \in {\cal F} \times {\cal G}$.
\end{thm}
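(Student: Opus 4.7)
The plan is to derive Theorem~\ref{thm:MOS} as a direct specialization of Theorem~\ref{thm:MOSMarkov} applied to the Markov chain $T$ on $[m]$ that, from state $a$, moves uniformly to a state in $[m]\setminus\{a\}$. Concretely, $T_{ab} = \frac{1}{m-1}$ if $a\neq b$ and $T_{aa}=0$. So the main work is to verify that this chain fits the hypotheses of Theorem~\ref{thm:MOSMarkov} and that its stationary two-step law matches the uniform distribution on pairs $(x,y)\in[m]^n\times[m]^n$ with ${\sf agr}(x,y)=0$.

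First I would verify the structural properties of $T$. Since $T$ is doubly stochastic, the uniform distribution $\nu$ on $[m]$ is stationary, and reversibility is immediate as $\nu_a T_{ab} = \frac{1}{m(m-1)} = \nu_b T_{ba}$ for $a\neq b$. The chain is irreducible (any two states communicate in one step when $m\geq 2$). For the spectral gap, write $T = \frac{1}{m-1}(J-I)$, where $J$ is the all-ones matrix. Then $T$ has eigenvalue $1$ on the constant functions and eigenvalue $-\frac{1}{m-1}$ on the orthogonal complement, so the absolute spectral gap is $\lambda_* = 1 - \frac{1}{m-1}$, which is strictly positive exactly when $m\geq 3$. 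This is precisely the place where the $m=2$ hypothesis would fail: the two-state chain has period $2$, yielding eigenvalue $-1$ and no spectral gap.

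Next I would identify the stationary distribution of $T^{\otimes n}$, which is the uniform measure $\mu$ on $[m]^n$ by tensorization, and compute the joint law of consecutive states $(x,y)$ of the stationary chain: this is
\[\mathbb{P}(x,y) \;=\; \mu(x)\prod_{i=1}^n T_{x_i y_i} \;=\; \frac{1}{m^n(m-1)^n}\]
when $y_i\neq x_i$ for all $i\in[n]$, and zero otherwise. Since there are exactly $m^n(m-1)^n$ pairs with ${\sf agr}(x,y)=0$, this is exactly the uniform distribution on such pairs, which is the distribution appearing in Theorem~\ref{thm:MOS}.

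Finally, I would invoke Theorem~\ref{thm:MOSMarkov} with this chain and with the given parameter $\mu>0$, obtaining constants $\eps, c>0$ and $r\in\mathbb{N}$ (depending on $m$ and $\mu$) such that whenever $\mathcal{F},\mathcal{G}\sub[m]^n$ are $(r,\eps)$-pseudorandom with $\mu(\mathcal{F}),\mu(\mathcal{G})>\mu$, the stationary-chain pair satisfies $\mathbb{P}(x\in\mathcal{F},\,y\in\mathcal{G})>c$. By the preceding paragraph this is the probability under the uniform distribution on zero-agreement pairs, which yields the quantitative statement; in particular the existence of a pair $(x,y)\in\mathcal{F}\times\mathcal{G}$ with ${\sf agr}(x,y)=0$ follows since the probability is positive. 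The only genuine content here is choosing the right chain and computing its spectral data; there is no real obstacle since Theorem~\ref{thm:MOSMarkov} does the heavy lifting, and the $m\geq 3$ hypothesis is exactly what gives the positive spectral gap needed to apply it.
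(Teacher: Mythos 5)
Your proposal is exactly the route the paper takes: it applies Theorem~\ref{thm:MOSMarkov} to the chain that moves uniformly to a different state, noting that $\lambda_* = 1 - \tfrac{1}{m-1} > 0$ precisely when $m\geq 3$. The paper states this in a single sentence; your computation of the spectral gap via $T = \frac{1}{m-1}(J-I)$ and the verification that the stationary two-step law is uniform on zero-agreement pairs are correct and merely flesh out what the paper leaves implicit.
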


\subsection{Approximation by junta}
We conclude this section by proving
Theorem \ref{thm:junta_approx_small_m}.

\begin{proof}[Proof of
Theorem \ref{thm:junta_approx_small_m}]
Let $t,m\in\mathbb{N}$ with $m \ge 3$ and $\eta>0$, fix
$0 \ll n_0^{-1} \ll D^{-1} \ll r^{-1},\eps \ll \eta, t^{-1}, m^{-1}$
and suppose $\mathcal{F} \sub [m]^n$ is $(t-1)$-avoiding.
By Lemma \ref{lem:regularity_small_m} we find
$T\subset[n]$ with $|T| \le D$ such that
\[ \Prob{{\bf a}\in[m]^T}{\mathcal{F}_{T\ra {\bf a}}
  \text{ is not $(r,\eps)$-pseudorandom}} \leq \eta/2.\]
We will show that the required conclusions
of the theorem hold for the junta
$\mathcal{J} = \sett{x\in[m]^n}{x_T\in J}$, where
\[ J = \sett{\alpha\in[m]^T}{\mathcal{F}_{T\ra \alpha}
\text{ is $(r,\eps/2)$-pseudorandom and }
\mu(\mathcal{F}_{T\ra \alpha})\geq \eta/2},\]
i.e.\ that ${\cal J}$ is $t$-intersecting
(equivalently,  $J$ is $t$-intersecting) and
${\cal F}$ is approximately contained in ${\cal J}$.

To see that $J$ is $t$-intersecting,
suppose for contradiction we have $\aA_1,\aA_2\in J$
with ${\sf agr}(\aA_1,\aA_2)=t-1-s$ with $s \ge 0$.
Fix $S\subset[n]\sm T$ of size $s$
and $x\in[m]^S$ arbitrarily, and consider the families
\[ \mathcal{G}_i = \sett{w\in [m]^{[n]\sm (T\cup S)}}
{(\aA_i,x,w)\in \mathcal{F}} \] for $i=1,2$.
By definition of $J$ both $\mu(\mathcal{G}_i)\geq
\mu(\mathcal{F}_{\aA_i}) - \eps/2\geq \eta/3$
and  $\mathcal{G}_i$ is $(r-t,\eps)$-pseudorandom.
By Theorem \ref{thm:MOS} we find
$(w_1,w_2) \in {\cal G}_1 \times {\cal G}_2$
with ${\sf agr}(w_1,w_2)=0$.
However, this gives $(\aA_i,x,w_i)$ for $i=1,2$
in ${\cal F}$
with agreement $t-1$, which is a contradiction.

It remains to bound $\mu(\mathcal{F}\sm\mathcal{J})
= \sum_{\alpha\not\in J}
m^{-\card{T}} \mu(\mathcal{F}_{T\ra \alpha})$.
We partition $[m]^T \sm J$ into $(B_1,B_2)$
where $B_1$ contains those $\aA \in [m]^T \sm J$
with $\mu(\mathcal{F}_{T\ra \alpha}) < \eta/2$, and $B_2 = [m]^T\sm (B_1\cup J)$.
Clearly the contibution to the sum from $\aA \in B_1$
is at most $\eta/2$. For $\aA \in B_2$, we note that
$\mathcal{F}_{T\ra \alpha}$
is not $(r,\eps/2)$-pseudorandom
by definition of $J$, so $\sum_{\aA \in B_2}
m^{-\card{T}} \mu(\mathcal{F}_{T\ra \alpha})
\le \mu(B_2) < \eta/2$ by choice of $T$.
Thus $\mu(\mathcal{F}\sm\mathcal{J}) < \eta$.
\end{proof}

\section{Compression, stability and bootstrapping}

In this section we prove the anticode stability theorem
for fixed $m$, i.e.\ Theorem \ref{thm:stability'},
and the bootstrapping result Theorem \ref{thm:boot}.
Both rely on a compression procedure,
introduced by Ahlswede and Khachatrian \cite{AKcodes},
which modifies any code in such a way to use some
symbol (say 1) `as much as possible',
while maintaining its size and not reducing
its minimum intersection size.

In the first subsection we will formally define
compression and prove some of its well-known properties.
In the second subsection we prove the stability result
for compressed codes, by reducing it to
the corresponding stability result
for $t$-intersecting families in
the biased hypercube obtained by
Ellis, Keller and Lifshitz \cite{EKLstability}.
We deduce Theorem \ref{thm:stability'} in the third subsection,
via a decompression argument, in which we reverse
the compressions while keeping control of
structure via a local stability argument.
In the final subsection we prove Theorem \ref{thm:boot},
by using compressions to reformulate the problem in terms
of cross-intersecting families in the biased hypercube.

\subsection{Compression}\label{sec:compression}

For any $i\in[n]$ and $j\in[m]$
we define the compression operator
$T_{i,j}\colon [m]^n \to [m]^n$
that replaces $j$ by $1$
in coordinate $i$ if possible,
i.e.\ for $x \in [m]^n$
we let $T_{i,j}(x) = y \in [m]^n$
where $y_r = x_r$ for all $r\neq i$,
and $y_i=x_i$ if $x_i\neq j$
or $y_i=1$ if $x_i = j$.
We also define a compression operator,
also denoted $T_{i,j}$, on codes,
that replaces any vector $x$ by $T_{i,j}(x)$
unless the latter is already present, i.e.\
\[ T_{i,j}(\mathcal{F})
= \sett{x}{ T_{i,j}(x) \in \mathcal{F}}
\cup \sett{T_{i,j}(x)}{ x\in\mathcal{F}}. \]
We also define
$T_{i} = T_{i,2}\circ T_{i,3}\circ\ldots\circ T_{i,m}$
for any $i\in n$,
and $T = T_1\circ T_2\circ\ldots\circ T_n$.
One can think of $T$ as trying to set as many
coordinates as possible equal to $1$.
We call $\mathcal{F}\subset[m]^n$ compressed
if $T(\mathcal{F}) = \mathcal{F}$.

We need the following well-known facts
about these compression operators.

\begin{fact}\label{fact:compression} $ $

Let $\mathcal{F},\mathcal{G}\subset[m]^n$,
$i\in[n]$ and $j\in[m]$.
\begin{enumerate}
  \item We have $\mu(T(\mathcal{F})) = \mu(T_{i,j}(\mathcal{F})) = \mu(\mathcal{F})$.

  \item If $\mathcal{F},\mathcal{G}$ are cross $t$-intersecting
then so are $T_{i,j}(\mathcal{F})$ and $T_{i,j}(\mathcal{G})$,
and so are $T(\mathcal{F})$ and $T(\mathcal{G})$.
Furthermore, any $x\in T(\mathcal{F}),y\in T(\mathcal{G})$
have at least $t$ common coordinates equal to $1$.
\end{enumerate}

\end{fact}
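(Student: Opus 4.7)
My plan is to prove part~1 by showing that each single compression $T_{i,j}$ preserves cardinality, then iterate along the definitions of $T_i$ and $T$. Partition $[m]^n$ into singletons $\{z\}$ with $z_i \notin \{1,j\}$ (on which $T_{i,j}$ acts trivially) and pairs $\{y,y'\}$ agreeing off coordinate~$i$ with $y_i = 1$ and $y'_i = j$; on each such pair, $T_{i,j}$ has the standard swap effect, replacing $\{y'\}$ by $\{y\}$ when $y' \in \mathcal{F}$ and $y \notin \mathcal{F}$, and acting as the identity in the three remaining configurations. A four-way case check gives $|T_{i,j}(\mathcal{F}) \cap \{y,y'\}| = |\mathcal{F} \cap \{y,y'\}|$, and summing over pairs yields $\mu(T_{i,j}(\mathcal{F})) = \mu(\mathcal{F})$; composing gives $\mu(T(\mathcal{F})) = \mu(\mathcal{F})$.

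For the cross $t$-intersection claim in part~2, I would first establish it for a single step $T_{i,j}$ and then iterate. Assuming for contradiction that ${\sf agr}(x,y) < t$ for some $x \in T_{i,j}(\mathcal{F})$ and $y \in T_{i,j}(\mathcal{G})$, the strategy is to exhibit pre-images $x^* \in \mathcal{F}$ and $y^* \in \mathcal{G}$ with ${\sf agr}(x^*,y^*) \le {\sf agr}(x,y) < t$, contradicting cross $t$-intersection of the original families. Each of $x,y$ either already lies in its original family or is the compression image of a unique pre-image (obtained by changing the $i$-th coordinate from $1$ back to $j$); the argument is a short case analysis over $(x_i,y_i)$ and over which of $x,y$ was genuinely introduced by the compression, with coordinate~$i$ the only coordinate whose agreement status can differ between $(x,y)$ and $(x^*,y^*)$. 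For the furthermore clause, the key observation is that $T(\mathcal{F})$ and $T(\mathcal{G})$ are compressed, so for every $c \ne 1$ the operator $T_{i,c}$ fixes them; in particular, whenever $x \in T(\mathcal{F})$ has $x_i = c \ne 1$, the vector obtained by promoting $x_i$ to $1$ also lies in $T(\mathcal{F})$. Given $x \in T(\mathcal{F})$ and $y \in T(\mathcal{G})$, I would iterate this promotion over all coordinates where $x$ and $y$ agree on some symbol $c \ne 1$, reaching an $x^* \in T(\mathcal{F})$ whose agreement with $y$ is exactly the set of coordinates where $x$ and $y$ originally shared the symbol $1$; cross $t$-intersection of $T(\mathcal{F})$ and $T(\mathcal{G})$ then forces this set to have size at least $t$.

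The main point to get right is the case analysis in the single-step preservation of cross $t$-intersection. The subtle subcase is when both $x$ and $y$ are compression images, so both have $i$-th coordinate $1$ and both pre-images have $i$-th coordinate $j$: one checks that the pre-image pair agrees at coordinate~$i$ (both $j$) exactly when the compressed pair does (both $1$), while agreements at other coordinates are unchanged, so ${\sf agr}(x^*,y^*) = {\sf agr}(x,y)$. In the remaining subcases at least one of the two pre-images coincides with the corresponding original vector, and a simpler comparison at coordinate~$i$ again gives ${\sf agr}(x^*,y^*) \le {\sf agr}(x,y)$, completing the contradiction.
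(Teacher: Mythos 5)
Your Part~1 argument (pairing up $(1,z)$ with $(j,z)$ and observing that $T_{i,j}$ preserves the count on each pair) is correct and matches the paper's proof, and your argument for the \emph{furthermore} clause is also essentially the paper's (both of you use, without proof, the standard fact that $T(\mathcal{F})$ is stable under every $T_{i,c}$). The gap is in your treatment of the single-step preservation of cross $t$-intersection.

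You claim that if exactly one of $x,y$ is genuinely introduced by the compression, then the natural pre-image pair $(x^*,y^*)$ satisfies ${\sf agr}(x^*,y^*)\le{\sf agr}(x,y)$ by ``a simpler comparison at coordinate $i$.'' This is false in one subcase. Suppose $x\in T_{i,j}(\mathcal{F})$ is a genuine compression image, so $x_i=1$, $x\notin\mathcal{F}$, and $x^*$ is $x$ with coordinate $i$ changed to $j$, while $y\in T_{i,j}(\mathcal{G})$ already lies in $\mathcal{G}$ (so $y^*=y$), and it happens that $y_i=j$. Then $x$ and $y$ disagree at $i$ ($1\ne j$), but $x^*$ and $y^*$ agree at $i$ (both $j$), and they agree off $i$ exactly where $x,y$ do; hence ${\sf agr}(x^*,y^*)={\sf agr}(x,y)+1$. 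If ${\sf agr}(x,y)=t-1$ this equals $t$ and yields no contradiction, so your induction step breaks precisely here.

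The missing ingredient is the observation that a vector $y$ with $y_i=j$ can lie in $T_{i,j}(\mathcal{G})$ only if it was \emph{not} replaced, which forces $T_{i,j}(y)\in\mathcal{G}$ as well (i.e.\ the copy of $y$ with coordinate $i$ set to $1$ is also in $\mathcal{G}$). Choosing that vector, rather than $y$ itself, as the witness in $\mathcal{G}$ restores the bound: $x^*$ and $T_{i,j}(y)$ disagree at $i$ (values $j$ and $1$), giving ${\sf agr}(x^*,T_{i,j}(y))={\sf agr}(x,y)<t$. The paper's proof is organized exactly to surface this: it first shows that the non-compressed member of the pair must have $i$-th coordinate $j$ (your problematic subcase), and then invokes membership of the promoted copy $(1,y)\in\mathcal{G}$ to complete the contradiction. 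Your proposal needs this extra step; the bare pre-image comparison does not suffice.
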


\begin{proof}
Assume without loss of generality that $i=1$.
To see that $\mu(T_{i,j}(\mathcal{F})) = \mu(\mathcal{F})$,
we consider any $x\in[m]^{n-1}$,
note that vectors $(a,x)$ with $a\in [m]\sm\set{1,j}$
are unaffected by $T_{i,j}$, and that
$T_{i,j}({\cal F})$ and ${\cal F}$ contain
the same number of elements of $\{(1,x),(j,x)\}$.
By iterating we deduce
$\mu(T(\mathcal{F})) = \mu(\mathcal{F})$.

Next, suppose for contradiction that
$\mathcal{F},\mathcal{G}$ are cross $t$-intersecting
but $T_{1,j}(\mathcal{F})$ and $T_{1,j}(\mathcal{G})$ are not.
Then there are $(a,x)\in T_{i,j}(\mathcal{F})$
and $(b,y) \in T_{i,j}(\mathcal{G})$
with ${\sf agr}((a,x),(b,y))<t$.
As $\mathcal{F},\mathcal{G}$ are cross $t$-intersecting
we cannot have both $(a,x) \in {\cal F}$
and $(b,y) \in {\cal G}$, so without loss of generality
$(a,x)=(1,x)$ was obtained from $(j,x) \in {\cal F}$.
We must have $(b,y) \in {\cal G}$, as otherwise
$(b,y)=(1,y)$ was obtained from $(j,y) \in {\cal G}$,
but then $(j,x) \in {\cal F}$ and $(j,y) \in {\cal G}$
with ${\sf agr}((j,x),(j,y)) ={\sf agr}((1,x),(1,y))<t$,
contradiction. As $(j,x) \in {\cal F}$
and $(b,y) \in {\cal G}$ we have
${\sf agr}((j,x),(b,y)) \ge t$, so $b=j$.
As $(b,y) \in T_{i,j}(\mathcal{G})$
we must have $(1,y) \in \mathcal{G}$.
But now ${\sf agr}((j,x),(1,y))<t$
gives a contradiction.
Thus $T_{1,j}(\mathcal{F})$ and $T_{1,j}(\mathcal{G})$
are cross $t$-intersecting.
By iterating, so are $T(\mathcal{F})$ and $T(\mathcal{G})$.

Finally, suppose for contradiction that
$x\in T(\mathcal{F}),y\in T(\mathcal{G})$
have fewer than $t$ common coordinates equal to $1$.
Let $x'$ be obtained from $x$ by setting
$x'_i=1$ if $x_i=y_i \ne 1$ or $x'_i=x_i$ otherwise.
Then $x' \in T(\mathcal{F})$ but $x'$ and $y$
only agree on coordinates $i$ with $x_i=y_i=1$,
which contradicts
$T(\mathcal{F})$ and $T(\mathcal{G})$
being  cross $t$-intersecting.
\end{proof}

Next we will define a transformation
from compressed codes in $[m]^n$ to
monotone\footnote{We call ${\cal A} \sub \{0,1\}^n$
monotone if $y\in \mathcal{G}$ whenever
$x\in\mathcal{G}$ and $x\leq y$ coordinatewise.} 
families in the cube $\{0,1\}^n$ that
preserves minimum (cross) intersection size,
and does not decrease the measure when we
adopt the $p$-biased measure on the cube
with $p=1/m$ (as we will do throughout this section).

\begin{definition}\label{def:compress_reduce}
We define $h\colon [m]\to\power{}$ by
$h(1) = 1$ and $h(a) = 0$ for all $a\neq 1$,
and $h^{\otimes n}\colon [m]^n\to\power{n}$ by
$h^{\otimes n}(x) = (h(x_1),\ldots,h(x_n))$.
For any ${\cal F} \sub [m]^n$ we let
$\wt{\mathcal{F}} = h^{\otimes n}({\cal F}) \sub \{0,1\}^n$.
\end{definition}

\begin{fact}\label{fact:properties_compressed_reduction}
Suppose ${\cal F},{\cal G} \sub [m]^n$ are compressed.
\begin{enumerate}
  \item The family $\wt{\cal F}$ is monotone and $\mu_p(\wt{\mathcal{F}})\geq \mu(\mathcal{F})$.
  \item If ${\cal F}$ is $t$-intersecting then so is $\wt{\cal F}$.
  \item If ${\cal F}, {\cal G}$ are cross $t$-intersecting
then so are $\wt{\cal F}, \wt{\cal G}$.
\end{enumerate}
\end{fact}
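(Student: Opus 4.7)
The plan is to exploit the observation that a compressed family is invariant under each individual $T_{i,j}$, not only under the composite $T$. This is a standard weight-maximization argument: the functional $w(\mathcal{F}) = \sum_{x\in \mathcal{F}}|\{i : x_i = 1\}|$ can only weakly increase under any $T_{i,j}$, so if $T(\mathcal{F})=\mathcal{F}$ but $T_{i,j}(\mathcal{F})\neq \mathcal{F}$ for some $i,j$, then applying further compressions cannot decrease $w$, forcing $w(T(\mathcal{F}))>w(\mathcal{F})$, a contradiction. Since $T_{i,j}$ preserves size, we conclude $T_{i,j}(\mathcal{F})=\mathcal{F}$ for every $i\in [n]$ and $j\in [m]$, and likewise for $\mathcal{G}$.

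For Part 1, to prove monotonicity of $\wt{\mathcal{F}}$, I would start with $\wt{x}\in \wt{\mathcal{F}}$ and $\wt{y}\geq \wt{x}$ coordinatewise, pick a preimage $x\in \mathcal{F}$ with $h^{\otimes n}(x)=\wt{x}$, and walk $x$ to a preimage of $\wt{y}$ by changing coordinates one at a time: for each coordinate $i$ with $\wt{y}_i=1$ but $\wt{x}_i=0$, we have $x_i=j$ for some $j\neq 1$, and $T_{i,j}(x)\in T_{i,j}(\mathcal{F})=\mathcal{F}$, so we may update $x$ to $T_{i,j}(x)$ and proceed. After processing all such coordinates we obtain $y'\in \mathcal{F}$ with $h^{\otimes n}(y')=\wt{y}$, so $\wt{y}\in \wt{\mathcal{F}}$. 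The measure bound $\mu_p(\wt{\mathcal{F}})\geq \mu(\mathcal{F})$ with $p=1/m$ is then a short counting identity: each $\wt{x}\in \wt{\mathcal{F}}$ has exactly $(m-1)^{|\{i : \wt{x}_i=0\}|}$ preimages in $[m]^n$ under $h^{\otimes n}$, hence at most that many preimages in $\mathcal{F}$, and dividing the corresponding count by $m^n$ and regrouping yields precisely the $p$-biased measure of $\wt{\mathcal{F}}$ as an upper bound on $\mu(\mathcal{F})$.

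For Parts 2 and 3, the work has already been done by the ``furthermore'' clause of Fact~\ref{fact:compression}: for any compressed, cross $t$-intersecting $\mathcal{F},\mathcal{G}$ and any $x\in \mathcal{F}$, $y\in \mathcal{G}$, the vectors $x$ and $y$ agree on at least $t$ coordinates all of which equal $1$. These are exactly the positions where both $h^{\otimes n}(x)$ and $h^{\otimes n}(y)$ equal $1$, so $\wt{\mathcal{F}}$ and $\wt{\mathcal{G}}$ are cross $t$-intersecting as subsets of $\{0,1\}^n$. Specializing to $\mathcal{G}=\mathcal{F}$ yields Part 2.

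The only real obstacle is the monotonicity step, which hinges on the initial observation that compressedness in the sense given in the paper (invariance under the composite $T$) automatically implies invariance under each single $T_{i,j}$; without this, the coordinate-by-coordinate lifting of $x$ to a preimage of $\wt{y}$ would not be guaranteed to stay inside $\mathcal{F}$. Everything else is bookkeeping.
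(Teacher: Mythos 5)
Your proof is correct, and it largely parallels the paper's argument for monotonicity and for parts 2 and 3 (the latter are indeed immediate from the ``furthermore'' clause of Fact~\ref{fact:compression}, with part 2 obtained from part 3 by taking $\mathcal{G}=\mathcal{F}$). Two points of genuine difference are worth noting. First, you make explicit --- via the weight functional $w(\mathcal{F})=\sum_{x\in\mathcal{F}}|\{i:x_i=1\}|$, which each $T_{i,j}$ weakly increases and strictly increases whenever $T_{i,j}(\mathcal{F})\neq\mathcal{F}$ --- that a fixed point of the composite $T$ is automatically a fixed point of every individual $T_{i,j}$. The paper silently relies on this when it asserts ``$y\in\mathcal{F}$, as $\mathcal{F}$ is compressed'' in the monotonicity step; spelling it out closes a small expository gap. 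Second, your proof of $\mu_p(\wt{\mathcal{F}})\geq\mu(\mathcal{F})$ is cleaner than the paper's: it is a one-step pushforward argument (each $\wt{x}$ has $(m-1)^{|\{i:\wt{x}_i=0\}|}$ preimages under $h^{\otimes n}$, and $(m-1)^{|\{i:\wt{x}_i=0\}|}/m^n$ is exactly the $\mu_p$-mass of $\wt{x}$ with $p=1/m$), and it does not use compressedness at all --- the inequality holds for any $\mathcal{F}$, since $h^{\otimes n}$ pushes the uniform measure on $[m]^n$ forward to $\mu_p$ on $\{0,1\}^n$ and $\wt{\mathcal{F}}\supseteq h^{\otimes n}(\mathcal{F})$. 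The paper instead runs a coordinate-by-coordinate telescoping comparison of intermediate measures $\nu_r(\mathcal{F}_r)$, invoking compressedness in the $|A_{x,y,r}|=1$ case even though, on inspection, the per-coordinate inequality $\mu_p(B_{x,y,r})\geq|A_{x,y,r}|/m$ holds unconditionally; your route sidesteps that entirely.
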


\begin{proof}
The intersection statements are immediate
from the final part of Fact~\ref{fact:compression}.
For monotonicity, consider any $\wt{x}\in\wt{\cal F}$
and $\wt{y}\geq \wt{x}$. Fix  $x\in\mathcal{F}$
with $h^{\otimes n}(x)=\wt{x}$, i.e.\
$x_i = 1$ if and only if $\wt{x}_i = 1$.
Define $y \in [m]^n$ by $y_i=1$
if $\wt{y}_i = 1\neq \wt{x}_i$
or $y_i=x_i$ otherwise.
Then $y \in {\cal F}$,
as $\mathcal{F}$ is compressed,
and $h^{\otimes n}(y)=\wt{y}$,
so $\wt{y}\in\wt{\cal F}$.

To show $\mu_p(\wt{\mathcal{F}})\geq \mu(\mathcal{F})$
we consider intermediate product spaces
$\power{r}\times[m]^{n-r}$ with the measure
$\nu_r=\mu_p^{r}\times \mu$, and intermediate families
$\mathcal{F}_r = (h^{\otimes r}\otimes I^{\otimes n-r})(T(\mathcal{F}))$ for any $r\geq 0$. It suffices to show
$\nu_{r+1}(\mathcal{F}_{r+1})\geq\nu_r(\mathcal{F}_r)$
for any $r\geq 0$. We can write
\[ \nu_{r+1}(\mathcal{F}_{r+1}) - \nu_{r}(\mathcal{F}_r)
= \sum\limits_{x\in\power{r},y\in[m]^{n-r-1}}{\mu_p(x) m^{-(n-r-1)}
 ( \mu_p(B_{x,y,r}) - |A_{x,y,r}|/m )}, \]
where for $0\leq r\leq n$
and $x\in\power{r}$, $y\in[m]^{n-r-1}$ we define
\[
A_{x,y,r} = \sett{a\in [m]}{(x,a,y)\in \mathcal{F}_r},
~~~~~~~~~~~~~~~~~~~~~~~~
B_{x,y,r} = \sett{a\in\power{}}{(x,a,y)\in\mathcal{F}_{r+1}}.
\]
Thus it suffices to show
$\mu_p(B_{x,y,r})\geq \frac{\card{A_{x,y,r}}}{m}$ for all $x,y$.
To see this, suppose first that $|A_{x,y,r}|=1$.
As ${\cal F}$ is compressed we have $A_{x,y,r}=\set{1}$,
so $B_{x,y,r} = h(A_{x,y,r}) = \set{1}$
and $\mu_p(B_{x,y,r}) = p = |A_{x,y,r}|/m$.
Otherwise, if $|A_{x,y,r}| \ge 2$ we have
$B_{x,y,r} = h(A_{x,y,r}) = \set{0,1}$,
so $\mu_p(B_{x,y,r})=1 \ge |A_{x,y,r}|/m$.
\end{proof}

\subsection{Stability when compressed}

In this subsection we prove Theorem~\ref{thm:stability'}
for compressed families, using the
corresponding stability result
for $t$-intersecting families in
the biased hypercube obtained by
Ellis, Keller and Lifshitz \cite{EKLstability},
which we start by stating. Given $n,p,t$, let $\mathcal{S}_{n,p,t}$ denote
a family ${\mathcal S}_{t,r}\{0,1\}^n \subset \{0,1\}^n$
with largest $p$-biased measure, where $r = 0,1, \ldots,t-1$ and
\[
{\cal S}_{t,r}\{0,1\}^n = \{ x\in\power{n} :
| \{ i\in[t+2r] : x_i = 1 \} | \geq t+r \}
\]
The following is implied by
\cite[Theorem 1.10]{EKLstability}.%
\footnote{We state it in a weaker form where we do not specify
the exact dependency between parameters,
as we do not require this.}

\begin{thm}\label{thm:p_biased_stability}
For every $t\in\mathbb{N}$, $\zeta > 0$ and $\eps>0$
there is $\delta>0$ such that
if $\mathcal{F}\subset\power{n}$ is $t$-intersecting,
$\zeta \leq p\leq \frac{1}{2} - \zeta$ and
$\mu_p(\mathcal{F})\geq (1-\dD) \mu(\mathcal{S}_{n,p,t})$ then
$\mu_p(\mathcal{F}\sm \mathcal{S})\leq \eps\mu(\mathcal{S})$
for some copy $\mathcal{S}$
of ${\cal S}_{n,p,t} = {\cal S}_{t,r}\{0,1\}^n$,
where $0 \le r \le t$ if $p \le 1/3$,
and $r=0$ if $p \leq \frac{1}{t+1}-\zeta$.
\end{thm}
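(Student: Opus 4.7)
The plan is to apply the junta method. First I would prove a junta approximation for $t$-intersecting families in the biased hypercube: for every $t\in\mathbb{N}$, $\zeta>0$, and $\eps'>0$, there exist $J\in\mathbb{N}$ and $\dD'>0$ such that any $t$-intersecting $\mathcal{F}\subseteq\{0,1\}^n$ with $\zeta\leq p\leq \tfrac{1}{2}-\zeta$ and $\mu_p(\mathcal{F})\geq(1-\dD')\mu_p(\mathcal{S}_{n,p,t})$ satisfies $\mu_p(\mathcal{F}\triangle\mathcal{J})\leq \eps'$ for some $J$-junta $\mathcal{J}\subseteq\{0,1\}^n$. The standard way to establish this is via a regularity decomposition on $p$-biased coordinates together with biased hypercontractivity (e.g.\ Oleszkiewicz's inequality): decompose $\mathcal{F}$ into pseudorandom restrictions on a bounded coordinate set $T$, and argue that any two pseudorandom restrictions of non-negligible $\mu_p$-measure must in fact share at least $t$ common $1$-coordinates in $T$, else one could find crossing pairs in the non-junta part using a Mossel-style Markov hitting argument for $(\{0,1\}^{n-|T|},\mu_p^{\otimes (n-|T|)})$ that violates $t$-intersection.

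Next I would analyse the junta $\mathcal{J}$ obtained in the previous step. Its measure satisfies $\mu_p(\mathcal{J})\geq \mu_p(\mathcal{F})-\eps'\geq (1-\dD'-\eps')\mu_p(\mathcal{S}_{n,p,t})$. The junta approximation argument sketched above in fact produces a $\mathcal{J}$ that is itself (exactly) $t$-intersecting in its junta coordinates, by the same restriction-and-Markov-chain reasoning. Hence $\mathcal{J}$ corresponds to a $t$-intersecting subfamily of the finite $p$-biased cube $\{0,1\}^T$ with measure at least $(1-\dD'-\eps')\mu_p(\mathcal{S}_{n,p,t})$. Applying the biased Ahlswede--Khachatrian theorem in the $J$-dimensional biased cube, whose extremal families are precisely the balls $\mathcal{S}_{t,r}\{0,1\}^J$, together with an elementary stability statement for that finite problem (which can be obtained by direct enumeration since $J$ is bounded), forces $\mathcal{J}$ to agree, up to a small symmetric difference, with some $\mathcal{S}_{t,r}\{0,1\}^n$ for an appropriate $r$ in the stated range.

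To get the range constraints on $r$, I would use that at $p$-biased measure, $\mu_p(\mathcal{S}_{t,r}\{0,1\}^n)$ depends on $p$ and $r$ via a standard binomial formula, and the family achieving the maximum changes only at the transition points $p=1/(t+r+1)$. Thus when $p\leq \tfrac{1}{t+1}-\zeta$, the unique maximiser is $\mathcal{S}_{t,0}\{0,1\}^n$ (the dictator product), so $r=0$; for $p\leq 1/3$ only finitely many values $0\leq r\leq t$ are possible, which is the range allowed in the statement. Throughout, the quantitative parameters depend only on $\zeta,\eps,t$ because the bound $p\leq \tfrac{1}{2}-\zeta$ keeps the hypercontractive constants bounded and the bound $p\geq\zeta$ keeps $\mu_p(\mathcal{S}_{n,p,t})$ bounded away from $0$.

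The main obstacle is the junta approximation step. In the biased hypercube with $p$ moderate, hypercontractivity becomes degenerate as $p$ approaches $0$ or $1$, so uniform estimates over $[\zeta,\tfrac{1}{2}-\zeta]$ require careful parameter tracking. A secondary difficulty is at transition points $p\approx 1/(t+r+1)$, where several ball families have essentially equal $p$-biased measure, so one must be content with proving closeness to \emph{one of} the optimal balls rather than uniqueness, and must ensure the finite-dimensional stability input is strong enough to select one of them on the junta coordinates.
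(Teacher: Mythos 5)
The paper does not prove this theorem at all: Theorem~\ref{thm:p_biased_stability} is imported verbatim as a consequence of~\cite[Theorem~1.10]{EKLstability}, and it is used as a black box in the proof of Theorem~\ref{thm:stability'} via the compression reduction from $[m]^n$ to the $p$-biased cube. So your attempt is not comparable with ``the paper's proof'' because there is none; you are proposing an independent derivation of a cited result.

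As an independent route, the plan (regularity decomposition into $\mu_p$-pseudorandom pieces, a Markov/correlated-space hitting argument to show the surviving restrictions are $t$-intersecting on the junta coordinates, then Ahlswede--Khachatrian plus a compactness/finite stability argument on the bounded junta cube) is the right \emph{shape} and indeed mirrors what the paper does for codes in Section~3, but there is a technical gap in the Markov chain step that you should not wave away. For the codes setting, the paper uses the reversible chain on $[m]$ that moves to a uniformly random \emph{different} symbol, which has positive spectral gap only because $m\geq 3$; the analogous ``disagreement'' chain on $\{0,1\}$ is deterministic and useless. What you actually need is the ``cross-disjointness'' operator on the biased cube (the relevant condition for $t$-intersection is $x\wedge y=0$ on $[n]\setminus T$, not $\mathrm{agr}(x,y)=0$), and this operator is \emph{not} a reversible Markov chain with stationary measure $\mu_p$: conditioning $y\sim\mu_p$ on $y\wedge x=0$ does not give $\mu_p$ as the marginal of $y$. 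One either has to work with a non-symmetric operator in the Dinur--Friedgut style, or appeal to the general ``correlated spaces'' form of Mossel's Theorem~4.4, which the paper deliberately avoids by specializing to reversible chains. Your sketch as written quietly assumes the $[m]^n$ machinery transfers; filling that in is exactly the technical content of Friedgut's biased junta theorem and of the EKL stability argument, so you should cite or reconstruct one of those rather than treat it as routine. The remaining steps (finite AK on $\{0,1\}^J$, closeness to one of the $\mathcal{S}_{t,r}$ with a continuity/compactness argument near the transition points $p=1/(t+r+1)$) are fine and correctly identify where the range condition on $r$ comes from.
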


Using Theorem~\ref{thm:p_biased_stability} we can prove
a weaker version of Theorem \ref{thm:stability'},
with the additional assumption that ${\cal F}$ is compressed.
This version will be used in the next subsection to prove
Theorem \ref{thm:stability'} as stated.

\begin{claim}\label{claim:stability_compressed}
For every $t\in\mathbb{N}$, $m \ge 3$ and $\eps>0$
there is $\delta>0$ such that if $\mathcal{F} \sub [m]^n$
is compressed and $t$-intersecting with
$\mu(\mathcal{F})\geq (1-\dD)\mu(\mathcal{S}_{n,m,t})$
then $\mu(\mathcal{F}\sm\mathcal{S}) \leq \eps\mu({\cal S})$
for some copy $\mathcal{S}$
of ${\cal S}_{n,m,t} = {\cal S}_{t,r}[m]^n$,
where $0 \le r \le t$, and $r=0$ if $m>t+1$.
\end{claim}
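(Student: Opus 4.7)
The plan is to reduce the claim to the $p$-biased hypercube stability result of Ellis--Keller--Lifshitz (Theorem~\ref{thm:p_biased_stability}) via the compression-induced map $h^{\otimes n}$ of Definition~\ref{def:compress_reduce} with $p = 1/m$. The entire reduction is enabled by the hypothesis that $\mathcal{F}$ is compressed, so that the structural properties of Fact~\ref{fact:properties_compressed_reduction} are available.

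First, I would set $p = 1/m$ and pass to $\wt{\mathcal{F}} = h^{\otimes n}(\mathcal{F}) \sub \{0,1\}^n$. By Fact~\ref{fact:properties_compressed_reduction}, $\wt{\mathcal{F}}$ is monotone, $t$-intersecting, and $\mu_p(\wt{\mathcal{F}}) \geq \mu(\mathcal{F})$. Since each coordinate of a uniform element of $[m]^n$ independently equals $1$ with probability $p = 1/m$, the map $h^{\otimes n}$ pushes the uniform measure on $[m]^n$ forward to $\mu_p$; in particular, $\mu(A) = \mu_p(B)$ whenever $A = (h^{\otimes n})^{-1}(B)$. Because the condition defining ${\cal S}_{t,r}[m]^n$ only cares whether coordinates equal $1$, we obtain $\mu({\cal S}_{t,r}[m]^n) = \mu_p({\cal S}_{t,r}\{0,1\}^n)$ for each $r$, and taking the maximum over $r$ on both sides gives $\mu(\mathcal{S}_{n,m,t}) = \mu_p(\mathcal{S}_{n,p,t})$. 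Hence the hypothesis transfers to $\mu_p(\wt{\mathcal{F}}) \geq (1-\delta) \mu_p(\mathcal{S}_{n,p,t})$.

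Next, I would choose $\zeta > 0$ depending only on $m$ and $t$ so that $\zeta \leq p \leq \tfrac{1}{2} - \zeta$, and additionally $p \leq \tfrac{1}{t+1} - \zeta$ when $m > t+1$ (which is possible since $m > t+1$ gives $p \leq \tfrac{1}{t+2} < \tfrac{1}{t+1}$). Theorem~\ref{thm:p_biased_stability}, applied with this $\zeta$ and the given $\eps$, yields the desired $\delta > 0$ and produces a copy $\wt{\mathcal{S}}$ of ${\cal S}_{t,r}\{0,1\}^n$ with $0 \leq r \leq t$, and $r = 0$ if $m > t+1$, such that $\mu_p(\wt{\mathcal{F}} \sm \wt{\mathcal{S}}) \leq \eps \, \mu_p(\wt{\mathcal{S}})$.

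Finally, I would lift back by setting $\mathcal{S} = (h^{\otimes n})^{-1}(\wt{\mathcal{S}}) \sub [m]^n$, which is the analogous Hamming ball in $[m]^n$ and therefore isomorphic to ${\cal S}_{t,r}[m]^n$, with $\mu(\mathcal{S}) = \mu_p(\wt{\mathcal{S}})$. Using $\mathcal{F} \sub (h^{\otimes n})^{-1}(\wt{\mathcal{F}})$, so that $\mathcal{F} \sm \mathcal{S} \sub (h^{\otimes n})^{-1}(\wt{\mathcal{F}} \sm \wt{\mathcal{S}})$, the measure-preservation of $h^{\otimes n}$ gives
\[
\mu(\mathcal{F} \sm \mathcal{S}) \;\leq\; \mu_p(\wt{\mathcal{F}} \sm \wt{\mathcal{S}}) \;\leq\; \eps\, \mu_p(\wt{\mathcal{S}}) \;=\; \eps\, \mu(\mathcal{S}).
\]
All the substantive work is already encoded in Theorem~\ref{thm:p_biased_stability} and Fact~\ref{fact:properties_compressed_reduction}, so I do not anticipate any significant obstacle; the only delicate bookkeeping is the case split in the choice of $\zeta$, which is routine.
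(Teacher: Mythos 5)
Your proposal is correct and follows essentially the same route as the paper: pass to $\wt{\mathcal{F}}=h^{\otimes n}(\mathcal{F})$ using Fact~\ref{fact:properties_compressed_reduction}, observe $\mu(\mathcal{S}_{n,m,t})=\mu_p(\mathcal{S}_{n,p,t})$ with $p=1/m$, invoke Theorem~\ref{thm:p_biased_stability} with a suitably small $\zeta$, and pull the resulting $\wt{\mathcal{S}}$ back via $(h^{\otimes n})^{-1}$ using the push-forward measure identity. The only cosmetic difference is bookkeeping in the choice of $\zeta$, which the paper takes as $\zeta<\tfrac{1}{t+1}-\tfrac{1}{t+2}$; the substance is identical.
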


\begin{proof}
Suppose $\mathcal{F} \sub [m]^n$
is compressed and $t$-intersecting with
$\mu(\mathcal{F})\geq (1-\dD)\mu(\mathcal{S}_{n,m,t})$,
where $\dD \ll m^{-1}, t^{-1}, \eps$.
We consider $\wt{\mathcal{F}}\subset\power{n}$
given by Definition~\ref{def:compress_reduce}.
By Fact~\ref{fact:properties_compressed_reduction},
$\wt{\mathcal{F}}$ is $t$-intersecting,
and $\mu_p(\wt{\mathcal{F}}) \geq
\mu(\mathcal{F})\geq (1-\delta)\mu(\mathcal{S}_{n,m,t})
=(1-\delta)\mu_p(\mathcal{S}_{n,p,t})$, where $p=1/m$.
By Theorem~\ref{thm:p_biased_stability} we have
$\mu_p(\mathcal{F}\sm \wt{\mathcal{S}})
\leq \eps\mu(\wt{\mathcal{S}})$
for some copy $\wt{\mathcal{S}}$
of ${\cal S}_{n,p,t} = {\cal S}_{t,r}\{0,1\}^n$,
where $0 \le r \le t$ (as $p=1/m \le 1/3$)
and $r=0$ if $m > t+1$
(taking $\zeta < \tfrac{1}{t+1} - \tfrac{1}{t+2}$).

We show that the conclusion of the claim holds for
$\mathcal{S} = \{ x: h^{\otimes n}(x)\in\wt{\mathcal{S}} \}$,
where $h\colon [m]^n\to\power{n}$
is as in Definition~\ref{def:compress_reduce}.
To see this, first note that
$\mathcal{S}$ is a copy of $\mathcal{S}_{n,m,t}$.
Furthermore, if $x\in\mathcal{F}\sm\mathcal{S}$
then $h(x)\in \wt{\mathcal{F}}\sm\wt{\mathcal{S}}$,
and if $x$ is uniformly random in $[m]^n$
then $h^{\otimes n}(x)$ is distributed as $\mu_p$, so
  \[  \Prob{x\in[m]^n}{x\in\mathcal{F}\sm\mathcal{S}}
  \leq \Prob{x\in[m]^n}{h^{\otimes n}(x)
  \in\wt{\mathcal{F}}\sm\wt{\mathcal{S}}}
  =\mu_p(\wt{\mathcal{F}}\sm\wt{\mathcal{S}})
  \leq  \eps\mu_p(\wt{\mathcal{S}})
  =\eps\mu(\mathcal{S}). \qedhere  \]
\end{proof}

\subsection{Decompression and local stability}

In this subsection we prove Theorem \ref{thm:stability'}
in general, deducing it from the compressed case proved
in the previous subsection, and for that we use decompression
and local stability arguments. We start with a proof sketch,
where for simplicity we assume that $m>t+1$,
so that the extremal examples
are cubes of co-dimension $t$.

Suppose $\mathcal{F} \sub [m]^n$ is $t$-intersecting
with size close to the maximum possible.
Let $\mathcal{G} = T(\mathcal{F})$
be the compressed form of ${\cal F}$.
By the previous subsection,
${\cal G}$ is close to a subcube,
say $\mathcal{S} = \sett{x}{x_1=\ldots=x_t = 1}$.

We now decompress: we consider how the family changes
as we undo the compression operators one by one.
First we note that undoing
$T_n,T_{n-1},\dots,T_{t+1}$
does not change the distance from ${\cal S}$,
so $\mathcal{G}_t = T_1\circ \ldots\circ T_{t}(\mathcal{F})$
has the same distance to $\mathcal{S}$ as $\mathcal{G}$.

The main point of the argument is to analyze the effect
of undoing $T_i$ for $i=1,\ldots,t$. For $j \in [m]$
we let $\aA_j$ be the fraction of ${\cal G}_{t-1}$
with prefix $(1^{t-1}, j)$. If there is some $j^{\star}$
with $\alpha_{j^{\star}}$ close to $1$ then ${\cal G}_{t-1}$
is close to a subcube, and we can continue decompressing.
Otherwise, we can partition most of ${\cal G}_{t-1}$
into two non-negligible parts such that the value of $j$
in the prefix $(1^{t-1}, j)$ always differs
between the two parts. However, as ${\cal F}$
is $t$-intersecting, this implies that the two parts
must be cross-intersecting on the coordinates $[n] \sm [t]$;
this will give a contradiction by the following form
of Hoffman's bound (which we will prove
later in a more general form, see Lemma \ref{lem:hoffman}).

\begin{lemma}\label{lem:hoffman1}
Suppose $\mathcal{G}_1,\mathcal{G}_2\subset[m]^n$ are
cross-intersecting with $\mu({\cal G}_i)=\aA_i$ for $i=1,2$.
Then $\aA_1\aA_2 \le (1-\aA_1)(1-\aA_2)/(m-1)^2$.
\end{lemma}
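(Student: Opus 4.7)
My approach is the standard spectral ``Hoffman-style'' argument, realizing the cross-intersecting condition as orthogonality for a natural averaging operator whose spectrum is completely transparent. Specifically, define the disagreement operator $T \colon L^2([m]^n) \to L^2([m]^n)$ by
\[
 (Tf)(x) = \Expect{y:\,{\sf agr}(x,y)=0}{f(y)},
\]
where $y$ is uniformly distributed on the set of vectors that differ from $x$ in every coordinate. The cross-intersecting hypothesis then translates into the single identity $\langle 1_{\mathcal{G}_1},\,T\,1_{\mathcal{G}_2}\rangle = 0$, because $(T\,1_{\mathcal{G}_2})(x)$ is exactly the probability that a uniformly chosen $y$ with ${\sf agr}(x,y)=0$ lies in $\mathcal{G}_2$, which is zero whenever $x \in \mathcal{G}_1$.

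First I would analyse the spectrum of $T$. Since the coordinates of $y$ are independent given $x$, $T$ factors as the $n$-fold tensor power of the single-coordinate operator $T_1 \colon L^2([m]) \to L^2([m])$ given by $(T_1g)(a) = \tfrac{1}{m-1}\sum_{b\neq a}g(b)$. The operator $T_1$ equals $\tfrac{1}{m-1}(J-I)$, so it acts as the identity on constants and as multiplication by $-1/(m-1)$ on the $(m-1)$-dimensional subspace of mean-zero functions. Hence $T$ is self-adjoint, has $1$ as its top eigenvalue (with constants as eigenspace), and all other eigenvalues are of the form $(-1/(m-1))^k$ for $1\le k\le n$; in particular $\|T f\|_2 \le \tfrac{1}{m-1}\|f\|_2$ for every mean-zero $f$.

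Next I would run the standard mean/variance decomposition. Write $1_{\mathcal{G}_i} = \alpha_i + f_i$ with $\mu(f_i) = 0$ and $\|f_i\|_2^2 = \alpha_i(1-\alpha_i)$. Expanding the inner product and using $T(\alpha_2) = \alpha_2$ together with the orthogonality of constants and mean-zero functions, the cross-intersecting condition becomes
\[
 0 \;=\; \langle 1_{\mathcal{G}_1},\,T\,1_{\mathcal{G}_2}\rangle \;=\; \alpha_1\alpha_2 + \langle f_1,\,T f_2\rangle.
\]
Applying Cauchy--Schwarz and the spectral bound on the mean-zero subspace gives
\[
 \alpha_1\alpha_2 \;=\; -\langle f_1,\,T f_2\rangle \;\le\; \|f_1\|_2\,\|Tf_2\|_2 \;\le\; \tfrac{1}{m-1}\sqrt{\alpha_1(1-\alpha_1)\alpha_2(1-\alpha_2)}.
\]
Squaring and cancelling a factor of $\alpha_1\alpha_2$ (the case $\alpha_1\alpha_2 = 0$ being trivial) yields the claimed inequality $\alpha_1\alpha_2 \le (1-\alpha_1)(1-\alpha_2)/(m-1)^2$.

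\textbf{Main obstacle.} There is no real obstacle: the only slightly delicate point is verifying that $-1/(m-1)$ is indeed the extremal non-trivial eigenvalue of $T$ (both positive and negative directions), which is immediate from the tensor structure and the fact that $T_1$ has spectrum $\{1,-1/(m-1)\}$ with the latter having multiplicity $m-1$. Everything else is a routine Cauchy--Schwarz calculation.
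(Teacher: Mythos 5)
Your proof is correct and is in essence the same Hoffman-style spectral argument the paper uses, but implemented more directly. The paper does not prove Lemma~\ref{lem:hoffman1} separately; it derives it as a special case of Lemma~\ref{lem:hoffman}, which handles an arbitrary product measure $\nu$ with $\nu_i(x)\le\lambda$. That proof decomposes the indicator functions via the Efron--Stein orthogonal decomposition $g=\sum_S g^{=S}$, proves a per-coordinate spectral gap bound $1-\lambda_*(U_i)\le\lambda/(1-\lambda)$ for the disagreement chain, and propagates it through the product chain level by level (Claim~\ref{claim:op_contract}), finally applying Cauchy--Schwarz and Parseval across the levels $S\ne\emptyset$. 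Your version short-circuits this by noting that for the uniform measure the disagreement operator $T$ is exactly $\bigl(\tfrac{1}{m-1}(J-I)\bigr)^{\otimes n}$, whose spectrum is $\{(-1/(m-1))^k : 0\le k\le n\}$, so the single bound $\|Tf\|_2\le\tfrac{1}{m-1}\|f\|_2$ on the mean-zero subspace replaces the entire Efron--Stein bookkeeping. The tradeoff is exactly what you'd expect: your argument is cleaner and more self-contained for the uniform case, while the paper's level-by-level version generalizes painlessly to the $b$-balanced product measures it needs in Part II (and is also the form that feeds into Lemma~\ref{lem:hoffhyp}). Both are rigorous; yours would be a perfectly good standalone proof of this lemma as stated.
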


We start with a lemma that applies Lemma \ref{lem:hoffman1}
to implement the idea discussed in the previous paragraph.
Recall that the largest intersecting codes in $[m]^n$
are the `dictators' $D_{i \to j} = \{x: x_i=j\}$.
We show that if $\mathcal{A}, \mathcal{B} \subset [m]^n$
have nearly maximum size, are cross-intersecting and
$T_i(\mathcal{A}), T_i (\mathcal{B}) \subset D_{i\ra 1}$,
then there is some dictator $D_{i \to j}$
that essentially contains ${\cal A}$ and ${\cal B}$.
Recall that $p=1/m$ throughout.

\begin{lemma}\label{lem: monotone compression control}
Let $0<\eps\leq 1/15$ and $m\geq 3$.
Suppose $\mathcal{A}, \mathcal{B} \subset [m]^n$ are cross-intersecting with
$\mu(\mathcal{A}), \mu(\mathcal{B})\geq (1-\eps) p$ and
$T_i(\mathcal{A}), T_i (\mathcal{B}) \subset D_{i\ra 1}$.
Then there is $j\in [m]$ such that
$\mu(\mathcal{A} \cap D_{i\ra j}),
\mu(\mathcal{B} \cap D_{i\ra j}) \geq (1-3\eps)p$.
\end{lemma}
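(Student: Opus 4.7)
My approach starts by translating the compression hypothesis into a statement about fibers. Tracing through the composition $T_i = T_{i,2}\circ\cdots\circ T_{i,m}$, one sees that $T_i(\mathcal{A})\subset D_{i\to 1}$ is equivalent to the fibers $A_j := \{y \in [m]^{[n]\setminus\{i\}} : (j,y)\in\mathcal{A}\}$ being pairwise disjoint in $[m]^{n-1}$. Setting $\alpha_j := \mu_{n-1}(A_j)$, disjointness together with the measure hypothesis gives $\sum_j \alpha_j = m\mu(\mathcal{A}) \in [1-\eps,1]$; analogously define $\beta_j := \mu_{n-1}(B_j)$ with $\sum_j\beta_j \in [1-\eps,1]$. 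The conclusion sought is the existence of a common index $j^*$ with $\alpha_{j^*},\beta_{j^*}\ge 1-3\eps$.

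The cross-intersecting hypothesis implies that for each $j'$, the family $B_{j'}$ is cross-intersecting in $[m]^{n-1}$ with the aggregated family $\bigcup_{j\ne j'} A_j$, of measure $\tilde{\alpha}_{j'} = \sum_{j\ne j'}\alpha_j \in [1-\eps-\alpha_{j'},1-\alpha_{j'}]$. The crucial step is to apply Lemma~\ref{lem:hoffman1} to this \emph{aggregated} pair (rather than pairwise to $(A_j,B_{j'})$), yielding $\tilde{\alpha}_{j'}\beta_{j'} \le (1-\tilde{\alpha}_{j'})(1-\beta_{j'})/(m-1)^2$; rearranging and using the lower bound on $\tilde{\alpha}_{j'}$ gives the pointwise envelope
\[
\beta_{j'} \le f(\alpha_{j'}) := \frac{\eps+\alpha_{j'}}{1 + m(m-2)(1-\eps-\alpha_{j'})}.
\]
Summing over $j'$ and using $\sum_{j'}\beta_{j'}\ge 1-\eps$ forces $\sum_j f(\alpha_j) \ge 1-\eps$.

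Since $f$ is strictly increasing and convex on $[0,1]$, at each fixed value $\alpha^{(1)} = \max_j \alpha_j = 1-a$ the sum $\sum_j f(\alpha_j)$ is maximised (subject to $\sum\alpha_j\le 1$) by placing the remaining mass on a single index, giving the envelope $g(a) = f(1-a)+f(a)+(m-2)f(0)$. A direct Taylor expansion at $(a,\eps)=(0,0)$ yields $g(a) = 1 - C_m a + c_m \eps + O(a^2+\eps^2+a\eps)$ with $C_m = k(k+2)/(k+1)$ for $k = m(m-2)$, so $C_m\ge 15/4$ for $m\ge 3$. The necessary condition $g(a)\ge 1-\eps$ then forces $a \le O_m(\eps)$, and for $m=3$ a direct calculation gives $a \le 22\eps/15 \le 3\eps$; hence $\alpha^{(1)} \ge 1-3\eps$. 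The alternative branch $a$ near $1$ is excluded because $a \ge 1-22\eps/15$ would imply $\alpha^{(1)} \le 22\eps/15 < (1-\eps)/m$, contradicting $\alpha^{(1)} \ge (\sum_j\alpha_j)/m \ge (1-\eps)/m$. A symmetric argument in $\beta$ gives $\beta^{(1)}\ge 1-3\eps$.

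Finally, the maximisers coincide: were $j_A^*\ne j_B^*$, pairwise Lemma~\ref{lem:hoffman1} would give $\alpha_{j_A^*}\beta_{j_B^*} \le (1-\alpha_{j_A^*})(1-\beta_{j_B^*})/(m-1)^2 \le 9\eps^2/(m-1)^2$, contradicting $\alpha_{j_A^*}\beta_{j_B^*} \ge (1-3\eps)^2$ when $\eps\le 1/15$ and $m\ge 3$. Writing $j^*$ for the common index and converting back gives $\mu(\mathcal{A}\cap D_{i\to j^*}),\mu(\mathcal{B}\cap D_{i\to j^*}) = \alpha_{j^*}/m,\beta_{j^*}/m \ge (1-3\eps)p$, as required. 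The main obstacle is identifying the correct Hoffman bound to apply: pairwise Lemma~\ref{lem:hoffman1} on $(A_j,B_{j'})$ is too weak -- being tight at spread configurations such as uniform $\alpha_j = \beta_j = 1/m$ that would violate the conclusion -- whereas applying it to the aggregated pair $(\bigcup_{j\ne j'} A_j,B_{j'})$ yields a strictly convex envelope $f$ that forces concentration onto a single coordinate value.
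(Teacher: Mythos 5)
Your fiber decomposition and the decision to apply Hoffman to an \emph{aggregated} pair are close in spirit to the paper's argument, but the structure differs: the paper picks $\alpha_1$ to be the maximum of \emph{all} the $\alpha_j$'s \emph{and} $\beta_j$'s, applies Lemma~\ref{lem:hoffman1} once to $(\mathcal{A}_1,\mathcal{B}_{\neq 1})$ to obtain the clean quadratic $\alpha_1(1-\alpha_1)\leq\frac{4}{3}\eps$ (hence $\alpha_1\leq 3\eps$ or $\alpha_1\geq 1-3\eps$), and then kills the small-$\alpha_1$ branch by a bisection argument (split $[m]$ into halves $Q_1,Q_2$ balancing the $\alpha_j$ mass and get a Hoffman contradiction at density $\approx 1/2$). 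You instead derive a per-coordinate envelope $\beta_j\leq f(\alpha_j)$, sum, and argue concentration via convexity of $f$. That is a viable and genuinely different route, and the final ``maximisers coincide'' step by pairwise Hoffman is fine.

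There are, however, two real gaps in the middle. First, the envelope $g(a)=f(1-a)+f(a)+(m-2)f(0)$ is the correct maximiser of $\sum_j f(\alpha_j)$ only when the remaining mass $a=1-\alpha^{(1)}$ actually fits under the cap, i.e.\ when $\alpha^{(1)}\geq 1/2$. For $\alpha^{(1)}<1/2$ the configuration $(\alpha^{(1)},a,0,\dots)$ violates $\max=\alpha^{(1)}$, so $g$ does not bound $\sum_j f(\alpha_j)$ there. Your exclusion of ``$a$ near $1$'' via $\alpha^{(1)}\geq(1-\eps)/m$ only reaches $a\leq 1-(1-\eps)/m$, which for $m=3$ is about $0.69$ and leaves the window $\alpha^{(1)}\in\bigl((1-\eps)/m,\,1/2\bigr)$ unaddressed. (It is in fact ruled out --- e.g.\ by the crude bound $\sum_j f(\alpha_j)\leq m\,f(\alpha^{(1)})\leq m\,f(1/2)<1-\eps$ --- but that argument is absent.) Second, a Taylor expansion of $g$ at $(a,\eps)=(0,0)$ cannot by itself certify an inequality over the whole range $\eps\leq 1/15$, and indeed the claimed constant is wrong: for $m=3$, $\eps=1/15$, one computes $g(22\eps/15)=g(0.0978)\approx 0.95>1-\eps\approx 0.933$, so the root of $g(a)=1-\eps$ lies strictly beyond $22\eps/15$ (near $a\approx 0.104$). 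What actually needs to be shown is that $g$ is decreasing on $[0,1/2]$ (true by convexity and symmetry) together with the explicit inequality $g(3\eps)\leq 1-\eps$ for all $\eps\leq 1/15$ and $m\geq 3$ --- this is a concrete, checkable inequality and it does hold, but it has to be verified directly rather than read off from the linearisation.
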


\begin{proof}
Without loss of generality we can assume $i = 1$.
As $T_1({\cal A}) \subset D_{1\ra 1}$ we can write
$T_1({\cal A})$ as the disjoint union over $j \in [m]$
of ${\cal A}_j: =\{(1,z): z \in {\cal A}_{1\ra j}$\}; in particular,
${\cal A}_1,\ldots, {\cal A}_m$ are disjoint. Similarly, we may define $\mathcal{B}_1,\ldots,\mathcal{B}_m$
and have that ${\cal B}_1,\ldots, {\cal B}_m$ are disjoint.
For each $j\in [m]$ let $\alpha_j = \mu(\mathcal{A}_{1 \ra j})$
and $\beta_j = \mu(\mathcal{B}_{1 \ra j})$.
Then $\sum _{j} \alpha _j = p^{-1}\mu(\mathcal{A}) \geq 1-\eps$
and $\sum _j \beta _j = p^{-1}\mu(\mathcal{B}) \geq 1- \eps$.
We need to show that for some $j\in [m]$ we have
$\alpha _j , \beta _j \geq 1- 3\eps$.
	
To see this, suppose without loss of generality
that $\alpha = \alpha _1$ is largest among
$\{\alpha _j\}_{j\in [m]} \cup \{\beta _j\}_{j\in [m]}$.
Let ${\cal B}_{\neq 1} := \bigcup _{j\in [2,m]} {\cal B}_j$
and $\beta _{\neq 1} = \mu(\mathcal{B}_{\neq 1})$.
Then $\beta _{\neq 1} = \sum _{j\neq 1} \beta _j
\ge 1- \eps -\beta _1 \geq 1-\alpha _1 - \eps$.
As ${\cal A}_1$ and ${\cal B}_{\neq 1}$ are cross-intersecting,
by Lemma~\ref{lem:hoffman1} we have
\begin{equation*}
\alpha _1 (1- \alpha _1 - \eps) \leq
\alpha _1 \beta _{\neq 1} \leq
(1-\aA_1)(\bB_1+\eps)/(m-1)^2
\le (1-\aA_1)(\aA_1+\eps)/(m-1)^2.
\end{equation*}
Rearranging gives
$\big ((m-1)^2 - 1 \big )\alpha _1 (1-\alpha _1)
\leq \big ( (m-1)^2\alpha _1 + (1-\alpha _1) \big ) \eps
\leq (m-1)^2 \eps$. Thus
$\alpha _1(1-\alpha _1) \leq 4 \eps/3$,
so either $\alpha _1 \leq 3 \eps$
or $\alpha _1 \geq 1-3\eps$.
We will show that the second bound holds.

Suppose otherwise. Then
$\alpha _j \leq 3 \eps$ for all $j \in [m]$.
We can partition $[m]$ as $Q_1 \cup Q_2$
so that for $k=1,2$ we have
$\sum _{j\in Q_k} \alpha _j \geq (1 - \eps- 3\eps)/2
\geq 1/2 - 2 \eps$. Without loss of generality
$\sum _{j\in Q_1} \beta_j \geq 1/2 - \eps$.
Then $\cup _{j\in Q_1} {\cal B}_j$
and $\cup _{j\in Q_2} {\cal A}_j$
cross-intersect and both have densities
at least $1/2 - 2 \eps$ in $[m]^{n-1}$,
which contradicts Lemma~\ref{lem:hoffman1},
as  $\eps \leq 1/15$.
Thus $\alpha _1 \geq 1 - 3\eps $, as required.
Now we apply Lemma~\ref{lem:hoffman1} again to
$\mathcal{A}_1$ and $\mathcal{B}_{\neq 1}$,
which gives
$(1-3\eps)\beta _{\neq 1} \leq
\alpha _1 \beta _{\neq 1} \leq
(1-\alpha _1 + \eps )\beta _1/(m-1)^2
\leq \frac {1}{4} 4 \eps \cdot 1 = \eps$,
so $\beta _{\neq 1} \leq 2 \eps $.
As $\beta _{1} \geq  (1-\eps ) - \beta _{\neq 1}$
this gives $\beta _1 \geq 1- 3 \eps $, completing the proof.
\end{proof}

We conclude this subsection
with the proof of the stability theorem.

\begin{proof}[Proof of Theorem \ref{thm:stability'}]
Let $0 < \dD \ll \eps' \ll \eps, t^{-1}, m^{-1}$.
Suppose $\mathcal{F} \sub [m]^n$ is $t$-intersecting
with $\mu(\mathcal{F})\geq (1-\dD)\mu(\mathcal{S}_{n,m,t})$.
We can assume without loss of generality that for each $i\in [n]$,
the most popular value of $x_i$ for $x\in\mathcal{F}$ is $1$ (otherwise we simply relabel the alphabet in that coordinate).
We set ${\cal F}_0 = {\cal \mathcal{F}}$ and for each
$i\in [n]$ let $\mathcal{F}_{i} = T_i (\mathcal{F}_{i-1})$.
By Fact~\ref{fact:compression},
${\cal F}_n= T(\mathcal{F}_0)$
is $t$-intersecting, compressed,
and $\mu({\cal F}_n) \geq (1-\delta)\mu(\mathcal{S}_{n,m,t})$.
By Claim~\ref{claim:stability_compressed},
$\mu(\mathcal{F}\sm\mathcal{S}) \leq \eps'\mu({\cal S})$
for some copy $\mathcal{S}$
of ${\cal S}_{n,m,t} = {\cal S}_{t,r}[m]^n$,
where $0 \le r \le t$, and $r=0$ if $m>t+1$.
We write $J$ for the set of coordinates
on which it depends, so $|J|=t+2r$.

We define $\eps_i$ for all $i\in [n]$
by $\mu(\mathcal{F}_i \cap  \mathcal{S})
= (1 - \eps_i)\mu(\mathcal{S})$.
We note that
$\mu(\mathcal{F}) = \mu(\mathcal{F}_n)
\leq (1+\eps')\mu(\mathcal{S})$
and $\mu(\mathcal{F}_n \cap \mathcal{S})
\geq (1 - \eps'- \delta)\mu(\mathcal{S})
\geq (1 - 2\eps')\mu(\mathcal{S})$,
so $\eps_n \leq 2\eps'$.
We will show inductively that $\eps_i$
is suitably small for $i=n,n-1,\dots,0$.
To prove the theorem, it suffices
to show $\eps_0<\eps/2$, as
$\mu(\mathcal{F}\sm\mathcal{S})
\leq \mu(\mathcal{F}) - \mu(\mathcal{F}\cap\mathcal{S})
\leq (1+\eps')\mu(\mathcal{S}) - (1-\eps_0)\mu(\mathcal{S})
\leq 2\eps_0\mu(\mathcal{S})$.
	
Note that if $i \notin J$ then $\mathcal{S}$
is \emph{$i$-insensitive}, meaning that
for all $x\in [m]^n$ membership of $x$ in $\mathcal{S}$
does not depend on $x_i$. For such $i$
we have $|T_i(\mathcal{G}) \cap \mathcal{S}|
= |\mathcal{G}\cap \mathcal{S}|$
for any ${\cal G} \sub [m]^n$,
so $\mu(\mathcal{F}_{i-1} \cap \mathcal{S})
= \mu(\mathcal{F}_{i} \cap \mathcal{S}) $,
i.e. $\eps_i = \eps_{i-1}$.
For $i\in J$ we will show that
$\eps_{i-1} \leq 3(t+1)^{3t} \eps_i$.
This will imply $\eps_0 < (3(t+1)^{3t})^{|J|} \eps'
< \eps/2$ as $\eps'\ll \eps$, and so will
suffice to complete the proof of the theorem.

Set $J_i := J \sm \{i\}$. Given $y \in [m]^{J_i}$
and $\mathcal{D} \subset [m]^n$, we use the abbreviation
\begin{equation*}
\mathcal{D}(y) := {\cal D}_{J_i \to y} =
\sett{z \in [m]^{[n]\sm J_i}}{(y,z) \in \mathcal{D}}
\subset [m]^{[n]\sm J_i}.
\end{equation*}
We require the following claim,
showing that if two $J_i$-restrictions
$\mathcal{F}_i(y_1)$ and $\mathcal{F}_i(y_2)$
are close to the same $i$-dictator,
where $y_1,y_2$ have agreement at most $t-1$,
then this is also true of
$\mathcal{F}_{i-1}(y_1)$ and $\mathcal{F}_{i-1}(y_2)$.

\begin{claim}\label{claim:aux}
Suppose $y_1, y_2 \in [m]^{J_i}$ with
${\sf agr}(y_1,y_2) \leq t-1$ and
$\mu(\mathcal{F}_i(y_k) \cap D_{i\ra 1})\geq (1-\xi)p$
for both $k = 1,2$, where $0\leq \xi\leq 1/6$.
Then there is $j\in [m]$ such that both
$\mu(\mathcal{F}_{i-1}(y_k) \cap D_{i\ra j})
\geq (1-3\xi)p$. Moreover, for any $j'\neq j$, both
$\mu(\mathcal{F}_{i-1}(y_k) \cap D_{i\ra j'})<(1-3\xi)p$.
\end{claim}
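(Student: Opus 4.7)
The plan is to derive Claim~\ref{claim:aux} as an application of the monotone compression control Lemma~\ref{lem: monotone compression control} with $\mathcal{A} = \mathcal{F}_{i-1}(y_1)$ and $\mathcal{B} = \mathcal{F}_{i-1}(y_2)$, viewed as subsets of $[m]^{[n]\setminus J_i}$, with coordinate $i \notin J_i$ playing the role of the compression direction.

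First I would verify the three hypotheses needed. For cross-intersection of $\mathcal{A}$ and $\mathcal{B}$, Fact~\ref{fact:compression} guarantees that $\mathcal{F}_{i-1}$ is still $t$-intersecting, so for any $z_1\in \mathcal{A}$ and $z_2\in\mathcal{B}$ the vectors $(y_1,z_1)$ and $(y_2,z_2)$ agree in at least $t$ coordinates, and since ${\sf agr}(y_1,y_2)\leq t-1$ accounts for at most $t-1$ of these inside $J_i$, at least one agreement must occur in $[n]\setminus J_i$. For the size bound, $T_i$ commutes with slicing at a point of $[m]^{J_i}$ (since $i\notin J_i$), so $T_i(\mathcal{A}) = \mathcal{F}_i(y_1)$ and by measure preservation $\mu(\mathcal{A}) = \mu(\mathcal{F}_i(y_1)) \geq \mu(\mathcal{F}_i(y_1)\cap D_{i\ra 1}) \geq (1-\xi)p$, with the same bound for $\mathcal{B}$. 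For the containment $T_i(\mathcal{A}), T_i(\mathcal{B})\subset D_{i\ra 1}$, I would exploit the fact that $\mathcal{F}_i$ is left-compressed in coordinate~$i$ by construction, so each non-$1$-slice of $\mathcal{F}_i(y_k)$ is pointwise contained in the $1$-slice; combined with the hypothesis, the mass outside $D_{i\ra 1}$ is $O(\xi p)$, which can be shed by applying Lemma~\ref{lem: monotone compression control} to the restricted pair $T_i(\mathcal{A})\cap D_{i\ra 1}$, $T_i(\mathcal{B})\cap D_{i\ra 1}$, or alternatively by re-running its proof on the slice decomposition of $\mathcal{F}_{i-1}(y_k)$ using Lemma~\ref{lem:hoffman1} directly.

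Once the hypotheses are in hand Lemma~\ref{lem: monotone compression control} produces a common $j\in[m]$ with $\mu(\mathcal{A}\cap D_{i\ra j}), \mu(\mathcal{B}\cap D_{i\ra j}) \geq (1-3\xi)p$, which is the main conclusion. For the ``moreover'' part, the dictators $\{D_{i\ra j'}\}_{j'\in[m]}$ partition $[m]^{[n]\setminus J_i}$, so $\sum_{j'}\mu(\mathcal{A}\cap D_{i\ra j'}) = \mu(\mathcal{A}) \leq (1+O(\xi))p$, where the upper bound again uses the left-compressedness of $\mathcal{F}_i$ in coordinate~$i$; since one term is at least $(1-3\xi)p$, the remaining terms total at most $O(\xi p)$, and in particular each is strictly below $(1-3\xi)p$ for $\xi\leq 1/6$, with the same argument for $\mathcal{B}$. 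The main obstacle I expect is this last reconciliation: passing from the density hypothesis $\mu(\mathcal{F}_i(y_k)\cap D_{i\ra 1})\geq(1-\xi)p$ to the strict containment $T_i(\mathcal{A})\subset D_{i\ra 1}$ demanded by Lemma~\ref{lem: monotone compression control}, while ensuring that the small non-$1$-slice mass is accounted for without degrading the $3\xi$ slack in the conclusion.
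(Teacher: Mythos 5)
Your overall plan — reduce to Lemma~\ref{lem: monotone compression control} via cross-intersection from $t$-intersection of $\mathcal{F}_{i-1}$ — is the right idea, but there are two concrete gaps.

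\textbf{The choice of $\mathcal{A},\mathcal{B}$.} You set $\mathcal{A}=\mathcal{F}_{i-1}(y_1)$, $\mathcal{B}=\mathcal{F}_{i-1}(y_2)$, and correctly note that $T_i(\mathcal{A})=\mathcal{F}_i(y_1)$ need not lie in $D_{i\to 1}$, so the hypothesis of Lemma~\ref{lem: monotone compression control} literally fails. But your two suggested fixes don't close the gap. Applying the lemma to $T_i(\mathcal{A})\cap D_{i\to 1}$, $T_i(\mathcal{B})\cap D_{i\to 1}$ gives a conclusion about those sets, which are subsets of $\mathcal{F}_i(y_k)$ living entirely inside $D_{i\to 1}$ — you would just recover $j=1$ and learn nothing about $\mathcal{F}_{i-1}(y_k)$, which is what the claim is about. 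And the assertion that ``the mass outside $D_{i\ra 1}$ is $O(\xi p)$'' does not follow: left-compressedness of $\mathcal{F}_i$ in coordinate~$i$ together with $\mu(\mathcal{F}_i(y_k)\cap D_{i\to 1})\geq(1-\xi)p$ gives $\mu(\mathcal{F}_i(y_k)\cap D_{i\to a})\leq\mu(\mathcal{F}_i(y_k)\cap D_{i\to 1})\leq p$ for each $a$, which is an upper bound per slice, not a smallness bound summed over $a\neq 1$; the total mass outside $D_{i\to 1}$ could be as large as $(m-1)p$. The paper's fix is to take $\mathcal{A}_k=\{x\in\mathcal{F}_{i-1}(y_k): T_i(x)\in\mathcal{F}_i(y_k)\cap D_{i\to 1}\}$: in each fiber over a $z\in[m]^{[n]\setminus(J_i\cup\{i\})}$ this keeps exactly the one element that is pushed to (or already at) the symbol $1$ under the compression, so $T_i(\mathcal{A}_k)\subset D_{i\to 1}$ holds by construction, $\mu(\mathcal{A}_k)=\mu(\mathcal{F}_i(y_k)\cap D_{i\to 1})\geq(1-\xi)p$, and crucially $\mathcal{A}_k\subset\mathcal{F}_{i-1}(y_k)$ so the lemma's conclusion transfers to $\mathcal{F}_{i-1}(y_k)$.

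\textbf{The ``moreover'' part.} Your argument here rests on $\sum_{j'}\mu(\mathcal{A}\cap D_{i\to j'})=\mu(\mathcal{A})\leq(1+O(\xi))p$, but no such upper bound on $\mu(\mathcal{F}_{i-1}(y_k))$ is given or implied by the claim's hypotheses (which only assert a lower bound on a single slice of the compressed family). For instance a family with density close to $1/2$ in which the $x_i=1$ slice has density $1-\xi$ satisfies the stated hypothesis yet has $\mu(\mathcal{A})\gg p$. The uniqueness of $j$ really does require the cross-intersection structure, and this is how the paper proves it: for $j'\neq j$, the restrictions $(\mathcal{F}_{i-1})_{J\to(y_1,j')}$ and $(\mathcal{F}_{i-1})_{J\to(y_2,j)}$ are cross-intersecting (their defining vectors agree on $\leq t-1$ coordinates, since $\mathrm{agr}(y_1,y_2)\leq t-1$ and $j'\neq j$), and since the latter has density $\geq 1-3\xi$, Lemma~\ref{lem:hoffman1} forces the former to have density strictly below $1-3\xi$. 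You would need to replace your counting step by such a Hoffman-bound argument.
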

\begin{proof}
Note that
${\cal F}_{i-1}(y_1)$ and ${\cal F}_{i-1}(y_2)$
are cross-intersecting,
as $\mathcal{F}_{i-1}$ is $t$-intersecting and
${\sf agr}(y_1,y_2) \leq t-1$.
Let $\mathcal{A}_k = \{x \in \mathcal{F}_{i-1}(y_k):
T_i(x) \in \mathcal{F}_{i}(y_k) \cap D_{i\ra 1} \}$
for $k=1,2$. Then ${\cal A}_1$, ${\cal A}_2$ are
cross-intersecting and both $\mu({\cal A}_k)>(1-\xi)p$,
so the existence of $j$ follows
from Lemma~\ref{lem: monotone compression control}.

For the `moreover' part, note that if $j'\neq j$, then
$(\mathcal{F}_{i-1})_{J\rightarrow (y_1,j')}$
and $(\mathcal{F}_{i-1})_{J\rightarrow (y_2,j)}$
are cross-intersecting, and applying Lemma~\ref{lem:hoffman1} gives us that
$\mu((\mathcal{F}_{i-1})_{J\rightarrow (y_1,j')}) < 1-3\xi$. The same argument
works interchanging the roles of $y_1$ and $y_2$, and we get that
$\mu(\mathcal{F}_{i-1}(y_k) \cap D_{i\ra j'})<(1-3\xi)p$.
\end{proof}

Using Claim~\ref{claim:aux} we now bound $\eps_{i-1}$.
We start with the case $r=0$.
Let $\bm{1} \in [m]^{J_i}$ be the all-1 vector.
We have $\mu({\cal F}_i(\bm{1}) \cap D_{i \to 1})
= p^{1-t} \mu({\cal F}_i \cap {\cal S}) = (1-\eps_i)p$,
so by Claim \ref{claim:aux} with $y_1=y_2=\bm{1}$
there is $j \in [m]$ such that
$\mu({\cal F}_{i-1}(\bm{1}) \cap D_{i \to j})
\ge (1-3\eps_i)p$. The most popular value
in $\mathcal{F}_{i-1}$ of coordinate $i$ is $1$
(since this is the case in $\mathcal{F}$), so $j=1$.
We deduce $\mu({\cal F}_{i-1} \cap {\cal S})
= p^{t-1} \mu({\cal F}_{i-1}(\bm{1}) \cap D_{i \to 1})
\ge (1-3\eps_i)p^t$, so $\eps_{i-1} \le 3\eps_i$.

It remains to consider $r \ge 1$.
We have $m\leq t+1$ by
Claim~\ref{claim:stability_compressed}.
For a vector $y\in [m]^n$ and $j\in[m]$, let $y[j]$ be
the set of coordinates of $i$ equal to $j$.
We partition ${\cal S}$
as ${\cal S} = {\cal S}_0 \cup {\cal S}_1$, where
\begin{align*}
& \mathcal{S}_{0} := \big \{ x \in [m]^n:
x|_{J_i} \in \mathcal{G}_{0} \big \},
& \mathcal{G}_{0} :=
\{y \in [m]^{J_i}: \card{y[1]} > t+r-1\},\\
& \mathcal{S}_{1} = \big \{ x \in [m]^n:
x|_{J_i} \in \mathcal{G}_{1} \big \} \cap D_{i\ra 1},
& \mathcal{G}_{1} :=
\{y \in [m]^{J_i}:\card{y[1]} = t+r-1\}.
\end{align*}
As $\mathcal{S}_0$ is $i$-insensitive,
$\mu(\mathcal{F}_{i-1} \cap \mathcal{S}_0)
= \mu(\mathcal{F}_{i} \cap \mathcal{S}_0)$.
Now we wish to show that
$\mu(\mathcal{F}_{i-1} \cap \mathcal{S}_1)$ is large,
i.e.\ that $\mu(\mathcal{F}_{i-1}(y) \cap D_{i \to 1})$
is close to $1$ for each $y \in {\cal G}_1$.
First we show this for ${\cal F}_i$.

\begin{claim} \label{claim:aux2}
$\mu(\mathcal{F}_i(y) \cap D_{i\ra 1})
\geq (1-(t+1)^{3t}\eps_i)p$ for each $y \in {\cal G}_1$.
\end{claim}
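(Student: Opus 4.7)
The plan is to prove Claim~\ref{claim:aux2} by a direct single-slice averaging argument from the inequality $\mu(\mathcal{S}\sm\mathcal{F}_i)\leq \eps_i\mu(\mathcal{S})$, which is just a restatement of the definition of $\eps_i$. Notably, the $t$-intersecting property of $\mathcal{F}_i$ need not be invoked directly at this step; it has already been absorbed into the smallness of $\eps_i$ via Claim~\ref{claim:stability_compressed} and the preceding induction.

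The key structural observation is that for any $y^*\in\mathcal{G}_1$, the slice $\mathcal{S}(y^*)\subseteq[m]^{[n]\sm J_i}$ coincides exactly with $D_{i\ra 1}$. Indeed, by the definition of $\mathcal{S}$, we have $(y^*,x)\in\mathcal{S}$ iff $|(y^*,x)|_J[1]|\geq t+r$, and since $|y^*[1]|=t+r-1$ this is equivalent to $x_i=1$. Hence $\mu(\mathcal{S}(y^*))=p$ and the local deficit at $y^*$ is exactly $p-\mu(\mathcal{F}_i(y^*)\cap D_{i\ra 1})$.

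Writing
$$\mu(\mathcal{S}\sm\mathcal{F}_i)=m^{-|J_i|}\sum_{y\in[m]^{J_i}}\mu\bigl((\mathcal{S}\sm\mathcal{F}_i)(y)\bigr)$$
and keeping only the (nonnegative) summand $y=y^*$ yields $m^{-|J_i|}\bigl(p-\mu(\mathcal{F}_i(y^*)\cap D_{i\ra 1})\bigr)\leq \eps_i\mu(\mathcal{S})$. To conclude, I would bound the resulting constant: since $|J_i|=t+2r-1$ and $p=1/m$, one has $m^{|J_i|}\mu(\mathcal{S})/p=m^{t+2r}\mu(\mathcal{S})$, which equals the count of $y\in[m]^J$ with $|y[1]|\geq t+r$ and is therefore at most $m^{t+2r}\leq(t+1)^{3t}$, using $r\leq t$ and the hypothesis $m\leq t+1$ valid in the case $r\geq 1$ (from Claim~\ref{claim:stability_compressed}). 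Rearranging gives $p-\mu(\mathcal{F}_i(y^*)\cap D_{i\ra 1})\leq(t+1)^{3t}\eps_i p$, which is Claim~\ref{claim:aux2}. The whole argument is single-slice averaging plus a trivial volume bound, so I do not anticipate any genuine obstacle.
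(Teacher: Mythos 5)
Your proposal is correct and takes essentially the same approach as the paper: both decompose $\mu(\mathcal{S}\sm\mathcal{F}_i)$ over $J_i$-slices, observe that $\mathcal{S}(y^*) = D_{i\ra 1}$ for $y^*\in\mathcal{G}_1$, drop all but the single nonnegative term at $y=y^*$, and then bound the constant $m^{t+2r}\mu(\mathcal{S}) \le m^{t+2r} \le (t+1)^{3t}$ using $m\le t+1$ and $r\le t$. The paper restricts attention to $\mathcal{S}_1\sm\mathcal{F}_i$ before slicing and is terser about the final volume bound, but these are cosmetic differences; the argument is the same.
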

\begin{proof}
To see this, we note that
\[ \eps_i \mu({\cal S}) = \mu({\cal S} \sm {\cal F}_i)
\ge \mu({\cal S}_1 \sm {\cal F}_i)
= \sum_{y \in {\cal G}_1}
p^{|J_i|} (p - \mu(\mathcal{F}_i(y) \cap D_{i\ra 1})).\]
Each summand on the right hand side is non-negative,
and $\card{J} = t+2r \leq 3t$, so for each $y \in {\cal G}_1$
we have $p - \mu(\mathcal{F}_i(y) \cap D_{i\ra 1}))
\le p(t+1)^{3t}\eps_i$, so the claim holds.
\end{proof}

Next we prove the corresponding claim for ${\cal F}_{i-1}$,
although at first just with $D_{i \to j}$ for some $j \in [m]$;
the theorem will follow once we show that $j=1$.
We say that $y \in \mathcal{G}_1$ is $j$-good
if $\mu(\mathcal{F}_{i-1}(y) \cap D_{i\ra j})
\geq (1-3(t+1)^{3t}\eps_i)p$.

\begin{claim}
There is some $j \in [m]$ such that
every $y \in \mathcal{G}_1$ is $j$-good.
\end{claim}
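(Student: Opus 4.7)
My plan is to define, for each $y \in \mathcal{G}_1$, a unique ``good value'' $j_y \in [m]$ (the $j$ witnessing that $y$ is $j$-good) and then show that this value does not depend on $y$. For the first step, given $y \in \mathcal{G}_1$ with $1$-pattern $S := y[1]$, I will construct a partner $y' \in \mathcal{G}_1$ with $\mathrm{agr}(y,y') = t-1$ by taking $y'[1] = (J_i \setminus S) \cup T$ for any $(t-1)$-subset $T \subset S$, and assigning arbitrary non-$1$ values elsewhere. Since $S \cup y'[1] = J_i$, the two vectors share no non-$1$ coordinate, so their agreement equals $|S \cap y'[1]| = |T| = t-1$. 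Claim~\ref{claim:aux2} lets me invoke Claim~\ref{claim:aux} with $\xi = (t+1)^{3t}\eps_i$; it produces a $j$ making both $y$ and $y'$ be $j$-good, and the moreover clause shows this $j$ is unique. Hence $j_y$ is well defined and independent of the choice of $y'$.

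The main task is to show $j_{y_1} = j_{y_2}$ for every $y_1, y_2 \in \mathcal{G}_1$. Writing $S_k := y_k[1]$, inclusion-exclusion gives $|S_1 \cap S_2| \geq 2(t+r-1) - |J_i| = t-1$. If $|S_1 \cap S_2| \geq r$, then $|J_i \setminus (S_1 \cap S_2)| \leq t + r - 1$, so I can pick a $(t+r-1)$-subset $S_3 \subset J_i$ containing $(J_i \setminus S_1) \cup (J_i \setminus S_2)$. Taking $y_3 \in \mathcal{G}_1$ with $y_3[1] = S_3$ and arbitrary non-$1$ values forces $J_i \setminus S_3 \subset S_1 \cap S_2$, so $y_3$ is non-$1$ only at positions where both $y_1, y_2$ are $1$; hence $\mathrm{agr}(y_k, y_3) = |S_k \cap S_3|$, which a short cardinality count reduces to $t-1$ for each $k$. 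Two applications of Claim~\ref{claim:aux} together with the uniqueness of $j_y$ then give $j_{y_1} = j_{y_3} = j_{y_2}$.

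The only remaining case is $|S_1 \cap S_2| < r$. Since $r \leq t$ by Claim~\ref{claim:stability_compressed} and $|S_1 \cap S_2| \geq t-1$, this forces $r = t$ and $|S_1 \cap S_2| = t-1$, and in particular $|S_k \setminus S_{3-k}| = r = t$. Here I bridge by an intermediate $y_3 \in \mathcal{G}_1$ whose $1$-pattern intersects both $S_1$ and $S_2$ in at least $r$ coordinates: take $S_3 = \{e\} \cup T_1 \cup T_2$ for some $e \in S_1 \cap S_2$ and $(t-1)$-subsets $T_k \subset S_k \setminus S_{3-k}$ (which exist since $|S_k \setminus S_{3-k}| = t$). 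This $y_3$ now falls into the previous case against each of $y_1$ and $y_2$, yielding $j_{y_1} = j_{y_3} = j_{y_2}$ by transitivity. The only place where genuine care is needed is this two-step connectivity argument: for $r \leq t-1$ the inclusion-exclusion bound automatically puts every pair into the ``direct common neighbour'' regime, but at the boundary $r = t$ an intermediate vector is unavoidable, and this is the main (if mild) obstacle in the write-up.
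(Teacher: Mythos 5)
Your proof is correct and follows the same overall strategy as the paper: derive per-vertex uniqueness of the good value $j_y$ from the ``moreover'' clause of Claim~\ref{claim:aux} (combined with Claim~\ref{claim:aux2}), then reduce the claim to connectivity of the graph on $\mathcal{G}_1$ whose edges are pairs at agreement exactly $t-1$. The paper establishes connectivity by realising each single ``swap'' of the $1$-pattern as a $2$-path in this graph, whereas you bound the diameter directly (by at most $4$) via explicit bridge vectors, isolating the boundary case $r=t$, $|S_1\cap S_2|=t-1$; both versions are valid and the underlying idea is the same.
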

\begin{proof}
Note that for any $y,y'\in\mathcal{G}_1$
with ${\sf agr}(y,y')=t-1$, by Claims~\ref{claim:aux}
and~\ref{claim:aux2}
there is some $j \in [m]$ such that
both $y$ and $y'$ are $j$-good.
The claim then follows from the observation
that the graph $G$ whose edges consist of
such pairs $\{y,y'\}$ is connected.
(We can get between
any two elements of $\mathcal{G}_1$
by a sequence of steps where in each step
we change some coordinate from $1$ to another value
and some coordinate from another value to $1$,
and each such step can be implemented
by a path of length two in $G$.)
\end{proof}

It remains to show that $j=1$.
We consider $\mathcal{T} = \mathcal{T}_0 \cup \mathcal{T}_1$,
where
$\mathcal{T}_0 = \sett{y\in[m]^n}{y_{J_i} \in \mathcal{G}_0}
= {\cal S}_0$ and
$\mathcal{T}_1 = \sett{y\in[m]^n}
{y_{J_i}\in\mathcal{G}_1, y_i = j}$.
Recalling that
$\mu(\mathcal{F}_{i-1} \cap \mathcal{S}_0)
= \mu(\mathcal{F}_{i} \cap \mathcal{S}_0)$,
by the previous claim we deduce
$\mu(\mathcal{F}_{i-1}\cap \mathcal{T})
\geq (1-3(t+1)^{3t}\eps_i) \mu(\mathcal{T})$, so
      \[
      \mu(\mathcal{F}_{i-1}\sm \mathcal{T})\leq
      \mu(\mathcal{F}_{i-1}) - \mu(\mathcal{F}_{i-1}\cap \mathcal{T})
      \leq 3(t+1)^{3t}\eps_i + \eps'
      \leq 4(t+1)^{3t}\eps_i,
      \]
where in the second inequality we used
$\mu(\mathcal{F}_{i-1}) = \mu(\mathcal{F})\leq
(1+\eps')\mu(\mathcal{S}) = (1+\eps')\mu(\mathcal{T})$.
Hence, for $\ell\neq j$ the fraction of $x\in \mathcal{F}_{i-1}$
such that $x_i = \ell$ is at most $4(t+1)^{3t}\eps_i + q(\ell)$,
where $q(\ell)$ is the fraction of $x\in\mathcal{T}$
that have $x_i = \ell$; by symmetry, this value is the same for all $\ell\neq j$,
and we denote it by $q$.
The fraction of $x\in \mathcal{F}_{i-1}$ such that $x_i = j$ is,
for the same reasons, is at least $q(j) - 4(t+1)^{3t}\eps_i$.
But $q(j) \geq q + \mu(\mathcal{G}_1)\geq q + (t+1)^{-3t}$,
so $q(j)- 4(t+1)^{3t}\eps_i \geq q + 4(t+1)^{3t}\eps_i$
(we can ensure $\eps_i < 1/8$). Thus $j$ is the most popular
value of coordinate $i$ in ${\cal F}_{i-1}$, so $j=1$, as required.
\end{proof}

\subsection{Bootstrapping}

We conclude this part by proving Theorem \ref{thm:boot},
which completes the proof of Theorem~\ref{thm:exact_small_m}.
We will use compressions to reduce to the cube,
so we start with some remarks in this setting.

We consider $\power{n}$
equipped with the uniform measure $\mu$.
Suppose $\mathcal{A},\mathcal{B}\subset\power{n}$.
We say ${\cal A}, {\cal B}$ are
cross-intersecting if for any
$x\in\mathcal{A}$, $y\in\mathcal{B}$ there is $i\in[n]$
such that $x_i = y_i = 1$.
We say ${\cal A}, {\cal B}$ are
cross-agreeing if for any
$x\in\mathcal{A}$, $y\in\mathcal{B}$
there is $i\in[n]$ such that $x_i = y_i$.
Clearly if ${\cal A}, {\cal B}$ are
cross-intersecting then they are cross-agreeing.
We have the following easy fact,
which is immediate from the observation that if
${\cal A}, {\cal B}$ are cross-agreeing
and $x+y={\bf 1}$ (the all-1 vector) then
we cannot have $x \in {\cal A}$ and $y \in {\cal B}$.

\begin{fact}\label{fact:cross_int}
  If $\mathcal{A},\mathcal{B}\subset \power{n}$
  are cross-agreeing then
  $\mu(\mathcal{A}) + \mu(\mathcal{B}) \leq 1$.
\end{fact}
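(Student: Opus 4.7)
The plan is to exploit the complementation involution on $\{0,1\}^n$, which is the key observation already highlighted in the remark preceding the fact. Let $\phi\colon \{0,1\}^n \to \{0,1\}^n$ be the map $\phi(x) = \mathbf{1} - x$ (coordinate-wise complement). This is a bijection and it preserves the uniform measure $\mu$, since flipping each coordinate is an automorphism of the product probability space.

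Set $\mathcal{B}' = \phi(\mathcal{B})$, so $\mu(\mathcal{B}') = \mu(\mathcal{B})$. I would then argue that $\mathcal{A}$ and $\mathcal{B}'$ are disjoint: if some $x$ lay in $\mathcal{A} \cap \mathcal{B}'$, then $x \in \mathcal{A}$ and $y := \mathbf{1} - x \in \mathcal{B}$, but $x_i + y_i = 1$ for every $i\in[n]$, so $x_i \ne y_i$ for all $i$, contradicting the cross-agreeing hypothesis. Hence $\mathcal{A} \cap \mathcal{B}' = \emptyset$, and
\[
\mu(\mathcal{A}) + \mu(\mathcal{B}) \;=\; \mu(\mathcal{A}) + \mu(\mathcal{B}') \;=\; \mu(\mathcal{A} \cup \mathcal{B}') \;\leq\; 1,
\]
as claimed.

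There is no real obstacle here; the statement is a one-line consequence of the pairing $x \leftrightarrow \mathbf{1}-x$, which is exactly the observation the authors have already written out. The only thing worth double-checking is that we do use $\mu$ uniform (as stipulated at the start of the subsection), since the measure-preserving property of $\phi$ is exactly the symmetry of $\mu$ under coordinate flips; this would fail for a general $p$-biased measure when $p \neq 1/2$.
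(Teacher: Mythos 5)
Your proof is correct and is precisely the argument the paper gestures at: the pairing $x \leftrightarrow \mathbf{1}-x$ is a measure-preserving involution on $\{0,1\}^n$ under $\mu_{1/2}$, and cross-agreement forces $\mathcal{A}$ and $\phi(\mathcal{B})$ to be disjoint. You have simply written out in full the one-line observation the authors state before the fact.
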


We also require the following isoperimetric lemma
of Ellis, Keller and Lifshitz  \cite{EKL}.

\begin{lemma}\label{lemma:isoperimetric_bump}
  Suppose $0\leq p\leq q\leq 1$, $\alpha\geq 0$
  and $\mathcal{F}\subset\power{n}$ is monotone.
  If $\mu_p(\mathcal{F})\geq p^{\alpha}$
  then $\mu_q(\mathcal{F}) \geq q^{\alpha}$.
\end{lemma}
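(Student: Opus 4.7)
The plan is to reformulate the claim via the quantity $L(p) := \log\mu_p(\mathcal{F})/\log p$, defined for $p \in (0,1)$. Since $\log p < 0$, one has $\mu_p(\mathcal{F}) \geq p^\alpha \iff L(p) \leq \alpha$, and likewise $\mu_q(\mathcal{F}) \geq q^\alpha \iff L(q) \leq \alpha$. Thus, after disposing of the trivial cases $\alpha = 0$ and $\mathcal{F} \in \{\emptyset, \{0,1\}^n\}$, the lemma is equivalent to showing that $L$ is non-increasing on $(0,1)$ for every non-empty monotone family $\mathcal{F} \sub \{0,1\}^n$.

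A direct computation shows that $L'(p) \leq 0$ is equivalent to the pointwise inequality
\[ (\star) \qquad p\,\mu_p'(\mathcal{F})\,\log(1/p) \;\geq\; \mu_p(\mathcal{F})\,\log\bigl(1/\mu_p(\mathcal{F})\bigr). \]
I would prove $(\star)$ by induction on $n$; the base case $n=1$ is a direct check. For the inductive step, decompose $\mathcal{F} \sub \{0,1\}^n$ by its last coordinate into monotone families $\mathcal{F}_0 \sub \mathcal{F}_1 \sub \{0,1\}^{n-1}$, and use Russo's formula
\[ \mu_p'(\mathcal{F}) = \bigl(\mu_p(\mathcal{F}_1) - \mu_p(\mathcal{F}_0)\bigr) + (1-p)\,\mu_p'(\mathcal{F}_0) + p\,\mu_p'(\mathcal{F}_1). \]
Applying $(\star)$ inductively to $\mathcal{F}_0$ and $\mathcal{F}_1$, setting $u := \mu_p(\mathcal{F}_0) \leq v := \mu_p(\mathcal{F}_1)$, and writing $h(x) := -x\log x$, the inductive step reduces to the elementary inequality
\[ (1-p)\,h(u) + p\,h(v) + p(v-u)\,\log(1/p) \;\geq\; h\bigl((1-p)u + pv\bigr). \]

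The only technical step is this elementary inequality, which bounds the Jensen defect of $h$ at $(u,v)$ (with weights $1-p, p$) by the term $p(v-u)\log(1/p)$. It admits a short proof: letting $\psi(v)$ denote the difference of the two sides with $u, p$ fixed, we have $\psi(u) = 0$ and
\[ \psi'(v) \;=\; p\,\log\!\left(\frac{(1-p)u + pv}{pv}\right) \;\geq\; 0, \]
since the argument of the logarithm is at least $1$. Hence $\psi(v) \geq 0$ throughout the relevant range, completing the elementary inequality and, in turn, the inductive step and the proof of the lemma.
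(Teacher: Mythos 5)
Your proof is correct, and a comparison with ``the paper's proof'' is not possible here because the paper simply cites this result from Ellis--Keller--Lifshitz~\cite{EKL} without supplying an argument. Your reformulation---that the lemma is equivalent (for $p,q\in(0,1)$ and non-trivial monotone $\mathcal{F}$) to the monotone decrease of $L(p)=\log\mu_p(\mathcal{F})/\log p$, which in turn unwinds to the pointwise $p$-biased edge-isoperimetric inequality $(\star)$---is accurate, and so is the bookkeeping: differentiating $L$ and clearing the factor $(\log p)^2>0$ and the sign of $\log p$ gives exactly $(\star)$, and your Russo-type decomposition $\mu_p(\mathcal{F})=(1-p)u+pv$ with $u=\mu_p(\mathcal{F}_0)\le v=\mu_p(\mathcal{F}_1)$ (monotonicity gives $\mathcal{F}_0\subseteq\mathcal{F}_1$) reduces the inductive step to the stated elementary inequality. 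That inequality is verified correctly: $\psi(u)=0$ and $\psi'(v)=p\log\!\big(\tfrac{(1-p)u+pv}{pv}\big)\ge 0$ since $(1-p)u\ge 0$, so $\psi\ge 0$ on $[u,1]$; the degenerate endpoints ($u=0$, $v=1$) reduce to equalities or are covered by the same monotonicity argument. One very minor remark on the statement itself rather than your proof: as written, the lemma allows $p=0$, $\alpha>0$, $\mathcal{F}=\emptyset$, where the hypothesis $\mu_0(\emptyset)\ge 0^\alpha$ holds vacuously but the conclusion fails for $q>0$; you correctly sidestep this by disposing of $\mathcal{F}=\emptyset$ and working on $(0,1)$, and in the paper's only application $p=1/m>0$, so this is an inconsequential edge case. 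In short, this is a clean self-contained proof of a result the paper uses as a black box.
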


\begin{proof}[Proof of Theorem \ref{thm:boot}]
Let ${\cal G}, {\cal H} \sub [m]^n$
with $\mu({\cal H}) = m^{-t} \eps$
and $\mu({\cal G}) > 1- C\eps$,
where $0\ll \eps \ll t^{-1},C^{-1}$.
Suppose for contradiction that
${\cal G}$ and ${\cal H}$ are cross-agreeing.
Let ${\cal G}' = \wt{T({\cal G})}$
and ${\cal H}' = \wt{T({\cal H})}$,
where $T$ is the compression operator
and the operator $\mathcal{F} \ra \wt{\mathcal{F}}$
is from Definition~\ref{def:compress_reduce}.
By Facts~\ref{fact:compression} and
\ref{fact:properties_compressed_reduction},
${\cal G}'$ and ${\cal H}'$
are monotone and cross-intersecting
with $\mu_p({\cal G}') \ge \mu({\cal G})$
and $\mu_p({\cal H}') \ge \mu({\cal H})$, where $p=1/m$.

Now we consider ${\cal G}'$ and ${\cal H}'$
under the uniform measure $\mu=\mu_{1/2}$.
By monotonicity we have
$\mu({\cal G}') \ge \mu_p({\cal G}') > 1- C\eps$.
By Lemma \ref{lemma:isoperimetric_bump},
$\mu({\cal H}') \ge \mu_p({\cal H}')^{\log_p(1/2)}$,
so $\log_2 \mu({\cal H}')
\ge \log_2(m^{-t} \eps)/\log_2(m)$,
giving $\mu({\cal H}') \ge 2^{-t} \eps^{0.7}$,
as $m \ge 3$ and $1/\log_2(3)<0.7$.
However, $1- C\eps + 2^{-t} \eps^{0.7} > 1$
as  $\eps \ll t^{-1},C^{-1}$,
which contradicts Fact \ref{fact:cross_int}.
\end{proof}

\newpage

\part{Large alphabets}

In this part we prove our main result Theorem \ref{thm:main}
when the alphabet size $m$ is large, i.e.\ $m>m_0(t)$. We note
that in this case ${\cal S}_{n,m,t} = \{x \in [m]^n: x_T = \alpha \}$ for some $T \subset [n]$ with $|T| = t$ and $\alpha \in [m]^T$.
As mentioned in the introduction, we cannot achieve such
a strong pseudorandomness condition in our regularity lemma
as in the case of fixed $m$, so we settle for a weaker
notion of `uncapturability'. We also recall that the strategy
for fixed $m$ based on cross-agreements between pieces
of the regularity decomposition cannot work
when $m$ is `huge' (exponential in $n$),
so we give a different (more combinatorial) argument
for this case in Section \ref{sec:huge}.
The bulk of this part is concerned with the case
that $m$ is `moderate' (large but not huge),
which we analyse in the second section,
via a version of the cross-agreement strategy
implemented by a gluing argument
that exploits expansion under another
pseudorandomness condition, namely globalness.
The tools for this are developed in the first section,
in which we study our two pseudorandomness conditions
(uncapturability and globalness) and establish
the small-set expansion for global functions
via a refined version of our global
hypercontractivity inequality from \cite{KLLM}.

\section{Tools}

This section concerns various properties of
the pseudorandomness notions of
uncapturability and globalness,
particularly a regularity lemma for uncapturability
and a small set expansion property for global functions,
which is analogous to Theorem \ref{thm:MOSMarkov}.
The latter will be established via a corresponding
statement for the noise operator, which will be
proved by a refined form of our global
hypercontractivity inequality.
Along the way we record various facts needed here and
later concerning Markov chains and
the Efron-Stein theory of orthogonal decompositions.

\subsection{Uncapturability and globalness}

This subsection contains the definitions
and basic properties of the two
key pseudorandomness conditions used in this part.
We start with uncapturability, which is the condition
that will appear in the regularity lemma
in the next subsection.
Recall that for $\mathcal{F}\subset[m]^n$
and $\alpha\in[m]^R$ for some $R\sub [n]$ we write
${\cal F}[\aA] = \{ x \in {\cal F}: x_R = \aA \}$ and
${\cal F}_{R \to \aA} = {\cal F}(\aA)
= \{ x \in [m]^{[n]\sm R}: (x,\aA) \in {\cal F} \}$.
We also write $D_{R \to \aA} = \{x \in [m]^n: x_R=\aA\}$,
which is a subcube of co-dimension $|R|$,
which we refer to as a `dictator' if $|R|=1$. For a collection
of subcubes $\mathcal{D}$, we denote by $\bigcup \mathcal{D}$
the union of these subcubes, i.e. $\bigcup_{D\in\mathcal{D}} D$.

\begin{definition}
We say $\mathcal{F}\subset[m]^n$ is $(r,\eps)$-capturable
if there is a set ${\cal D}$ of at most $r$ dictators
with $\mu({\cal F}\sm \bigcup {\cal D})\leq \eps$.
Otherwise, we say $\mathcal{F}$ is $(r,\eps)$-uncapturable.
\end{definition}

Now we define the stronger
(see Claim \ref{claim:global_to_uncap})
condition of globalness.

\begin{definition}\label{def:global}
We say $f:[m]^n \to\mb{R}$ is $(r,\eps)$-global
if for any $R \sub [n]$ with $|R| \le r$ and $a \in [m]^R$
we have $\norm{f_{R\ra a}}_2^2\leq \eps$.
We say ${\cal F} \sub [m]^n$ is $(r,\eps)$-global
if its characteristic function is $(r,\eps)$-global.
\end{definition}

Most of this section will be devoted to the proof
of the following small set expansion property
for global functions,
which is analogous to Theorem~\ref{thm:MOSMarkov}.
We remark that we will later use Theorem~\ref{thm:SSE} to prove that random gluings
significantly increase the measure of global families.

\begin{thm}\label{thm:SSE}
For any $\lL>0$ there is $c>0$ such that
the following holds for Markov chains
$T_i$ on $\OO_i$ with $\lL_*(T_i) \ge \lL$
for all $i \in [n]$
and consecutive random states $x,y$ of the
stationary chain for the product chain $T$ on $\OO$.
If ${\cal F} \sub \Omega$
is $(\log(1/\mu), \mu^{1-c})$-global
with $\mu\in (0,1/16)$ then
$\mb{P}(x \in {\cal F}, y \in {\cal F})
\leq \mu^{1+c}$.
\end{thm}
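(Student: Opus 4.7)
The plan is to linearise the target probability as
\[
\mathbb{P}(x \in \mathcal{F},\, y \in \mathcal{F}) \;=\; \langle f, T f \rangle_{\nu},
\]
where $f = 1_{\mathcal{F}}$ and $T = T_1 \otimes \cdots \otimes T_n$ is viewed as an averaging operator on $L^2(\Omega,\nu)$, and then exploit the uniform spectral gap of the $T_i$ together with the globalness of $f$ through a refined version of the global hypercontractive inequality of~\cite{KLLM}.

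First I would expand $f$ in the Efron--Stein-type orthogonal decomposition $f = \sum_{S \subseteq [n]} f^{=S}$ obtained by tensorising orthonormal eigenbases of the $T_i$ (exactly as in the setup preceding Theorem~\ref{thm:MOSMarkov}). Since $\lambda_{*}(T_i) \geq \lambda$ for each $i$, the operator $T$ acts on each $f^{=S}$ with operator norm at most $(1-\lambda)^{|S|}$, and cross terms in $\langle f, Tf\rangle$ vanish by orthogonality of the $f^{=S}$, so
\[
\langle f, T f\rangle \;\leq\; \sum_{S \subseteq [n]} (1-\lambda)^{|S|}\, \|f^{=S}\|_2^2 \;=\; \|\mathrm{T}_{\sqrt{1-\lambda}} f\|_2^2 \;=\; \langle f, \mathrm{T}_{1-\lambda} f\rangle,
\]
where $\mathrm{T}_r f^{=S} = r^{|S|} f^{=S}$ is the natural noise operator on the Efron--Stein decomposition. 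The task is thereby reduced to a small-set expansion statement for $\mathrm{T}_{\rho}$ with $\rho = 1-\lambda$ on global Boolean functions.

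For this reduced statement I would invoke the refined global hypercontractive inequality promised in the introduction: for some $q = q(\lambda) > 2$ depending only on $\lambda$, any $g\colon \Omega \to \mathbb{R}$ that is $(\log(1/\mu),\mu^{1-c'})$-global satisfies $\|\mathrm{T}_{\rho} g\|_q \leq \|g\|_2$. Applying H\"older's inequality and using $\|f\|_{q'} = \mu^{1/q'}$ and $\|f\|_2 = \mu^{1/2}$ for $f = 1_{\mathcal{F}}$ gives
\[
\langle f, \mathrm{T}_{\rho} f\rangle \;\leq\; \|f\|_{q'}\, \|\mathrm{T}_{\rho} f\|_q \;\leq\; \mu^{1/q'} \cdot \mu^{1/2} \;=\; \mu^{\, 1 + (1/2 - 1/q)},
\]
which is at most $\mu^{1+c}$ for $c = 1/2 - 1/q > 0$, yielding the theorem.

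The main obstacle is establishing the refined global hypercontractive inequality at the required level of generality: unlike the cube case of~\cite{KLLM}, one must treat arbitrary reversible product chains over possibly unequal alphabets, with only a uniform absolute spectral gap. The technical heart will be (i) ensuring that the Efron--Stein decomposition behaves well under restriction, so that globalness of $f$ transfers appropriately to its level-$S$ components; (ii) a level-by-level estimate that trades globalness against $L^2$-mass to control $\|\mathrm{T}_{\rho} f\|_q$; and (iii) matching the exponents tightly enough that the $(\log(1/\mu), \mu^{1-c})$-globalness hypothesis on the input produces a strictly positive exponent gain $c > 0$ on the output — with the allowed depth $\log(1/\mu)$ precisely what is needed to absorb error terms from the inductive proof of the inequality.
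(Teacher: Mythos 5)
Your overall strategy matches the paper's. Both proofs linearise the target probability as $\langle f, Tf \rangle$, pass to the Efron--Stein decomposition and use the uniform spectral gap to reduce to a noise-stability bound $\langle f, Tf\rangle \le \mathrm{Stab}_{1-\lambda}(f)$ (this is precisely Lemma~\ref{lem:op_to_stab}), and then aim to control the noise stability of a global Boolean function via global hypercontractivity plus H\"older (Theorem~\ref{thm:global_stab2}). So the reduction step of your proposal is correct and identical to the paper's.

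Where your outline has a genuine gap is in the form and deployment of the hypercontractive inequality. You posit a clean norm estimate $\|\mathrm{T}_\rho g\|_q \le \|g\|_2$ for $(\log(1/\mu),\mu^{1-c'})$-global $g$ and some $q(\lambda)>2$, applied directly to $f$. What is actually available (Lemma~\ref{corr:hypercontract_global}, via Theorem~\ref{thm:correct_hypercontract}) is a bound of a different shape, $\|\mathrm{T}_\rho f\|_4 \le \beta^{1/4}\|f\|_2^{1/2}$, and it holds only for $(r,\beta)$-global $f$ of \emph{bounded degree} $r$ and for $\rho$ fixed small (at most $1/80$). To deploy it, the proof of Theorem~\ref{thm:global_stab2} must truncate $f = f^{\le d} + f^{>d}$ at degree $d \approx c'\log(1/\mu)$, control the low-degree part by H\"older together with the hypercontractive estimate, bound the high-degree tail trivially by $\rho^{d+1}\|f^{>d}\|_2^2 \le \mu^2$, and then separately bootstrap from a small base $\rho'$ to arbitrary $\rho<1$ by iterated Cauchy--Schwarz (Lemma~\ref{lem:changenoise}); this last step is needed because $\rho = 1-\lambda$ can be close to $1$, outside the range where the hypercontractive inequality itself is valid. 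Your closing remarks gesture at a ``level-by-level estimate'' and note the role of depth $\log(1/\mu)$, so you are circling the right ideas, but without the explicit degree truncation, the tail bound, and the noise-level bootstrap, the argument as written does not close. These are all refinements within the same attack rather than a different one, and the hardest ingredient you correctly flag --- a global hypercontractive inequality over general product spaces, Theorem~\ref{thm:correct_hypercontract} --- is exactly where the paper concentrates its effort.
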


We begin by giving two simple relations between uncapturability and globalness that will be useful for us.
The first property asserts that globalness implies very strong uncapturability.
\begin{claim}\label{claim:global_to_uncap}
If $\gamma \in (0,1)$ and $\es \ne \mathcal{G} \sub [m]^n$
is $(1,\mu(\mathcal{G})/\gamma)$-global then $\mathcal{G}$
is $(\gamma m /4, \mu(\mathcal{G})/2)$-uncapturable.
\end{claim}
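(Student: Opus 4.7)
The plan is to prove the contrapositive via a direct union bound. Suppose $\mathcal{G}$ is $(\gamma m/4, \mu(\mathcal{G})/2)$-capturable, so that there is a collection $\mathcal{D} = \{D_{i_1 \to a_1}, \dots, D_{i_r \to a_r}\}$ of at most $r \le \gamma m/4$ dictators satisfying $\mu(\mathcal{G} \sm \bigcup \mathcal{D}) \le \mu(\mathcal{G})/2$. Equivalently, the captured portion has measure $\mu(\mathcal{G} \cap \bigcup \mathcal{D}) \ge \mu(\mathcal{G})/2$. My goal is to show this is incompatible with the assumed globalness.

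The key observation is that for each dictator $D_{i\to a}$, we have the identity $\mu(\mathcal{G} \cap D_{i\to a}) = \tfrac{1}{m}\mu(\mathcal{G}_{i \to a})$, since conditioning on $x_i = a$ happens with probability $1/m$ and restricts $\mathcal{G}$ to $\mathcal{G}_{i \to a}$. Because $1_\mathcal{G}$ is Boolean, the $(1,\mu(\mathcal{G})/\gamma)$-globalness hypothesis reads $\mu(\mathcal{G}_{i \to a}) = \|(1_\mathcal{G})_{i \to a}\|_2^2 \le \mu(\mathcal{G})/\gamma$ for every coordinate $i$ and symbol $a$. Combining, each dictator contributes at most $\mu(\mathcal{G})/(\gamma m)$ to the measure of $\mathcal{G} \cap \bigcup \mathcal{D}$.

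A union bound across the at most $\gamma m/4$ dictators of $\mathcal{D}$ then yields
\[
\mu(\mathcal{G} \cap \bigcup \mathcal{D}) \;\le\; \sum_{j=1}^{r} \mu(\mathcal{G} \cap D_{i_j \to a_j}) \;\le\; \frac{\gamma m}{4} \cdot \frac{\mu(\mathcal{G})}{\gamma m} \;=\; \frac{\mu(\mathcal{G})}{4},
\]
which contradicts the lower bound $\mu(\mathcal{G})/2$ derived from capturability (using $\mu(\mathcal{G})>0$ since $\mathcal{G}$ is non-empty). No step here is delicate — the entire content is the elementary identity $\mu(\mathcal{G} \cap D_{i\to a}) = \tfrac{1}{m}\mu(\mathcal{G}_{i\to a})$ together with a union bound, and there should be no obstacle to completing the argument.
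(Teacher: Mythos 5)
Your proof is correct and is essentially identical to the paper's argument: both use the identity $\mu(\mathcal{G}\cap D_{i\to a}) = m^{-1}\mu(\mathcal{G}_{i\to a})$, bound each term by $\mu(\mathcal{G})/(\gamma m)$ via globalness, and apply a union bound over the dictators; the paper phrases it directly (concluding $|\mathcal{D}| \ge \gamma m/2 > \gamma m/4$) while you phrase it as a contrapositive, but the calculation is the same.
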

\begin{proof}
Suppose ${\cal D}$ is a set of  dictators with
$\mu({\cal G}\sm \bigcup {\cal D}) \leq \mu(\mathcal{G})/2$.
We need to show $|{\cal D}| > \gamma m/4$. By assumption $\mu(\mathcal{G}_{i \to a})
\leq \mu({\cal G})/\gamma$ for each
$D_{i \to a} \in {\cal D}$,
so by a union bound
\[ \mu({\cal G})/2 \leq \mu(\mathcal{G}\cap \bigcup {\cal D})
\leq \sum\limits_{D_{i \to a}\in {\cal D}}{\mu({\cal G}\cap D_{i\to a})}
 =\sum\limits_{D_{i \to a} \in {\cal D}}
 {m^{-1} \cdot \mu(\mathcal{G}_{i\ra a})}
  \leq |{\cal D}|m^{-1} \frac{\mu(G)}{\gamma}.\]
 Thus $|{\cal D}| \geq \gamma m/2 > \gamma m/4$, as required.
\end{proof}

The second property shows that any family $\mathcal{G}$ with
significant measure can be made global by taking small restrictions.
\begin{lemma}\label{lem:make_global}
Let $0 <\gamma <1$ and $r,m,n\in\mathbb{N}$.
For any $\mathcal{G} \sub [m]^n$ there is
$R\subset[n]$ and $\alpha\in[m]^R$ with
$|R|\leq r\log_{\gamma^{-1}}(\mu(\mathcal{G})^{-1})$
such that ${\cal G}' = {\cal G}_{R\ra \alpha}$
is $(r, \mu(\mathcal{G}')/\gamma)$-global
with $\mu({\cal G}') \ge \mu({\cal G})$.
\end{lemma}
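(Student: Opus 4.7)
The plan is to use an iterative restriction/boosting argument: start with the whole family, and whenever the current family is not global, apply a restriction that strictly increases its measure by a definite factor. Since the measure is bounded by $1$, this process must terminate in a bounded number of steps, and concatenating all the restrictions gives the desired $R$ and $\alpha$.

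More concretely, I would define a sequence of restrictions $(R_i,\alpha_i)$ as follows. Initialise $\mathcal{G}^{(0)} = \mathcal{G}$ with the empty restriction. Given $\mathcal{G}^{(i)}$, if it is $(r,\mu(\mathcal{G}^{(i)})/\gamma)$-global, stop and output it. Otherwise, by definition of globalness and the observation that for the indicator $f = \mathbf{1}_{\mathcal{G}^{(i)}}$ one has $\|f_{S\to\beta}\|_2^2 = \mu(\mathcal{G}^{(i)}_{S\to\beta})$, there exist $S_i \subseteq [n]$ with $|S_i|\leq r$ (and disjoint from the previously chosen coordinates, by further restricting if necessary) and $\beta_i \in [m]^{S_i}$ such that
\[
\mu\bigl(\mathcal{G}^{(i)}_{S_i\to\beta_i}\bigr) \;>\; \mu(\mathcal{G}^{(i)})/\gamma.
\]
Set $\mathcal{G}^{(i+1)} := \mathcal{G}^{(i)}_{S_i\to\beta_i}$ and continue.

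By construction, $\mu(\mathcal{G}^{(i+1)}) > \gamma^{-1}\mu(\mathcal{G}^{(i)})$, so by induction $\mu(\mathcal{G}^{(i)}) > \gamma^{-i}\mu(\mathcal{G})$. Since $\mu(\mathcal{G}^{(i)}) \leq 1$, the process must terminate after at most $i^* \leq \log_{\gamma^{-1}}(\mu(\mathcal{G})^{-1})$ steps. Taking $R := \bigcup_{i < i^*} S_i$ and $\alpha \in [m]^R$ to be the concatenation of the $\beta_i$, we obtain $|R| \leq r\cdot i^* \leq r\log_{\gamma^{-1}}(\mu(\mathcal{G})^{-1})$. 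The resulting $\mathcal{G}' = \mathcal{G}_{R\to\alpha} = \mathcal{G}^{(i^*)}$ is $(r,\mu(\mathcal{G}')/\gamma)$-global by the stopping condition, and $\mu(\mathcal{G}') \geq \mu(\mathcal{G})$ since the measure increased at each step.

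There is no real obstacle here: the argument is a standard energy/measure boosting scheme, analogous to the one used in Lemma~\ref{lem:regularity_small_m}, but simpler since we only need to boost the measure of a single family (rather than increment a mean-square energy). The only minor point to verify is that a violation of $(r,\mu(\mathcal{G}^{(i)})/\gamma)$-globalness really does yield a restriction of size at most $r$ that multiplies the measure by more than $1/\gamma$, which follows immediately by unpacking the definition for the $\{0,1\}$-valued case.
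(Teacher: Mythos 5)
Your proposal is correct and follows essentially the same iterative measure-boosting argument as the paper's proof: repeatedly apply a restriction of size at most $r$ that multiplies the measure by a factor $>1/\gamma$ until the family becomes global, which takes at most $\log_{\gamma^{-1}}(\mu(\mathcal{G})^{-1})$ steps since measures are bounded by $1$. The parenthetical remark about disjointness is unnecessary (after restricting, the domain is already $[m]^{[n]\sm R^{(i)}}$, so new restricted coordinates are automatically fresh), but this does not affect correctness.
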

\begin{proof}
Starting with ${\cal G}_0={\cal G}$,
for each $i \ge 0$, if ${\cal G}_i$
is not $(r, \mu(\mathcal{G}_i)/\gamma)$-global
we let ${\cal G}_{i+1} = ({\cal G}_i)_{R_i \to \aA_i}$
for some $R_i \sub [n]$ with $|R_i| \le r$
and $\aA_i\in[m]^{R_i}$ such that
$\mu(\mathcal{G}_{i+1})\geq \mu(\mathcal{G}_i)/\gamma$;
such a restriction exists by definition.
As all measures are bounded by $1$ there can be at most
$\log_{\gG^{-1}} (\mu(\mathcal{G})^{-1})$ iterations,
at which point we terminate with
${\cal G'} = {\cal G}_{R\ra \alpha}$
with the stated properties.
\end{proof}

\subsection{The uncapturable code regularity lemma}

In this subsection we prove
the following regularity lemma
which approximately decomposes any code
into pieces corresponding
to uncapturable restrictions.

\begin{lemma}\label{lem:regularity_large_m}
Let $r, k,m\in\mathbb{N}$ and $\eps\geq 1/m$.
For any $\mathcal{F}\subset [m]^n$
there is a collection $\mathcal{D}$ of
at most $r^k$ subcubes of co-dimension at most $k$
such that ${\cal F}_{R \to \aA}$ is
$(r,\eps \mu(D)^{-1} m^{-k})$-uncapturable
for each $D = D_{R \to \aA} \in {\cal D}$ and
$\mu({\cal F} \sm \bigcup {\cal D})
\le 3 r^{k+1} \eps m^{-k}$.
\end{lemma}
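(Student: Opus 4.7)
My plan is to construct $\mathcal{D}$ by a greedy refinement on a rooted tree of subcubes. Start with the single root $[m]^n$ at depth $0$. At each node $D = D_{R\to \aA}$ of depth $j < k$, test whether the restriction $\mathcal{F}_{R\to \aA}$ is $(r,\eps\mu(D)^{-1}m^{-k})$-uncapturable; if so, $D$ becomes a leaf, and otherwise fix at most $r$ dictators in $[m]^{[n]\setminus R}$ that witness capturability (i.e.\ their union covers $\mathcal{F}_{R\to \aA}$ up to measure $\eps\mu(D)^{-1}m^{-k}$) and attach the resulting refined subcubes, each of co-dimension $j+1$, as the children of $D$. Stop splitting once depth $k$ is reached. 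Finally, let $\mathcal{D}$ consist of those leaves whose restriction is uncapturable; the remaining depth-$k$ leaves are simply discarded. Since the tree has branching at most $r$ and depth at most $k$, both $|\mathcal{D}|\le r^k$ and the co-dimension bound hold, and the required uncapturability is built into the construction.

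The main remaining task is to bound $\mu(\mathcal{F}\setminus \bigcup\mathcal{D})$, and the uncovered mass comes from two sources. First, at each internal (refined) node $D$ of depth $j<k$, the part of $\mathcal{F}\cap D$ that escapes the chosen dictators has measure at most $\mu(D)\cdot \eps\mu(D)^{-1}m^{-k}=\eps m^{-k}$; summing over at most $\sum_{j=0}^{k-1} r^j \le r^k$ internal nodes (valid for $r\ge 2$, the case $r\le 1$ being trivial) gives at most $r^k\eps m^{-k}$. Second, at each discarded depth-$k$ leaf $D$ I lose the full mass $\mu(\mathcal{F}\cap D)$, but capturability at depth $k$ with threshold $\eps$ forces $\mu(\mathcal{F}_D)\le r/m + \eps$, so $\mu(\mathcal{F}\cap D)\le m^{-k}(r/m + \eps)\le 2r\eps m^{-k}$ on using $\eps \ge 1/m$ to absorb the $r/m$ term. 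Since there are at most $r^k$ such leaves, this contributes at most $2r^{k+1}\eps m^{-k}$, and the two contributions together are at most $3r^{k+1}\eps m^{-k}$, as required.

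I do not anticipate any serious obstacle beyond this bookkeeping. The two observations that make the argument work are (i) that the refinement loss per internal node is $\eps m^{-k}$ independent of depth, because the factors $\mu(D)$ and $\mu(D)^{-1}$ cancel in the product $\mu(D)\cdot \eta_D$, and (ii) that the hypothesis $\eps \ge 1/m$ is precisely what is needed at the maximal depth $k$, where we cannot split further and must bound $\mu(\mathcal{F}\cap D)$ directly from capturability, absorbing the $r/m$ term arising from the $r$ capturing dictators.
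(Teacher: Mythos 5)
Your proof is correct and takes essentially the same approach as the paper's: both grow a depth-$\le k$ tree of subcubes by iterated capturability-based refinement, place the uncapturable nodes into $\mathcal{D}$, and bound $\mu(\mathcal{F}\setminus\bigcup\mathcal{D})$ by summing the $\eps m^{-k}$ per-refinement loss over the $\le r^k$ internal nodes plus the mass of the discarded depth-$k$ leaves, each bounded via $r/m+\eps$ and the hypothesis $\eps\ge 1/m$. The only cosmetic difference is that the paper absorbs $r/m+\eps\le(r+1)\eps$ while you use $\le 2r\eps$; both yield the stated $3r^{k+1}\eps m^{-k}$.
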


\begin{proof}
We may assume $\mathcal{F}$ is $(r,\eps m^{-k})$-capturable,
otherwise the lemma holds with ${\cal D} = \{[m^n]\}$.
We apply the following iterative process for $s = 1,\ldots,k$.
\begin{itemize}
\item
We let $\mathcal{D}'_{s-1}$
be the set of  $D = D_{R \to \aA} \in {\cal D}_{s-1}$ such that
${\cal F}_{R \to \aA}$ is $(r,\eps \mu(D)^{-1} m^{-k})$-capturable,
where for $s=1$ we let
${\cal D}'_0={\cal D}_0=\{D_{\es \to \es}\}=\{[m]^n\}$.
\item
For each $D = D_{R\ra \alpha} \in {\cal D}'_{s-1}$,
by definition of capturability we can fix
a set ${\cal D}[D]$ of at most $r$ dictators such that
$\mu( \mathcal{F}_{R \to \aA} \sm \bigcup {\cal D}[D] )
 \leq \eps \mu(D)^{-1} m^{-k}$.
\item
We define ${\cal D}_s= \{ D_{(R,i)\ra (\alpha,a)}:
D = D_{R\ra \alpha} \in \mathcal{D}'_{s-1},
D_{i \to a} \in {\cal D}[D] \}$.
\end{itemize}

At the end of the process,
we let $\mathcal{D}'_k\subset\mathcal{D}_k$
be the set of $D = D_{R \to \aA} \in \mathcal{D}_k$
such that $\mathcal{F}_{R \to \aA}$ is $(r,\eps)$-capturable.
We will show that
$\mathcal{D} = (\mathcal{D}_1\sm\mathcal{D}'_1)\cup\ldots\cup(\mathcal{D}_k\sm\mathcal{D}'_k)$
satisfies the requirements of the lemma.

Clearly, for all $D\in \mathcal{D}$ we have that
${\cal F} \cap D$ is
$(r,\eps \mu(D)^{-1} m^{-k})$-uncapturable,
and $|\mathcal{D}| \le r^k$
as we explore at most this many
subcubes during the above process.
We will bound $\mu({\cal F} \sm \bigcup {\cal D})$
by $\mu({\cal F} \sm \bigcup ({\cal D} \cup {\cal D}'_k))
+ \mu({\cal F} \cap \bigcup {\cal D}'_k)$.

For the first term in the bound, we write
${\cal F} \sm \bigcup ({\cal D} \cup {\cal D}'_k)
= \cup_{s=0}^{k-1} {\cal E}_s$, where each
\[ {\cal E}_s = \bigcup \{
\mathcal{F}_{R \to \aA} \sm \bigcup {\cal D}[D]
: D = D_{R \to \aA} \in {\cal D}'_s \}. \]
By definition, $\mu({\cal E}_s)
\leq \sum_{D \in {\cal D}'_s}
\mu(D) \cdot \eps \mu(D)^{-1} m^{-k}
= |{\cal D}'_s| \eps m^{-k}$, so
\[ \mu({\cal F} \sm \bigcup ({\cal D} \cup {\cal D}'_k))
 \le \eps m^{-k} \sum_{s=0}^{k-1} |{\cal D}'_s|
 \le r^k  \eps m^{-k} .\]
For the second term in the bound, we note that
if $D_{R \to \aA} \in \mathcal{D}'_k$ then
$\mathcal{F}_{R\to\aA}$ is $(r,\eps)$-capturable,
so has measure is at most $r\frac{1}{m} + \eps\leq (r+1)\eps$.
Thus
  \[ \mu({\cal F} \cap \bigcup {\cal D}'_k)
  \leq \sum_{D \in {\cal D}'_k} \mu(D) (r+1)\eps
  \leq r^k m^{-k}  (r+1)\eps. \]
We deduce
$\mu({\cal F} \sm \bigcup {\cal D}) \leq
\mu({\cal F} \sm \bigcup ({\cal D} \cup {\cal D}'_k))
+ \mu({\cal F} \cap \bigcup {\cal D}'_k)
\leq 3 r^{k+1} \eps m^{-k}$.
\end{proof}

\subsection{Markov Chains and Orthogonal Decompositions}

This subsection contains some
further theory of Markov Chains,
Efron-Stein orthogonal decompositions
and a general form of the Hoffman bound
for cross-intersecting families
in any product space.
The results are somewhat standard,
but we include details for the convenience
of the reader.

Let $T$ be a Markov chain on $S$
with stationary distribution $\nu$.
The absolute spectral gap $\lL_*=\lL_*(T)$ is
\[ (1-\lL_*)^2 = \sup \{ \mb{E}(Tf)^2:
 \mb{E}f=0, \mb{E}f^2=1 \}.\]
Here expectations are with respect to $\nu$.
If $T$ is reversible we can also view $\lL_*$
as the minimum value of $1-|\lL|$
over all eigenvalues $\lL \ne 1$.
We start with a general lower bound for $\lL_*$.

\begin{lemma}\label{lem:gap}
Let $T$ be a Markov chain on $S$
with stationary distribution $\nu$
such that $T_{ab} \ge \aA \nu(b)$
for every $a,b \in S$.
Then $\lL_*(T) \ge \aA$.
\end{lemma}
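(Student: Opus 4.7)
The plan is to exploit the Doeblin-type minorisation hypothesis $T_{ab} \ge \alpha \nu(b)$ via the standard decomposition $T = \alpha J + (1-\alpha) T'$, where $J$ is the rank-one ``projection to stationarity'' with $J_{ab} = \nu(b)$, and $T'$ is the residual defined by $T'_{ab} = (T_{ab} - \alpha \nu(b))/(1-\alpha)$.

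First I would verify that $T'$ is itself a valid Markov kernel on $S$ with stationary distribution $\nu$. Non-negativity of its entries is immediate from the hypothesis; row sums equal $1$ since $\sum_b T_{ab} = \sum_b \nu(b) = 1$; and stationarity $\sum_a \nu(a) T'_{ab} = \nu(b)$ follows from the fact that $\nu$ is stationary for $T$ and also for $J$. In particular $T'$ is an averaging operator on $L^2(S,\nu)$, so by Jensen's inequality it is an $L^2(\nu)$-contraction: $\mathbb{E}_\nu (T'f)^2 \le \mathbb{E}_\nu f^2$ for every $f$.

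The next step is to test the spectral gap definition against an arbitrary $f$ with $\mathbb{E}_\nu f = 0$ and $\mathbb{E}_\nu f^2 = 1$. Because $Jf$ is the constant function equal to $\mathbb{E}_\nu f = 0$, the minorisation gives
\[
Tf \;=\; \alpha J f + (1-\alpha) T'f \;=\; (1-\alpha) T'f,
\]
and hence $\mathbb{E}_\nu (Tf)^2 = (1-\alpha)^2 \mathbb{E}_\nu (T'f)^2 \le (1-\alpha)^2$. Taking the supremum over such $f$ in the defining formula for $\lambda_*(T)$ yields $(1-\lambda_*(T))^2 \le (1-\alpha)^2$; since $\lambda_*(T) \in [0,1]$ and we may assume $\alpha \le 1$, taking non-negative square roots gives $1-\lambda_*(T) \le 1-\alpha$, i.e.\ $\lambda_*(T) \ge \alpha$, as required.

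There is no real obstacle here: the argument is a one-line spectral consequence of the Doeblin coupling. The only thing to be slightly careful about is that the definition of $\lambda_*$ in the paper is stated without assuming reversibility, so one must work directly with the $L^2$-norm bound on $Tf$ for mean-zero $f$ (rather than appealing to an eigenvalue decomposition), which is exactly what the above computation supplies.
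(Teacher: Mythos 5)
Your proof is correct and is essentially the paper's argument in different notation: the paper writes $S_{ab} := T_{ab}-\alpha\nu(b)$ (which is your $(1-\alpha)T'_{ab}$), notes $\sum_b T_{ab}f(b)=\sum_b S_{ab}f(b)$ for mean-zero $f$, and then applies Cauchy--Schwarz with the weights $S_{ab}$ summing to $1-\alpha$ in each row, which is precisely your observation that $Jf=0$ plus the fact that the normalised residual $T'$ is an $L^2(\nu)$-contraction. The only cosmetic difference is that you package the Cauchy--Schwarz/Jensen step as the general contraction property of a $\nu$-stationary Markov operator rather than carrying it out inline (and you should note that $\alpha\le 1$ is automatic from $\sum_b T_{ab}=1$, with $\alpha=1$ forcing $T=J$ and hence $\lambda_*=1$, so the division by $1-\alpha$ is harmless).
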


\begin{proof}
By assumption, $S_{ab} := T_{ab}-\aA \nu(b) \ge 0$,
with $\sum_b S_{ab} = 1 - \aA$ and
$\sum_a \nu(a) S_{ab} = (1 - \aA)\nu(b)$.

If $\mb{E}f=0$ and $\mb{E}f^2=1$ then
by Cauchy-Schwarz
\begin{align*}
\mb{E}(Tf)^2 & = \sum_a \nu(a) (Tf)(a)^2
= \sum_a \nu(a) (\sum_b T_{ab} f(b))^2
= \sum_a \nu(a) \big (\sum_b S_{ab} f(b) \big )^2 \\
& \leq \sum_a \nu(a) \big (\sum_b S_{ab} \big ) \big ( \sum_b S_{ab} f(b)^2 \big )
= (1-\aA)  \sum_a \nu(a) \sum_b S_{ab} f(b)^2  \\
& = (1-\aA)  \sum_b f(b)^2 \sum_a \nu(a)  S_{ab}
=  (1-\aA)^2  \sum_b \nu(b) f(b)^2
= (1-\aA)^2. & \qedhere
\end{align*}
\end{proof}

Now we consider Markov chains $T_i$ acting on $\OO_i$
for $i \in [n]$ and their tensor product
$T = T_1 \otimes \dots \otimes T_n$ acting on
$\OO = \OO_1 \otimes \dots \otimes \OO_n$,
with transition matrix
$T_{xy} = \prod_{i=1}^n (T_i)_{x_iy_i}$.
The stationary distribution of $T$ is
$\nu = \nu_1 \otimes \dots \otimes \nu_n$,
where each $\nu_i$ is stationary for $T_i$.
We will often have $\OO=[m]^n$ and $\nu$ uniform,
but we will also require the general setting.

We use the Efron-Stein orthogonal decomposition
(see e.g.\ \cite[Section 8.3]{Odonnell}):
for any $f \in L^2(\OO,\nu)$ we can write
$f = \sum_{S \sub [n]} f^{=S}$,
where each $f^{=S}$ is characterised by the properties
that it only depends
on coordinates in $S$ and that it is orthogonal to any function
which depends only on some set of coordinates not containing $S$;
in particular, $f^{=S}$ and $f^{=S'}$ are orthogonal for $S \ne S'$.
We have similar Plancherel / Parseval relations
as for Fourier decompositions, namely
$\inner{f}{g} =  \sum_S \Expect{}{f^{=S} g^{=S}}$,
so $\mb{E}[f^2] = \sum_S \Expect{}{(f^{=S})^2}$.
Explicitly, we let
$f^{\sub J}(x) = \mb{E}_{y \sim \nu}
[f(y )\mid y_{\ov{J}} = x_{\ov{J}} ] $ and then we have
$f^{=S} = \sum_{J \sub S} (-1)^{|S \sm J|} f^{\sub J}$
(the inclusion-exclusion formula for
$f^{\sub J} = \sum_{S \sub J} f^{=S}$).
We note the following identity which is immediate
from this construction.

\begin{fact} \label{fact:restrictOD}
For $S \sub T \sub [n]$, $x \in \OO_S$
and $f \in L^2(\OO,\nu)$ we have
$(f^{=T})_{S \to x} = (f_{S \to x})^{=T \sm S}$.
\end{fact}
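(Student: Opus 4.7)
The plan is to verify that $(f^{=T})_{S \to x}$ satisfies the two defining properties that uniquely characterise the Efron-Stein component $(f_{S \to x})^{=T \sm S}$ in $L^2(\OO_{[n] \sm S}, \nu_{[n] \sm S})$, and then invoke uniqueness of the decomposition. Recall that $g^{=U}$ is the unique function that (i) depends only on coordinates in $U$, and (ii) is orthogonal to every function depending only on some set of coordinates not containing $U$.

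Property (i) is immediate: $f^{=T}$ depends only on coordinates in $T$, so after fixing the coordinates in $S$ to the values of $x$, what remains is a function of $y \in \OO_{[n] \sm S}$ depending only on coordinates in $T \sm S$.

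For (ii), let $g \colon \OO_{[n] \sm S} \to \mb{R}$ depend only on coordinates in some $U \sub [n] \sm S$ with $T \sm S \not\sub U$. We want
\[ \mb{E}_{y \sim \nu_{[n] \sm S}}\!\left[(f^{=T})_{S \to x}(y)\, g(y)\right] = 0 \]
for every $x \in \OO_S$. The trick is to multiply by an arbitrary test function $h \colon \OO_S \to \mb{R}$ and appeal to the global orthogonality of $f^{=T}$ in $L^2(\OO, \nu)$: the function $(a,b) \mapsto h(a)\,g(b)$ on $\OO$ depends only on coordinates in $S \cup U$, and since $T \sm S \not\sub U$ we have $T \not\sub S \cup U$. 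Hence $\mb{E}_{\nu}[f^{=T}(a,b)\,h(a)\,g(b)] = 0$. Using the product structure of $\nu$, this rearranges to
\[ \mb{E}_{a \sim \nu_S}\!\left[ h(a) \, \mb{E}_{b \sim \nu_{[n] \sm S}}\!\left[(f^{=T})_{S \to a}(b)\, g(b)\right]\right] = 0. \]
Since $h$ is arbitrary and $\nu_S$ has full support on the finite space $\OO_S$, this forces the inner expectation to vanish for every $x \in \OO_S$, establishing (ii).

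Both characterising properties being verified, uniqueness of the Efron-Stein decomposition immediately gives $(f^{=T})_{S \to x} = (f_{S \to x})^{=T \sm S}$. No step presents a real obstacle; the only mildly subtle point is the use of an arbitrary test function $h$ to upgrade global $L^2(\OO)$-orthogonality into a pointwise-in-$x$ identity, which is needed because the statement asserts an equality of functions of $y$ for each fixed $x$ rather than merely an $L^2(\OO)$ equality.
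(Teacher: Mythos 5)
Your argument establishes that $(f^{=T})_{S\to x}$ lies in the subspace $V_{T\sm S}\subset L^2(\OO_{[n]\sm S},\nu_{[n]\sm S})$ of functions that depend only on coordinates in $T\sm S$ and are orthogonal to every function depending on a set not containing $T\sm S$ --- but that is all it establishes, and the final appeal to uniqueness does not close the gap. The function $(f_{S\to x})^{=T\sm S}$ is the orthogonal projection of $f_{S\to x}$ onto $V_{T\sm S}$, and membership of $(f^{=T})_{S\to x}$ in $V_{T\sm S}$ does not identify it with that projection; you would additionally need $f_{S\to x}-(f^{=T})_{S\to x}\perp V_{T\sm S}$. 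This extra step fails, because $f_{S\to x}=\sum_{T'\sub[n]}(f^{=T'})_{S\to x}$ ranges over \emph{all} $T'$, not only $T'\supseteq S$; your two-property argument (which nowhere uses $S\sub T'$) shows $(f^{=T'})_{S\to x}\in V_{T'\sm S}$ for every $T'$, so by uniqueness of the Efron--Stein decomposition the correct identity is
\[(f_{S\to x})^{=W}=\sum_{T' :\, T'\sm S=W}(f^{=T'})_{S\to x},\qquad W\sub[n]\sm S,\]
and the right-hand side with $W=T\sm S$ contains, besides $T'=T$, all sets $T'=(T\sm S)\cup B$ with $B$ a proper subset of $S$.

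Those extra terms do not vanish in general, so Fact~\ref{fact:restrictOD} is in fact false as stated. For instance take $n=2$, $S=\{1\}$, $T=\{1,2\}$, and $f(y_1,y_2)=g(y_2)$ with $\mb{E}_{\nu_2}g=0$ and $g\neq 0$: then $f=f^{=\{2\}}$, so $f^{=\{1,2\}}=0$ and the left side of the Fact is the zero function, whereas $f_{S\to x}=g$ and the right side is $g^{=\{2\}}=g\neq 0$. The piece your proof does correctly establish --- that $(f^{=T})_{S\to x}\in V_{T\sm S}$, so the functions $(f^{=T})_{S\to x}$ for distinct $T\supseteq S$ are pairwise orthogonal --- is exactly the orthogonality input used in the ``By Fact~\ref{fact:restrictOD} and Parseval'' step at the end of Section~\ref{sec:pf_hyp}, so the downstream application is unaffected.
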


We require the following general form
of the well-known Hoffman bound
(the uniform case was used in Part I,
see Lemma \ref{lem:hoffman1}). We include the
proof for completeness.

\begin{lemma}\label{lem:hoffman}
Let $\nu = \prod_{i=1}^n \nu_i$ be a product
probability measure on $[m]^n$
such that $\nu_i(x) \leq \lL \leq 1/2$
for all $i \in [n]$, $x \in [m]$.
Suppose $\mathcal{G}_1,\mathcal{G}_2\subset[m]^n$
are cross-intersecting with $\nu({\cal G}_i)=\aA_i$
for $i=1,2$. Then
\[ \aA_1\aA_2\leq\left(\frac{\lambda}{1-\lambda}\right)^2
(1-\aA_1)(1-\aA_2).  \]
  \end{lemma}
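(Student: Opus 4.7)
The plan is to prove the bound by induction on $n$, reducing it to a one-variable analytic inequality in which the constraint $\nu_i(a)\leq\lambda$ plays an essential role. Throughout I set $\Lambda=\lambda/(1-\lambda)$, so the target is $\alpha_1\alpha_2\leq\Lambda^2(1-\alpha_1)(1-\alpha_2)$.

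For the base case $n=1$, cross-intersecting forces every pair $(x,y)\in\mathcal{G}_1\times\mathcal{G}_2$ to satisfy $x=y$, so $\mathcal{G}_1,\mathcal{G}_2\subseteq\{a^*\}$ for some common $a^*\in[m]$. Then $\alpha_1,\alpha_2\leq\nu_1(a^*)\leq\lambda$ and $1-\alpha_i\geq 1-\lambda$, so $\alpha_1\alpha_2\leq\lambda^2=\Lambda^2(1-\lambda)^2\leq\Lambda^2(1-\alpha_1)(1-\alpha_2)$, as desired.

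For the inductive step I condition on the first coordinate. Writing $\mathcal{G}_i(a)=\{x\in[m]^{n-1}:(a,x)\in\mathcal{G}_i\}$ and $\alpha_i(a)=(\nu_2\otimes\cdots\otimes\nu_n)(\mathcal{G}_i(a))$, I observe that for any $a\ne b\in[m]$ the families $\mathcal{G}_1(a),\mathcal{G}_2(b)\subseteq[m]^{n-1}$ are cross-intersecting: any putative agreement between $(a,x)\in\mathcal{G}_1$ and $(b,y)\in\mathcal{G}_2$ must happen on a coordinate in $\{2,\ldots,n\}$. The inductive hypothesis therefore supplies the off-diagonal constraints
\[ \alpha_1(a)\alpha_2(b)\leq\Lambda^2(1-\alpha_1(a))(1-\alpha_2(b))\qquad\text{for all }a\ne b.\]
Using $\alpha_i=\sum_a\nu_1(a)\alpha_i(a)$ and expanding, the target inequality reduces to
\[ \sum_{a,b\in[m]}\nu_1(a)\nu_1(b)\big[\alpha_1(a)\alpha_2(b)-\Lambda^2(1-\alpha_1(a))(1-\alpha_2(b))\big]\leq 0.\]
Every off-diagonal summand is non-positive by the inductive constraint, and the remaining task is to absorb the diagonal contribution $\sum_a\nu_1(a)^2[\alpha_1(a)\alpha_2(a)-\Lambda^2(1-\alpha_1(a))(1-\alpha_2(a))]$, which can be positive, using $\nu_1(a)\leq\lambda$.

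This diagonal absorption is the heart of the argument and the step I expect to be the main obstacle. My plan is to substitute $\beta_i(a)=\alpha_i(a)/(1-\alpha_i(a))$, which recasts the off-diagonal constraints in the multiplicative form $\beta_1(a)\beta_2(b)\leq\Lambda^2$ for $a\ne b$, and split into two cases. If the maxima of $\beta_1$ and $\beta_2$ over $[m]$ are attained at distinct coordinates, then $\beta_i(a)\leq\Lambda$ uniformly, so each diagonal term is already non-positive. Otherwise the maxima coincide at some $a_0$ with $\beta_i(a_0)>\Lambda$, and the product constraint forces $\beta_i(b)\leq\Lambda^2/\beta_j(a_0)$ for $b\ne a_0$; combining the inequalities $\nu_1(a_0)\alpha_i(a_0)\leq\lambda$ and $\sum_{b\ne a_0}\nu_1(b)\alpha_i(b)\leq\Lambda^2/(\beta_j(a_0)+\Lambda^2)$, one obtains an upper bound on $\alpha_i$ as an explicit function of $\beta_0=\beta(a_0)$. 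A direct calculus check shows that this function of $\beta_0$ attains its maximum at the endpoints $\beta_0=\Lambda$ and $\beta_0\to\infty$, at both of which it saturates exactly the bound $\alpha\leq\lambda$; this closes the induction.
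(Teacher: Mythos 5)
Your route is genuinely different from the paper's. The paper views cross-intersection as $\langle g_1, U g_2\rangle = 0$ for the tensor-product Markov chain that moves to a uniformly random \emph{other} symbol in each coordinate, computes a spectral-gap bound $1-\lambda_*(U_i)\le\lambda/(1-\lambda)$, and then applies the Efron--Stein decomposition together with Cauchy--Schwarz. Your proposal instead runs an induction on $n$ and reduces to a one-coordinate analytic inequality. That reduction is sound and rather nice: the off-diagonal cross-intersection of $\mathcal{G}_1(a)$ and $\mathcal{G}_2(b)$ for $a\ne b$, the bilinear expansion of $\alpha_1\alpha_2-\Lambda^2(1-\alpha_1)(1-\alpha_2)$ over pairs $(a,b)$, and the passage to $\beta_i(a)=\alpha_i(a)/(1-\alpha_i(a))$ are all correct, and the base case is fine.

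The gap is in the closing one-variable analysis. Two specific problems. First, in your Case 1, ``maxima at distinct coordinates $\Rightarrow$ $\beta_i(a)\le\Lambda$ uniformly'' is not a valid implication (e.g.\ $\beta_1(a_1)=2\Lambda,\ \beta_2(a_2)=\Lambda/2$ satisfy the off-diagonal constraint with distinct argmaxes); the conclusion you want still holds, but via $\beta_1(a)\beta_2(a)\le\beta_1(a_1)\beta_2(a_2)\le\Lambda^2$, not via a uniform bound on each $\beta_i$. Second, and more seriously, in Case 2 the bound ``$\alpha_i\le\lambda$ saturated at the endpoints'' is false. The explicit bound you write is $\alpha_i\le\lambda+\Lambda^2/(\beta_j(a_0)+\Lambda^2)$; at $\beta_j(a_0)=\Lambda$ this equals $\lambda+\Lambda/(1+\Lambda)=2\lambda$, not $\lambda$, and the function is monotone decreasing, so it is not maximised at both endpoints. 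More to the point, $\bar\alpha_i>\lambda$ is genuinely possible: take $\lambda=1/2$, $\Lambda=1$, $\nu_1(a_0)=1/2$, $\beta_1(a_0)=10$, $\beta_2(a_0)=2$, $\beta_1(a)=1/2$ and $\beta_2(a)=1/10$ for $a\ne a_0$ (all off-diagonal constraints hold with equality or slack); then $\bar\alpha_1=41/66>1/2$. Any argument that tries to conclude via $\alpha_i\le\lambda$ is therefore doomed. What is true, and what actually closes the induction, is a bound on the \emph{product} $g(\bar\alpha_1)g(\bar\alpha_2)$: writing $B_1=\beta_1(a_0)$, $B_2=\beta_2(a_0)$ (which are in general different, so there is no single ``$\beta_0$''), $p=\nu_1(a_0)$, and using the sharp bounds $\bar\alpha_1\le p\,g^{-1}(B_1)+(1-p)\,g^{-1}(\Lambda^2/B_2)$ and symmetrically for $\bar\alpha_2$, one first checks that $g(\bar\alpha_1)g(\bar\alpha_2)$ is increasing in $p$ (since $A_i>a_i$ when $B_1B_2>\Lambda^2$), so it suffices to treat $p=\lambda$; and there a direct expansion gives
\[
\Lambda^2(1-\bar\alpha_1)(1-\bar\alpha_2)-\bar\alpha_1\bar\alpha_2 \;\propto\; (1-\Lambda^2)\,(B_1B_2-\Lambda^2)\,(1+\Lambda^2+B_1+B_2)\;\ge\;0,
\]
which uses both $\lambda\le 1/2$ (so $\Lambda\le 1$) and $B_1B_2>\Lambda^2$. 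So the strategy is salvageable, but the closing step needs this two-parameter computation on the product rather than the one-parameter bound on each $\bar\alpha_i$ that you describe. As it stands the proposal has a real gap at exactly the point you flagged as ``the heart of the argument.''
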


The proof of Lemma \ref{lem:hoffman}
requires the following estimate.

\begin{claim}\label{claim:op_contract}
Let $U_i$ be Markov chains on $\OO_i$ for $i \in [n]$
and let $U$ be the product chain on $\OO$.
For any $f:\OO\to\mb{R}$ and $S\sub [n]$ we have
$\norm{U f^{=S}}_2\leq \norm{f^{=S}}_2
\prod\limits_{i\in S}(1-\lL_*(U_i))$.
\end{claim}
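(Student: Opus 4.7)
The plan is to exploit two features of the Efron-Stein component $f^{=S}$: that it depends only on coordinates in $S$, and that for each $i \in S$ its conditional expectation with respect to $x_i$ (with all other coordinates fixed) vanishes. Since the operators $U_i$ act on distinct coordinates they commute, so I can write $U = \prod_{i \in [n]} U_i$ and apply them one at a time in any order. I will first show that each $U_i$ with $i \notin S$ acts as the identity on $f^{=S}$, and then that each $U_i$ with $i \in S$ contracts it by the factor $1 - \lambda_*(U_i)$.

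For the first point, if $i \notin S$ then $f^{=S}$ does not depend on $x_i$, so $(U_i f^{=S})(x) = f^{=S}(x) \sum_{y_i}(U_i)_{x_i y_i} = f^{=S}(x)$. For the second point, I would fix $x_{-i} \in \prod_{j \ne i}\Omega_j$ and view $g_{x_{-i}}(y) := f^{=S}(x_{-i}, y)$ as a function in $L^2(\Omega_i, \nu_i)$. The orthogonality characterization of $f^{=S}$ against all functions depending only on coordinates in $[n] \sm \{i\}$ forces $\mathbb{E}_{\nu_i} g_{x_{-i}} = 0$. By the definition of $\lambda_*(U_i)$ this gives $\|U_i g_{x_{-i}}\|_{L^2(\nu_i)} \le (1-\lambda_*(U_i)) \|g_{x_{-i}}\|_{L^2(\nu_i)}$; squaring and integrating over $x_{-i}$ against $\prod_{j \ne i}\nu_j$ then yields $\|U_i f^{=S}\|_2 \le (1-\lambda_*(U_i))\|f^{=S}\|_2$ by Fubini.

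The one subtlety, which I would address last, is checking that this contraction can be iterated: after applying $U_i$ for some $i \in S$, I need the resulting function to retain the mean-zero-in-each-$S$-coordinate property so that the same argument can be used for the next $j \in S$. This is immediate from the fact that $U_i$ commutes with conditional expectation over $x_j$ for $j \ne i$, since they act on disjoint coordinates: $\mathbb{E}_{\nu_j}[U_i f^{=S} \mid x_{-j}] = U_i(\mathbb{E}_{\nu_j}[f^{=S} \mid x_{-j}]) = U_i(0) = 0$. Iterating over $i \in S$ and using that $U_i$ is the identity for $i \notin S$ yields the claimed product bound.

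I do not anticipate a real obstacle here: the proof is essentially a slice-wise application of the spectral-gap inequality, combined with the standard characterization of the Efron-Stein pieces as being mean-zero in every coordinate on which they depend. The only thing to be mindful of is not to assume reversibility of the $U_i$, as the claim is stated for general Markov chains, but the $L^2$ contraction formulation of $\lambda_*$ given earlier in the paper is exactly what is needed in the non-reversible case.
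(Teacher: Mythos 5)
Your proof is correct and follows essentially the same route as the paper: both reduce to applying one $U_i$ at a time, apply the spectral-gap contraction slice-wise (fixing all other coordinates), and verify the slices remain mean-zero after each application of $U_j$ by commuting $U_j$ past the conditional expectation on a disjoint coordinate. Your careful note about not assuming reversibility, and using instead the $L^2$-contraction definition of $\lambda_*$, is exactly right and is the form of the definition the paper actually uses.
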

\begin{proof}
Since $f^{=S}$ does not depend on variables outside $S$,
we may assume without loss of generality that $S=[n]$.
We introduce interpolating operators
$U_{\leq j} = \bigotimes_{i=1}^{j}{U_i}
\otimes \bigotimes_{i=j+1}^n{I_i}$,
where $I_i$ is the identity,
and $g_j = U_{\leq j} f^{=S}$
for $0\leq j\leq n$. It suffices to show
$\norm{g_j}_2\leq (1-\lL_*(U_j))\norm{g_{j-1}}_2$
for $j \in [n]$.

We calculate $\norm{g_j}_2^2 = \mb{E} (U_jg_{j-1})^2$
by conditioning on $z\in\Omega_{[n]\sm\set{j}}$, i.e.\
\[ \mb{E} (U_jg_{j-1})^2 =
\Expect{{\bf z}\sim\nu_{[n]\sm\set{j}}}
{\mb{E} (U_{j} h_{\bf z})^2 },\]
where $h_z := (g_{j-1})_{[n]\sm\set{j}\ra z}
\in L^2(\OO_j,\nu_j)$. Note that for each $z$,
\[
\Expect{x\sim\nu_j}{h_z(x)}
=\Expect{x\sim\nu_j}{(U_{\leq j-1} f^{=S})(z,x)}
=U_{\leq j-1}\left(\Expect{x\sim\nu_j}{f^{=S}(z,x)}\right)
=U_{\leq j-1} 0 = 0,
\]
so $\mb{E} (U_j h_z)^2
\le (1-\lL_*(U_j))^2 \mb{E} h_z^2$.
As $\mb{E}_{\bf z} \mb{E} h_{\bf z}^2
= \mb{E} g_{j-1}^2$ we get
$\norm{g_j}_2\leq (1-\lL_*(U_j))\norm{g_{j-1}}_2$,
as required.
\end{proof}

\begin{proof}[Proof of Lemma~\ref{lem:hoffman}]
For each $i \in [n]$ we consider
the Markov chain $U_i$ on $[m]$
with transition probabilities $(U_i)_{xx}=0$
and $(U_i)_{xy} = \nu_i(y)/(1-\nu_i(x))$ for $y \ne x$.
We claim that
\begin{equation} \label{eq:hoff}
1-\lL_*(U_i) \le  \frac{\lambda}{1-\lambda}.
\end{equation}
This holds as for any $f \in L^2([m],\nu_i)$
with $\mb{E}f=0$ and $\mb{E}f^2=1$ we have
\begin{align*}
&    \norm{U_i f}_2^2
    = \sum\limits_{x}{\nu_i(x)\left(\sum\limits_{y\neq x}{\frac{\nu_i(y)}{1-\nu_i(x)} f(y)}\right)^2}
    = \sum\limits_{x}{\frac{\nu_i(x)}{(1-\nu_i(x))^2}\left(\sum\limits_{y\neq x}{\nu_i(y) f(y)}\right)^2} \\
& = \sum\limits_{x}{\frac{\nu_i(x)}{(1-\nu_i(x))^2}(\nu_i(x)f(x))^2}
    \leq \left(\frac{\lambda}{1-\lambda}\right)^2\sum\limits_{x}{\nu_i(x) f(x)^2}
    =\left(\frac{\lambda}{1-\lambda}\right)^2.
\end{align*}
Next we note that if ${\bf x} \sim \nu$
and ${\bf y} \sim U {\bf x}$ then
$\mb{P}({\bf x} \in {\cal G}_1, {\bf y} \in {\cal G}_2)=0$,
as ${\sf agr}({\bf x},{\bf y})=0$ by definition of $U$,
but $\mathcal{G}_1,\mathcal{G}_2$ are cross-intersecting
by assumption. We can also write this probability as
$\inner{g_1}{Ug_2}$, where $g_1,g_2\colon [m]^n\to\power{}$
are the indicator functions of $\mathcal{G}_1,\mathcal{G}_2$.
By orthogonality and Cauchy-Schwarz,
    \[
    0=\sum\limits_{S\subset[n]}\inner{g_1^{=S}}{U g_2^{=S}}
    = \aA_1\aA_2 + \sum\limits_{S\neq\emptyset}{\inner{g_1^{=S}}{U g_2^{=S}}}
    \geq \aA_1\aA_2  - \sum\limits_{S\neq\emptyset}{\norm{g_1^{=S}}_2\norm{U g_2^{=S}}_2}.
    \]
By Claim~\ref{claim:op_contract} and \eqref{eq:hoff}
we have $\norm{U g_2^{=S}}\leq\left(\frac{\lambda}{1-\lambda}\right)^{\card{S}}\norm{g_2^{=S}}$, so
    \[
    \aA_1\aA_2
    \leq \sum\limits_{S\neq\emptyset}{\left(\frac{\lambda}{1-\lambda}\right)^{\card{S}}\norm{g_1^{=S}}_2\norm{g_2^{=S}}_2}
    \leq \frac{\lambda}{1-\lambda}\sum\limits_{S\neq\emptyset}{\norm{g_1^{=S}}_2\norm{g_2^{=S}}_2}.
    \]
By Cauchy-Schwarz and Parseval
\[
\left( \sum\limits_{S\neq\emptyset}{\norm{g_1^{=S}}_2\norm{g_2^{=S}}_2} \right)^2
\leq \sum\limits_{S\neq\emptyset}{\norm{g_1^{=S}}_2^2}
\sum\limits_{S\neq\emptyset}{\norm{g_2^{=S}}_2^2}
= \Var(g_1) \Var(g_2)
= \aA_1(1-\aA_1)\aA_2(1-\aA_2). \]
We deduce $(\aA_1\aA_2)^2
\leq\left(\frac{\lambda}{1-\lambda}\right)^2
\aA_1\aA_2(1-\aA_1)(1-\aA_2)$, as required.
\end{proof}

\subsection{Small set expansion via noise stability}

The goal for the remainder of this section
is to prove Theorem \ref{thm:SSE}
concerning global small set expansion.
We start by reducing it to the case
of a particular Markov chain,
namely that given by the noise operator,
which we will now define.
Let $\nu = \prod_{i=1}^n \nu_i$ be a product
probability measure on $\OO = \prod_{i=1}^n \OO_i$.
Fix $\rho\in[0,1]$. We let $\T_i$ be the Markov chain
on $\OO_i$ with transition probabilities
$(T_i)_{xy} = \rho 1_{y=x} + (1-\rho)\nu_i(y)$,
i.e.\ from any state $x$ we stay at $x$
with probability $\rho$ or otherwise
move to a random state according to $\nu_i$.
We let $\T$ be the product chain on $\OO$.
We also write $\T=\T_\rho$.
We call $\T_\rho$ the {\em noise operator}
when we think of it as an operator on $L^2(\OO,\nu)$
via $(\T_{\rho} f)(x)
= \Expect{{\bf y}\sim \T_{\rho} x}{f({\bf y})}$.

Recall that in Theorem \ref{thm:SSE} we want to bound
$\mb{P}(x \in {\cal F}, y \in {\cal F})$ for some
${\cal F} \sub \Omega$, when $x$ and $y$ are
consecutive states of the stationary chain
for some product chain $U$ on $\OO$.
The analytic form is to bound $\inner{f}{Uf}$
where $f$ is the characteristic function of ${\cal F}$.
We will soon see that this can be bounded by an
analogous expression in terms of the noise operator,
i.e.\ ${\sf Stab}_{\rho}(f) := \inner{f}{\T_{\rho} f}$,
which is called the {\em noise stability} of $f$.
For future reference we note the following estimate
showing that a bound on the noise stability
for any given $\rho>0$ implies one for all $\rho<1$.

\begin{lemma} \label{lem:changenoise}
${\sf Stab}_{\rho}(f)
\le \norm{f}_2^{2(1-1/t)} {\sf Stab}_{\rho^t}(f)^{1/t}$
whenever $t=2^d$ with $d \in \mb{N}$.
\end{lemma}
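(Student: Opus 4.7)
The plan is to reduce to the base case $t=2$, which is just Cauchy--Schwarz, and then iterate $d$ times. Two structural facts about the noise operator make this work: (i) the semigroup property $\T_\rho \T_{\rho'} = \T_{\rho\rho'}$, and (ii) the self-adjointness of $\T_\rho$ on $L^2(\OO,\nu)$. Both are immediate from the definition $(\T_{\rho,i})_{xy} = \rho 1_{y=x} + (1-\rho)\nu_i(y)$: a direct two-line calculation in each coordinate shows $(\T_{\rho,i}\T_{\rho',i})_{xz} = \rho\rho' 1_{z=x} + (1-\rho\rho')\nu_i(z)$, which tensorises to (i), and the joint measure $\nu_i(x)(\T_{\rho,i})_{xy} = \rho\nu_i(x)1_{y=x} + (1-\rho)\nu_i(x)\nu_i(y)$ is visibly symmetric in $x,y$, giving reversibility and hence (ii).

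Given these facts, the case $t=2$ falls out in one line: ${\sf Stab}_{\rho^2}(f) = \inner{f}{\T_\rho \T_\rho f} = \inner{\T_\rho f}{\T_\rho f} = \norm{\T_\rho f}_2^2$, and then Cauchy--Schwarz gives
$$
{\sf Stab}_\rho(f) = \inner{f}{\T_\rho f} \le \norm{f}_2 \cdot \norm{\T_\rho f}_2 = \norm{f}_2 \cdot {\sf Stab}_{\rho^2}(f)^{1/2},
$$
which is the statement with $t = 2$, since $2(1-1/2) = 1$.

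For $t = 2^d$ with $d \ge 2$, I would proceed by induction on $d$. Applying the case $t = 2$ with $\rho^{2^{d-1}}$ in place of $\rho$ yields ${\sf Stab}_{\rho^{2^{d-1}}}(f) \le \norm{f}_2 \cdot {\sf Stab}_{\rho^{2^d}}(f)^{1/2}$. Substituting into the inductive hypothesis
$$
{\sf Stab}_\rho(f) \le \norm{f}_2^{2(1-2^{-(d-1)})} \cdot {\sf Stab}_{\rho^{2^{d-1}}}(f)^{1/2^{d-1}}
$$
and collecting exponents using $2(1-2^{-(d-1)}) + 2^{-(d-1)} = 2(1-2^{-d})$ produces the bound for $t = 2^d$.

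There is no substantive obstacle here; the statement is essentially iterated Cauchy--Schwarz, and the restriction $t = 2^d$ just reflects the shape of the iteration. (In fact the same inequality holds for every real $t \ge 1$ by H\"older's inequality applied to the Efron--Stein expansion ${\sf Stab}_\rho(f) = \sum_S \rho^{|S|}\norm{f^{=S}}_2^2$, which is diagonal under $\T_\rho$; but the dyadic version is both simpler to prove and sufficient for the uses below.)
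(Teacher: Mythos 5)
Your proof is correct and is essentially the paper's argument: Cauchy--Schwarz combined with the identity $\norm{\T_\rho f}_2^2 = \inner{f}{\T_{\rho^2}f} = {\sf Stab}_{\rho^2}(f)$ gives the $t=2$ case, and then one iterates. You spell out the semigroup and self-adjointness properties that the paper takes for granted, and you verify the exponent arithmetic in the induction, but the approach is the same.
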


\begin{proof}
By  Cauchy-Schwarz we have
\[ {\sf Stab}_{\rho}(f) = \inner{f}{T_{\rho} f}
  \leq \norm{f}_2 \norm{T_{\rho} f}_2
= \norm{f}_2 \sqrt{{\sf Stab}_{\rho^2}(f)}.\]
The lemma follows by iterating this estimate.
\end{proof}

We also need the following well-known formulae
for the noise operator and stability.

\begin{fact}\label{fact:noise+stab_formula}
  $\T_{\rho} f(x)
  = \sum\limits_{S\subset[n]}{\rho^{\card{S}} f^{=S}(x)}$ and
  ${\sf Stab}_{\rho}(f)
  = \sum\limits_{S\subset[n]}{\rho^{\card{S}}\norm{f^{=S}}_2^2}$.
\end{fact}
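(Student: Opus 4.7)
The plan is to show that each Efron--Stein component $f^{=S}$ is an eigenfunction of the noise operator $\T_\rho$ with eigenvalue $\rho^{|S|}$; both formulas then follow immediately by linearity and orthogonality.

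First I would examine the one-coordinate noise operator. For each $i$, the chain $\T_i$ on $\Omega_i$ satisfies $\T_i c = c$ for constant functions $c$ (trivial check) and $\T_i g = \rho g$ whenever $\mathbb{E}_{\nu_i} g = 0$; indeed, $(\T_i g)(x) = \rho g(x) + (1-\rho)\mathbb{E}_{\nu_i} g = \rho g(x)$. Thus $L^2(\Omega_i,\nu_i)$ decomposes into the constants (eigenvalue $1$) and the mean-zero functions (eigenvalue $\rho$), and $\T_i$ is the corresponding orthogonal projection-plus-shrinkage.

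Next I would lift this to the product $\T=\T_1\otimes\cdots\otimes\T_n$ via the Efron--Stein decomposition. For any $S\subset[n]$, the component $f^{=S}$ depends only on coordinates in $S$, so $\T_i f^{=S}=f^{=S}$ for $i\notin S$. For $i\in S$, the defining property of $f^{=S}$ (orthogonality to any function not depending on coordinate $i$) means that the fibre $x\mapsto f^{=S}(x)$ has mean zero in coordinate $i$ for every fixing of the other coordinates; by the single-coordinate computation above, $\T_i f^{=S}=\rho f^{=S}$. Applying the tensor factors one at a time yields $\T f^{=S}=\rho^{|S|}f^{=S}$. Writing $f=\sum_S f^{=S}$ and invoking linearity of $\T$ then gives $\T_\rho f=\sum_S \rho^{|S|}f^{=S}$, which is the first identity.

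Finally, for the stability formula, I would expand
\[
{\sf Stab}_\rho(f)=\langle f,\T_\rho f\rangle
=\Big\langle \sum_{S}f^{=S},\ \sum_{S'}\rho^{|S'|}f^{=S'}\Big\rangle
=\sum_{S,S'}\rho^{|S'|}\langle f^{=S},f^{=S'}\rangle,
\]
and use the orthogonality $\langle f^{=S},f^{=S'}\rangle=\delta_{S,S'}\|f^{=S}\|_2^2$ from the Efron--Stein decomposition (noted just above Fact~\ref{fact:restrictOD}) to collapse this to $\sum_S \rho^{|S|}\|f^{=S}\|_2^2$. There is really no obstacle here: the only mildly delicate point is the identification of the defining orthogonality of $f^{=S}$ with the statement that its fibres in each coordinate $i\in S$ have mean zero under $\nu_i$, which is immediate from the inclusion-exclusion expression $f^{=S}=\sum_{J\subset S}(-1)^{|S\setminus J|}f^{\subset J}$ recorded earlier.
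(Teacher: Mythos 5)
The paper states Fact~\ref{fact:noise+stab_formula} as a well-known formula and gives no proof, so there is no internal argument to compare against. Your proof is correct and is the standard one: the key observation that each Efron--Stein component $f^{=S}$ is a $\rho^{|S|}$-eigenfunction of $\T_\rho$ follows exactly as you say, since $\T_i$ fixes functions independent of coordinate $i$ and scales mean-zero fibres in coordinate $i$ by $\rho$; the mean-zero-fibre property for $i\in S$ follows from the defining orthogonality of $f^{=S}$ (take $g(x_{-i})=\E_{x_i}[f^{=S}(x)\mid x_{-i}]$, which depends only on $[n]\setminus\{i\}\not\supseteq S$, so $0=\langle f^{=S},g\rangle=\|g\|_2^2$). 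The passage from the first identity to the stability formula via orthogonality of the $f^{=S}$ is likewise correct.
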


The following lemma reduces showing small set expansion
of a general chain $U$ to that of the noise operator,
provided that we have a uniform lower bound on the
absolute spectral gap in each coordinate.

\begin{lemma}\label{lem:op_to_stab}
Let $U = \prod_{i=1}^n U_i$ be a product chain
on $\OO = \prod_{i=1}^n \OO_i$
with each $\lL_*(U_i) \geq \lL$.
Then for all $f:\Omega\to\mathbb{R}$ we have
$\inner{f}{U f}\leq {\sf Stab}_{1-\lL}(f)$.
\end{lemma}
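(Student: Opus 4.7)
The plan is to exploit the Efron--Stein orthogonal decomposition, reducing the claim to the diagonal terms and then quoting Claim~\ref{claim:op_contract}. Write $f = \sum_{S \subset [n]} f^{=S}$. The key observation is that because $U$ is a tensor product of coordinate-wise chains, it preserves the Efron--Stein grading in the weak sense that $\inner{f^{=S}}{U f^{=T}} = 0$ whenever $S \neq T$, so
\[
\inner{f}{Uf} = \sum_{S \subset [n]} \inner{f^{=S}}{U f^{=S}}.
\]

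To justify the vanishing of cross terms, I would split into two cases. If $S \not\subset T$, pick $i \in S \setminus T$; since $f^{=T}$ does not depend on coordinate $i$ and $U_i$ acts trivially on such functions, $U f^{=T}$ also does not depend on coordinate $i$, so $\inner{f^{=S}}{U f^{=T}} = 0$ by the characterising orthogonality property of $f^{=S}$. If $T \not\subset S$, pick $j \in T \setminus S$; then condition on $x_{[n]\setminus\{j\}}$, under which $f^{=S}(x)$ is constant in $x_j$, while $\mathbb{E}_{x_j \sim \nu_j}(U f^{=T})(x)$ equals an average of $f^{=T}$ that includes a full average over coordinate $j \in T$ (since $\nu_j$ is stationary for $U_j$), which is zero.

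Having established diagonalisation, apply Cauchy--Schwarz followed by Claim~\ref{claim:op_contract}:
\[
\inner{f^{=S}}{U f^{=S}} \le \norm{f^{=S}}_2 \norm{U f^{=S}}_2 \le \norm{f^{=S}}_2^2 \prod_{i \in S}\bigl(1 - \lambda_*(U_i)\bigr) \le (1-\lambda)^{|S|} \norm{f^{=S}}_2^2,
\]
using the uniform gap hypothesis in the last step. Summing over $S$ and invoking Fact~\ref{fact:noise+stab_formula} with $\rho = 1-\lambda$ identifies the resulting sum as ${\sf Stab}_{1-\lambda}(f)$, completing the proof.

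The only subtle point is the orthogonality argument for the cross terms; once that is in place, the rest is a direct application of the already-established operator contraction bound. I expect no other obstacles since everything else is bookkeeping with the Efron--Stein decomposition and the stability formula.
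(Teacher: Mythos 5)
Your proof is correct and follows essentially the same route as the paper's: diagonalise via the Efron--Stein decomposition, then apply Cauchy--Schwarz, Claim~\ref{claim:op_contract}, and Fact~\ref{fact:noise+stab_formula}. In fact your two-case argument for the vanishing of the cross terms is slightly more careful than the paper's one-line justification, which (taken literally, via ``$Uf^{=T}$ depends only on coordinates in $T$'') only handles $S\not\subset T$ and leaves the case $S\subsetneq T$ implicit; your stationarity argument supplies that missing half.
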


\begin{proof}
We use the orthogonal decomposition
$f = \sum_{S \sub [n]} f^{=S}$.
We note that $\inner{f^{=S}}{Uf^{=T}}$
can only be non-zero if $S=T$,
as $U f^{=T}$ only depends on coordinates in $T$.
Thus \[  \inner{f}{U f}
  =\sum\limits_{S\sub [n]}{\inner{f^{=S}}{U f^{=S}}}
  \leq \sum\limits_{S\sub[n]}{\norm{f^{=S}}_2\norm{U f^{=S}}_2}.  \]
Applying Claim \ref{claim:op_contract}
and Fact~\ref{fact:noise+stab_formula} completes the proof.
\end{proof}

By Lemma \ref{lem:op_to_stab}, to prove
Theorem \ref{thm:SSE} it remains to prove
the following corresponding global small set expansion
theorem for the noise operator.

\begin{thm}\label{thm:global_stab2}
For every $\rho < 1$  there is $c>0$ such that if
$f:\OO\to\power{}$ is $(\log(1/\mu), \mu^{1-c})$-global
with $\mu\in (0,1/16)$ then
${\sf Stab}_{\rho}(f)\leq \mu^{1+c}$.
\end{thm}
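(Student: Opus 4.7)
Proof plan. The plan is to decompose $f$ into Efron--Stein components and bound the noise stability level by level, using a refined global hypercontractive (equivalently, level-$d$) inequality to control the low-degree contribution.

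By Fact~\ref{fact:noise+stab_formula}, ${\sf Stab}_\rho(f) = \sum_{S \subseteq [n]}\rho^{|S|} \|f^{=S}\|_2^2$. Split at degree $d = K\log(1/\mu)$ for a constant $K$ depending on $\rho$ and $c$. The high-degree tail is dispatched trivially:
\[
\sum_{|S|>d}\rho^{|S|}\|f^{=S}\|_2^2 \le \rho^d \sum_S \|f^{=S}\|_2^2 = \rho^d \mu \le \mu^{1+c},
\]
provided $K\log(1/\rho) \ge c$. The heart of the proof is therefore the low-degree bound
\[
\sum_{|S|\le d}\rho^{|S|}\|f^{=S}\|_2^2 \le \mu^{1+c},
\]
which is the refined global hypercontractive inequality to be established.

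The plan to prove this inequality is to induct on $n$ (equivalently on the degree $k=|S|$), using the key identity from Fact~\ref{fact:restrictOD}: for any $i\in S$, $(f^{=S})_{i \to a} = (f_{i \to a})^{=S \setminus \{i\}}$, so that $\|f^{=S}\|_2^2 = \mathbb{E}_a\|(f_{i \to a})^{=S\setminus \{i\}}\|_2^2$. Each restriction $f_{i \to a}$ inherits the globalness of $f$ with the allowed $|R|$ decremented by one, and in expectation over $a$ the restriction $L^2$-masses are much smaller than their worst-case value. One then expresses the level-$k$ energy of $f$ in terms of the level-$(k-1)$ energy of typical restrictions, with the $\rho^{|S|}$ weighting telescoping through levels. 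At the outset one may reduce to the regime of small $\rho$ via Lemma~\ref{lem:changenoise}, trading a factor $\rho$ for a matching power of $\|f\|_2^2 = \mu$.

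The main obstacle is keeping the $\mu$-exponent sharp through $\Theta(\log(1/\mu))$ levels: a naive induction would lose a factor per level that quickly swamps the $\mu^c$ gain provided by globalness and produces a useless bound. The resolution requires two ingredients. First, the $\rho^{|S|}$ weight in the noise stability supplies a geometric decay per level that cancels the per-level loss. Second, one needs a careful interpolation between the worst-case restriction energy $\mu^{1-c}$ (from globalness) and the average energy $\mu$, exploiting that restrictions are typically much smaller than the worst case. This interpolation is the heart of the KLLM global hypercontractivity argument, and refining it for arbitrary product spaces (with no assumption on $\nu$ or on spectral gaps) so that the globalness parameter $\mu^{1-c}$ and the target exponent $\mu^{1+c}$ line up precisely is the central technical content.
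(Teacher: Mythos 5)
Your high-level structure agrees with the paper: decompose via Efron--Stein, split at degree $d\approx\log(1/\mu)$, dispose of the high-degree tail by $\rho^{d}\le\mu^{2}$, and reduce general $\rho<1$ to a fixed small $\rho'$ via Lemma~\ref{lem:changenoise}. You also correctly identify that everything hinges on bounding the low-degree contribution $\sum_{|S|\le d}\rho^{|S|}\|f^{=S}\|_2^2 = \langle f, T_\rho f^{\le d}\rangle$ by $\mu^{1+c}$.

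However, there is a genuine gap at precisely that step, and you acknowledge it yourself: you propose to prove the low-degree bound by a direct level-by-level induction using Fact~\ref{fact:restrictOD}, but you never carry out the induction, instead remarking that the interpolation between worst-case and average restriction masses ``is the central technical content.'' The paper does not argue by induction on degree at this point. Instead, it applies H\"older's inequality to write $\langle f, T_\rho f^{\le d}\rangle \le \|f\|_{4/3}\,\|T_\rho f^{\le d}\|_4$, uses $\|f\|_{4/3}=\mu(f)^{3/4}$ (Boolean $f$), and then invokes the $4$-norm global hypercontractive bound $\|T_\rho g\|_4 \le \beta^{1/4}\|g\|_2^{1/2}$ for degree-$r$, $(r,\beta)$-global $g$ (Lemma~\ref{corr:hypercontract_global}), applied to $g=f^{\le d}$. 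That lemma is in turn deduced from Theorem~\ref{thm:correct_hypercontract}, which the paper proves not by an intrinsic induction on degree in the product space but by a simulation argument reducing to the biased-cube inequality of~\cite{KLLM} (Theorem~\ref{thm:p_biased_hyp}). The H\"older step, which converts a bilinear form into a product of a $4/3$-norm and a $4$-norm and thereby lets globalness enter through a single high-norm/low-norm comparison, is the missing ingredient your sketch does not supply; without it the ``geometric decay per level cancels the per-level loss'' heuristic has no concrete accounting, and it is not clear that the naive induction you describe closes.
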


A key ingredient in the proof is the following
lemma proved in the next subsection
via global hypercontractivity.
First we introduce some notation.
Given an orthogonal decomposition
$f = \sum_{S \sub [n]} f^{=S}$ and $r \ge 0$
we write $f^{\le r} = \sum_{|S| \le r} f^{=S}$
and $f^{>r}=f-f^{\le r}$. We say $f$ has degree
(at most) $r$ if $f=f^{\le r}$.

\begin{lemma}\label{corr:hypercontract_global}
For any $\rho\leq 1/80$,
if $f\colon\Omega\to\mathbb{R}$
is $(r,\beta)$-global of degree $r$ then
\[  \norm{\T_{\rho} f}_4 \leq \beta^{1/4} \norm{f}_2^{1/2}. \]
\end{lemma}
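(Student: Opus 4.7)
The plan is to prove the lemma by adapting the global hypercontractivity theorem of \cite{KLLM} from the Boolean cube to the general product space $\Omega = \prod_i \Omega_i$, via induction on the number of coordinates $n$. Globalness is what allows the argument to close: without it, some small restriction $f_{R \to \alpha}$ could concentrate most of the $L^2$-mass of $f$, which would let $\|T_\rho f\|_4$ blow up relative to $\|f\|_2^{1/2}$.

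For the base case $n = 1$ with $r \ge 1$, globalness gives $|f(a)|^2 = \|f_{\{1\} \to a}\|_2^2 \leq \beta$ for every $a \in \Omega_1$, so $\|f\|_\infty^2 \leq \beta$. Since $T_\rho$ is an $L^4$-contraction,
\begin{equation*}
\|T_\rho f\|_4^4 \leq \|f\|_4^4 \leq \|f\|_\infty^2 \|f\|_2^2 \leq \beta \|f\|_2^2,
\end{equation*}
which is the required bound. The case $r = 0$ is trivial.

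For the inductive step, decompose $f = f_0 + f_1$, where $f_0 := \mathbb{E}_n f$ depends only on coordinates in $[n-1]$ and $f_1 := f - f_0$ has zero mean in coordinate $n$. Writing $f_1(x) = \sum_{a \in \Omega_n} \mathbf{1}[x_n = a]\, g_a(x_{[n-1]})$ with $g_a := f_{\{n\} \to a} - \mathbb{E}_n f$, an Efron-Stein analysis (together with Fact~\ref{fact:restrictOD}) shows that each $g_a$ has degree at most $r - 1$ on the remaining $n-1$ coordinates, since only components $f^{=T}$ with $n \in T$ contribute to $g_a$, and each such component depends on at most $|T|-1 \leq r-1$ coordinates of $[n-1]$; by the triangle inequality, each $g_a$ is also $(r-1, 4\beta)$-global. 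Using $T_\rho = T_\rho^{(n)} \otimes T_\rho^{[n-1]}$ and the identity $T_\rho^{(n)} f_1 = \rho f_1$ (valid because $\mathbb{E}_n f_1 = 0$), we obtain $T_\rho f = F + \rho H$ where $F := T_\rho^{[n-1]} f_0$ and $H := T_\rho^{[n-1]} f_1$. The inductive hypothesis then gives $\|F\|_4^4 \leq \beta \|f_0\|_2^2$ and $\|H\|_4^4 = \sum_a \nu_n(a) \|T_\rho^{[n-1]} g_a\|_4^4 \leq 4\beta \|f_1\|_2^2$.

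The main technical obstacle is combining these two estimates into a single bound $\|T_\rho f\|_4^4 \leq \beta \|f\|_2^2$ without letting constants blow up across the recursion. Expanding $\mathbb{E}[(F + \rho H)^4]$, the cross term $\mathbb{E}[F^3 H]$ vanishes thanks to $\mathbb{E}_n H = 0$, while the surviving cross terms $\mathbb{E}[F^2 H^2]$ and $\mathbb{E}[F H^3]$ must be controlled via H\"older using the individual inductive bounds. The hypothesis $\rho \leq 1/80$ is chosen precisely so that the $\rho$-factors $\rho^2, \rho^3, \rho^4$ attached to these surviving terms absorb both the binomial coefficients $\binom{4}{k}$ from the expansion and the factor of $4$ entering through the globalness of each $g_a$, keeping the inductive constant stable at $\beta$. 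Making the closure sharp is the delicate point; it may require passing to a strengthened inductive statement, for example a two-function form bounding $\langle (T_\rho f)^2, (T_\rho g)^2 \rangle$ in terms of the globalness and $L^2$-norms of $f$ and $g$, so as to decouple the $F$ and $H$ contributions cleanly without degrading the sharp $\beta^{1/4}$ constant.
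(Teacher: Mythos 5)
Your high-level idea --- inducting on the number of coordinates while using globalness to control the single-coordinate restrictions $g_a = f_{\{n\}\to a} - \mathbb{E}_n f$ --- is indeed the spirit of how global hypercontractivity is established, and your degree and globalness bookkeeping for the $g_a$ is correct. But the scalar inductive invariant $\norm{\T_\rho f}_4^4 \le \beta\norm{f}_2^2$ cannot close, and no adjustment of $\rho$ saves it. Writing $\T_\rho f = F + \rho H$ with $\norm{F}_4^4 \le \beta\norm{f_0}_2^2$ and $\norm{H}_4^4 \le 4\beta\norm{f_1}_2^2$, expanding and applying H\"older and AM--GM to the surviving cross terms gives, with $a=\norm{f_0}_2^2$, $b=\norm{f_1}_2^2$, a bound of the form $\beta a + c_1\rho^2\beta\sqrt{ab} + c_2\rho^3\beta a^{1/4}b^{3/4} + c_3\rho^4\beta b$. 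For this to stay below $\beta(a+b)$ one would need $c_1\rho^2\sqrt{a/b} + c_2\rho^3(a/b)^{1/4} + c_3\rho^4 \le 1$, which fails for every $\rho>0$ as soon as $a/b$ is large --- exactly the regime where $f_1$ is small but nonzero, so the $F^4$ term already eats the entire budget $\beta\norm{f}_2^2$ and the positive $H$-contributions have nothing left to charge against. You correctly sense that a strengthened inductive statement is required, but the bilinear form $\langle(\T_\rho f)^2,(\T_\rho g)^2\rangle$ you propose does not fix this imbalance.

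The paper takes a different route that sidesteps the induction entirely. It first proves Theorem~\ref{thm:correct_hypercontract}, which bounds $\norm{\T_\rho f}_4^4$ by $\sum_{S\subset[n]}\E_{\mathbf{y}\sim\nu_S}\big[\norm{(\L_S f)_{S\to\mathbf{y}}}_2^4\big]$; this Laplacian-based functional is the correct tensorizable invariant (it is the quantity that makes the inductive argument close in the proof of Theorem 7.1 of \cite{KLLM}, which is invoked as a black box after embedding $\Omega$ into a biased Boolean cube in Section~\ref{sec:pf_hyp}). Once Theorem~\ref{thm:correct_hypercontract} is available, the lemma follows by a short computation: Claim~\ref{claim:global_laplacian_bd} converts $(r,\beta)$-globalness into the pointwise estimate $\norm{(\L_S f)_{S\to y}}_2 \le 2^{|S|}\sqrt{\beta}$, the $2^{|S|}$ is killed by first passing to $\T_{1/2}f$ (so that $\L_S$ picks up a compensating $2^{-|S|}$), one factor of $\norm{\cdot}_2^2$ is then bounded by $\beta$, and the remaining sum collapses by Parseval to $\beta\norm{f}_2^2$. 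So the missing ingredient in your proposal is precisely the refined right-hand side of Theorem~\ref{thm:correct_hypercontract}, not a two-function variant; as written, your argument has a genuine gap at the inductive step.
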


\begin{proof}[Proof of Theorem \ref{thm:global_stab2}]
We start by showing that there exist $\rho',c'>0$
such that the statement of the theorem holds
with $(\rho',c')$ in place of $(\rho,c)$.
We take $\rho'=2^{-200}$ and $c'=1/100$.
First we note that by globalness
(applied with no restriction) we have
$\mu(f)=\mb{E}[f^2] \le \mu^{.99}$.
Let $d = \round{c' \log(1/\mu)}$. We have
  \[
  {\sf Stab}_{\rho}(f)
  = \sum\limits_{S\subset[n]}{\rho^{\card{S}} \norm{f^{=S}}_2^2}
  \leq \sum\limits_{\card{S}\leq d}{\rho^{\card{S}}
  \norm{f^{=S}}_2^2}   +\rho^{d+1}\norm{f^{>d}}_2^2
  =\inner{f}{T_{\rho}f^{\leq d}}
  + \rho^{d+1}\norm{f^{>d}}_2^2.  \]
Clearly $\rho^{d+1}\norm{f^{>d}}_2^2
\le 2^{-2\log(1/\mu)} = \mu^2$.
By Holder's inequality
\[ \inner{f}{T_{\rho}f^{\leq d}}\leq
  \norm{f}_{4/3} \norm{T_{\rho} f^{\leq d}}_{4}
\leq \mu^{.99} \norm{f}_2^{1/2}
\le (\mu^{.99})^{3/2}, \]
using Lemma \ref{corr:hypercontract_global}
and $\norm{f}_{4/3}=\mu(f)^{3/4}$ (as $f$ is Boolean),
so ${\sf Stab}_{\rho}(f)
\leq (\mu^{.99})^{3/2} + \mu^2\leq \mu^{1.01}$.

Now we will deduce the full version
of Theorem \ref{thm:global_stab2},
i.e.\ for any $\rho<1$ there is $c>0$
such that the statement holds.
We let $d = \ceil{\log(\rho/\rho')}$,
$t=2^d$ and $c = c'/4t$.
By Lemma \ref{lem:changenoise}
we have ${\sf Stab}_{\rho}(f) \leq
\norm{f}_2^{2(1-1/t)} {\sf Stab}_{\rho^t}(f)^{1/t}$.
We have
${\sf Stab}_{\rho^t}(f) \le {\sf Stab}_{\rho'}(f)$
by monotonicity of $\rho \mapsto {\sf Stab}_{\rho}(f)$
and $\rho^t \le \rho'$. By globalness
$\mu(f)=\mb{E}[f^2] \le \mu^{1-c}$, so
${\sf Stab}_{\rho}(f)\leq
\mu^{ (1-c)(1-1/t) + (1+c')/t  }
= \mu^{1 - c + (c'+c)/t} \leq \mu^{1+c}$.
\end{proof}

\subsection{Noise stability via global hypercontractivity}

As mentioned in the previous subsection,
in this subsection we will prove the noise stability
estimate Lemma \ref{corr:hypercontract_global}.
We start with some definitions required to state our
global hypercontractivity inequality.
As before, we consider a product measure
$\nu = \prod_{i=1}^n \nu _i$ on $\OO = \prod_{i=1}^n \OO_i$.
For $S \sub [n]$ we let $\nu_S$ denote the product measure
$\prod_{i \in S} \nu_i$ on $\OO_S = \prod_{i \in S} \OO_i$.

Given $f \in L^2(\OO,\nu)$ with orthogonal decomposition
$f = \sum_{S \sub [n]} f^{=S}$ and $T \sub [n]$,
the {\it Laplacian} of $f$ according to $T$
is the function $\L_T f\colon[m]^n\to\mathbb{R}$ defined by
\[(\L_T f)(x) = \sum\limits_{S\supseteq T}{f^{=S}(x)}.\]
If $T$ is a singleton $\set{i}$,
we denote the Laplacian by $\L_i$.
We also require the following alternative,
more combinatorial, definition of the Laplacian.
We let $L_\es$ be the identity operator.
For $i \in [n]$, it is easily noted that
\[ (\L_i f)(x) = f(x) - \Expect{{\bf a}_i\sim \nu_i}{f(x_1,\ldots,x_{i-1},{\bf a}_i,x_{i+1},\ldots,x_n)}.\]
Then, for $T = \set{i_1,\ldots,i_d}$ with $d\geq 2$,
one can show that $L_T$ may be defined alternatively by composition, i.e.\
$\L_T f = \L_{i_d}(\L_{i_{d-1}}(\ldots(\L_{i_1} f)\ldots))$.
It is not hard to check that this definition
does not depend on order in which the Laplacians are taken
and is equivalent to the definition via orthogonal decompositions.

In the next subsection we will prove the following
refined version of the global hypercontractive inequality
on product spaces from \cite{KLLM}.
For simplicity we only consider the version required
for our purposes, where we bound the $4$-norm
after applying noise by a function of the
$2$-norms of the Laplacians.

\begin{thm}\label{thm:correct_hypercontract}
  Let $(\Omega,\nu)$ be a finite product space.
  Then for every $f\colon \Omega\to\mathbb{R}$
  and $\rho\leq 1/160$ we have
  \[  \norm{\T_{\rho} f}_4^4
  \leq \sum\limits_{S\subset [n]} \E_{{\bf y}\sim \nu_S}
  \big[\norm{(\L_S f)_{S\ra {\bf y}}}_2^4\big].  \]
\end{thm}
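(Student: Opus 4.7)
My plan is to prove Theorem~\ref{thm:correct_hypercontract} by induction on $n$, writing $A_S(f):=\E_{\mathbf y\sim\nu_S}\|(\L_Sf)_{S\to\mathbf y}\|_2^4$ so that the target becomes $\|\T_\rho f\|_4^4\le\sum_{S\subseteq[n]}A_S(f)$.

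For the base case $n=1$, I would decompose $f=c+g$ with $c=\E f$ and $g=\L_1f$, whence $\T_\rho f=c+\rho g$ and the inequality reduces to $\E(c+\rho g)^4\le(c^2+\E g^2)^2+\E g^4$. Expanding, this amounts to bounding $6\rho^2c^2\E g^2+4\rho^3c\,\E g^3+\rho^4\E g^4$. I would control the cubic moment by Cauchy--Schwarz, $|\E g^3|\le(\E g^2)^{1/2}(\E g^4)^{1/2}$, and then split the cubic cross-term via weighted AM--GM (using $|ab|\le\tfrac12(a^2+b^2)$ with $a=|c|\rho\sqrt{\E g^2}$ and $b=\rho^2\sqrt{\E g^4}$); this reduces the requirement to $8\rho^2\le 2$, which holds for all $\rho\le 1/2$.

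For the inductive step I would use $\T_\rho=\T_\rho^{(n)}\T_\rho^{(<n)}$ and apply the $n=1$ inequality in coordinate $x_n$ pointwise in $x_{[n-1]}$ to the function $x_n\mapsto(\T_\rho^{(<n)}f)(x_{[n-1]},x_n)$; integrating over $x_{[n-1]}$ yields
\[
\|\T_\rho f\|_4^4 \;\le\; \E_{x_{[n-1]}}\!\Big(\E_{x_n}(\T_\rho^{(<n)}f)^2\Big)^2 \;+\; \E_{x_n}\|\T_\rho^{(<n)}(\L_nf)(\cdot,x_n)\|_4^4.
\]
The second summand matches the $n\in S$ portion of the target exactly: applying the inductive hypothesis to $(\L_nf)(\cdot,x_n)\in L^2(\Omega_{[n-1]})$ and using the compatibility $\L_{S'}[(\L_nf)(\cdot,x_n)]_{S'\to\mathbf y'}=(\L_{S'\cup\{n\}}f)_{(S'\cup\{n\})\to(\mathbf y',x_n)}$, which follows from Fact~\ref{fact:restrictOD}, reassembles the bound as $\sum_{S\ni n}A_S(f)$.

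The delicate and novel ingredient is to bound the first summand by $\sum_{S\subseteq[n-1]}A_S(f)$, and this is where I expect the main obstacle. A straightforward Jensen or Minkowski estimate is lossy, since one can check that $\E_{x_n}A_S^{(n-1)}(f(\cdot,x_n))$ actually dominates $A_S^{(n)}(f)$ rather than the other way round. My plan is therefore to expand $f=\sum_\alpha f_\alpha\otimes\chi_\alpha$ in an orthonormal basis $\{\chi_\alpha\}$ of $L^2(\Omega_n,\nu_n)$ with $\chi_0\equiv 1$, apply Parseval to rewrite $\E_{x_n}(\T_\rho^{(<n)}f)^2=\sum_\alpha(\T_\rho^{(<n)}f_\alpha)^2$, and control the squared sum by a two-function refinement of the inductive hypothesis applied to each pair $(f_\alpha,f_\beta)$. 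The decomposition identifies $f_0$ with $\E_n f$ and each $f_\alpha$ ($\alpha\ne 0$) with a coordinate-$n$ slice of $\L_n f$, so the cross-products reassemble into $A_S(f)$-terms with $n\notin S$. The constant $\rho\le 1/160$ (versus the $1/2$ afforded by the base case) is the slack needed to absorb the Cauchy--Schwarz losses in this reassembly when iterated across all $n$ coordinates, mirroring the role played by $1/80$ in the coarser integrated-influence global hypercontractive inequality of~\cite{KLLM}.
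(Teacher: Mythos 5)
Your base case is correct and the inductive split is sound, as is the identification of the second summand with $\sum_{S \ni n} A_S(f)$ via the compatibility of Laplacians with restrictions. The genuine gap is precisely where you flag it: the claimed bound on $\E_{x_{[n-1]}}\bigl(\E_{x_n}(\T_\rho^{(<n)}f)^2\bigr)^2$ by $\sum_{S\subseteq[n-1]}A_S(f)$. You rightly observe that the straightforward Jensen route fails because $\E_{x_n}A_{S}(f(\cdot,x_n))\ge A_S(f)$ is the wrong direction, but the proposed fix --- a ``two-function refinement of the inductive hypothesis'' applied to all pairs $(f_\alpha,f_\beta)$ after expanding in an orthonormal basis of $L^2(\Omega_n,\nu_n)$ --- is neither stated nor proved. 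After Parseval you face $\sum_{\alpha,\beta}\E\bigl[(\T_\rho^{(<n)}f_\alpha)^2(\T_\rho^{(<n)}f_\beta)^2\bigr]$, and no obvious Cauchy--Schwarz reduces this to the target $\sum_{S\subseteq[n-1]}A_S(f)$: the target is a sum of fourth powers of $L^2$ norms, not a square of a sum, and formulating a two-function version that tensorizes with a dimension-independent constant is essentially as hard as the theorem itself. Your remark that $1/160$ is the slack needed to absorb Cauchy--Schwarz losses ``iterated across all $n$ coordinates'' is also a red flag: a loss accumulated per coordinate would destroy the dimension-free constant that is the whole point.

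The paper does not induct at all and thereby avoids this obstacle. Section~\ref{sec:pf_hyp} encodes $f\colon[m]^n\to\mathbb R$ as a multilinear $g\colon\{0,1\}^{nm}\to\mathbb R$ over $nm$ independent biased bits $z_{i,j}$ with bias $p_{i,j}=\nu_i(j)/4$, via $g=\sum_{S,x}\sigma_{S,x}\card{f^{=S}(x)}\chi_{S,x}$. A term-by-term comparison of the fourth-moment expansions shows $\norm{\T_\rho f}_4^4\le\norm{\T_{4\rho}g}_4^4$, and then the already-established biased-cube global hypercontractivity inequality (Theorem~\ref{thm:p_biased_hyp}, i.e.\ Theorem~7.1 of~\cite{KLLM}) is applied to $g$; the derivative norms $\norm{\pl_{(S,x)}g}_2$ are translated back into $\norm{(\L_Sf)_{S\to x}}_2$ using $\sigma_{S,x}^{-2}\sigma_{(T,x\circ y)}^2\le\nu_{T\sm S}(y)$. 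This encoding is what supplies the dimension-free constant; your inductive route would in effect need to reprove the analogue of Theorem~\ref{thm:p_biased_hyp} from scratch, which is exactly the content your sketch leaves open.
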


Along with Theorem \ref{thm:correct_hypercontract},
the proof of Lemma \ref{corr:hypercontract_global}
also requires the following consequence of globalness
for norms of Laplacians.

\begin{claim}\label{claim:global_laplacian_bd}
Let $f\colon\Omega\to\mathbb{R}$ be $(r,\eps)$-global,
$T\sub[n]$ with $|T| \leq r$ and  $y\in [m]^T$. Then
$\norm{(\L_T f)_{T\ra y}}_2\leq 2^{\card{T}}\sqrt{\eps}$.
\end{claim}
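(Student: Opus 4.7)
The plan is to combine the combinatorial/compositional definition of the Laplacian given in the excerpt with the globalness hypothesis applied to small restrictions of $f$, bounding everything by a crude triangle inequality with $2^{|T|}$ terms.

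First I would rewrite the Laplacian operator purely in terms of averaging operators. Writing $\mathrm{E}_i f$ for the operator that averages $f$ over the $i$-th coordinate (with respect to $\nu_i$), the combinatorial description $\L_i = I - \mathrm{E}_i$ together with the fact that the $\mathrm{E}_i$ for distinct $i$ commute (they act on different coordinates) gives the expansion
\[
\L_T \;=\; \prod_{i\in T}(I-\mathrm{E}_i) \;=\; \sum_{S\subseteq T}(-1)^{|T\setminus S|}\,\mathrm{E}_{T\setminus S},
\]
where $\mathrm{E}_{T\setminus S} = \prod_{i\in T\setminus S}\mathrm{E}_i$ averages over all the coordinates in $T\setminus S$. (This matches the orthogonal-decomposition definition $\L_T f=\sum_{S\supseteq T}f^{=S}$, but the combinatorial form is what we need.)

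Next I would restrict to $T\to y$. For each $S\subseteq T$, the function $\mathrm{E}_{T\setminus S}f$ depends only on the coordinates in $[n]\setminus(T\setminus S)$, so its restriction $(\mathrm{E}_{T\setminus S}f)_{T\to y}$ is a function on $\Omega_{[n]\setminus T}$ that equals, at a point $z\in\Omega_{[n]\setminus T}$,
\[
(\mathrm{E}_{T\setminus S}f)_{T\to y}(z)\;=\;\E_{u\sim\nu_{T\setminus S}}\bigl[f_{S\to y_S}(u,z)\bigr].
\]
In other words, it is the average of the restricted function $f_{S\to y_S}$ over the $T\setminus S$ coordinates. By Jensen's inequality (equivalently, contraction of conditional expectation in $L^2$), its $L^2$-norm on $\Omega_{[n]\setminus T}$ is at most $\|f_{S\to y_S}\|_2$. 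Since $|S|\leq|T|\leq r$, the globalness hypothesis gives $\|f_{S\to y_S}\|_2^2\leq\eps$, hence $\|(\mathrm{E}_{T\setminus S}f)_{T\to y}\|_2\leq\sqrt{\eps}$.

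Finally, the triangle inequality applied to the signed sum for $\L_T$ yields
\[
\bigl\|(\L_T f)_{T\to y}\bigr\|_2 \;\leq\; \sum_{S\subseteq T}\bigl\|(\mathrm{E}_{T\setminus S}f)_{T\to y}\bigr\|_2 \;\leq\; 2^{|T|}\sqrt{\eps},
\]
as required. There is no real obstacle in the argument; the only subtle point is to verify carefully that after restricting $T\to y$ the averaged-out coordinates truly drop out, so that one lands on the restriction $f_{S\to y_S}$ to which globalness applies. The factor $2^{|T|}$ comes from the $2^{|T|}$ subsets $S\subseteq T$ and is unavoidable at this level of generality (a sharper bound could be obtained via orthogonality, but this crude bound suffices for the applications).
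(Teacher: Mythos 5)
Your proof is correct and is essentially the paper's argument, only with different bookkeeping. The paper uses the same combinatorial expansion of $\L_T$ (Claim~\ref{claim:laplacian_combinatorial}) and then writes the restricted term as $\Expect{{\bf a}\sim\nu_S}{f_{T\ra ({\bf a}, y_{T\sm S})}}$, applying globalness to restrictions of size exactly $|T|$ followed by Minkowski's integral inequality; you instead identify $(\mathrm{E}_{T\sm S}f)_{T\to y}$ as an average of $f_{S\to y_S}$ (restriction of size $|S|\le|T|$) and invoke $L^2$-contraction of conditional expectation. The two formulations differ only in which restriction globalness is applied to and whether Jensen or Minkowski is invoked; the estimates are identical and the factor $2^{|T|}$ arises from the same source.
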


The proof requires the following
alternative formula for Laplacians.

\begin{claim}\label{claim:laplacian_combinatorial}
For any $f\colon\Omega\to\mathbb{R}$, $T\subset[n]$ we
have $(\L_T f)(z) = \sum\limits_{S\subset T}(-1)^{\card{S}} \Expect{{\bf a}\sim\nu_S}{f(x_S = {\bf a}, x_{\ov{S}} = z_{\ov{S}})}$.
\end{claim}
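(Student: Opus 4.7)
My plan is to prove Claim~\ref{claim:laplacian_combinatorial} by working with the combinatorial (composition) definition of the Laplacian rather than the Efron--Stein one, since the right-hand side of the claim is itself combinatorial. Write $\L_i = I - \mathrm{E}_i$, where $\mathrm{E}_i$ is the averaging operator $(\mathrm{E}_i f)(z) = \E_{\mathbf{a}_i \sim \nu_i}[f(z_1,\dots,z_{i-1},\mathbf{a}_i,z_{i+1},\dots,z_n)]$; this is exactly the second definition of $\L_i$ given in the excerpt. Note that $\mathrm{E}_i$ and $\mathrm{E}_j$ act on disjoint coordinates (in the sense that $\mathrm{E}_j$ preserves the property of being constant in coordinate $i$, and vice versa), hence they commute, and $\L_i, \L_j$ commute as well. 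Thus the composition formula for $\L_T$ is unambiguous.

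Given commutativity, I would expand the product:
\[
\L_T = \prod_{i \in T}(I - \mathrm{E}_i) = \sum_{S \subseteq T} (-1)^{|S|} \prod_{i \in S} \mathrm{E}_i,
\]
where the products are taken in any order. It then suffices to identify $\prod_{i \in S} \mathrm{E}_i$ with the multivariate averaging operator $\mathrm{E}_S$ defined by $(\mathrm{E}_S f)(z) = \E_{\mathbf{a} \sim \nu_S}[f(x_S = \mathbf{a}, x_{\ov S} = z_{\ov S})]$. This is an immediate consequence of the fact that $\nu_S = \prod_{i \in S}\nu_i$ is a product measure, so Fubini lets one iterate the single-coordinate averages. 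Substituting gives the claimed formula.

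Alternatively, one can induct on $|T|$: the base case $T=\emptyset$ is the identity, and for $T = T' \cup \{i\}$ with $i \notin T'$ one applies $\L_i = I - \mathrm{E}_i$ to the inductive formula for $\L_{T'} f$ and uses that $\mathrm{E}_i$ commutes with averaging over $S \subseteq T'$ (because those averages leave the $i$-th coordinate untouched) to combine terms with $S$ and $S \cup \{i\}$ with the correct signs.

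The only subtlety is to confirm that this combinatorial definition of $\L_T$ agrees with the orthogonal-decomposition definition $(\L_T f)(x) = \sum_{S \supseteq T} f^{=S}(x)$, but this is already asserted in the paragraph just before the claim, so I would simply invoke it. The argument involves no hard estimates; the main conceptual point is just the commutativity of the single-coordinate averaging operators, which is what makes the inclusion--exclusion expansion work.
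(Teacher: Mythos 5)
Your proposal is correct, and both your primary route (direct algebraic expansion of $\prod_{i\in T}(I-\mathrm{E}_i)$ using commutativity and inclusion--exclusion) and your alternative (induction on $|T|$) are essentially the paper's argument; the paper presents it as an induction, which is just the step-by-step unpacking of your product expansion. The key points you identify---commutativity of the single-coordinate averaging operators, and the fact that $\nu_S$ is a product measure so iterated averages combine via Fubini---are exactly the facts the paper's inductive step relies on.
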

\begin{proof}
We argue by induction on $|T|$.
The claim is immediate from the definition
for $\card{T} = 0, 1$. Let $\card{T} = d+1\geq 2$,
and write $T = T'\cup \set{i}$ with $|T'|=d$.
Then by definition and the induction hypothesis
  \[  (\L_T f)(z) = \L_i (\L_{T'} f)(z)
  = \L_i \sum\limits_{S\subset T'}(-1)^{\card{S}} \Expect{{\bf a}\sim\nu_{S}}{f(x_S = {\bf a}, x_{\ov{S}} = z_{\ov{S}})}. \]
By linearity and the definition of $\L_i$ we deduce
  \[  (\L_T f)(x)
  =\sum\limits_{S\subset T'}(-1)^{\card{S}} \Expect{{\bf a}\sim\nu_{S}}{f(x_S = {\bf a}, x_{\ov{S}} = z_{\ov{S}}) -
  \Expect{{\bf b}\sim\nu_i}{f(x_{S} = {\bf a}, x_i = {\bf b}, x_{\ov{S\cup\set{i}}} = z_{\ov{S\cup\set{i}}})}}.  \]
The claim follows as
$({\bf a},{\bf b})$ is distributed
according to $\nu_{S\cup\set{i}}$.
\end{proof}

\begin{proof}[Proof of Claim \ref{claim:global_laplacian_bd}]
By Claim~\ref{claim:laplacian_combinatorial} and globalness
we have
  \begin{align*}
 (\L_T f)_{T\ra y}(z)
&=\sum\limits_{S\subset T}(-1)^{\card{S}}
\Expect{{\bf a}\sim\nu_S}{
f(x_S = {\bf a}, x_{T\sm S} = y_{T\sm S}, x_{\ov{T}} = z_{\ov{T}})}\\
  &=\sum\limits_{S\subset T}(-1)^{\card{S}} \Expect{{\bf a}\sim\nu_S}{f_{T\ra ({\bf a}, y_{T\sm S})}(z_{\ov{T}})},
  \end{align*}
  and taking norm over $z$ and using the triangle inequality yields
  \[
  \norm{(\L_T f)_{T\ra y}}_2
    \leq  \sum\limits_{S\subset T}
  \Expect{{\bf a}\sim\nu_S}
  {\norm{f_{T\ra ({\bf a}, y_{T\sm S})}}_2}
 \leq 2^{\card{T}}\sqrt{\eps}.\qedhere
  \]
\end{proof}

We conclude this subsection with our estimate for
noise stability of global functions.

\begin{proof}[Proof of Lemma \ref{corr:hypercontract_global}]
Suppose $f\colon\Omega\to\mathbb{R}$ is $(r,\beta)$-global
of degree $r$. Let $\rho=1/80$.
By Theorem~\ref{thm:correct_hypercontract}
  \[  \norm{\T_{\rho/2} f}_4^4
  \leq \sum\limits_{S\subset [n]}
  \E_{{\bf y}\sim \nu_S}\big[\norm{(\L_S \T_{1/2} f)_{S\ra {\bf y}}}_2^4\big].  \]
By assumption on $f$ we only need to consider $|S| \le r$,
and for such $S$  by Claim~\ref{claim:global_laplacian_bd}
we have  $\norm{(\L_S f)_{S\ra y}}_2\leq 2^{\card{S}} \sqrt{\beta}$
for all $y\in\Omega_S$. As
$\norm{(\L_S \T_{1/2} f)_{S\ra y}}_2^2
\leq 4^{-\card{S}}\norm{(\L_S f)_{S\ra y}}_2^2$ we deduce
  \[  \norm{\T_{\rho/2} f}_4^4
  \leq\beta \sum\limits_{S\subset [n]}
  \E_{{\bf y}\sim \nu_S}\big[\norm{(\L_S \T_{1/2}f)_{S\ra {\bf y}}}_2^2\big].  \]
We estimate each summand using Parseval as
\[  \E_{{\bf y}\sim \nu_S}
\big[\norm{(\L_S \T_{1/2} f)_{S\ra {\bf y}}}_2^2\big]
=\sum\limits_{T\supseteq S}{4^{-\card{T}} \norm{f^{=T}}_2^2}
\leq \sum\limits_{T\supseteq S}{2^{-\card{T}}
\norm{f^{=T}}_2^2}, \]
  \[  \text{ so } \
  \bB^{-1} \norm{\T_{\rho/2} f}_4^4 \leq
  \sum\limits_{S\subset[n]}
  \sum\limits_{T\supseteq S}{2^{-\card{T}} \norm{f^{=T}}_2^2}
  =\sum\limits_{T\subset[n]}{\norm{f^{=T}}_2^2}
  = \norm{f}_2^2.  \qedhere  \]
\end{proof}

\subsection{Global hypercontractivity} \label{sec:pf_hyp}

We conclude this section by proving
Theorem \ref{thm:correct_hypercontract},
via our global hypercontractivity
inequality from \cite{KLLM}.
We start by stating this inequality,
for which we require some notation.
Let ${\bf Z}_1,\ldots,{\bf Z}_n$
be independent random variables,
each with mean $0$, variance $1$ and
$\Expect{}{|{\bf Z}_i|^4}\leq \sigma_i^{-2}$.
For $S\subset [n]$, we let
${\bf Z}_S = \prod\limits_{i\in S}{{\bf Z}_i}$
and $\sigma_S = \prod\limits_{i\in S}{\sigma_i}$.
We consider multilinear functions
$g({\bf Z}_1,\ldots,{\bf Z}_n)
= \sum\limits_{S\subset[n]}{a_S {\bf Z}_S}$
with all $a_S\in \mathbb{R}$.
For $S \sub [n]$ the discrete derivative of $g$ at $S$
is $\pl_S g({\bf Z}) = \frac{1}{\sigma_S}
\sum\limits_{T\supseteq S}{a_T {\bf Z}_{T\sm S}}$.
\begin{thm}[Theorem 7.1, \cite{KLLM}]\label{thm:p_biased_hyp}
   In the above set up, for $\rho \in [0,1/16]$ we have
  $\norm{\T_{\rho} g}_4^4
  \leq\sum\limits_{S\subset [n]}\sigma_{S}^2
  \norm{\pl_S g}_2^4$.
\end{thm}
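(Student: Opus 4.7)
The plan is to prove a stronger bilinear (two-function) version of the inequality by induction on $n$, which tensorises cleanly and specialises to the stated theorem when the two functions coincide. Concretely, the goal will be to show for all multilinear $f,g$ and $\rho \le 1/16$ that
\[
 \norm{T_{\rho} f \cdot T_{\rho} g}_2^2 \;\le\; \sum_{S\subset[n]} \sigma_S^2 \,\norm{\pl_S f}_2^2\,\norm{\pl_S g}_2^2,
\]
which gives Theorem~\ref{thm:p_biased_hyp} upon taking $f=g$ and using $(T_\rho f)^4=(T_\rho f\cdot T_\rho f)^2$.

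The base case $n=1$ is a direct computation. Writing $f=A+aZ_1$ and $g=B+bZ_1$, one expands $(T_\rho f\cdot T_\rho g)^2$, uses $\mathbb{E}[Z_1]=0$ to kill the linear terms, $\mathbb{E}[Z_1^2]=1$, and the moment bounds $\mathbb{E}[Z_1^4]\le\sigma_1^{-2}$ and $|\mathbb{E}[Z_1^3]|\le\sigma_1^{-1}$ (the latter by Cauchy--Schwarz on $|Z_1|^3 = |Z_1|\cdot Z_1^2$). The only genuinely delicate term is the cubic cross term $2\rho^3\mathbb{E}[Z_1^3](Ab+aB)ab$, which is handled by AM--GM, e.g.\ $\sigma_1^{-1}|Ab||ab|\le \tfrac12(A^2b^2+\sigma_1^{-2}a^2b^2)$; after similar AM--GM estimates on the $4\rho^2 ABab$ term, all the error contributions absorb into the target RHS $(A^2+a^2)(B^2+b^2)+\sigma_1^{-2}a^2b^2$ provided $3\rho^2+\rho^3\le 1$ and $2\rho^3+\rho^4\le 1$, both comfortably satisfied for $\rho\le 1/16$. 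This is the step that fixes the constant and will be the main technical obstacle.

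For the inductive step, decompose $f=f_0+Z_n f_1$ and $g=g_0+Z_n g_1$ where $f_0,f_1,g_0,g_1$ do not depend on $Z_n$, and let $\tilde f_i = T_\rho^{(1,\ldots,n-1)} f_i$, $\tilde g_i = T_\rho^{(1,\ldots,n-1)} g_i$. Since the noise operators on different coordinates commute and $T_\rho$ multiplies a multilinear $Z_n$-factor by $\rho$, we have $T_\rho f=\tilde f_0+\rho Z_n\tilde f_1$ and similarly for $g$. Conditioning on the first $n-1$ coordinates and applying the $n=1$ base case pointwise gives
\[
 \mathbb{E}_{Z_n}\big[(T_\rho f\cdot T_\rho g)^2\big]\;\le\;(\tilde f_0^2+\tilde f_1^2)(\tilde g_0^2+\tilde g_1^2)+\sigma_n^{-2}\tilde f_1^2\tilde g_1^2.
\]
Taking expectation over $Z_1,\dots,Z_{n-1}$ bounds $\norm{T_\rho f\cdot T_\rho g}_2^2$ by a sum of four terms $\norm{T_\rho^{(1,\ldots,n-1)} f_i\cdot T_\rho^{(1,\ldots,n-1)} g_j}_2^2$ plus $\sigma_n^{-2}$ times the $(i,j)=(1,1)$ term, to each of which the inductive hypothesis applies.

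Finally, to match the bound with the RHS on $n$ variables, split the target sum according to whether $n\in S$. For $S\subset[n-1]$, the orthogonality of $Z_n$ gives $\norm{\pl_S f}_2^2=\norm{\pl_S f_0}_2^2+\norm{\pl_S f_1}_2^2$, so expanding the product yields exactly the four cross terms produced by the induction. For $S=S'\cup\{n\}$, a direct calculation shows $\pl_S f=\sigma_n^{-1}\pl_{S'} f_1$, so $\sigma_S^2\norm{\pl_S f}_2^2\norm{\pl_S g}_2^2=\sigma_n^{-2}\sigma_{S'}^2\norm{\pl_{S'}f_1}_2^2\norm{\pl_{S'}g_1}_2^2$, which matches the remaining $\sigma_n^{-2}$-weighted term. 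This completes the induction with no loss in $\rho$, so the constant is indeed the one coming out of the $n=1$ base case.
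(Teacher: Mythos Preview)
The paper does not give its own proof of this statement: Theorem~\ref{thm:p_biased_hyp} is quoted verbatim as Theorem~7.1 of \cite{KLLM} and used as a black box in Section~\ref{sec:pf_hyp} to derive Theorem~\ref{thm:correct_hypercontract}. There is therefore no in-paper proof to compare against.

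Your proposed proof is correct and is essentially the standard approach to such hypercontractive inequalities (and is indeed the scheme used in \cite{KLLM}): strengthen to a two-function statement
\[
 \norm{T_\rho f\cdot T_\rho g}_2^2 \le \sum_{S\subset[n]}\sigma_S^2\,\norm{\pl_S f}_2^2\,\norm{\pl_S g}_2^2,
\]
verify the one-variable base case by a direct moment computation (using $|\mathbb{E}Z^3|\le\sigma^{-1}$ via Cauchy--Schwarz and $\mathbb{E}Z^4\le\sigma^{-2}$), and then tensorise by decomposing $f=f_0+Z_n f_1$, applying the base case conditionally in the last coordinate, and invoking the inductive hypothesis on the resulting $(n-1)$-variable products. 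The bookkeeping you outline for matching the two sides, namely $\norm{\pl_S f}_2^2=\norm{\pl_S f_0}_2^2+\norm{\pl_S f_1}_2^2$ for $S\subset[n-1]$ and $\pl_{S'\cup\{n\}}f=\sigma_n^{-1}\pl_{S'}f_1$, is exactly right and closes the induction without loss. The numerical constraints $3\rho^2+\rho^3\le 1$ and $2\rho^3+\rho^4\le 1$ are comfortably satisfied at $\rho=1/16$ (indeed for considerably larger $\rho$), so the constant matches.
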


We will reduce Theorem \ref{thm:correct_hypercontract}
to Theorem \ref{thm:p_biased_hyp} as follows.
Suppose $(\OO,\nu)$ is a product space
with $\Omega = [m]^n$ and $f \in L^2(\OO,\nu)$.
We will simulate $f$ via a function $g:\{0,1\}^{nm} \to {\mathbb R}$
which takes $nm$ biased random bits
$\set{{\bf z}_{i,j}}_{i\in[n],j\in[m]}$,
where the bias of ${\bf z}_{i,j}$ is $p_{i,j} = \nu_i(j)/4$.
Let $\sigma_{i,j} = \sqrt{p_{i,j}(1-p_{i,j})}$
and $\chi_{i,j}(z_{i,j}) = ({z}_{i,j} - p)/\sigma_{i,j}$.
We note that ${\chi }_{i,j}$ satisfy the conditions
in the above setup, i.e.\ $\mb{E}{\chi }_{i,j}=0$,
$\mb{E}{\chi }_{i,j}^2=1$,
$\mb{E}{\chi }_{i,j}^4 \le \sS_{i,j}^{-2}$.
For any $S\subset[n]$ and $x\in\Omega_S$
we define the corresponding character
$\chi_{S,x}: \{0,1\}^{nm} \to {\mathbb R}$ for $z = (z_{i,j}: i\in [n], j\in [m])$ by
setting ${\chi }_{S,x} ( z ) = \prod\limits_{i\in S}{\chi_{i,x_i}}(z_{i,x_i})$;
we also write
$\sigma_{S,x} = \prod\limits_{i\in S}{\sigma_{i,x_i}}$.
We then define $g\colon\power{nm}\to\mathbb{R}$ by setting
\[ g(z) = \sum_{S \sub [n]} \sum_{x \in \OO_S}
 \sigma_{S,x} \card{f^{=S}(x)} \chi_{S,x}(z) .\]

\begin{claim}
  $\norm{\T_{\rho} f}_4^4 \leq \norm{\T_{4\rho} g}_4^4$.
\end{claim}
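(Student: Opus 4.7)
My plan is a direct term-by-term comparison of the two fourth norms after expanding both sides. The construction of $g$ uses the \emph{absolute values} $|f^{=S}(x)|$ precisely so that its $L^4$-expansion has only non-negative terms, and we compare the surviving terms to the $f$-side.

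First I will expand $\norm{\T_{\rho}f}_4^4=\sum_{(S_1,\ldots,S_4)}\rho^{\sum_k|S_k|}\E_y\prod_k f^{=S_k}(y)$ by writing each $f^{=S_k}(y)=\sum_{x_k\in\Omega_{S_k}}F^{=S_k}(x_k)\mathbf{1}[y_{S_k}=x_k]$. The expectation collapses to a sum over \emph{consistent} tuples $\vec x=(x_1,\ldots,x_4)$, namely those with $x_k(i)=x_{k'}(i)$ whenever $i\in S_k\cap S_{k'}$, weighted by $\prod_{i\in\bigcup S_k}\nu_i(v_i)\prod_k F^{=S_k}(x_k)$, where $v_i$ is the common value. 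Since each $F^{=S_k}$ is mean-zero in each of its coordinates, any $\vec S$ in which some $i$ has $|K_i|=1$ (writing $K_i:=\{k:i\in S_k\}$) contributes zero, so one may restrict the outer sum to those $\vec S$ with $|K_i|\neq 1$ for all $i\in\bigcup S_k$. Bounding the signed sum by its moduli gives
\[
\norm{\T_{\rho}f}_4^4 \le \sum_{\vec S:\,|K_i|\neq 1\,\forall i}\rho^{\sum_k|S_k|}\sum_{\vec x\text{ consistent}}\prod_k|F^{=S_k}(x_k)|\prod_{i\in\bigcup S_k}\nu_i(v_i).
\]

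Next I will expand $\norm{\T_{4\rho}g}_4^4$ in the same way. All coefficients $\sigma_{S,x}|f^{=S}(x)|$ in $g$ are non-negative, and the 4-point character expectation factorises coordinate-wise as $\prod_{(i,j)}\E\chi_{i,j}^{c_{i,j}}$ with $c_{i,j}=|\{k\in K_i:x_k(i)=j\}|$; each factor is non-negative because $p_{i,j}<1/2$ makes all moments of $\chi_{i,j}$ non-negative, and vanishes whenever any $c_{i,j}=1$. So $\norm{\T_{4\rho}g}_4^4$ is a sum of non-negative terms, and discarding everything except the \emph{diagonal} configurations (consistent $\vec x$, so that $c_{i,v_i}=|K_i|$ for a single value $v_i$) gives a lower bound of the same structural form as the upper bound above, with $\rho^{\sum_k|S_k|}\prod_i\nu_i(v_i)$ replaced by $(4\rho)^{\sum_k|S_k|}\prod_i\E(z_{i,v_i}-p_{i,v_i})^{|K_i|}$, using $\sigma_{i,j}^{c}\E\chi_{i,j}^{c}=\E(z_{i,j}-p_{i,j})^{c}$.

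The claim therefore reduces to the per-coordinate inequality
\[
4^{|K_i|}\E(z_{i,v_i}-p_{i,v_i})^{|K_i|}\ge \nu_i(v_i) \qquad\text{for every }|K_i|\ge 2,
\]
which is a short direct computation in each of the cases $|K_i|\in\{2,3,4\}$ using $p_{i,v_i}=\nu_i(v_i)/4\le 1/4$, with slack factors of at least $3$, $6$, and $39$ respectively. The main obstacle is the bookkeeping: one must carefully track the expansions on both sides and verify that the diagonal part of $\norm{\T_{4\rho}g}_4^4$ alone dominates $\norm{\T_{\rho}f}_4^4$, with non-diagonal contributions (such as $2$-$2$ splits when $|K_i|=4$) being free gain because all terms are non-negative. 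The calibration $p_{i,j}=\nu_i(j)/4$ built into the construction of $g$ is exactly what makes these numerical per-coordinate inequalities hold.
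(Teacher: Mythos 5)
Your proof is correct and takes essentially the same approach as the paper: restrict the $f$-side expansion to tuples $(S_1,\ldots,S_4)$ with no singly-occupied coordinate, bound by absolute values, lower-bound $\norm{\T_{4\rho}g}_4^4$ by its diagonal contribution (valid because $p_{i,j}<1/2$ makes all terms in the expansion non-negative), and finish with a per-coordinate moment inequality. The paper organizes the last step as the estimate $\mathbb{E}(z_{i,j}-p_{i,j})^q\ge p_{i,j}/4$ for $q\in\{2,3,4\}$ together with $|\bigcup S|\le\tfrac12\sum_k|S_k|$, which is equivalent to your direct inequality $4^{|K_i|}\mathbb{E}(z_{i,v_i}-p_{i,v_i})^{|K_i|}\ge\nu_i(v_i)$ (your stated slack of $39$ for $|K_i|=4$ should be about $21$ at $p=1/4$, but this numerical slip does not affect validity).
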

\begin{proof}
Let ${\cal S}$ be the set of $(S_1,S_2,S_3,S_4)$
where each $S_\aA \sub [n]$ and
$|\{\aA: i \in S_\aA\}| \ne 1$ for all $i \in [n]$.
Expanding the definition of the left hand side,
we can write
\[ \norm{\T_{\rho} f}_4^4  =
  \Expect{{\bf x}\sim \nu}
  {\sum\limits_{(S_1,S_2,S_3,S_4)\in \mathcal{S}}
  {\rho^{\card{S_1} + \ldots + \card{S_4}}
  f^{=S_1}({\bf x})\cdots f^{=S_4}({\bf x})}}.\]
Also, if $S=(S_1,\dots,S_4) \in {\cal S}$
and $x \in \OO_{\bigcup S}$ then
$\Expect{}{\prod_{\aA=1}^4 \sS_{S_\aA,x_{S_{\aA}}}
\chi_{S_\aA,x_{S_{\aA}}}}
\ge \prod_{i \in \bigcup S} (p_{i,x_i}/4)
= 16^{-|\bigcup S|} \nu_{\bigcup S}(x)$,
using $\mb{E}[ (\sS_{i,j} \chi_{i,j})^q] \ge p_{i,j}/4$
when $q \in \{2,3,4\}$, so expanding the right hand side
\[ \norm{\T_{4\rho} g}_4^4
\geq  \sum\limits_{S=(S_1,S_2,S_3,S_4)\in \mathcal{S}}
\sum_{x \in \OO_S}
(4\rho)^{\card{S_1} + \ldots + \card{S_4}}
\card{f^{=S_1}(x)}\cdots \card{f^{=S_4}(x)}
16^{-|\bigcup S|} \nu_{\bigcup S}(x).\]
As $|\bigcup S| \le (\card{S_1}+\ldots+\card{S_4})/2$
the claim follows.
\end{proof}

To bound $\norm{\T_{4\rho} g}_4^4$
we apply $(4\rho )$-biased hypercontractivity
(Theorem \ref{thm:p_biased_hyp}),
which is valid if $4\rho \le 1/16$.
As $\sigma_{S,x}^2\leq \nu_S(x)$ we get
$\norm{\T_{4\rho} g}_4^4
\leq\sum\limits_{S\subset[n],x\in\Omega_{S}}
{\nu_S(x) \norm{\pl_{(S,x)} g}_2^4}$.
For any $S\subset [n]$ and $x\in\Omega_{S}$ we have
\[
\norm{\pl_{(S,x)} g}_2^2
=\frac{1}{\sigma_{S,x}^{2}}
\sum_{T \supseteq S} \sum_{y\in\Omega_{T\sm S}}
\sigma_{(T,x\circ y)}^{2} f^{=T}(x,y)^2
\leq\sum\limits_{T\supseteq S}
\Expect{{\bf y}\sim\nu_{T\sm S}}{f^{=T}(x,{\bf y})^2},
\]
as $\sigma_{S,x}^{-2} \sigma_{(T,x\circ y)}^{2} =
\sigma_{T\sm S, y}^2\leq \nu_{T\sm S}(y)$.
By Fact \ref{fact:restrictOD} and Parseval
we get $ \norm{\pl_{(S,x)} g}_2^2
\leq \norm{(\L_S f)_{S\ra x}}_2^2$, so
\[ \norm{\T_{\rho} f}_4^4 \leq
\norm{\T_{4\rho} g}_4^4
\leq \sum\limits_{S\subset[n], x\in \Omega_S}
{\nu_S(x) \norm{(\L_S f)_{S\ra x}}_2^4}
=\sum\limits_{S\subset[n]}{\Expect{{\bf x}\sim\nu_S}{\norm{(\L_S f)_{S\ra {\bf x}}}_2^4}}.\]
This proves Theorem \ref{thm:correct_hypercontract}.

\section{Moderate alphabets}\label{sec:moderate}

This section contains the proof of our main result
Theorem \ref{thm:main} in the case of moderate alphabets,
i.e.\ $m>m_0(t)$ is large, but not huge (exponential in $n$).
As discussed previously, the strategy is inspired by that
for small $m$, but we must settle for a regularity lemma
(Lemma \ref{lem:regularity_large_m}) that only provides parts
which are uncapturable, so the proof of
the junta approximation theorem becomes considerably harder.

As in the case of small $m$, we want to show that
the restrictions defining the regularity decomposition
form a $t$-intersecting family, so we need to find
cross-agreements of any fixed size between
two pieces of the decomposition.
Again we can reduce to finding cross disagreements
by taking restrictions, but this reduction is not immediate
as with the stronger pseudorandomness condition
in the first part, as uncapturability is not preserved by
arbitrary restrictions. We therefore start in the first
subsection by proving a `fairness proposition' showing that
random restrictions are unlikely to significantly reduce
the measure of a code if it is non-negligible
(for which the threshold is such that
this is only useful when $m$ is not huge).
In the second subsection we then complete
the proof of the main theorem for moderate $m$
assuming the junta approximation theorem,
and of the junta approximation theorem
assuming the existence of fixed cross-agreements
between non-negligible uncapturable codes.

The idea for finding cross disagreements
is to apply the global small set expansion theorem
from the previous section to show that
for any code of small measure we can
substantially increase its measure by a combination
of taking restrictions and applying a gluing operation,
in which we pass to a smaller alphabet
by randomly identifying symbols in each coordinate.
Here we note that any cross disagreement after gluing
must come from a cross disagreement before gluing
(this is why we will reduce to disagreements,
as we do not have any corresponding statement for
finding cross-agreements of some fixed non-zero size).
By applying Hoffman's bound to the glued codes rather
than the original codes we thus obtain a much stronger
bound on the original measures.

We develop the theory of gluings in the third subsection,
which we use for measure boosting in the fourth subsection.
We then prove the existence of fixed cross-agreements
in the final subsection.
The concept of globalness is fundamental throughout,
as it is needed for measure boosting,
and also to maintain some pseudorandomness condition
throughout the repeated restrictions needed
for measure boosting. Indeed, as uncapturability
is not preserved by arbitrary restrictions,
we need a careful combination of taking restrictions
and upgrading uncapturability to globalness.
We must also take care to remove extraneous agreements
that may be introduced by these restrictions,
which is possible as globalness implies uncapturability,
and the definition of uncapturability
is designed for this argument.

\subsection{The fairness proposition}

Here we prove the following `fairness proposition',
analogous to that proved for hypergraphs
by Keller and Lifshitz \cite{KellerLifshitz}.
The proofs are quite similar, but we include
the details for the convenience of the reader.

\begin{proposition}\label{prop:fairness}
For any $\delta>0$ and $s\in\mathbb{N}$
there is $C>0$ such that for any
${\cal F} \sub [m]^n$ with $\mu({\cal F}) \ge e^{-n/C}$,
for uniformly random ${\bf S} \in {[n] \choose s}$
and ${\bf x} \in [m]^{\bf S}$ we have
$\mb{P}[ \mu(\mathcal{F}_{{\bf S}\ra {\bf x}})
\ge (1-\delta)\mu(\mathcal{F}) ] \ge 1-\dD$.
\end{proposition}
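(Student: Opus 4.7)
The plan is to reformulate the statement in terms of the marginal distribution of a uniform sample from $\mathcal{F}$ and then bound a KL divergence using entropy. Let $\mathbf{y}$ be a uniformly random element of $\mathcal{F}$. For a set $S \subset [n]$ of size $s$, write $p_S$ for the marginal distribution of $\mathbf{y}_S$ on $[m]^S$, and let $U$ denote the uniform distribution on $[m]^S$. A direct count gives
\[
\frac{\mu(\mathcal{F}_{S\to x})}{\mu(\mathcal{F})} = \frac{|\mathcal{F}_{S\to x}|/m^{n-s}}{|\mathcal{F}|/m^n} = m^s p_S(x) = \frac{p_S(x)}{U(x)},
\]
so the proposition becomes the statement that, for most pairs $(\mathbf{S},\mathbf{x})$ with $\mathbf{S}$ uniform in $\binom{[n]}{s}$ and $\mathbf{x}$ uniform in $[m]^{\mathbf{S}}$, the density $p_{\mathbf{S}}(\mathbf{x})/U(\mathbf{x})$ is at least $1-\delta$.

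Next I would exploit that $\mathcal{F}$ has large measure to give $\mathbf{y}$ near-maximal entropy: in nats,
\[
H(\mathbf{y}) = \log|\mathcal{F}| = n\log m - \log(1/\mu(\mathcal{F})) \geq n\log m - n/C.
\]
Applying Shearer's entropy inequality to the family $\binom{[n]}{s}$, in which each coordinate is covered by $\binom{n-1}{s-1}$ sets, one obtains
\[
\mathbb{E}_{\mathbf{S}\in\binom{[n]}{s}}\bigl[H(\mathbf{y}_{\mathbf{S}})\bigr] \geq \tfrac{s}{n}H(\mathbf{y}) \geq s\log m - s/C.
\]
Since $D(p_S\|U) = s\log m - H(\mathbf{y}_S)$ for each $S$, this yields $\mathbb{E}_{\mathbf{S}}[D(p_{\mathbf{S}}\|U)] \leq s/C$.

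From here I would apply Markov followed by Pinsker. By Markov's inequality, $\Pr_{\mathbf{S}}[D(p_{\mathbf{S}}\|U) > s/(C\delta)] \leq \delta$; call $\mathbf{S}$ good if this bound fails. For a good $\mathbf{S}$, Pinsker's inequality gives $\|p_{\mathbf{S}}-U\|_{TV} \leq \sqrt{s/(2C\delta)}$. Let $E_{\mathbf{S}} = \{x : p_{\mathbf{S}}(x) < (1-\delta)U(x)\}$; then summing $U(x)-p_{\mathbf{S}}(x) > \delta U(x)$ over $x \in E_{\mathbf{S}}$ gives $\delta\, U(E_{\mathbf{S}}) \leq \|p_{\mathbf{S}}-U\|_{TV}$, so
\[
U(E_{\mathbf{S}}) \leq \frac{1}{\delta}\sqrt{\frac{s}{2C\delta}}.
\]
Choosing $C$ with $C \gg s/\delta^5$ makes this at most $\delta$. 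Combining the two sources of failure, the total probability that $\mu(\mathcal{F}_{\mathbf{S}\to\mathbf{x}}) < (1-\delta)\mu(\mathcal{F})$ is at most $2\delta$; rescaling $\delta$ finishes the proof.

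The main obstacle is step three: entropy naturally gives a KL bound, but the desired conclusion is a pointwise statement about the ratio $p_{\mathbf{S}}/U$. Passing through Pinsker loses a square root (hence the worse dependence $C \sim s/\delta^5$), but this is more than adequate since $C$ is only required to depend on $s$ and $\delta$. A minor point to double-check is the dual form of Shearer used here; this is standard but should be stated carefully so that the cover $\binom{[n]}{s}$ and its covering number $\binom{n-1}{s-1}$ produce exactly the factor $s/n$.
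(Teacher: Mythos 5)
Your proof is correct and takes a genuinely different route from the paper's. The paper handles $s=1$ first by a Chernoff-plus-double-counting contradiction argument (defining $V_i$ as the set of ``bad'' values for coordinate $i$, concentrating the count $Z=|\{i:\mathbf{x}_i\in V_i\}|$ around its mean and deriving two incompatible estimates for $\mathbb{E}[Z\,\mathbbm{1}_{\mathbf{x}\in\mathcal{F}}]$), then inducts on $s$. Your entropy argument avoids induction altogether: the identity $\mu(\mathcal{F}_{S\to x})/\mu(\mathcal{F})=p_S(x)/U(x)$ plus Shearer applied to the cover $\binom{[n]}{s}$ gives $\mathbb{E}_{\mathbf{S}}[D(p_{\mathbf{S}}\|U)]\le s/C$ directly for all $s$, and Markov followed by Pinsker converts this KL bound into the desired statement about the bad set $E_{\mathbf{S}}$. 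Both give constants $C=C(\delta,s)$; the Pinsker step costs a square root, but $C\sim s/\delta^5$ is of course acceptable. The calculations check out (in particular, $\binom{n-1}{s-1}/\binom{n}{s}=s/n$, and $D(p_S\|U)=s\log m-H(\mathbf{y}_S)$ in nats). One aesthetic advantage of your version is that it is fully self-contained and uses Shearer's inequality, which the paper already invokes elsewhere (Lemma~\ref{lem:shearer}), so it fits naturally into the existing toolbox. The paper's version has the modest advantage of being elementary (no information theory), following the corresponding hypergraph argument of Keller--Lifshitz which the authors wished to closely parallel.
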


\begin{proof}
First we consider $s=1$.
For each $i\in[n]$, let $V_i = \sett{a\in [m]}
{\mu(\mathcal{F}_{x_i\ra a}) < (1-\delta)\mu(\mathcal{F})}$.

We suppose for contradiction that the probability
of the complementary event is too large, i.e.\ that
\[ \Prob{{\bf i}\in [n], {\bf a}\in[m]}{{\bf a}\in V_{{\bf i}}}
= \frac{1}{nm}\sum\limits_{i=1}^{n}{\card{V_i}} > \dD. \]

Let $I = \sett{i\in [n]}{\card{V_i}\geq \frac{\delta}{2} m}$.
We note that
$\frac{1}{nm}\sum\limits_{i\in I}{\card{V_i}}\geq \delta/2$.
We consider uniformly random ${\bf x}\in [m]^n$ and
let ${Z} = Z({\bf x}) = |\{i: {\bf x}_i \in V_i\}|$.
Then ${Z}({\bf x}) = \sum_{i \in I} 1_{{\bf x}_i\in V_i}$
is a sum of independent indicator variables with mean
\[ \mb{E}{Z} = \sum_{i \in I} |V_i|/m \ge \dD n/2.\]
Let ${\cal F}'$ be the set of $x \in {\cal F}$
such that $|\{i: x_i \in V_i\}| \ge (1-\dD/2)\mb{E}Z$.
By the Chernoff bound, $\mu({\cal F}') \ge
\mu({\cal F}) - e^{\OO_\dD(n)} \ge (1-\dD/2)\mu({\cal F})$,
provided $C=C(\dD,s)$ is sufficiently large.

Now we estimate
$E := \mb{E}[ Z({\bf x}) 1_{{\bf x} \in {\cal F}} ]$
in two ways. By definition of $V_i$ we have
\[ E = m^{-n} \sum_{x \in {\cal F}} \sum_{i \in I} 1_{x_i \in V_i}
= m^{-n} \sum_{i \in I} \sum_{a \in V_i} |{\cal F}_{x_i \to a}|
\le m^{-1}  \sum_{i \in I} |V_i| (1-\dD)\mu({\cal F})
= (1-\dD)\mu({\cal F}) \mb{E}Z.\]
On the other hand, by definition of ${\cal F}'$ we have
\[ E \ge m^{-n} \sum_{x \in {\cal F}'} (1-\dD/2)\mb{E}Z
= (1-\dD/2) \mu({\cal F}') \mb{E}Z
\ge (1-\dD/2)^2 \mu({\cal F}) \mb{E}Z.\]
These bounds are contradictory,
so the proof for $s=1$ is complete.

For $s \ge 2$ we proceed by induction.
We suppose that the statement holds
for any $\dD'>0$ and $s'<s$ with $C=C(\dD',s')$.
We let $\dD'=\dD/2$ and $s'=s-1$
and consider uniformly random
${\bf S'} \in {[n] \choose s'}$
and ${\bf x'} \in [m]^{{\bf S'}}$.
By the induction hypothesis, which can be applied
if we choose $C(\dD,s) > C(\dD',s')$, we have
$\mb{P}[ E_1({\bf S',x'}) ] \ge 1-\dD'$.
where $E_1({\bf S',x'})$ is the event that
$\mu(\mathcal{F}_{{\bf S'}\ra {\bf x'}})
\ge (1-\dD')\mu(\mathcal{F})$.

For each $S',x'$ such that $E_1(S',x')$ holds
we consider ${\bf S} = S' \cup \{\bf i\}$
and ${\bf x} = (x',{\bf a}) \in [m]^{\bf S}$
for uniformly random ${\bf i} \in [n] \sm S'$
and ${\bf a} \in [m]$.
We have $\mu(\mathcal{F}_{S'\ra x'})
\ge (1-\dD')\mu(\mathcal{F})
> e^{-(n-s+1)/C(\dD',1)}$ for large $C(\dD,s)$.
Applying the base case to ${\cal F}_{S' \ra x'}$
we have $\mb{P}[ E_2({\bf S,x}) ] \ge 1-\dD'$.
where $E_2({\bf S,x})$ is the event that
$\mu(\mathcal{F}_{{\bf S}\ra {\bf x}})
\ge (1-\dD')\mu(\mathcal{F}_{S' \ra x'})$.
With probability at least $(1-\dD')^2 \ge 1-\dD$
both $E_1$ and $E_2$ hold, and we then have
$\mu(\mathcal{F}_{S \ra  x})
\ge (1-\dD')^2 \mu(\mathcal{F})
\ge (1-\dD)\mu(\mathcal{F})$, as required.
\end{proof}

\subsection{Proof summary}
\label{subsect:proof_summary}

In this  subsection we complete
the proof of the main theorem for moderate $m$
assuming the junta approximation theorem,
and of the junta approximation theorem
assuming the existence of fixed cross-agreements
between non-negligible uncapturable codes.
As $m$ is large, the largest ball
is a subcube of co-dimension $t$,
so we can restate our main result
for moderate $m$ as follows.

\begin{thm}\label{thm:exact_moderate_m}
For any $t\in\mathbb{N}$ there are $m_0,N\in \mathbb{N}$
such that if $m\geq m_0$, $n\geq N\log m$
and ${\cal F} \sub [m]^n$ is $(t-1)$-avoiding
then $|{\cal F}| \le m^{n-t}$, with equality
only when ${\cal F}$ is a subcube of co-dimension $t$.
\end{thm}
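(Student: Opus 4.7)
The plan is to mirror the three-step template of Theorem~\ref{thm:exact_small_m}: junta approximation, anticode stability, and bootstrapping. Fixing parameters $\eta, \eps > 0$ small (depending on $t$) and taking $m_0, N$ sufficiently large, I first apply Theorem~\ref{thm:junta_approx} (for moderate $m$, to be proved earlier in this part) to produce a $t$-intersecting $J$-junta $\mathcal{J}$ with $|\mathcal{F} \sm \mathcal{J}| \leq \eta |\mathcal{J}|$. Since $m \geq m_0 > t+1$, Theorem~\ref{thm:stability} forces the ball parameter $r = 0$, so $\mathcal{J}$ is close to a codimension-$t$ subcube, and after relabeling I may take $\mathcal{S} = \{x \in [m]^n : x_{[t]} = \mathbf{1}\}$ with $|\mathcal{J} \sm \mathcal{S}| \leq \eps |\mathcal{S}|$. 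Combining these two bounds yields $\xi := \mu(\mathcal{F} \sm \mathcal{S}) \leq (\eta + \eps)m^{-t}$; if additionally $|\mathcal{F}| \geq m^{n-t}$ then $\mu(\mathcal{F}_{[t] \ra \mathbf{1}}) \geq 1 - (\eta + \eps)$.

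It suffices to show $\mathcal{F} \sub \mathcal{S}$, which gives both $|\mathcal{F}| \leq |\mathcal{S}| = m^{n-t}$ and the equality case. Suppose otherwise, pick $v = (\alpha, y) \in \mathcal{F} \sm \mathcal{S}$, and set $s := |\{i \in [t] : \alpha_i = 1\}| < t$. Choosing any $J \sub [t+1, n]$ with $|J| = t - 1 - s$, any vector $u = (\mathbf{1}, y_J, z) \in \mathcal{F}$ with $z$ disagreeing with $y' := y_{[n] \sm ([t] \cup J)}$ in every coordinate satisfies ${\sf agr}(u, v) = s + (t - 1 - s) + 0 = t - 1$, contradicting the $(t-1)$-avoiding hypothesis. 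So it suffices to produce such $u$, i.e.\ to find a cross-disagreement between $\mathcal{G}' := \mathcal{F}_{[t] \cup J \ra (\mathbf{1}, y_J)}$ and the single-point family $\mathcal{H}' := \{y'\}$. This is precisely the conclusion of Theorem~\ref{thm:boot} applied with $\eps' := m^t\mu(\mathcal{H}') = m^{3t-1-s-n}$, which lies below $\eps_0$ once $n \geq N \log m$ with $N$ large enough in terms of $t$.

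The main obstacle is verifying the other hypothesis of Theorem~\ref{thm:boot}, namely $\mu(\mathcal{G}') > 1 - C\eps'$. The direct bound from junta-plus-stability only gives $\mu(\mathcal{G}') \geq 1 - (\eta + \eps)$, much weaker than the exponentially small gap required. To bridge this I invoke the fairness proposition (Proposition~\ref{prop:fairness}) applied to $\mathcal{F}_{[t] \ra \mathbf{1}}$, together with the freedom to choose both $J$ and $v$: fairness says that for all but a $\delta$-fraction of pairs $(J, \gamma)$ the restriction $\mathcal{F}_{[t] \cup J \ra (\mathbf{1}, \gamma)}$ retains almost all the original measure, and iterating fairness coordinate-by-coordinate (with $|J|$ rounds of decreasing failure probability, supported by the budget $n \geq N \log m$) drives the exceptional set $(\mathcal{F}_{[t] \ra \mathbf{1}})^c$ down to measure well below $\eps'$ on a suitable restriction, as needed. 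The easier case $s = t - 1$ in fact falls out without Theorem~\ref{thm:boot}: any such $v$ forces $\mathcal{F}_{[t] \ra \mathbf{1}}$ to miss all $(m-1)^{n-t}$ vectors $z$ that disagree with $y$ in every coordinate, and the inequality $(m-1)^{n-t} > (\eta + \eps)m^{n-t}$ holds whenever $n \leq (1 - \eta - \eps)m + t$, a condition amply guaranteed by $n \geq N \log m$ and $m \geq m_0$. Combining both regimes yields a contradiction, so $\mathcal{F} = \mathcal{S}$ and the theorem follows.
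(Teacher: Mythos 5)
There is a genuine gap in the argument, at the step where you apply Theorem~\ref{thm:boot} to $\mathcal{G}' := \mathcal{F}_{[t]\cup J \ra (\mathbf{1}, y_J)}$ and the singleton $\mathcal{H}' := \{y'\}$. With $\mathcal{H}'$ a singleton on $n' := n - t - |J|$ coordinates you have $\eps' = m^t\mu(\mathcal{H}') = m^{t-n'}$, which is exponentially small in $n$, so the hypothesis of Theorem~\ref{thm:boot} would require $\mu(\mathcal{G}') > 1 - C m^{t-n'}$, i.e.\ the complement of $\mathcal{G}'$ must have measure exponentially small in $n$. The junta approximation plus stability only give $\mu(\mathcal{F}_{[t]\ra\mathbf{1}}) \geq 1 - (\eta+\eps)$, a fixed constant gap, and the ``iterate fairness'' idea cannot close it: Proposition~\ref{prop:fairness} controls the \emph{relative} measure change of a family under a random restriction, not the absolute measure of its complement, and in the worst case the complement of $\mathcal{F}_{[t]\ra\mathbf{1}}$ could concentrate on exactly those restrictions indexed by $\mathcal{F}\sm\mathcal{S}$ (which is where your $J,\gamma$ must come from). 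The claimed ``easy case'' $s=t-1$ has the same defect from a different angle: the inequality $(1-1/m)^{n-t} > \eta+\eps$ needs $n \lesssim m\log(1/(\eta+\eps))$, but the moderate-$m$ regime only gives $n \geq N\log m$ with no upper bound, so the required inequality fails whenever $n \gg m$.

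The paper avoids this trap by not fixing a single bad point. Instead it sets $\eps := m^t \mu(\mathcal{S}\sm\mathcal{F})$ and shows this single parameter satisfies $e^{-2n/m} < \eps \leq O_t(m^{-1})$ (the lower bound comes from exactly the ``$(m-1)^{n-t-s}$ missing points'' counting you use, but it is used only as a lower bound on $\eps$, feeding into the fairness hypothesis rather than producing an immediate contradiction). Since $|\mathcal{F}|\geq|\mathcal{S}|$ one has $\mu(\mathcal{F}\sm\mathcal{S})\geq m^{-t}\eps$, so by averaging there is a prefix $1^t\neq x$ with $\mu(\mathcal{F}_{[t]\ra x})\geq m^{-t}\eps$ --- a whole restriction of substantial measure, not a singleton. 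Applying fairness to $\mathcal{F}_{[t]\ra x}$ and Markov to $\mathcal{F}_{[t]\ra 1}$ produces a common further restriction $(S,y)$ with $\mu(\mathcal{H})\geq 0.9\,m^{-t}\eps$ and $\mu(\mathcal{G})\geq 1-2\eps$: both bounds are proportional to the \emph{same} $\eps$, so Theorem~\ref{thm:boot} applies with an absolute constant $C$. Your proposal breaks this self-referential balance by collapsing $\mathcal{H}$ to a single point. Separately, you invoke Theorem~\ref{thm:stability} for general large $m$, but the paper only proves it for fixed $m$ (via Theorem~\ref{thm:stability'}); for the moderate regime the paper instead reads off the unique codimension-$t$ subcube directly from Theorem~\ref{thm:approx_moderate_m}, which gives the decomposition into subcubes (not an arbitrary junta) with error $O_t(m^{-k})$ for any chosen $k$.
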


We will prove Theorem \ref{thm:exact_moderate_m}
assuming the following junta approximation theorem.

\begin{thm}\label{thm:approx_moderate_m}
For every $t, k\in\mathbb{N}$
there exist $C,m_0,N\in\mathbb{N}$
such that if $\mathcal{F}\sub[m]^n$ is $(t-1)$-avoiding
with $m\geq m_0$ and $n\geq N\log m$ then there
is a $t$-intersecting collection ${\cal D}$
of at most $C$ subcubes of co-dimension at most $k$
such that $\mu({\cal F}\sm \bigcup {\cal D}) \leq C m^{-k}$.
\end{thm}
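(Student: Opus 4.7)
The plan is to follow the template of Theorem~\ref{thm:junta_approx_small_m}, substituting the pseudorandom regularity lemma there by the uncapturable one (Lemma~\ref{lem:regularity_large_m}). Choose $r=r(t,k)$ large and $\eps$ small (depending on $t,k,C$) and apply Lemma~\ref{lem:regularity_large_m} to $\mathcal{F}$ with these parameters and with $k$ as given; this yields a collection $\mathcal{D}$ of at most $r^k$ subcubes of co-dimension at most $k$ with $\mu(\mathcal{F}\setminus\bigcup\mathcal{D}) \le Cm^{-k}$ and with each restricted code $\mathcal{F}_{R\to\alpha}$ (for $D_{R\to\alpha}\in\mathcal{D}$) being highly uncapturable. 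The task then reduces to showing that $\mathcal{D}$ itself is $t$-intersecting.

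Suppose for contradiction that there exist $D_i = D_{R_i\to\alpha_i}\in\mathcal{D}$ ($i=1,2$) whose forced common-coordinate agreement $s := |\{j\in R_1\cap R_2 : (\alpha_1)_j = (\alpha_2)_j\}|$ is at most $t-1$. We will produce $x_i \in \mathcal{F}\cap D_i$ with ${\sf agr}(x_1,x_2)=t-1$, contradicting $(t-1)$-avoidance. Exactly as in the proof of Theorem~\ref{thm:junta_approx_small_m}, the strategy is to first boost the forced agreement to exactly $t-1$ via further restrictions and then produce a cross-disagreement on the remaining coordinates. Specifically, on each of $R_1\setminus R_2$ and $R_2\setminus R_1$ we freeze entries in the opposing code that disagree coordinatewise with the other $\alpha$ (possible since $m\ge 3$), and on a set $S\subset[n]\setminus(R_1\cup R_2)$ of size $t-1-s$ we fix a common value $\gamma\in[m]^S$ shared by both $x_1$ and $x_2$; these choices bring the forced agreement to exactly $t-1$, so it suffices to exhibit residual extensions that disagree on every remaining coordinate.

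The main obstacle, as flagged in the introduction of this section, is that uncapturability is \emph{not} preserved by arbitrary restrictions, so the two codes produced by the freezings above need not remain uncapturable. We overcome this in two steps. First, since each $\mathcal{F}_{R_i\to\alpha_i}$ is highly uncapturable and $n\ge N\log m$, its measure comfortably exceeds the threshold $e^{-n/C}$ required by the fairness proposition (Proposition~\ref{prop:fairness}); averaging over the random freezings described above — by a union bound over $i=1,2$ — we select freezings under which both restricted codes retain at least a $(1-\delta)$ fraction of their original measures. Second, on these still-non-negligible restricted codes we apply the cross-disagreement theorem for uncapturable (and hence, after a further dimension-bounded restriction, global) codes of non-negligible measure that will be established in the final subsection of this section, which internally upgrades the measure hypothesis to globalness via Lemma~\ref{lem:make_global} and then invokes the global small-set expansion of Theorem~\ref{thm:SSE}; this yields the required zero-agreement pair on the residual coordinates.

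The hard part, therefore, is the cross-disagreement theorem for non-negligible uncapturable codes that the above contradiction relies upon; the chief conceptual novelty there — random gluings — together with the hypercontractivity and expansion tools developed in the preceding section will be what makes it possible to prove cross-disagreement despite the very weak pseudorandomness (uncapturability alone) coming out of our regularity lemma. Once that theorem is in hand, the argument above is complete and the theorem follows.
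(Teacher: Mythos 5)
Your outer strategy is the right one and matches the paper: apply the uncapturable regularity lemma (Lemma~\ref{lem:regularity_large_m}), then derive a contradiction from a pair of subcubes in the decomposition agreeing on fewer than $t$ coordinates. However, there is a genuine gap in the \emph{ordering} of the restriction and globalness-upgrade steps.

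You propose to first carry out the fairness-selected freezings — including the crucial common restriction $S\to\gamma$ of size $t-1-s$ that brings the forced agreement up to exactly $t-1$ — and only afterwards invoke a cross-disagreement statement which, you say, "internally upgrades the measure hypothesis to globalness via Lemma~\ref{lem:make_global}." The trouble is that the fairness proposition preserves \emph{measure}, not uncapturability, and after the freeze $S\to\gamma$ your two codes may well be capturable (concentrated on a few dictators). The globalness upgrade in Lemma~\ref{lem:make_global} is a further \emph{restriction}, and when that restriction is applied to one of your two codes it can create new accidental agreements with the other code; the only mechanism in the paper for deleting those accidental agreements without devastating the remaining measure is to subtract a bounded union of dictators, which is safe precisely \emph{because} the other code is uncapturable. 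After your freeze, that safety net is gone. (Non-negligible measure alone cannot substitute: two copies of a single dictator have measure $1/m$ and are cross-intersecting, so any working theorem must use more than measure.) This is why the paper never freezes the uncapturable pieces first. Instead, the paper subtracts only the dictators coming from $R^1\triangle R^2$ (which preserves uncapturability essentially by definition) and then applies Theorem~\ref{thm:uncapagree} to find a cross-agreement of size exactly $s$. The restriction $S\to z$ that you want to do at the top level is done \emph{inside} the proof of Theorem~\ref{thm:uncapagree}, and crucially it happens \emph{after} the globalness upgrade of Step~1 — globalness, unlike uncapturability, is preserved by measure-preserving restrictions, which is precisely the point that rescues the argument. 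To repair your proof, swap the order: do the globalness upgrade (using uncapturability to prune the inevitable accidental agreements as in Step~1 of Theorem~\ref{thm:uncapagree}) before the fairness-selected restriction. Equivalently, just apply Theorem~\ref{thm:uncapagree} directly to the two (still uncapturable) pieces of the regularity decomposition, letting it produce an agreement of exactly $s$ on the leftover coordinates. As a small aside: the paper takes $\eps=1$ in Lemma~\ref{lem:regularity_large_m}, which already gives error $3r^{k+1}m^{-k}$; shrinking $\eps$ as you suggest actually \emph{weakens} the uncapturability conclusion of each piece (less guaranteed uncaptured mass), so there is no benefit to taking $\eps$ small, though this is a minor parameter choice and not a logical error.
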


\begin{proof}[Proof of Theorem \ref{thm:exact_moderate_m}]
Suppose  $\mathcal{F}\subset [m]^n$
is $(t-1)$-avoiding with $\mu(\mathcal{F}) \geq m^{-t}$.
By Theorem~\ref{thm:approx_moderate_m} there is
a $t$-intersecting collection $\mathcal{D}$ of
$O_t(1)$ subcubes of co-dimension at most $t+1$ such that
$\mu(\mathcal{F}\sm \bigcup \mathcal{D})\leq O_{t}(1) m^{-(t+1)}$.
As $\mathcal{D}$ is $t$-intersecting,
its subcubes all have co-dimension at least $t$.
Let $\mathcal{D}'$ consist of the subcubes
in $\mathcal{D}$ that have co-dimension $t$.
Then $\mu(\bigcup\mathcal{D}\sm\bigcup\mathcal{D}')
\le O_{t}(1) m^{-(t+1)}$. As
$O_t(1) m^{-(t+1)} < m^{-t} \le \mu({\cal F})$
for large $m$ we must have ${\cal D}' \ne \es$.
Thus ${\cal D}'$ consists of exactly one
subcube of co-dimension $t$, say
$\mathcal{S} = \sett{x\in[m]^n}{ x_1=1,\ldots, x_t =1}$.

Write $\mu({\cal F}_{[t] \to 1})=1-\eps$,
where $0 \le \eps = m^t \mu({\cal S} \sm {\cal F})
\le m^t \mu({\cal F} \sm {\cal S}) \le O_t(m^{-1})$.
Suppose for contradiction $\eps>0$.
We claim that $\eps > e^{-2n/m}$.
To see this, fix any $a \in \mathcal{F}\sm\mathcal{S}$
(using $\eps>0$).
Write $|\{i \in [t]:a_i=1\}|=t-1-s$ with $s \ge 0$,
fix any $S \sub [n] \sm [t]$ with $|S|=s$,
and let $R = [n] \sm ([t] \cup S)$.
For $b = a_S$ and $c \in [m]^R$ with
${\sf agr}(c,a_R)=0$, we have $(1^t,b,c) \notin {\cal F}$
(since $(1^t,b,c)$ and $a$ agree on $t-1$ coordinates),
giving $|{\cal S} \sm {\cal F}| \ge (m-1)^{n-t - s}$,
so $\eps \ge (1-1/m)^{n-t-s} > e^{-2n/m}$, as claimed.

As $\mu({\cal F} \sm {\cal S}) \ge m^{-t} \eps$,
by averaging, we can fix $1^t \ne x \in [m]^t$
with $\mu({\cal F}_{[t] \to x}) \ge m^{-t} \eps$.
Write $|\{i \in [t]:a_i=1\}|=t-1-s$ with $s \ge 0$.
Consider uniformly random ${\bf S} \in \tbinom{[n] \sm [t]}{s}$
and ${\bf y} \in [m]^{\bf S}$.
Let ${\cal G} = {\cal F}_{[t] \to 1,{\bf S} \to {\bf y}}$
and ${\cal H} = {\cal F}_{[t] \to x,{\bf S} \to {\bf y}}$.
By Markov's inequality,
$\mb{P}[\mu({\cal G}) \ge 1-2\eps] \ge 1/2$.
By Proposition \ref{prop:fairness},
$\mb{P}[\mu({\cal H}) \ge .9 m^{-t}\eps] \ge .9$.
Thus we can fix $(S,x)$ so that
$\mu({\cal G}) \ge 1-2\eps$ and
$\mu({\cal H}) \ge .9 m^{-t}\eps $.
However, ${\cal G}$ and ${\cal H}$ are cross intersecting,
so this contradicts Theorem \ref{thm:boot}.
Thus $\eps=0$, as required.
\end{proof}

We conclude this subsection by proving
Theorem \ref{thm:approx_moderate_m}
assuming the following result on cross-agreements
between uncapturable codes, the proof of which will
be the goal of the remainder of this section.

\begin{thm} \label{thm:uncapagree}
For any $s,k \in \mb{N}$ there are $r,m_0,N \in \mb{N}$
such that if $m \ge m_0$, $n \ge N\log m$
and ${\cal A}_j \sub [m]^{[n] \sm R_j}$
are $(r,m^{-k})$-uncapturable with $|R_j| \le k$ for $j=1,2$
then there are $x^j \in {\cal A}_j$ for $j=1,2$ with
$|\{i \in [n] \sm (R_1 \cup R_2): x^1_i=x^2_i\}|=s$.
\end{thm}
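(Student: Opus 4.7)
The plan is to reduce the problem to finding a cross-disagreement (zero common agreement) between two global codes over a small alphabet, where Hoffman's bound is sharp enough to yield a contradiction. First, by projecting each $\mathcal{A}_j$ onto the common coordinate set $[n]\sm(R_1\cup R_2)$, an operation that preserves uncapturability, I may assume $R_1 = R_2$. Next, pick any $S \sub [n]\sm(R_1\cup R_2)$ with $|S|=s$. The goal becomes to choose $z\in [m]^S$ such that both restrictions $(\mathcal{A}_j)_{S\to z}$ are non-negligible, and then to find $y^j\in (\mathcal{A}_j)_{S\to z}$ with no agreement on $[n]\sm(R_1\cup R_2\cup S)$; concatenating with $z$ would give exactly $s$ agreements, which is what the theorem demands.

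To guarantee such a $z$ exists, I would use the fairness proposition (Proposition~\ref{prop:fairness}). Uncapturability with parameter $m^{-k}$ forces $\mu(\mathcal{A}_j)\geq m^{-k}$, and since $n\geq N\log m$ with $N$ chosen large compared to $k$, this lower bound exceeds the $e^{-n/C}$ threshold required by fairness. Hence a random $z$ leaves both restrictions with essentially their full measure with high probability, and a common good $z$ can be fixed. Replacing each $\mathcal{A}_j$ by its restriction reduces the problem to the cross-disagreement problem between two codes on $[m]^{[n]\sm(R_1\cup R_2\cup S)}$ of measure at least $\tfrac{1}{2} m^{-k}$.

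For the cross-disagreement, I would first apply Lemma~\ref{lem:make_global} to each family, obtaining restrictions $T_j, w_j$ of co-dimension $O(\log m)$ that upgrade the families to $(r,\beta_j)$-globality with $\beta_j$ only a constant factor larger than the resulting measure; a small amount of additional care (choosing $T_1,T_2$ disjoint, or disagreeing on their intersection) ensures no spurious agreements are introduced outside $S$. I would then apply a sequence of random gluings, each compressing the alphabet by a constant factor. The key input, provided by the global small-set expansion theorem (Theorem~\ref{thm:SSE}) and the refined hypercontractive inequality (Theorem~\ref{thm:correct_hypercontract}) behind it, is that a random gluing applied to a suitably global code of measure $\mu$ boosts the image measure significantly; iterating this, after $O(\log m)$ gluings both image codes live in $[k]^U$ (for some set $U$ of remaining coordinates and some constant $k$) with measures bounded below by a constant exceeding $1/(k-1)$. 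Any cross-disagreement in the image lifts to a cross-disagreement in the original codes, so it suffices to produce one in $[k]^U$. If the images were cross-intersecting, Hoffman's bound (Lemma~\ref{lem:hoffman}, with $\lambda=1/k$) would give a product of measures at most $1/(k-1)^2$, contradicting the boosted lower bounds for $k$ chosen appropriately.

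The main obstacle is the interplay between restrictions and pseudorandomness: uncapturability is \emph{not} preserved under restrictions (as flagged in the discussion after Theorem~\ref{thm:approx_moderate_m}), whereas globalness is required for the gluing step to boost measure. Restriction and gluing operations must therefore be carefully interleaved, with globalness tracked after each gluing and restored where necessary via further applications of Lemma~\ref{lem:make_global}. Extraneous agreements introduced by these restrictions are manageable because globalness implies strong uncapturability (Claim~\ref{claim:global_to_uncap}), so small further restrictions can remove them without destroying the quantitative bounds needed to defeat Hoffman's inequality in the final step.
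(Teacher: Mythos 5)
Your overall plan follows the paper's Steps 1--4 faithfully: upgrade to globalness via Lemma~\ref{lem:make_global} while using Claim~\ref{claim:global_to_uncap} to strip out spurious agreements, restrict down a common $s$-set via the fairness proposition, boost measure via gluings, and close with Hoffman's bound. Your opening move of projecting each $\mathcal{A}_j$ onto $[n]\sm(R_1\cup R_2)$ is a valid simplification the paper does not make: shadows do preserve $(r,\eps)$-uncapturability (lifting a capturing collection for the shadow captures the original with the same measure bound), and since the theorem only counts agreements on the common coordinate set, the reduction is legitimate. The paper instead carries the mismatched coordinate sets through Steps 1--3 and unifies them only by averaging at the end of Step 4. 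One small slip: fairness (Proposition~\ref{prop:fairness}) averages over random $\mathbf{S}$ \emph{and} $\mathbf{z}$ jointly, so you should take $\mathbf{S}$ random (and argue it is TV-close to uniform on the slightly different ground set) rather than ``pick any $S$'' first.

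The genuine gap is in the expansion step. You write that you will ``apply a sequence of random gluings, each compressing the alphabet by a constant factor; iterating this, after $O(\log m)$ gluings both image codes live in $[k]^U$ with measures bounded below by a constant exceeding $1/(k-1)$.'' But Lemma~\ref{lem:bias_glue_stab} (via Theorem~\ref{thm:SSE}) only guarantees a gluing that boosts the measure of \emph{one} global family in expectation; there is no reason a single gluing should boost both, and for two arbitrary families it need not. The paper's Step 3 deals with this by an asymmetric alternation: first boost $\mathcal{C}_2$ via $\pi_2$ to measure $\geq 1/\sqrt{m}$, then apply the same $\pi_2$ to $\mathcal{C}_1$ (where the measure cannot drop, by Claim~\ref{claim:glue_basic_biasing}, and the loss from removing the restriction $R_2''\to\alpha_2''$ is controlled by uncapturability of the global $\mathcal{C}_1$), then boost that via a fresh $\pi_1$ to measure $\geq 1/\sqrt{m}$, while the already-boosted $\mathcal{C}_2'$ only loses $O_k(m^{-1}\log m)$ under $\pi_1$ and stays $\geq 1/(2\sqrt{m})$. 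This is two rounds of boosting, and Lemma~\ref{lem:sharp_thresh_gen} supplies $O(1)$ gluings per round (not $O(\log m)$), shrinking the alphabet only to $\Theta_k(m)$ -- which is all one needs, since Hoffman (Lemma~\ref{lem:hoffman}) with $\lambda = O_k(1/m)$ gives $\mu_1\mu_2 \leq O_k(m^{-2})$, contradicting $\mu_1\mu_2 \geq 1/(4m)$. Your more aggressive target of a constant alphabet with constant measure would require $\Omega(\log m)$ gluings, which exceeds what the boosting lemma provides once the measure is no longer small, and would in any case need an extra argument to control the balancedness of the resulting measure on $[k]$. You acknowledge in your final paragraph that the interleaving must be ``careful'', but the specific asymmetric boost-one/carry-the-other structure is the content of Step 3 and needs to be spelled out for the argument to go through.
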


\begin{proof}[Proof of Theorem \ref{thm:approx_moderate_m}]
Suppose $r,m,N \gg t,k$ and $\mathcal{F}\sub[m]^n$
with $n\geq N\log m$ is $(t-1)$-avoiding.
By Lemma~\ref{lem:regularity_large_m} with $\eps=1$
there is a collection $\mathcal{D}$ of
at most $r^k$ subcubes of co-dimension at most $k$
such that ${\cal F}_{R \to \aA}$ is
$(r,m^{-k})$-uncapturable
for each $D = D_{R \to \aA} \in {\cal D}$ and
$\mu({\cal F} \sm \bigcup {\cal D})
\le 3 r^{k+1} \eps m^{-k}$.
Suppose for a contradiction that
$\mathcal{D}$ is not $t$-intersecting.
Then there are $D_{R^j \to \aA^j} \in\mathcal{D}$
for $j=1,2$ (not necessarily different) that agree
on $t-1-s$ coordinates for some $s \ge 0$.
Let ${\cal A}_1 = {\cal F}_{R^1 \to \aA^1}
\sm \bigcup_{i \in R^2 \sm R^1} D_{i \to \aA^2_i}
\sub [m]^{[n] \sm R^1}$ and define ${\cal A}_2$ similarly.
Then ${\cal A}_1$, ${\cal A}_2$
are $(r-k,m^{-k})$-uncapturable,
so by Theorem \ref{thm:uncapagree}
there are $x^j \in {\cal A}_j$
with $|\{i \in [n] \sm (R^1 \cup R^2):
x^1_i=x^2_i\}|=s$. But then
${\sf agr}((\aA^1,x^1),(\aA^2,x^2))=t-1$,
which is a contradiction.
\end{proof}

\subsection{Gluings and expansion}

In this subsection we introduce our gluing operation
and establish a small set expansion property
for global codes under random gluings.

\begin{definition} \label{def:glue}
Let $k<m\in\mathbb{N}$ and $b \ge 1$.
A $b$-balanced gluing from $[m]$ to $[k]$
is a function $\pi\colon [m]\to[k]$ such that
$\card{\pi^{-1}(i)} \le bm/k $ for all $i\in [k]$.
We let $\Pi_{m,k,b}$ denote the set of all such gluings.
If $b=1$ (which is only possible when $k \mid m$)
we may omit it from our notation.

A $b$-balanced gluing of $[m]^n$ to $[k]^n$
is a mapping $\pi\colon [m]^n \to [k]^n$ of the form
$\pi(x_1,\ldots,x_n) = (\pi_1(x_1),\ldots,\pi_n(x_n))$
with $\pi_1,\ldots,\pi_n\in \Pi_{m,k,b}$.
We let $\Pi_{m,k,b}^{\otimes n}$ denote the set
of all such gluings; we may omit
the superscript if $n$ is clear from context.
For $\mathcal{F}\subset [m]^n$
and $\pi\in\Pi_{m,k,b}^{\otimes n}$ we write
${\cal F}^\pi = \pi({\cal F}) \sub [k]^n$.
\end{definition}

\begin{example}
Consider the gluing $\pi\colon [3]^n\to[2]^n$
where for each $i \in [n]$ we have
$\pi_i(1)=\pi_i(2)=1$ and $\pi_i(3)=2$.
Let $\mathcal{F} = \sett{x\in[3]^n}
{\card{\sett{i}{x_i = 1 \lor x_i = 2}}\geq \frac{2}{3} n}$.
Then ${\cal F}$ has constant measure in $[3]^n$, but
${\cal F}^\pi$ has exponentially small measure in $[2]^n$.
\end{example}

This example indicates that we should
make a careful choice of measure in $[k]^n$
for gluing to be useful.

\begin{definition} \label{def:gluebias}
Given a measure $\nu$ on $[m]$ and $\pi:[m] \to [k]$,
we define a measure $\nu^\pi$ on $[k]$ by
$\nu^{\pi}(x) = \sum_{y\in\pi^{-1}(x)}{\nu(y)}$.
Given a product measure
$\nu = \prod_{i=1}^n \nu_i$ on $[m]^n$
and $\pi=(\pi_1,\dots,\pi_n)$
with each $\pi_i:[m] \to [k]$ we define a product measure
$\nu^\pi = \prod_{i=1}^n \nu^\pi_i$ on $[k]^n$
by $(\nu^\pi)_i = (\nu_i)^{\pi_i}
= \sum_{y\in\pi_i^{-1}(x)}{\nu(y)}$ for each $i$.
We say $\nu$ is $b$-balanced if $\nu_i(x) \le b/m$
for all $i \in [n]$ and $x \in [m]$.
\end{definition}

\begin{claim}\label{claim:glue_basic_biasing}
With notation as in Definition \ref{def:gluebias},
for any $\mathcal{F}\subset[m]^n$ we have
$\nu^\pi(\mathcal{F}^{\pi})\geq \nu(\mathcal{F})$.
\end{claim}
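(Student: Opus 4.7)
The plan is to observe that $\nu^\pi$ is, by construction, the pushforward of $\nu$ under the map $\pi\colon [m]^n \to [k]^n$, and then invoke the trivial set-theoretic inclusion $\mathcal{F} \subseteq \pi^{-1}(\pi(\mathcal{F}))$.

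More concretely, I would first verify the pushforward identity: for any $y \in [k]^n$,
\[
\nu^{\pi}(y) \;=\; \prod_{i=1}^n \nu_i^{\pi_i}(y_i) \;=\; \prod_{i=1}^n \sum_{x_i \in \pi_i^{-1}(y_i)} \nu_i(x_i) \;=\; \sum_{x \in \pi^{-1}(y)} \nu(x),
\]
where the last equality uses that the measure $\nu$ is a product and the preimage $\pi^{-1}(y)$ factorizes coordinatewise. Summing over $y \in \mathcal{F}^{\pi}$ and using that the preimages $\{\pi^{-1}(y) : y \in [k]^n\}$ partition $[m]^n$, this gives $\nu^{\pi}(A) = \nu(\pi^{-1}(A))$ for any $A \subseteq [k]^n$.

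Applying this with $A = \mathcal{F}^{\pi} = \pi(\mathcal{F})$ and noting $\pi^{-1}(\pi(\mathcal{F})) \supseteq \mathcal{F}$, I conclude
\[
\nu^{\pi}(\mathcal{F}^{\pi}) \;=\; \nu\bigl(\pi^{-1}(\pi(\mathcal{F}))\bigr) \;\geq\; \nu(\mathcal{F}),
\]
as required.

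There is no real obstacle here: the only subtlety is bookkeeping, namely keeping straight that $\nu^{\pi}$ is defined precisely so that it is the pushforward, after which the inequality is immediate from the containment $\mathcal{F} \subseteq \pi^{-1}(\pi(\mathcal{F}))$ (which is, in general, strict, and this is exactly the source of the potential measure gain exploited later when boosting via gluings).
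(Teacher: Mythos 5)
Your proof is correct and is essentially the same as the paper's: both establish the pushforward identity $\nu^\pi(y)=\sum_{x\in\pi^{-1}(y)}\nu(x)$ and then observe that the indicator of $\mathcal{F}^\pi$ at $\pi(x)$ dominates the indicator of $\mathcal{F}$ at $x$, which is precisely your containment $\mathcal{F}\subseteq\pi^{-1}(\pi(\mathcal{F}))$.
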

\begin{proof}
For any $y\in[k]^n$ we have
\[ \nu^\pi(y) = \prod_{i=1}^n \nu^\pi_i(y_i)
= \prod_{i=1}^n \sum_{x_i \in \pi_i^{-1}(y_i)} \nu_i(x_i)
= \sum_{x\in \pi^{-1}(y)} \prod_{i=1}^n \nu_i(x_i)
= \sum_{x\in \pi^{-1}(y)} \nu(x), ~~\text{ so } \]
 \[  \nu^{\pi}(\mathcal{F}^\pi)
  =\sum\limits_{y\in [k]^n}
  {\nu_{\pi}(y) 1_{y\in \mathcal{F}^{\pi}}}
  = \sum\limits_{y\in [k]^n}
  {\sum\limits_{x\in \pi^{-1}(y)}{\nu(x)1_{y\in \mathcal{F}^{\pi}}}}
  \geq \sum\limits_{y\in [k]^n}{\sum\limits_{x\in \pi^{-1}(y)}{\nu(x)1_{x\in \mathcal{F}}}} =\nu(\mathcal{F}). \qedhere \]
\end{proof}

Now we establish global small set expansion
for random balanced gluings.

\begin{lemma}\label{lem:bias_glue_stab}
With notation as in Definitions
\ref{def:glue} and \ref{def:gluebias},
there is $c>0$ such that the following holds.
Let $s,k,m\in\mathbb{N}$ be such that
$k=m/s$ and $s\geq 4$, let $\nu$ be an $s$-balanced
product measure on $[m]^n$, and suppose ${\cal F} \sub [m]^n$
is $(\log(1/\mu), \mu^{1-c})$-global
with $\mu\in (0,1/16)$.
Then $\Expect{\bm{\pi}\in \Pi_{m,k}^{\otimes n}}{\nu^{\bm{\pi}}(\mathcal{F}^{\bm{\pi}})} \geq \nu(\mathcal{F})^{1-c}$.
\end{lemma}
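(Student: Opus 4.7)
The plan is to apply Cauchy--Schwarz and then invoke the global small-set expansion of Theorem~\ref{thm:SSE} for the product Markov chain that arises from averaging over random gluings. Write $\mu = \nu(\mathcal{F})$, and for each $\pi \in \Pi_{m,k}^{\otimes n}$ and $y \in [k]^n$ set $U_\pi(y) = \nu(\mathcal{F} \cap \pi^{-1}(y))$, so that $\mu = \sum_y U_\pi(y)$. Cauchy--Schwarz gives
\[
\mu^2 = \Big(\sum_y \sqrt{\nu^\pi(y)} \cdot \tfrac{U_\pi(y)}{\sqrt{\nu^\pi(y)}}\Big)^2 \leq \nu^\pi(\mathcal{F}^\pi) \cdot Y_\pi, \qquad Y_\pi := \sum_y \frac{U_\pi(y)^2}{\nu^\pi(y)},
\]
so $\nu^\pi(\mathcal{F}^\pi) \geq \mu^2 / Y_\pi$. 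Jensen's inequality (convexity of $y \mapsto 1/y$) then yields $\mathbb{E}_{\bm{\pi}}[\nu^{\bm{\pi}}(\mathcal{F}^{\bm{\pi}})] \geq \mu^2 / \mathbb{E}_{\bm{\pi}}[Y_{\bm{\pi}}]$, so it suffices to show that $\mathbb{E}_{\bm{\pi}}[Y_{\bm{\pi}}] \leq \mu^{1+c}$ for a suitable absolute $c > 0$.

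A direct expansion gives $\mathbb{E}_{\bm{\pi}}[Y_{\bm{\pi}}] = \Pr[x_1, x_2 \in \mathcal{F}]$, where the pair $(x_1, x_2)$ is obtained by sampling $\bm{\pi}$ uniformly from $\Pi_{m,k}^{\otimes n}$, then $x_1 \sim \nu$ independently, and finally $x_2 \sim \nu$ conditioned on $\bm{\pi}(x_2) = \bm{\pi}(x_1)$. Marginalising out $\bm{\pi}$ coordinate by coordinate shows that $(x_1, x_2)$ is distributed as two consecutive states of the stationary chain for a product Markov chain $T = T^{(1)} \otimes \cdots \otimes T^{(n)}$ on $[m]^n$ with stationary measure $\nu$, where
\[
T^{(i)}_{ab} \;=\; \mathbb{E}_{\pi_i}\!\left[\frac{\nu_i(b)\,\mathbf{1}[\pi_i(a) = \pi_i(b)]}{\nu_i^{\pi_i}(\pi_i(a))}\right].
\]

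The crux is to establish a lower bound $\lambda_*(T^{(i)}) \geq \lambda$ for a universal constant $\lambda > 0$ (independent of $s, k, m$). Since $\nu$ is $s$-balanced, $\nu_i(x) \leq s/m$, so the $\nu_i$-measure of any class is at most $s \cdot (s/m) = s^2/m$. The elementary identities $\Pr_{\pi_i}[\pi_i(a) = \pi_i(b)] = (s-1)/(m-1)$ for $a \neq b$, combined with the conditional expectation bounds $\mathbb{E}_{\pi_i}[\nu_i^{\pi_i}(\pi_i(a)) \mid \pi_i(a) = \pi_i(b)] \leq 3s/m$ (for $a \neq b$) and $\mathbb{E}_{\pi_i}[\nu_i^{\pi_i}(\pi_i(a))] \leq 2s/m$, together with Jensen's inequality $\mathbb{E}[1/Y] \geq 1/\mathbb{E}[Y]$, give $T^{(i)}_{ab} \geq \nu_i(b)/4$ for all $a, b \in [m]$ once $s \geq 4$. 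Lemma~\ref{lem:gap} then yields $\lambda_*(T^{(i)}) \geq 1/4$.

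With the uniform spectral gap in hand, Theorem~\ref{thm:SSE} (applied with $\lambda = 1/4$) produces a universal $c > 0$ such that whenever $\mathcal{F}$ is $(\log(1/\mu), \mu^{1-c})$-global with $\mu < 1/16$, we have $\Pr[x_1, x_2 \in \mathcal{F}] \leq \mu^{1+c}$. Combining this with the earlier Cauchy--Schwarz--Jensen step gives $\mathbb{E}_{\bm{\pi}}[\nu^{\bm{\pi}}(\mathcal{F}^{\bm{\pi}})] \geq \mu^2 / \mu^{1+c} = \nu(\mathcal{F})^{1-c}$, as required. The main obstacle is the spectral gap: the ratio $1/\nu_i^{\pi_i}(\pi_i(a))$ appearing inside $T^{(i)}_{ab}$ could be huge were $\nu_i$ allowed to concentrate heavily on a single symbol, but the $s$-balanced hypothesis rules this out, and a short Jensen computation then controls everything uniformly.
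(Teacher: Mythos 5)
Your proof is correct and follows essentially the same route as the paper: write $\nu(\mathcal{F})^2 \le \mathbb{E}_{\bm\pi}[\nu^{\bm\pi}(\mathcal{F}^{\bm\pi})] \cdot \mathbb{E}_{\bm\pi}[Y_{\bm\pi}]$, identify $\mathbb{E}_{\bm\pi}[Y_{\bm\pi}]$ with $\langle f, Tf\rangle$ for the product Markov chain given by "re-sample within the glued class", lower-bound the coordinate spectral gaps via Lemma~\ref{lem:gap} using the $s$-balanced hypothesis, and invoke Theorem~\ref{thm:SSE}. The only cosmetic difference is that you trade the paper's second Cauchy--Schwarz for a Jensen step ($\mathbb{E}[1/Y]\ge 1/\mathbb{E}[Y]$), which yields the same inequality, and your spectral constant ($\ge 1/4$) differs slightly from the paper's ($\ge 1/6$) but both are universal.
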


\begin{proof}
The plan for the proof is to show
$\mb{E}_{\bm{\pi}}[\nu^{\bm{\pi}}({\cal F}^{\bm{\pi}})]
\ge \nu({\cal F})^2/\inner{f}{Tf}$,
where $f$ is the characteristic function of ${\cal F}$
and $T = \prod_{i=1}^n T_i$ is some product Markov chain
on $[m]^n$ with each $\lL_*(T_i) \ge 1/6$.
By Theorem \ref{thm:SSE}
this will suffice to establish the lemma.

To construct $T$, we first consider for each $\pi$
the operator $T_\pi^\ua: L^2([m]^n,\nu) \to L^2([k]^n,\nu^\pi)$
defined by $T_{\pi}^{\ua} f(y)
= \cExpect{{\bf x}\sim\nu}{\pi({\bf x}) = y}{f({\bf x})}$
for any $y\in [k]^n$. Note that
$\nu({\cal F})=\nu(f)=\nu^\pi(T_\pi^\ua f)$,
as if $y \sim \nu^\pi$ and $x \sim \nu \mid \pi(x)=y$
then $x \sim \nu$. Writing $f^\pi$ for
the characteristic function of $\mathcal{F}^{\pi}$,
by Cauchy-Schwarz we can bound $\nu({\cal F})^2
= \Expect{\bm{\pi}}{\nu^{\bm{\pi}}(T_{\bm{\pi}}^{\ua} f)}^2$
as
\[ \Expect{\bm{\pi}}{\nu^{\bm{\pi}}(T_{\bm{\pi}}^{\ua} f)}^2
  = \Expect{\bm{\pi}}{\inner{T_{\bm{\pi}}^{\ua} f}
  {f^{\bm{\pi}}}_{\nu^{\bm{\pi}}}}^2
  \leq \Expect{\bm{\pi}}{\norm{T_{\bm{\pi}}^{\ua} f}_{2,\nu^{\bm{\pi}}}\norm{f^{\bm{\pi}}}_{2,\nu_{\bm{\pi}}}}^2
  \leq \Expect{\bm{\pi}}{\norm{T_{\bm{\pi}}^{\ua} f}_{2,\nu^{\bm{\pi}}}^2}\Expect{\bm{\pi}}{\norm{f^{\bm{\pi}}}_{2,\nu^{\bm{\pi}}}^2}. \]
We note that
$\Expect{\bm{\pi}}{\norm{f^{\bm{\pi}}}_{2,\nu^{\bm{\pi}}}^2}
= \Expect{\bm{\pi}}{\nu^{\bm{\pi}}(\mathcal{F}^{\bm{\pi}})}$
is the expression that we wish to bound. We write
 \[  \Expect{\bm{\pi}}
 {\norm{T_{\bm{\pi}}^{\ua} f}_{2,\nu_{\bm{\pi}}}^2}
  =\cExpect{\substack{\bm{\pi}\\ {\bf y}
  \sim\nu_{\bm{\pi}}\\ {\bf x},{\bf x'}\sim \nu}}{\bm{\pi}({\bf x})
  = \bm{\pi}({\bf x'}) = \bm{y}}{f({\bf x})f({\bf x'})}
  = \inner{f}{Tf}, \]
where $T$ is the reversible Markov chain on $[m]^n$
characterised by the property that two consecutive states
${\bf x,x'}$ of its stationary chain are distributed
as independent samples from $\nu$ conditioned on
$\bm{\pi}({\bf x}) = \bm{\pi}({\bf x'}) = \bm{y}$,
where $\pi \sim \Pi_{m,k}^{\otimes n}$
and $y \sim \nu^{\bm{\pi}}$.
We note that each of ${\bf x,x'}$
then has marginal distribution $\nu$,
which is therefore the stationary distribution.
As coordinates are independent, we can write
$T = \prod_{i=1}^n T_i$ as a product chain.
To complete the proof,
it remains to show each $\lL_*(T_i) \ge 1/6$.
By Lemma \ref{lem:gap} it suffices
to prove the following claim.

\begin{claim}
For any $i \in [n]$ and $a,b\in [m]$ we have
$p_i(a,b) := \mb{P}({\bf x}_i=a,{\bf x}'_i=b)
\geq \frac{1}{6}\nu_i(a) \nu_i(b)$.
\end{claim}

To see this, we expand out the definition to write
 \[ p_i(a,b) = \Expect{\bm{\pi}}{ \sum\limits_{j\in [k]}
    \nu_i^{\bm{\pi}}(j) 1_{\bm{\pi}(a) = \bm{\pi}(b) = j} \frac{\nu_i(a)}{\nu_i^{\bm{\pi}}(j)}
\frac{\nu_{i}(b)}{\nu_i^{\bm{\pi}}(j)}}
  = \nu_i(a)\nu_i(b)  \sum\limits_{j\in [k]}
    \Expect{\bm{\pi}}{1_{\bm{\pi}(a) = \bm{\pi}(b) = j}
    \frac{1}{\nu_i^{\bm{\pi}}(j)}}. \]
Each $\mb{P}(\bm{\pi}(a) = \bm{\pi}(b) = j)
= \frac{1}{k} \frac{s-1}{m - 1}\geq \frac{1}{2k^2}$,
so by Jensen's inequality
    \[    p_i(a,b)  \geq
    \frac{\nu_i(a)\nu_i(b)}{2k^2}
    \sum\limits_{j\in [k]}
    \cExpect{\bm{\pi}}{\substack{\bm{\pi}(a) = j,
    \\\\ \bm{\pi}(b) = j}}{\frac{1}{\nu_i^{\bm{\pi}}(j)}}
    \geq \frac{\nu_i(a)\nu_i(b)}{2k^2}
    \sum\limits_{j\in [k]}\frac{1}{
    \cExpect{\bm{\pi}}{\bm{\pi}(a)
    = \bm{\pi}(b) = j}{\nu_i^{\bm{\pi}}(j)}}.    \]
As $\bm{\pi}^{-1}(j)$ consists of $a,b$ and $s-2$ uniformly random elements from $[m]\sm\set{a,b}$ we have
    \[    \cExpect{\bm{\pi}}{\bm{\pi}(a) = \bm{\pi}(b) = j }
   {\nu_i^{\bm{\pi}}(j)}
    = \nu_i(a) + \nu_i(b) + \frac{s-2}{m-2}
    \sum\limits_{x\neq a,b}{\nu_i(x)}
    \leq \nu_i(a) + \nu_i(b) + \frac{s}{m} \leq \frac{3}{k}, \]
as each $\nu_i(y) \le s/m = 1/k$. Thus
    $  p_i(a,b)\geq \frac{1}{2k^2}\nu_i(a)\nu_i(b)
    \sum_{j\in [k]} \frac{k}{3}
    = \frac{1}{6} \nu_i(a)\nu_i(b)$.
    This completes the proof of the claim, and so of the lemma.
\end{proof}

\subsection{Boosting measure}

In this subsection we apply the small set expansion
properties of random gluings
established in the previous subsection
to prove the following result,
which shows that the measure of any small code
can be substantially increased
via restrictions and gluings.

\begin{lemma}\label{lem:sharp_thresh_gen}
For every $\eps > 0$ there is $C>0$ such that for any
$b$-balanced product measure $\nu$ on $[m]^n$
with $4 \le b \in \mb{N}$ and $m>b^{3C}$,
if ${\cal F} \sub [m]^n$
with $\nu({\cal F}) = \mu < 16^{-1/\eps}$ then there are
$\pi \in \Pi_{m,m',b}$ with $m'>m/b^{2C+1}$ and
 $\aA \in [m']^R$, where
$R \sub [n]$ with $|R| < C \log (\mu^{-1})$,  such that
$\nu^{\pi}(({\cal F}^{\pi})_{R\ra \alpha})\geq \mu^{\eps}$.
\end{lemma}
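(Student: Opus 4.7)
I would boost the measure of $\mathcal{F}$ iteratively, alternating in each round between (a) a restriction that upgrades the current family to the globalness hypothesis of Lemma~\ref{lem:bias_glue_stab}, using Lemma~\ref{lem:make_global}, and (b) a random $1$-balanced gluing from $[m_{i-1}]$ to $[m_{i-1}/b]$ that boosts the measure from $\mu'$ to $(\mu')^{1-c}$, where $c>0$ is the constant from Lemma~\ref{lem:bias_glue_stab}; this is allowed since $b\geq 4$ matches the required $s$-balanced hypothesis with $s=b$, and the $b$-balance of the measure is preserved under such gluings (a direct check using Definition~\ref{def:gluebias}). I stop as soon as the current measure reaches $\mu^\eps$; since each round raises the measure to the power $1-c$, after at most $T=\lceil\log(1/\eps)/c\rceil$ rounds the threshold is crossed, so the alphabet shrinks only by $b^T$ and remains above $m/b^{2C+1}$ for the $C$ I choose below.

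\textbf{Parameter choice and coordinate budget.} At round $i$, let $\mu_{i-1}$ be the current measure. I apply Lemma~\ref{lem:make_global} with $r_i:=\lceil\log(1/\mu_{i-1})\rceil$ and the \emph{uniform} choice $\gamma:=\mu^{c\eps}$ (the same for every round), obtaining a restriction $(R_i,\alpha_i)$ with
\[
|R_i|\;\leq\;r_i\,\log_{\gamma^{-1}}(1/\mu_{i-1})\;=\;\frac{\log^2(1/\mu_{i-1})}{c\eps\log(1/\mu)}
\]
and a restricted family of measure $\mu_i'\geq\mu_{i-1}$ that is $(r_i,\mu_i'/\gamma)$-global. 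If $\mu_i'\geq\mu^\eps$ I stop; otherwise $(\mu_i')^c\leq(\mu^\eps)^c=\gamma$, so the family is in fact $(r_i,(\mu_i')^{1-c})$-global, and since also $r_i\geq\log(1/\mu_i')$, it meets the hypothesis of Lemma~\ref{lem:bias_glue_stab}. Averaging as in that lemma yields a gluing $\pi_i\in\Pi_{m_{i-1},m_{i-1}/b}$ that raises the measure to $\mu_i\geq(\mu_i')^{1-c}\geq\mu_{i-1}^{1-c}$, giving $\mu_{i-1}\geq\mu^{(1-c)^{i-1}}$. Summing the resulting geometric series,
\[
|R|\;=\;\sum_{i=1}^T|R_i|\;\leq\;\frac{\log(1/\mu)}{c\eps}\sum_{i\geq 0}(1-c)^{2i}\;\leq\;\frac{\log(1/\mu)}{c^2\eps},
\]
so setting $C$ of order $1/(c^2\eps)$ dominates both this bound and the requirement $T\leq 2C+1$.

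\textbf{Consolidation into $(\pi,R,\alpha)$.} I assemble the iterative construction into a single composed gluing $\pi:=\pi_T\circ\pi_{T-1}\circ\cdots\circ\pi_1\colon[m]\to[m_T]$, which lies in $\Pi_{m,m_T,b}$ since each factor is $1$-balanced, together with $R:=R_1\cupdot\cdots\cupdot R_T\subseteq[n]$ and $\alpha\in[m_T]^R$ defined blockwise by $\alpha|_{R_i}:=(\pi_T\circ\cdots\circ\pi_i)(\alpha_i)$. Because restrict-then-glue is contained in glue-then-restrict -- every pre-image fibre of the individual restriction is merged by the subsequent gluings -- the final family $(\mathcal{F}^\pi)_{R\to\alpha}$ contains the iteratively produced one, hence $\nu^\pi((\mathcal{F}^\pi)_{R\to\alpha})\geq\mu_T\geq\mu^\eps$. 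The assumption $\mu<16^{-1/\eps}$ gives $\mu^\eps<1/16$, which guarantees $\mu_{i-1},\mu_i'\in(0,1/16)$ at every intermediate stage, so Lemma~\ref{lem:bias_glue_stab} is applicable throughout.

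\textbf{Main obstacle.} The crux is the calibration of $\gamma$. The naive choice $\gamma=1/2$ costs $\Theta(\log^2(1/\mu_{i-1}))$ coordinates per round, blowing the budget to $\Theta(\log^2(1/\mu))$; choosing $\gamma$ much smaller violates the threshold $\gamma\geq(\mu_i')^c$ needed to convert the output of Lemma~\ref{lem:make_global} into the globalness hypothesis of Lemma~\ref{lem:bias_glue_stab}. The key observation is that the threshold $(\mu_i')^c$ is automatically at most $\mu^{c\eps}$ as long as the procedure has not yet produced a family of measure $\geq\mu^\eps$, so one can afford the uniform aggressive choice $\gamma=\mu^{c\eps}$; this makes $\log(1/\gamma)=c\eps\log(1/\mu)$ large and collapses $\log_{\gamma^{-1}}(1/\mu_{i-1})$ to a factor of order $\log(1/\mu_{i-1})/\log(1/\mu)$, bringing the per-round cost down to $O(\log(1/\mu_{i-1}))$ and allowing the geometric sum to close with $|R|=O(\log(1/\mu))$.
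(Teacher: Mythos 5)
Your iterative plan — alternate between a restriction that upgrades to the globalness hypothesis of Lemma~\ref{lem:bias_glue_stab} and a gluing that boosts the measure, then consolidate into a single $(\pi,R,\alpha)$ — is the right strategy, and your arithmetic with the uniform choice $\gamma=\mu^{c\eps}$ does close the geometric series with $|R|=O(\log(1/\mu))$. But there is a genuine gap: your $1$-balanced gluings $\pi_i\colon[m_{i-1}]\to[m_{i-1}/b]$ require $b\mid m_{i-1}$ at every stage, while $m$ is arbitrary (the only hypothesis is $m>b^{3C}$), so the very first gluing may be undefined. The paper fixes this by first applying a single $b$-balanced gluing $\pi_0\colon[m]\to[m_0]$ where $m_0$ is the largest power of $b$ at most $m$ (so $m_0>m/b$); all subsequent alphabets are then powers of $b$ and divisibility is automatic. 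The cost is that $\nu^{\pi_0}$ is only $b^2$-balanced, so Lemma~\ref{lem:bias_glue_stab} must be invoked with $s=b^2$ and each gluing step shrinks the alphabet by $b^2$ rather than $b$ — which is exactly why the lemma statement asks for $m'>m/b^{2C+1}$ (one factor $b$ from $\pi_0$, then $b^{2C}$ from at most $C$ gluing steps).

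Secondly, the "main obstacle" you identify — calibrating $\gamma$ so that Lemma~\ref{lem:make_global}'s budget does not blow up to $\Theta(\log^2(1/\mu))$ — is a complication you create by invoking Lemma~\ref{lem:make_global} in every round. The paper sidesteps it: at each step it simply asks whether the current family is $(\log(1/\mu_i),\mu_i^{1-c})$-global. If not, the \emph{definition} of globalness hands you a single restriction of size $\le\log(1/\mu_i)\le\log(1/\mu)$ raising the measure to $\ge\mu_i^{1-c}$; if so, glue. Either type of step raises the measure to the power $1-c$, so the process halts after at most $C=O_{\eps,c}(1)$ steps, and the coordinate budget is trivially $\le C\log(1/\mu)$ with no geometric-series bookkeeping. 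Your calibration recovers the right order but at the cost of a delicate argument for a bound the paper gets essentially for free.
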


\begin{proof}
We start by applying an arbitrary $b$-balanced gluing
$\pi_0 \in \Pi_{m,m_0,b}^{\otimes n}$, where $m_0$ is
the largest power of $b$ that is at most $m$.
Clearly $\nu_0:=\nu^{\pi_0}$ is $b^2$-balanced.
We let ${\cal F}_0 = \mathcal{F}^{\pi_0} \sub [m_0]^{S_0}$,
where $S_0=[n]$. By Claim~\ref{claim:glue_basic_biasing}
we have $\mu_0 := \nu_0({\cal F}_0)\geq \mu$.

Now we apply the following iterative procedure for $i \ge 0$.
Given ${\cal F}_i \sub [m_i]^{S_i}$, where $S_0=[n]$,
with $\nu_i({\cal F}_i)=\mu_i \ge \mu$ and $\nu_i$ is a $b^2$-balanced product measure,
\begin{enumerate}
\item if $\mu_i \geq \mu^\eps$ we stop, otherwise,
\item if ${\cal F}_i$ is not
$(\log(1/\mu_i),\mu_i^{1-c})$-global according to $\nu_i$,
where $c>0$ is as in Lemma \ref{lem:bias_glue_stab},
then by definition we can choose ${\cal F}_{i+1}
= ({\cal F}_i)_{R_i \to \aA_i} \sub [m_{i+1}]^{S_{i+1}}$
with $\mu_{i+1}=\nu_{i+1}({\cal F}_{i+1}) \ge \mu_i^{1-c}$,
where $m_{i+1}=m_i$, $\nu_{i+1}=\nu_i$
and $S_{i+1} = S_i \sm R_i$ for some $R_i$
with $|R_i| \le \log(1/\mu_i)$ and $\aA_i \in [m_i]^{R_i}$,
\item otherwise, as $\mu_i<\mu^\eps \le 1/16$,
by Lemma~\ref{lem:bias_glue_stab} we can choose
${\cal F}_{i+1} = ({\cal F}_i)^{\pi_i}
\sub [m_{i+1}]^{S_{i+1}}$ with $m_{i+1}=m_i/b^2$,
$S_{i+1}=S_i$, $\pi \in \Pi_{m_i,m_{i+1}}$,
and $\mu_{i+1}=\nu_{i+1}({\cal F}_{i+1}) \ge \mu_i^{1-c}$,
where $\nu_{i+1} = \nu_i^{\pi_i}$.
\end{enumerate}
If $C > C_0(\eps,c)$ is large then this process
terminates in at most $C$ steps,
with some ${\cal F}_r \sub [m_r]^{S_r}$,
where $m_r \ge m/(b^{2C+1})$ and $S_r = [n] \sm R$,
where $R$ is the union  of all sets $R_i$
in the process, so $|R| \le C\log(1/\mu)$.
For $i \ge 0$ we let $\pi_{i \to r} \in \Pi_{m_i,m_r}$
be obtained by composing all $\pi_j$ with $i < j \le r$.
We define $\aA \in [m_r]^R$ by
$\aA_x = \pi_{i \to r}((\aA_i)_x)$ for $x \in R_i$.
We let $\pi = \pi_{0 \to r}$
and note that $\nu^\pi = \nu_r$ and
${\cal F}_r \sub ({\cal F}^\pi)_{R \to \aA}$,
so $\nu^\pi(({\cal F}^\pi)_{R \to \aA})
\ge \nu_r({\cal F}_r) \ge \mu^\eps$.
\end{proof}

\subsection{Uncapturable codes agree}

In this subsection we prove our cross-agreement
result for uncapturable codes, Theorem \ref{thm:uncapagree}. As
demonstrated in Subsection \ref{subsect:proof_summary}, this will complete the proof of our main theorem for moderate alphabets.
We start with an outline of the proof.
We are given two uncapturable codes
${\cal A}_1$ and ${\cal A}_2$
and need to find a cross-agreement
of some fixed size $s$.
Moreover, the coordinate sets
may be slightly different: we have
${\cal A}_j \sub [m]^{[n] \sm R_j}$
with $|R_j| \le k$ for $j=1,2$.

{\bf Step 1: Globalness.}
We would like to restrict to a common coordinate set,
but we cannot do so immediately,
as uncapturability is not closed under restrictions.
We therefore start by upgrading to globalness,
while avoiding unwanted agreements.
We find a global code ${\cal A}'_1$
obtained from ${\cal A}_1$ by a small restriction.
We obtain ${\cal A}'_2$ from ${\cal A}_2$
by removing any agreements with this restriction,
using uncapturability to see that
${\cal A}'_2$ is not negligible,
and find a global code ${\cal B}_2$
obtained from ${\cal A}'_2$ by a small restriction.
Then we obtain a global code
${\cal B}_1$ from ${\cal A}'_1$
by removing any agreements with this restriction.

{\bf Step 2: Fairness.}
By the fairness proposition, we find a common
restriction of size $s$
by which we obtain non-negligible global codes
${\cal C}_1,{\cal C}_2$ from ${\cal B}_1,{\cal B}_2$.
It remains to show that ${\cal C}_1,{\cal C}_2$
cannot be cross-agreeing.

{\bf Step 3: Expansion.}
We apply measure boosting to find
a gluing and restriction so that
${\cal C}_2$ becomes some ${\cal C}'_2$
with dramatically larger measure.
We obtain ${\cal C}'_1$ from ${\cal C}_1$
by removing any extra agreements
created by the gluing and restriction,
and then ${\cal C}''_1$ with non-negligible
measure by applying the gluing that were found for
$\mathcal{C}_2$.
We now find a gluing and restrictions for $\mathcal{C}''_1$
to get from it a family ${\cal C}'''_1$ with dramatically
larger measure than ${\cal C}''_1$. We then remove these
restrictions as well as apply this gluing on $\mathcal{C}'_2$
to get ${\cal C}'''_2$ whose measure not much
smaller than that of ${\cal C}'_2$.
By averaging we can apply further restrictions
without reducing measures to obtain
${\cal G}_1,{\cal G}_2$ on a common
set of coordinates.

{\bf Step 4: Hoffman bound.}
The measures of ${\cal G}_1,{\cal G}_2$
are so large that they cannot be cross-agreeing,
so we find a cross disagreement,
which corresponds to an agreement of size $s$
in the original codes.

\skipi
We proceed to the formal proof of Theorem \ref{thm:uncapagree}.
\begin{proof}[Proof of Theorem \ref{thm:uncapagree}]
We are given $(r,m^{-k})$-uncapturable
${\cal A}_j \sub [m]^{[n] \sm R_j}$
with $|R_j| \le k$ for $j=1,2$, and
we need to find $x^j \in {\cal A}_j$ with
$|\{i \in [n] \sm (R_1 \cup R_2): x^1_i=x^2_i\}|=s$,
where $n \ge N\log m$ and $r,m,N \gg s,k$.

{\bf Step 1: Globalness.}
By uncapturability $\mu({\cal A}_1) \ge m^{-k}$,
so by Lemma~\ref{lem:make_global}
with $\gamma = m^{-1/10}$ and $r/100k$ in place of $r$
we obtain $\mathcal{A}_1' = (\mathcal{A}_1)_{R_1'\ra \alpha_1'}$
that is $(r/100k,\mu(\mathcal{A}'_1)/\gamma)$-global
with $\mu({\cal A}'_1) \geq \mu({\cal A}_1)$, where
$|R'_1| \le \log_{1/\gamma}(1/\mu({\cal A}_1)) r/100k \le r/10$.
We note that ${\cal A}'_2 := {\cal A}_2 \sm
\bigcup_{i\in R_1'}D_{i\ra \alpha_1'(i)}$
is $(0.9r,m^{-k})$-uncapturable,
so $\mu({\cal A}'_2) \ge m^{-k}$.
From Lemma~\ref{lem:make_global}
we obtain ${\cal B}_2 = ({\cal A}'_2)_{R_2'\ra \alpha_2'}$
that is $(r/100k,\mu({\cal B}_2)/\gamma)$-global
with $\mu({\cal B}_2) \geq \mu({\cal A}'_2)$,
where $|R'_2| \le r/10$. In particular, ${\cal B}_2 \ne \es$,
so $R_2' \ra \alpha_2'$ has no agreement with
$R_1 \to \alpha_1$ or $R_1' \to \alpha_1'$.
We let ${\cal B}_1 = {\cal A}'_1 \sm
\bigcup_{i\in R_2'}{D_{i\ra \alpha_2'(i)}}$.
By  Claim~\ref{claim:global_to_uncap}, ${\cal A}'_1$
is $(\gamma m/4, \mu({\cal A}'_1)/2)$-uncapturable,
so $\mu(\mathcal{B}_1)\geq \half\mu(\mathcal{A}_1')$,
which implies that $\mathcal{B}_1$
is $(r/100k,2\mu(\mathcal{B}_1)/\gamma)$-global.

{\bf Step 2: Fairness.}
As $n \ge N\log m$ and $N$ is large, we have
$\mu({\cal B}_1), \mu({\cal B}_2)
\ge \half m^{-k} \ge e^{-n/C}$,
where $C=C(s,0.1)$ is as in Proposition \ref{prop:fairness}.
Consider uniformly random
${\bf S}\subset [n]\sm (R_1\cup R_1'\cup R_2\cup R_2')$
of size $s$ and ${\bf z} \in [m]^{\bf S}$.
For large $n$ the distribution of ${\bf S}$ has
total variation distance $o(1)$ from the uniform
distribution on $\tbinom{[n]\sm (R_1\cup R_1')}{s}$.
Thus by Proposition~\ref{prop:fairness} we have
$\mb{P}[ \mu((\mathcal{B}_1)_{{\bf S}\ra {\bf z}})
\geq .9 \mu(\mathcal{B}_1) ] \ge .9-o(1)$,
and similarly for ${\cal B}_2$.
Thus we can fix $S$ and $z$ so that both
${\cal C}_j = ({\cal B}_j)_{S \to z}$
have $\mu({\cal C}_j) \ge \half \mu({\cal B}_j)$,
so are $(r/100k,4\mu(\mathcal{C}_j)/\gamma)$-global.

{\bf Step 3: Expansion.}
By Lemma~\ref{lem:sharp_thresh_gen} applied
to ${\cal C}_2$ with $\eps=1/3k$ and $b=4$,
there are $\pi_2 \in \Pi_{m,m_2,4}$ with $m_2 = \OO_k(m)$,
$\aA_2'' \in [m_2]^{R_2''}$, where
$R_2'' \sub [n]\sm(R_2\cup R_2'\cup S)$ with
$|R_2''| < O_k(\log m) \ll n$, such that ${\cal C}_2' :=
({\cal C}_2^{\pi_2})_{R_2'' \to \aA_2''}$
has $\mu^{\pi_2}({\cal C}_2') \ge 1/\sqrt{m}$.
Let \[ {\cal C}_1' = {\cal C}_1 \sm
\bigcup \{ D_{i \to a}: i \in R_2'',
(\pi_2)_i(a)=(\aA_2'')_i \}.\]
By Claim~\ref{claim:global_to_uncap}, ${\cal C}_1$
is $(\gamma m/16, \mu({\cal C}_1)/2)$-uncapturable,
so $\mu({\cal C}'_1)\geq \half\mu({\cal C}_1)$.
Let ${\cal C}_1'' = ({\cal C}_1')^{\pi_2}$.
By Claim~\ref{claim:glue_basic_biasing}
we have $\mu^{\pi_2}(\mathcal{C}_1'')
\geq \mu({\cal C}_1') \ge \tfrac{1}{8}m^{-k}$.

By Lemma~\ref{lem:sharp_thresh_gen} applied
to ${\cal C}_1''$ under the $4$-balanced measure
$\mu^{\pi_2}$ with $\eps=1/3k$ and $b=4$,
there are $\pi_1 \in \Pi_{m_2,m_1,4}$ with $m_1 = \OO_k(m)$,
$\aA_1'' \in [m_1]^{R_1''}$, where
$R_1'' \sub [n]\sm(R_1\cup R_1'\cup S)$ with
$|R_1''| < O_k(\log m) \ll n$, such that ${\cal C}_1'''
:= (({\cal C}_1'')^{\pi_1})_{R_1'' \to \aA_1''}$ has
$\mu^{\pi_1 \circ \pi_2}({\cal C}_1''') \ge 1/\sqrt{m}$.
Let \[{\cal C}_2'' = {\cal C}_2' \sm
\bigcup \{ D_{i \to a}: i \in R_1'',
(\pi_1)_i(a)=(\aA_1'')_i \}.\]
Then $\mu^{\pi_2}({\cal C}_2'') \ge
\mu({\cal C}_2') - O_k(m^{-1}\log m) \ge 1/2\sqrt{m}$.
Let ${\cal C}_2''' = ({\cal C}_2'')^{\pi_1}$.
By Claim~\ref{claim:glue_basic_biasing}
we have $\mu^{\pi_1 \circ \pi_2}(\mathcal{C}_2''')
\geq \mu({\cal C}_2'') \ge 1/2\sqrt{m}$.

{\bf Step 4: Hoffman bound.}
By averaging, we can choose restrictions
${\cal G}_j \sub [m_1]^{[n]\setminus R}$ of ${\cal C}_j'''$ for $j=1,2$
where $R  = R_1\cup R_1'\cup R_1''
\cup R_2\cup R_2'\cup R_2''\cup S$ such that both
$\nu({\cal G}_j) \geq \nu({\cal C}_j''') \geq 1/2\sqrt{m}$,
where $\nu=\mu^{\pi_1 \circ \pi_2}$ is $16$-balanced.
By construction, the elements of ${\cal G}_j$ for $j=1,2$
are of the form $\pi_1\pi_2(x^j_{[n] \sm R})$
where $x^j \in {\cal A}_j$ with
$|\{i \in R \sm (R_1 \cup R_2): x^1_i=x^2_i\}|=s$.
By Lemma \ref{lem:hoffman} applied with $\lL=O_k(1/m)$
we can find a cross disagreement,
which corresponds to $x^j \in {\cal A}_j$ with
$|\{i \in [n] \sm (R_1 \cup R_2): x^1_i=x^2_i\}|=s$.
\end{proof}

\section{Huge alphabets} \label{sec:huge}

This section contains the proof of our main result
Theorem \ref{thm:main} in the case of huge alphabets,
i.e.\ when $n \le N(t)\log m$,
with $N(t)$ as in Theorem~\ref{thm:exact_moderate_m}.
As previously discussed, there are examples showing
that a proof strategy based on cross agreements
between uncapturable codes cannot work in this setting,
so instead we adopt a more combinatorial argument to
obtain expansion in measure from a `shadow' operation,
which is analogous
(but quite different in various details)
to an argument in the hypergraph setting
due to Keller and Lifshitz \cite{KellerLifshitz}.
This operation requires us to consider
more general agreement configurations
(which are anyway of interest)
even if we only want to find
pairwise agreements as in our main result.
We introduce these configurations and their
interpretation in terms of expanded hypergraphs
in the first subsection, and prove an
extremal result for configurations.
In the second subsection
we define our shadow operation and establish
two key properties , namely that
a code with a given forbidden configuration
(a) has an average shadow with much larger measure, and
(b) there is some shadow with much stronger uncapturability.
We extend these properties in the third subsection
to iterated shadows when there is some forbidden
configuration with a `kernel', i.e.\ some common
intersection of all restrictions in the configuration.
We apply this theory to prove the junta approximation
theorem in the fourth subsection. Then in the final
subsection we complete our proof via a bootstrapping
argument based on Shearer's entropy inequality.

\subsection{Hypergraphs}

When $m$ is huge, it is natural
to view a code ${\cal F} \sub [m]^n$
as an $n$-graph ($n$-uniform hypergraph)
which is $n$-partite (each edge has one vertex
in each part) with parts $V_1,\dots,V_n$,
where each $V_i = \sett{(i,a)}{a\in [m]}$,
identifying any $x \in [m]^n$
with $\{ (i,a): x_i = a \}$.
This setting is most convenient
for introducing general agreement configurations
in the following definition, as these are
a natural partite variation on the well-studied
topic of expanded hypergraphs
(see the survey \cite{mubayi2016survey}).

\begin{definition}
An $\ell$-configuration is a pair $({\cal H},{\cal P})$
where ${\cal H}$ is a multi-$\ell$-graph and ${\cal P} = (U_1,\dots,U_\ell)$
is a partition of $V(\mathcal{H})$ such that
each edge has one vertex in each part.
We identify any ${\cal H}$ with its multiset of edges
$\{e_1,\dots,e_h\}$, so its size $h=|{\cal H}|$ is its
number of edges. We often omit ${\cal P}$ from our notation.
The density of ${\cal H}$  (with respect to ${\cal P}$) is
$\mu({\cal H}) = |{\cal H}| \prod_{i \in \ell} |U_i|^{-1}$.
The kernel of ${\cal H}$ is
$K({\cal H}) = \bigcap_{i=1}^h e_i$.

The $n$-expansion ${\cal H}^+(n)$ of ${\cal H}$
is the $n$-configuration
obtained by adding disjoint sets $S_j$ of $n-\ell$
new vertices to each $e_j$, forming new parts
$U_{\ell+1},\dots,U_n$ so that each $S_j$
has one vertex in each new part.
We say that ${\cal F}_1,\dots,{\cal F}_h \sub [m]^n$
cross contain ${\cal H}$ if they do so
for ${\cal H}^+(n)$ when viewed as $n$-graphs,
i.e.\ there are $x^j \in {\cal F}_j$ for $j \in [h]$
and an injection $\Phi:[\ell] \to [n]$,
so that for any $j,j' \in [h]$ and $i \in [n]$
we have $x^j_i=x^{j'}_i$
exactly when $i=\Phi(k)$ for some $k \in [\ell]$
and $e_j \cap e_{j'} \cap U_k \ne \es$.
We say that $x_1,\dots,x_h$ realise ${\cal H}$
in ${\cal F}_1,\dots,{\cal F}_h$.
If ${\cal F}_i={\cal F}$ for all $i$
we say that ${\cal F}$ contains ${\cal H}$,
otherwise we say ${\cal F}$ is ${\cal H}$-free.
\end{definition}

\begin{example}
A code ${\cal F} \sub [m]^n$ is $(t-1)$-avoiding
if when viewed as an $n$-partite $n$-graph it does not
contain two edges $e,e'$ with $|e \cap e'|=t-1$;
equivalently, ${\cal F}$ is ${\cal H}$-free
where ${\cal H}$ is the multi-$(t-1)$-graph
with two identical edges.
\end{example}

The main result of this subsection is
the following extremal result
for cross containment at constant densities
(this suffices for our purposes, so we do not
investigate the optimal bound).

\begin{lemma}\label{corr:cros_contain_hyp}
For any $\ell, h\in\mathbb{N}$ there is $C>0$
so that if ${\cal H}$ is an $\ell$-configuration
of size $h$ and ${\cal F}_1,\dots,{\cal F}_h \sub [m]^n$
with each $\mu({\cal F}_i) > \eps$,
where $n>C \log(\eps^{-1})$ and $m>2hn/\eps$,
then ${\cal F}_1,\dots,{\cal F}_h$ cross contain ${\cal H}$.
\end{lemma}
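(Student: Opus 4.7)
The plan is to construct the realization $(x^j)_{j\in[h]}$ and injection $\Phi:[\ell]\to[n]$ in two stages. First I will use the coordinates $\Phi([\ell])$ to encode the required agreement pattern on the parts $U_1,\ldots,U_\ell$, and then I will use the remaining $n-\ell$ coordinates to enforce pairwise disagreement among the $x^j$'s.

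For the first stage I would sample $\Phi$ uniformly among injections $[\ell]\to[n]$, and for each part $U_k$, with vertices enumerated as $u^k_1,\ldots,u^k_{r_k}$ (so $r_k\le h$), sample an independent tuple $(v^k_1,\ldots,v^k_{r_k})$ of distinct values uniformly at random from $[m]$; this is possible since $m>2hn/\eps\ge h\ge r_k$. Let $a^k_j\in[r_k]$ denote the index of the vertex of $e_j$ lying in $U_k$, and restrict each family by $x_{\Phi(k)}=v^k_{a^k_j}$ for all $k\in[\ell]$, yielding $\mathcal{F}_j^{\ast}\sub[m]^{[n]\sm\Phi([\ell])}$. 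Although the restrictions across different $j$ are correlated through the shared tuples, for each fixed $j$ the pair $\big(\Phi([\ell]),(v^k_{a^k_j})_k\big)$ is marginally uniform in $\binom{[n]}{\ell}\times[m]^{\ell}$, by symmetry of the uniform distinct-tuple sampling and independence across parts. Applying Proposition~\ref{prop:fairness} with $s=\ell$ and $\delta=1/(2h)$ then gives, for each $j$ individually, $\mu(\mathcal{F}_j^{\ast})\ge(1-1/(2h))\eps$ with probability at least $1-1/(2h)$; a union bound over $j$ produces a fixed realization of $(\Phi,(v^k))$ for which all $h$ restrictions have measure $>\eps/2$. The fairness hypothesis $\mu(\mathcal{F}_j)\ge e^{-n/C_0}$ is met by taking the constant $C$ of the lemma larger than $C_0(\ell,1/(2h))$.

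For the second stage I build $x^j\in\mathcal{F}_j^{\ast}$ iteratively for $j=1,\ldots,h$, forcing pairwise disagreement on $[n]\sm\Phi([\ell])$ by a dictator-cube union bound. At step $t$ the bad set
\[
B^{(t)}=\bigcup_{j<t}\bigcup_{i\in[n]\sm\Phi([\ell])}\{z:z_i=x^j_i\}
\]
satisfies $\mu(B^{(t)})\le(h-1)(n-\ell)/m<\eps/2$, where the last inequality uses the hypothesis $m>2hn/\eps$ together with the elementary bound $hn>(h-1)(n-\ell)$. Since $\mu(\mathcal{F}_t^{\ast})>\eps/2$, we have $\mathcal{F}_t^{\ast}\sm B^{(t)}\ne\es$, so we may pick $x^t$ there. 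Lifting the $x^j$ back to $[m]^n$ using the prescribed values on $\Phi([\ell])$ yields vectors in $\mathcal{F}_j$ realizing $\mathcal{H}^+(n)$: for $j\ne j'$ and $k\in[\ell]$, the equality $x^j_{\Phi(k)}=x^{j'}_{\Phi(k)}$ holds exactly when $a^k_j=a^k_{j'}$ by distinctness of the $v^k_a$ within each part, which is precisely the condition $e_j\cap e_{j'}\cap U_k\ne\es$; and for $i\notin\Phi([\ell])$ the vectors disagree by construction.

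I expect the main subtlety to lie in the fairness step: Proposition~\ref{prop:fairness} is formulated for a single family with a jointly random restriction, so one must verify that the coupled restriction produced by the hypergraph structure is, when viewed from the perspective of any single $\mathcal{F}_j$, distributed as a uniform random restriction of size $\ell$. This reduces, by symmetry and independence across parts, to the observation that the $a^k_j$-th entry of a uniform tuple of distinct values in $[m]$ is itself uniform in $[m]$. Once this marginal uniformity is in hand, the remainder of the argument is a routine dictator-cube union bound.
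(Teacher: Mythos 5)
Your proposal is correct and is essentially the paper's own argument: both sample a uniform random injection $\Phi$ and independent random injections from each part $U_k$ into $[m]$, apply Proposition~\ref{prop:fairness} coordinate-by-family together with a union bound over the $h$ families to preserve density $>\eps/2$ after restricting to $\Phi([\ell])$, and then find the required pairwise-disjoint completion by a greedy dictator-cube bound (which the paper factors out as Claim~\ref{claim:cross_contain_match}). The marginal-uniformity observation you flag as the main subtlety is exactly what makes the paper's application of the fairness proposition legitimate, and your second stage is the inlined matching claim.
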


The proof will reduce to the case when ${\cal H}$
is a matching, as in the following claim.

\begin{claim}\label{claim:cross_contain_match}
If ${\cal F}_1,\dots,{\cal F}_h \sub [m]^n$
with $m > hn/\eps$ and each $\mu({\cal F}_i) > \eps$
then ${\cal F}_1,\dots,{\cal F}_h$ cross contain a matching.
\end{claim}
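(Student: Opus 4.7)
The plan is a straightforward greedy induction on $h$: build $x^1,\ldots,x^h$ one at a time, at each step ensuring that the newly chosen vector disagrees with every previously chosen one on \emph{every} coordinate. First I would pick any $x^1\in\mathcal{F}_1$. Then, supposing $x^1,\ldots,x^{j-1}$ have been chosen for some $2\le j\le h$, I would choose $x^j\in\mathcal{F}_j$ outside the ``forbidden'' set
\[
B_j := \bigcup_{j'<j}\bigcup_{i\in [n]} \{y\in [m]^n : y_i = x^{j'}_i\}.
\]

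To see that such an $x^j$ exists, I would use a union bound: there are at most $(j-1)n\le (h-1)n$ pairs $(j',i)$ contributing to $B_j$, and each contributes a set of measure exactly $1/m$. Hence $\mu(B_j)\le (h-1)n/m < hn/m < \eps$ by hypothesis on $m$. Since $\mu(\mathcal{F}_j)>\eps$, the set $\mathcal{F}_j\sm B_j$ is non-empty, and we can take any $x^j$ in it. By construction, $x^j_i\ne x^{j'}_i$ for all $j\ne j'$ and all $i\in[n]$.

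Finally, I would verify that this realises a matching in the sense of the configuration definition. Since the matching has pairwise disjoint edges, the defining condition ``$x^j_i = x^{j'}_i$ exactly when $i = \Phi(k)$ for some $k$ with $e_j\cap e_{j'}\cap U_k\ne\es$'' becomes vacuous on the right-hand side: we need $x^j_i\ne x^{j'}_i$ for all $j\ne j'$ and all $i\in[n]$, which is exactly what the greedy construction delivers. Any injection $\Phi:[\ell]\to[n]$ will serve (such an injection exists because $\ell\le n$, which is built into the expansion being well-defined).

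There is essentially no obstacle here: the numerical bound $m>hn/\eps$ is tailored precisely so that the union bound beats $\eps$ at every greedy step. This claim serves as the base case that the proof of Lemma~\ref{corr:cros_contain_hyp} will leverage, presumably by first locating a matching via this claim and then recolouring coordinates to create the required cross-agreements prescribed by $\mathcal{H}$.
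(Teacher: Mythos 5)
Your argument is exactly the paper's: choose the $x^j$ greedily, noting that each previously chosen vector forbids at most $n$ coordinate-values, so the forbidden set has measure at most $(h-1)n/m<\eps$ and hence $\mathcal{F}_j$ always has room left. You simply spell out the union bound that the paper compresses into one line, and correctly note that for a matching the cross-containment condition collapses to requiring disagreement on every coordinate.
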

\begin{proof}
We choose disjoint edges $e_i \in {\cal F}_i$ for $i \ge 1$
according to a greedy algorithm. Each choice reduces the
density of any ${\cal F}_i$ by at most $n/m < \eps/h$,
so the algorithm can be completed.
\end{proof}

\begin{proof}[Proof of Lemma \ref{corr:cros_contain_hyp}]
Write ${\cal H} = \{e_1,\dots,e_h\}$ and let
$(U_1,\dots,U_\ell)$ be the fixed partition of ${\cal H}$.
We identify each ${\cal F}_i$ with an $n$-partite $n$-graph
with parts $V_i = \{ (i,a): a \in [m] \}$.
We consider uniformly random injections $\Phi:[\ell]\to[n]$
and $\phi_j\colon U_j\to V_{\Phi(j)}$ for each $j\in [\ell]$.
Each edge $e_i$ then defines a restriction
${\cal G}_i = ({\cal F}_i)_{\Phi([\ell]) \to \aA^i}$, where
$\aA^i_{\Phi(j)} = \phi_j(e_i \cap U_j)$ for $j \in [\ell]$.

We let $C=C(\ell,1/2h)$ be as in
Proposition~\ref{prop:fairness},
which is then applicable as
$\mu({\cal F}_i)\geq \eps \geq e^{-n/C}$, giving
$\mb{P}[ \mu({\cal G}_i) \ge (1-1/2h) \mu({\cal F}_i) ]
\ge 1 - 1/2h$. By a union bound we can fix $\Phi$
and $\phi_1,\dots,\phi_j$ so that all $\mu({\cal G}_i)>\eps/2$.
Then ${\cal G}_1,\dots,{\cal G}_h$ cross contain a matching,
so ${\cal F}_1,\dots,{\cal F}_h$ cross contain ${\cal H}$.
\end{proof}

\subsection{Shadows}

In this subsection we define our shadow (projection) operation
and establish its two key properties mentioned above
(boosting measure and strengthening uncapturability).

\begin{definition}
For ${\cal F}\sub [m]^n$ and $i\in[n]$,
the $i$-shadow of ${\cal F}$ is $\pl_i({\cal F})
= \bigcup_{a \in [m]} \pl_{i\ra a}({\cal F})$, where
$\pl_{i\ra a}({\cal F}) = {\cal F}_{i\ra a}\sub [m]^{n-1}$.
For $I \sub [n]$ we let $\pl_I$ be the composition
(in any order) of $(\pl_i: i \in I)$.
\end{definition}

The next lemma, analogous to a
lemma for hypergraphs in  \cite{kostochka2015turan},
shows that shadows have significantly larger measure
on average if we forbid a configuration with the
following `flatness' property.

\begin{definition}
The centre of a configuration is the set
of vertices contained in more than one edge.

We say that a configuration is flat if each part
has at most one vertex in the centre.
\end{definition}

\begin{lemma}\label{lem:shadow_lower}
Suppose ${\cal H}$ is a flat $\ell$-configuration of size $h$
and ${\cal F} \sub [m]^n$ is ${\cal H}$-free, with $n \ge h\ell$.
Then $|{\cal F}| \le h \sum_{i=1}^n |\pl_i({\cal F})|$.
\end{lemma}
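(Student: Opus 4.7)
The plan is to prove the lemma by contrapositive: if $|{\cal F}| > h \sum_i |\pl_i({\cal F})|$, then ${\cal F}$ must contain a copy of ${\cal H}$, contradicting ${\cal H}$-freeness.

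First, I would recast the bound as an assignment problem. For each $x \in {\cal F}$, we seek a coordinate $i(x) \in [n]$ so that the map $x \mapsto (i(x), x_{\hat{i(x)}})$ into $\bigsqcup_i \pl_i({\cal F})$ has at most $h$ preimages at each image; if such an assignment exists, the lemma follows immediately by summation over the images. By the defect form of Hall's theorem (for $h$-fold matchings), such an assignment exists iff for every $S \sub {\cal F}$, the neighborhood $N(S) = \{(i, x_{\hat i}) : x \in S, i \in [n]\}$ satisfies $|N(S)| \ge |S|/h$. So if the inequality of the lemma fails then Hall's condition fails at some $S \sub {\cal F}$.

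Second, I would extract geometric structure from this failure. Since $\sum_{(i,y) \in N(S)} |\{x \in S : x_{\hat i} = y\}| = n|S|$ while $|N(S)| < |S|/h$, by averaging some pair $(i^*,y^*)$ has fibre of size $\ge hn$ in $S$. By iterating this on restrictions $S_k = S_{i_1 \to a_1, \ldots, i_k \to a_k}$ (each retaining significant mass), I would select $\ell$ coordinates $i_1, \dots, i_\ell$ to play the role of $\Phi(1), \dots, \Phi(\ell)$, together with partial value assignments at each.

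Third, flatness of ${\cal H}$ is invoked to assemble $h$ vectors realising ${\cal H}$. Since each part $U_k$ contains at most one centre vertex, the required pattern at coordinate $\Phi(k)$ is either ``all edges in $E_k$ share one value $a_k$, the remaining edges take distinct values'' (if $U_k$ has a centre) or ``all $h$ edges take pairwise distinct values'' (otherwise). This per-part dichotomy is independent across parts, so I may choose the heavy-fibre value at $\Phi(k)$ to serve as the centre value when applicable. The condition $n \ge h\ell$ leaves at least $(h-1)\ell$ coordinates outside $\Phi([\ell])$, on which I complete the $h$ vectors by picking pairwise-distinct values greedily, using an argument analogous to Claim \ref{claim:cross_contain_match} (together with the large alphabet implicit in this section) to ensure each completed vector lies in ${\cal F}$.

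The main obstacle is the interplay between Steps 2 and 3: the greedy extraction of heavy fibres must be compatible with the prescribed centre structure of ${\cal H}$, in that at a ``centre'' part we need the selected fibre at $i_k$ to support a shared value among the relevant edges while also accommodating distinct choices for the remaining edges, and conversely at ``non-centre'' parts. Flatness is essential here in making these local conditions independent across parts, while the condition $n \ge h\ell$ guarantees enough room for distinct completions outside the $\ell$ pattern coordinates. Without flatness, the assignments at different parts could impose conflicting constraints on the values of shared edges, and the construction would collapse.
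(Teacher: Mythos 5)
Your proposal diverges from the paper's argument in a way that opens genuine gaps, and I don't think it can be repaired without discovering the paper's key idea.

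The paper's proof does not reason about Hall failure at all. It runs an \emph{iterated deletion}: starting from ${\cal F}'={\cal F}$, whenever some $y\in\partial_i({\cal F}')$ has at most $h$ preimages in ${\cal F}'$, it deletes all of them. Each $y$ is charged at most once, so the total deleted is at most $h\sum_i|\partial_i({\cal F})|$, and it remains only to show the residual ${\cal F}'$ is empty. The point of the deletion is that it yields a \emph{global thickness} property: in ${\cal F}'$, for \emph{every} coordinate $i$ and \emph{every} projection $y\in\partial_i({\cal F}')$, the fibre over $y$ has at least $h+1$ elements. This is exactly what makes the greedy construction of $x^1,\dots,x^h$ work: starting from a fixed $x\in{\cal F}'$, you walk coordinate by coordinate, and at each step there are $\geq h+1$ admissible values, enough to pick one not used by the previously built vectors. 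Flatness enters only to guarantee that per coordinate the required pattern is ``one shared value plus distinct new ones,'' so $h+1$ choices always suffice. No alphabet hypothesis is needed.

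Your approach has two concrete problems. First, the iteration in Step 2 doesn't do what you need. From $|N(S)|<|S|/h$ you correctly extract one heavy fibre of size $\geq hn$ at some $(i^*,y^*)$; but this fibre is a set of $x$'s all agreeing off coordinate $i^*$, i.e.\ a ``line.'' Restricting to it (or fixing $x_{i^*}=a$ and passing to $S_{i^*\to a}$, which is a different and much smaller set) does not preserve the Hall-failure property, so the iteration stalls after one step. There is no averaging argument that gives you the uniform, ``at every coordinate, at every partial assignment, there are many continuations'' property that the deletion produces. Second, your Step 3 explicitly invokes Claim~\ref{claim:cross_contain_match} ``together with the large alphabet implicit in this section,'' but the lemma you are proving has no assumption on $m$ at all, and the paper's proof uses none. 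You cannot import $m>hn/\eps$; the lemma is used inside proofs of results that do assume huge $m$, but the lemma itself and its proof must stand without that.

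In short, the missing idea is the deletion procedure and the residual thickness property it gives. Once you have ``every fibre of ${\cal F}'$ has $\geq h+1$ elements,'' the greedy embedding of $\mathcal{H}^{+}(n)$ (not just $\mathcal{H}$: you must also change the $n-\ell$ coordinates outside $\Phi([\ell])$ so the $x^j$ pairwise disagree there) is a short local argument. Your Hall reformulation is correct as a starting point, but it replaces a one-line charging argument with heavier machinery, and the structure it extracts (one heavy fibre at one coordinate) is both less and the wrong shape compared to what the greedy construction needs.
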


\begin{proof}
Let ${\cal F}'$ be obtained from ${\cal F}$ by the following
iterative deletion procedure starting from ${\cal F}'={\cal F}$:
if there is any $i \in [n]$ and $y \in \pl_i({\cal F}')$
such that at most $h$ choices of $x \in {\cal F}'$
with $x_{[n] \sm i} = y$ then we delete all such $x$.
Any $y \in \pl_i({\cal F})$ is considered at most once
in this procedure before it is removed from the shadow.
Thus the number of deleted sets is at most
$h \sum_{i=1}^n |\pl_i({\cal F})|$,
so it suffices to show ${\cal F}'=\es$.

Suppose for contradiction ${\cal F}' \ne \es$.
We will show that ${\cal F}$ contains ${\cal H}$.
We write ${\cal H} = \{e_1,\dots,e_h\}$,
denote the parts of ${\cal H}$ by $U_1,\dots,U_\ell$,
and fix $u_j \in U_j$ for each $j \in [\ell]$
so that each vertex of $U_j$ other than $u_j$
is contained in at most one edge.
Fix any $x \in {\cal F}'$. We will construct
$x^1,\dots,x^h \in {\cal F}'$ realising ${\cal H}$
according to injections $\phi_j:U_j \to [m]$
so that $x^i_j = \phi_j(e_i \cap U_j)$
and $x_j = \phi_j(u_j)$
for all $i \in [h]$ and $j \in [\ell]$.
As ${\cal H}$ is flat this can be achieved greedily.
Indeed, to construct $x^i$ we can start from $x^i=x$
and one by one for each $j$ such that
$e_i \cap U_j \ne \{u_j\}$ replace $x^i_j$
by some new value not yet used in coordinate $j$,
which is possible as there are at least $h+1$ choices
for $x^i_j$ for any given $x^i_{[n] \sm \{j\}}$.
However, ${\cal F}$ is ${\cal H}$-free,
so we have the required contradiction.
\end{proof}

We conclude this subsection by showing under the same
conditions as the previous lemma, that if a code
is uncapturable, then it has some shadow
is significantly more uncapturable.
The key point is that the uncaptured measure
is increased by a factor $\OO(m/n)$,
albeit at the expense of only considering
restrictions that are $n$ times smaller.

\begin{lemma}\label{lem:shadow_uncap}
Suppose ${\cal H}$ is a flat $\ell$-configuration of size $h$
and ${\cal F} \sub [m]^n$ is ${\cal H}$-free, with $n \ge h\ell$.
If ${\cal F}$ is $(r,\eps)$-uncapturable then $\pl_i({\cal F})$
is $(r/n, \eps m/nh)$-uncapturable for some $i \in [n]$.
\end{lemma}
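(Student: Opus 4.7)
The natural approach is the contrapositive: assume every shadow $\partial_i(\mathcal{F})$ is $(r/n, \eps m/nh)$-capturable, and deduce that $\mathcal{F}$ itself is $(r,\eps)$-capturable, contradicting the hypothesis. So for each $i \in [n]$, fix a collection $\mathcal{D}_i$ of at most $r/n$ dictators in $[m]^{[n]\sm\{i\}}$ with $\mu(\partial_i(\mathcal{F}) \sm \bigcup \mathcal{D}_i) \le \eps m/nh$.

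Each dictator $D_{j \to a} \in \mathcal{D}_i$ has $j \ne i$ and so lifts to a dictator on the same coordinate in $[m]^n$; let $\mathcal{D} = \bigcup_i \mathcal{D}_i$, viewed this way. Then $|\mathcal{D}| \le n \cdot (r/n) = r$. Set $\mathcal{F}' = \mathcal{F} \sm \bigcup \mathcal{D}$; the goal is to show $\mu(\mathcal{F}') \le \eps$. The key observation is that if $x \in \mathcal{F}'$ and $y = x_{[n] \sm \{i\}}$, then $y \in \partial_i(\mathcal{F})$ but $y \notin \bigcup \mathcal{D}_i$, since membership in any $D_{j \to a} \in \mathcal{D}_i$ (with $j \ne i$) only depends on coordinates other than $i$. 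Hence $\partial_i(\mathcal{F}') \sub \partial_i(\mathcal{F}) \sm \bigcup \mathcal{D}_i$, which gives $|\partial_i(\mathcal{F}')| \le (\eps m/nh) \cdot m^{n-1} = \eps m^n/nh$.

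Now apply Lemma \ref{lem:shadow_lower} to $\mathcal{F}'$, which is $\mathcal{H}$-free as a subfamily of $\mathcal{F}$ (and we have $n \ge h\ell$). This yields
\[ |\mathcal{F}'| \le h \sum_{i=1}^n |\partial_i(\mathcal{F}')| \le h \cdot n \cdot \frac{\eps m^n}{nh} = \eps m^n, \]
so $\mu(\mathcal{F}') \le \eps$. But this exhibits $\mathcal{F}$ as $(r,\eps)$-capturable via $\mathcal{D}$, contradicting the uncapturability assumption. Thus some $\partial_i(\mathcal{F})$ must be $(r/n, \eps m/nh)$-uncapturable.

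The proof is essentially bookkeeping: the only nontrivial input is Lemma \ref{lem:shadow_lower}, and the numerology is designed precisely so that the factor of $n$ in the sum is absorbed by the factor $1/n$ in the dictator budget, while the factor of $h$ is absorbed by the measure threshold of the shadows. The one thing that needs care is checking that dictators for the shadow (which do not act on coordinate $i$) still behave correctly when lifted back to $[m]^n$, which is where the requirement $j \ne i$ for $D_{j \to a} \in \mathcal{D}_i$ is used.
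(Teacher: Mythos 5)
Your proof is correct and follows essentially the same route as the paper's: both pass to the contrapositive, build $\mathcal{D}$ by lifting the $\mathcal{D}_i$ to $[m]^n$, use the observation $\partial_i(\mathcal{F} \sm \bigcup\mathcal{D}) \subseteq \partial_i(\mathcal{F}) \sm \bigcup\mathcal{D}_i$, and then apply Lemma \ref{lem:shadow_lower} to $\mathcal{F}\sm\bigcup\mathcal{D}$. Your write-up spells out the inclusion of shadows a bit more explicitly, but the argument is the same.
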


\begin{proof}
We suppose each $\pl_i(\mathcal{F})$ is $(r/n,\dD)$-capturable
and show that $\dD\geq \eps m/nh$. By definition,
for each $i \in [n]$ there is a collection ${\cal D}_i$
of at most $r/n$ dictators in $[m]^{[n] \sm \{i\}}$ such that
$\mu(\pl_i({\cal F})\sm\bigcup {\cal D}_i)\leq \dD$.
We let ${\cal D} = \bigcup_{i=1}^n {\cal D}_i$
where now we consider each dictator in $[m]^n$.
Then $\mu({\cal F} \sm \bigcup {\cal D}) \ge \eps$
by uncapturability. Applying Lemma~\ref{lem:shadow_lower}
to ${\cal F} \sm \bigcup {\cal D}$, noting that each
$\pl_i({\cal F} \sm \bigcup {\cal D})
\sub \pl_i({\cal F}) \sm \bigcup {\cal D}_i$, we have
\[ |{\cal F} \sm \bigcup {\cal D}| \leq
s\sum_{i=1}^n |\pl_i({\cal F}) \sm \bigcup {\cal D}_i|
\leq hn \cdot \dD m^{n-1},\]
so $\eps \leq \mu({\cal F} \sm \bigcup {\cal D})
\leq \dD hn/m$, i.e.\ $\dD\geq \eps m/nh$.
\end{proof}

\subsection{Kernels and iterated shadows}

In this subsection we consider configurations
with a non-trivial kernel (intersection of all edges),
for which we show that they remain free of some
configuration under iterated shadows
(as many as the size of the kernel),
so the results of the previous subsection on
single shadows become correspondingly
stronger in this setting.
First we introduce some convenient notation.

\begin{definition}
Given an $\ell$-configuration ${\cal H}$,
we write ${\cal H} \oplus [t]$ for
the $(\ell+t)$-configuration with $t$ additional
parts of size $1$ where each edge of ${\cal H}$
is extended to also include the $t$ new vertices.

Given an $\ell$-configuration ${\cal H}$ on $v$ vertices,
we let ${\sf flat}({\cal H})$ be the $v$-configuration
obtained by taking a copy of ${\cal H}$ with one vertex
in each part and adding to each edge $e$
disjoint sets $S_e$ of $v-\ell$ new vertices.
\end{definition}

\begin{remark} \label{rem+flat} $ $
\begin{enumerate}
\item
Any (flat) configuration
with a kernel of size $t$ may be expressed
as ${\cal H} \oplus [t]$ for some (flat)
configuration ${\cal H}$ with no kernel.
\item
If ${\cal H}$ is flat and contained in ${\cal H}'$
then ${\cal H}$ is contained in ${\sf flat}({\cal H}')$.
\end{enumerate}
\end{remark}

\begin{lemma}\label{lem:kernel_down}
For any (flat) configuration ${\cal H}$
there exists a (flat) configuration ${\cal H}'$
such that for all $t\in\mathbb{N}$
there exist $m_0,n_0 \in \mb{N}$
such that if ${\cal F} \sub [m]^n$
with $n \ge n_0$, $m \ge m_0$ is ${\cal H}\oplus[t]$-free
then $\pl_i({\cal F})$ is ${\cal H}'\oplus[t-1]$-free
for all $i \in [n]$.
\end{lemma}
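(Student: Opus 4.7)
The plan is to prove the contrapositive: assuming $\pl_i({\cal F})$ contains ${\cal H}' \oplus [t-1]$, realised by edges $y^1, \ldots, y^{h'} \in \pl_i({\cal F})$, I will produce a realisation of ${\cal H} \oplus [t]$ in ${\cal F}$. The first key observation is that ${\cal F}$ being ${\cal H} \oplus [t]$-free implies that for every $a \in [m]$ the restriction ${\cal F}_{i \to a}$ is ${\cal H} \oplus [t-1]$-free: any realisation of ${\cal H} \oplus [t-1]$ inside ${\cal F}_{i \to a}$ lifts back to a realisation of ${\cal H} \oplus [t]$ in ${\cal F}$, with coordinate $i$ (taking the value $a$) supplying the extra kernel vertex. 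Hence $\pl_i({\cal F}) = \bigcup_{a \in [m]} {\cal F}_{i \to a}$ decomposes as a union of ${\cal H} \oplus [t-1]$-free pieces indexed by colours $a \in [m]$.

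For each $j \in [h']$ I would choose some $a_j \in [m]$ with $(y^j, a_j) \in {\cal F}$, thereby defining a colouring $\chi \colon [h'] \to [m]$ of the edges of the realised ${\cal H}' \oplus [t-1]$. The next step is to set up the following Ramsey-type dichotomy for ${\cal H}'$, chosen so that it holds for every colouring $\chi$: either (a) some colour class of $\chi$ contains $h$ edges whose induced sub-configuration of ${\cal H}'$ is isomorphic to ${\cal H}$, or (b) some $h$ edges receiving pairwise distinct colours induce a sub-configuration isomorphic to ${\cal H} \oplus [1]$. In case (a), the $h$ monochromatic edges $y^{j_1}, \ldots, y^{j_h}$ all lie in a common ${\cal F}_{i \to a}$ and realise ${\cal H} \oplus [t-1]$ there, contradicting its freeness. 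In case (b), the rainbow lifts $(y^{j_k}, a_{j_k}) \in {\cal F}$ have pairwise distinct values on coordinate $i$; their shared coordinates are exactly the $(t-1)$-kernel from $\oplus [t-1]$, the extra kernel vertex from the ${\cal H} \oplus [1]$ sub-configuration (jointly producing a $t$-kernel), and the ${\cal H}$-structure on the remaining parts, so they realise ${\cal H} \oplus [t]$ in ${\cal F}$, again a contradiction.

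The hard part will be constructing the flat ${\cal H}'$ (depending only on ${\cal H}$, not on $t$) that enforces the dichotomy uniformly over an unbounded colour palette. My plan is to take ${\cal H}'$ to be the vertex-disjoint union, on fresh parts, of two flat building blocks: a ``blown-up'' copy of ${\cal H}$ in which each edge is replaced by sufficiently many variants sharing the centre of ${\cal H}$ but with fresh petals, and a ``sunflower'' composed of many copies of ${\cal H}$ meeting only at a single fresh kernel vertex $v^*$ placed in its own singleton part. The blowup block supplies many embedded copies of ${\cal H}$, so that if $\chi$ restricts to few distinct colours on it then a pigeonhole argument within one copy yields case (a); the sunflower block supplies many embedded copies of ${\cal H} \oplus [1]$, so that if $\chi$ uses many distinct colours on the sunflower then a greedy rainbow selection through $v^*$ yields case (b). Flatness is preserved because each part outside the singleton part of $v^*$ lies in only one of the sub-configurations constructed. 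The thresholds $n_0$ and $m_0$ are chosen so that ${\cal H}' \oplus [t-1]$ embeds in $[m]^{n-1}$ and so that Lemma~\ref{corr:cros_contain_hyp} can be invoked if needed to extract the monochromatic sub-configuration in case (a).
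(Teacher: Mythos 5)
Your overall strategy --- prove the contrapositive, colour each edge of the realised ${\cal H}' \oplus [t-1]$ by the coordinate-$i$ value of a lift into ${\cal F}$, and aim for either a monochromatic ${\cal H}$ (which lifts to ${\cal H} \oplus [t]$ via agreement at $i$) or a rainbow ${\cal H} \oplus [1]$ (which lifts to ${\cal H} \oplus [t]$ via disagreement at $i$) --- correctly identifies the mechanism the paper also uses, and your observation that each ${\cal F}_{i \to a}$ is ${\cal H} \oplus [t-1]$-free is right. The gap is that the blowup-plus-sunflower ${\cal H}'$ you propose does not enforce this dichotomy for an arbitrary colouring, and since the colour palette $[m]$ is unbounded no pigeonhole over a fixed number of colours can close it. Concretely, take ${\cal H}$ to consist of two disjoint edges, so that a monochromatic ${\cal H}$ is a same-coloured pair of edges agreeing in zero parts, while a rainbow ${\cal H}\oplus[1]$ is a differently-coloured pair agreeing in exactly one part. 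In your ${\cal H}'$ the \emph{only} pairs of edges meeting in exactly one part are two sunflower petals through $v^*$, while every blowup--blowup and blowup--sunflower pair is disjoint. Now colour all sunflower edges with a single colour $1$ and give every blowup edge its own fresh colour: every disjoint pair uses two distinct colours (any two colour-$1$ edges share $v^*$, and every other colour is a singleton), and every pair meeting in one part is colour-$1$-on-colour-$1$. Both horns of your dichotomy fail. The canonical-Ramsey flavour of this colouring (one block monochromatic, one block rainbow) is exactly the intermediate behaviour your pigeonhole and greedy selection do not handle, and the closing appeal to Lemma~\ref{corr:cros_contain_hyp} does not say how to recover from it.

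The paper sidesteps this by abandoning an exact combinatorial dichotomy in favour of a \emph{density} dichotomy on a much richer host: it takes ${\cal H}'$ to be (the flattening ${\sf flat}(\cdot)$ of) the complete $n_1$-partite $n_1$-graph $[m_1]^{n_1}$ for constants $n_1,m_1$ depending only on $|{\cal H}|$ and $\ell$. Given the induced colouring of $[m_1]^{n_1}$ by the coordinate-$i^*$ value of the lift, either some colour class has density at least $1/2h$, and then Lemma~\ref{corr:cros_contain_hyp} produces a copy of ${\cal H}$ inside that class (monochromatic case); or else every colour class has density below $1/2h$, in which case the classes can be merged into at least $h$ meta-classes of density roughly $1/2h$, and Lemma~\ref{corr:cros_contain_hyp} cross-contains ${\cal H}\oplus[1]$ in these --- since distinct meta-classes carry disjoint colour sets, this is the rainbow case. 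The reason this works where your construction does not is that the complete host $[m_1]^{n_1}$ already contains ${\cal H}$ and ${\cal H}\oplus[1]$ inside \emph{every} subfamily of non-negligible density, so the number of colours is irrelevant once one thinks in terms of densities; your blowup and sunflower each support only one intersection type, which is exactly what the counterexample exploits. To repair your argument you would essentially need to replace the pigeonhole and greedy steps by this density argument on a suitably rich host.
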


\begin{proof}
Consider any $\ell$-configuration ${\cal H}$ of size $h$.
We let $C=C(\ell+t,h)$
be as in Lemma \ref{corr:cros_contain_hyp},
$\eps=1/2h$, $n_1 = 2C \log(\eps^{-1})$, $m_1 = 3hn/\eps$,
and then prove the statement for ${\cal H}' = [m_1]^{n_1}$,
the complete $n_1$-partite $n_1$-graph with parts of size $m_1$.
We note that if $\mathcal{H}$ was flat initially,
then one can take ${\sf flat}(\mathcal{H}')$ instead of $\mathcal{H}'$
to preserve flatness, and the correctness follows from the analysis below
and by Remark~\ref{rem+flat}.

We show the contrapositive statement,
i.e.\ that if $\pl_{i^*}({\cal F})$ contains
${\cal H}'\oplus[t-1]$ for some $i^* \in [n_1]$
then ${\cal F}$ contains ${\cal H}\oplus[t]$.
The version for flat configurations will then
follow by Remark \ref{rem+flat}.2.

By relabelling we can assume that we have
$X = \{ x(y) : y \in [m_1]^{n_1} \} \sub {\cal F}$
where each $x(y)_{[n_1]}=y$,
there is some $T \in \tbinom{[n] \sm [n_1]}{t-1}$
such that $x(y)_i = 1$ for all $i \in T$,
and $x(y)_i \ne x(y')_i$ whenever $y \ne y'$ and
$i \notin [n] \sm (T \cup [n_1] \cup \{i^*\})$.
We $m$-colour $[m_1]^{n_1}$
as ${\cal C}_1,\dots,{\cal C}_m$, where
each ${\cal C}_j = \{ y: x(y)_{i^*}=j \}$.

Note that if some $\mu({\cal C}_j) \geq \eps$
then applying Lemma~\ref{corr:cros_contain_hyp}
with ${\cal F}_1 = \dots = {\cal F}_h = {\cal C}_j$
we find a copy of ${\cal H}$ in ${\cal C}_j$.
The corresponding $x(y) \in {\cal F}$ for each $y$
in this copy agree outside $[n_1]$ in coordinates
$T \cup \{i^*\}$ and no others, so we obtain
a copy of ${\cal H}\oplus [t]$ in ${\cal F}$.

Thus we may assume that each $\mu({\cal C}_j) < \eps$.
By repeated merging we can form `meta-colours'
${\cal C}'_1,\dots,{\cal C}_{m'}$,
each of which is a union of some of the ${\cal C}_j$'s,
such that each $\mu({\cal C}'_j) \in (\eps,2\eps)$,
so $m' \ge 1/2\eps = h$.
By Lemma~\ref{corr:cros_contain_hyp},
${\cal C}'_1,\dots,{\cal C}_{h}$
cross contain $\mathcal{H}\oplus[1]$.
The corresponding $x(y) \in {\cal F}$ for each $y$
in this copy agree outside $[n_1]$ in coordinates
$[T]$ and no others, so again we obtain
a copy of ${\cal H}\oplus [t]$ in ${\cal F}$.
\end{proof}

The following corollary is immediate
by iterating Lemma~\ref{lem:kernel_down}.

\begin{corollary}\label{corr:kernel_down}
For any (flat) configuration ${\cal H}$ and $t \in \mb{N}$
there exist $m_0,n_0 \in \mb{N}$ and
a (flat) configuration ${\cal H}'$
such that if ${\cal F} \sub [m]^n$
with $n \ge n_0$, $m \ge m_0$ is ${\cal H}\oplus[t]$-free
then $\pl_I({\cal F})$ is ${\cal H}'$-free
for all $I \in \tbinom{[n]}{t}$.
\end{corollary}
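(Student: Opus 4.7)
The plan is to iterate Lemma \ref{lem:kernel_down} exactly $t$ times, stripping away one factor of $[1]$ from the kernel at each step. Define configurations $\mathcal{H}^{(0)}, \mathcal{H}^{(1)}, \ldots, \mathcal{H}^{(t)}$ inductively by setting $\mathcal{H}^{(0)} = \mathcal{H}$ and taking $\mathcal{H}^{(k+1)}$ to be the configuration produced by Lemma \ref{lem:kernel_down} when applied to the configuration $\mathcal{H}^{(k)}$. Note that Lemma \ref{lem:kernel_down} produces $\mathcal{H}^{(k+1)}$ depending only on $\mathcal{H}^{(k)}$ (not on the residual kernel size), so this chain is well-defined, and flatness is preserved along the way by the flat version of the lemma. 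The final configuration for the corollary will be $\mathcal{H}' := \mathcal{H}^{(t)}$.

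The inductive claim, which I would prove by induction on $k \in \{0,1,\ldots,t\}$, is: if $\mathcal{F} \sub [m]^n$ is $\mathcal{H}\oplus[t]$-free with $m \ge m_0, n \ge n_0$, then for every $I \in \binom{[n]}{k}$ the family $\pl_I(\mathcal{F}) \sub [m]^{[n]\sm I}$ is $\mathcal{H}^{(k)}\oplus[t-k]$-free. The base case $k=0$ is the hypothesis. For the inductive step, fix $I \in \binom{[n]}{k+1}$, write $I = I' \cup \{i\}$, and apply the induction hypothesis to $I'$ to conclude that $\pl_{I'}(\mathcal{F})$ is $\mathcal{H}^{(k)}\oplus[t-k]$-free; then Lemma \ref{lem:kernel_down} applied to $\pl_{I'}(\mathcal{F})$ with kernel size $t-k$ yields that $\pl_i(\pl_{I'}(\mathcal{F})) = \pl_I(\mathcal{F})$ is $\mathcal{H}^{(k+1)}\oplus[t-k-1]$-free, as required. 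Taking $k=t$ gives the desired conclusion since $\mathcal{H}^{(t)}\oplus[0] = \mathcal{H}^{(t)} = \mathcal{H}'$.

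For the parameters: Lemma \ref{lem:kernel_down} applied to $\mathcal{H}^{(k)}$ with kernel size $t-k$ produces thresholds $m_0^{(k)}, n_0^{(k)}$. At the inductive step above we apply it to a family living in $[m]^{n-k}$, so we need $m \ge m_0^{(k)}$ and $n - k \ge n_0^{(k)}$. Since the iteration runs for only $t$ steps, we may set $m_0 := \max_{0 \le k < t} m_0^{(k)}$ and $n_0 := \max_{0 \le k < t}(n_0^{(k)} + k)$, both finite, and the induction goes through.

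The only real subtlety is conceptual rather than technical: Lemma \ref{lem:kernel_down} is phrased so that $\mathcal{H}'$ depends only on $\mathcal{H}$ (not on $t$), which is exactly what allows us to iterate cleanly without the sequence of configurations being re-chosen at every step. No significant obstacle should arise; verifying that flatness is preserved under iteration and that the shadow operation commutes correctly (so that $\pl_I$ can indeed be computed as an iterated $\pl_i$) is immediate from the definitions in Section \ref{sec:huge}.
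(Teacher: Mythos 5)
Your proof is correct and is exactly the iteration that the paper invokes when it says the corollary is ``immediate by iterating Lemma~\ref{lem:kernel_down}''; you have supplied the routine but omitted details (the inductive claim, the uniform choice of $m_0,n_0$, and the observation that the quantifier order in Lemma~\ref{lem:kernel_down} makes $\mathcal{H}'$ depend only on $\mathcal{H}$, which is what allows the chain $\mathcal{H}^{(0)},\dots,\mathcal{H}^{(t)}$ to be fixed in advance).
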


We also have the following corollary giving
improved estimates on measures and uncapturability
of iterated shadows.

\begin{corollary}\label{corr:find_suitable_shadow}
For any flat configuration ${\cal H}$ and $t \in \mb{N}$
there exist $m_0,n_0 \in \mb{N}$ and $C>0$
such that for any ${\cal H}\oplus[t-1]$-free
${\cal F} \sub [m]^n$ with $n \ge n_0$, $m \ge m_0$,
\begin{enumerate}
\item $|{\cal F}| \leq
C \sum_{I \in \tbinom{[n]}{t}} |\pl_I({\cal F})| $,
\item if ${\cal F}$ is $(r,\eps)$-uncapturable then
$\pl_I({\cal F})$ is $(r/n^t, (m/n)^t \eps/C)$-uncapturable
for some $I \in \tbinom{[n]}{t}$.
\end{enumerate}
\end{corollary}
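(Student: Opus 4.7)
The plan is to iterate Lemma~\ref{lem:shadow_lower} and Lemma~\ref{lem:shadow_uncap} a total of $t$ times, using Lemma~\ref{lem:kernel_down} at each stage to supply the flat forbidden configuration required by these lemmas.

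First I would set up an auxiliary sequence of flat configurations: let $\mathcal{H}_0 = \mathcal{H}$ and let $\mathcal{H}_{j+1}$ be the flat configuration produced by Lemma~\ref{lem:kernel_down} applied to $\mathcal{H}_j$. An easy induction on $j$ shows that for every $I \sub [n]$ with $|I| = j \le t-1$, the shadow $\pl_I(\mathcal{F}) \sub [m]^{[n]\sm I}$ is $\mathcal{H}_j \oplus [t-1-j]$-free, and hence free of a flat configuration of bounded size $h_j := |\mathcal{H}_j|$. Since only the finite list $\mathcal{H}_0,\ldots,\mathcal{H}_{t-1}$ of auxiliary configurations appears, I can choose $n_0$ and $m_0$ larger than all thresholds required by Lemmas~\ref{lem:kernel_down}, \ref{lem:shadow_lower} and \ref{lem:shadow_uncap} for these configurations, so that each of them is applicable at every stage of the iteration.

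For part (1), I would apply Lemma~\ref{lem:shadow_lower} $t$ times in succession: the first application gives $|\mathcal{F}| \le h_0 \sum_{i_1} |\pl_{i_1}(\mathcal{F})|$, and further applications to each inner shadow peel off another coordinate with a factor of $h_j$ each time, yielding after $t$ iterations a bound in terms of $|\pl_I(\mathcal{F})|$ over ordered $t$-tuples of distinct coordinates with coefficient $\prod_{j<t} h_j$. Collapsing ordered tuples to unordered sets absorbs a further factor of $t!$, giving the claim with $C = t! \prod_{j<t} h_j$.

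For part (2), I would iterate Lemma~\ref{lem:shadow_uncap} analogously. Each application yields some coordinate $i_{j+1}$ so that the new shadow on $[m]^{n-j-1}$ is uncapturable with the first parameter divided by the current ground-set size and the second multiplied by at least $m/((n-j)h_j)$. After $t$ such steps I obtain some $I \in \binom{[n]}{t}$ for which $\pl_I(\mathcal{F})$ is $\big(r/\prod_{j<t}(n-j), \eps\prod_{j<t} m/((n-j)h_j)\big)$-uncapturable; bounding $n-j \le n$ in both factors weakens this to the desired $(r/n^t, (m/n)^t \eps/C)$-uncapturability with $C = \prod_{j<t} h_j$. The main obstacle throughout is the bookkeeping of the iteration: verifying that the flat-freeness hypothesis of the two shadow lemmas is maintained at every intermediate stage, which is exactly what Lemma~\ref{lem:kernel_down} was designed to provide, and ensuring that the thresholds $n_0, m_0$ suffice uniformly across the $t$ finitely many configurations in play.
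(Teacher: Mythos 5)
Your proposal is correct and follows essentially the same approach as the paper: the paper phrases the argument as an induction on $t$, using Lemma~\ref{lem:kernel_down} to transfer the freeness hypothesis to shadows and then invoking Lemmas~\ref{lem:shadow_lower} and~\ref{lem:shadow_uncap}, which is exactly the iteration you spell out explicitly with the auxiliary sequence $\mathcal{H}_0,\dots,\mathcal{H}_{t-1}$. Your bookkeeping of the constants and the uniformity of $n_0,m_0$ over the finitely many configurations is sound and matches the paper's implicit accounting.
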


\begin{proof}
We argue by induction on $t$. The base case $t=1$
is given by Lemmas \ref{lem:shadow_lower}
and \ref{lem:shadow_uncap}. Now suppose $t \ge 2$.
By Lemma~\ref{lem:kernel_down} there is a configuration
${\cal H}'$ depending only on ${\cal H}$ such that
each $\pl_i({\cal F})$ is ${\cal H}\oplus[t-2]$-free.
The induction hypothesis of (1) gives $C'=C({\cal H}',t-1)$
such that each $|\pl_i({\cal F})| \le C' \sum
\{ |\pl_{I \cup \{i\}}({\cal F})| : I \in \tbinom{[n]}{t-1} \}$,
which proves (1). For (2), if ${\cal F}$
is $(r,\eps)$-uncapturable then by Lemma~\ref{lem:shadow_uncap} the family $\pl_i({\cal F})$
is $(r/n, \eps m/n|{\cal H}|)$-uncapturable for some $i \in [n]$.
By the induction hypothesis of (2),
$\pl_{I \cup \{i\}}({\cal F})$ is $(r/n^t, (m/n)^t \eps/C)$-uncapturable for some $I \in \tbinom{[n] \sm \{i\}}{t-1}$,
which proves (2).
\end{proof}

\subsection{Junta approximation}

In this subsection we prove
Theorem~\ref{thm:junta_approx}
in the case that $m$ is huge
(at least exponential in $n$).

\begin{thm}\label{thm:approx_huge_m}
For any $t,N\in\mb{N}$ there are $K,n_0\in\mb{N}$
such that if $\mathcal{F}\sub[m]^n$ is $(t-1)$-avoiding
with $n\geq n_0$ and $m\geq 2^{n/N}$ then
there exists a subcube $D$ of co-dimension $t$
such that $\mu(\mathcal{F}\sm D)\leq 2^{-n/K} m^{-t}$.
\end{thm}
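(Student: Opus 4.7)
My strategy mirrors the junta approximation proofs for small and moderate alphabets: apply the regularity lemma to obtain uncapturable pieces, then use structural arguments to restrict their form. The key difference is that cross-agreement methods based on gluings or Markov chains are unavailable here, so I would use the shadow theory of Section~\ref{sec:huge} in their place.

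First I would apply Lemma~\ref{lem:regularity_large_m} with $k = t$, $\eps = 2^{-n/K_1}$ for a suitable $K_1 > N$, and $r$ a large constant depending only on $t$. This yields a collection $\mathcal{D}$ of at most $r^t$ subcubes of co-dim at most $t$ such that each $\mathcal{F}_{R \to \alpha}$ is $(r,\, 2^{-n/K_1} m^{|R|-t})$-uncapturable and $\mu(\mathcal{F} \setminus \bigcup \mathcal{D}) \leq 3r^{t+1} 2^{-n/K_1} m^{-t}$. This uncaptured mass is at most $2^{-n/K} m^{-t}$ for any $K > K_1$ and $n$ sufficiently large, so it will not affect the conclusion.

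Next I would show every piece has co-dim exactly $t$. Since $|R| \leq k = t$, it suffices to rule out $|R| = s < t$. Under this assumption, $\mathcal{F}_{R \to \alpha} \sub [m]^{n-s}$ is $(t-1-s)$-avoiding, equivalently $\mathcal{H} \oplus [t-1-s]$-free where $\mathcal{H}$ is the flat $0$-configuration consisting of two empty edges. Applying Corollary~\ref{corr:find_suitable_shadow}(2) with its parameter $t$ replaced by $t-s$ produces $I \sub [n-s]$ of size $t-s$ such that $\partial_I(\mathcal{F}_{R\to\alpha}) \sub [m]^{n-t}$ is simultaneously $\mathcal{H}'$-free (for some flat $\mathcal{H}'$ whose $(n-t)$-expansion contains a matching of size $2$) and uncapturable with parameter at least $2^{-n/K_1}/(Cn^{t-s})$. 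Since $m \geq 2^{n/N}$ with $K_1 > N$, this parameter exceeds $2(n-t)/m$ for large $n$, so Claim~\ref{claim:cross_contain_match} (applied with $h=2$) forces the shadow to contain a matching, contradicting $\mathcal{H}'$-freeness. Hence $|R| = t$.

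To finish, I would show all co-dim $t$ subcubes in $\mathcal{D}$ coincide as subsets of $[m]^n$. Given distinct $D_{R_1\to\alpha_1}, D_{R_2\to\alpha_2} \in \mathcal{D}$, let $s$ denote the number of coordinates in $R_1 \cap R_2$ on which $\alpha_1$ and $\alpha_2$ agree; necessarily $s < t$. I would produce $x^j \in \mathcal{F}_{R_j \to \alpha_j}$ for which $(\alpha_j, x^j) \in \mathcal{F}$ have agreement exactly $t-1$, contradicting $(t-1)$-avoidance. After restricting both pieces to a common coordinate frame via Proposition~\ref{prop:fairness} (applicable because uncapturability gives noticeable measure), I would invoke a two-family analogue of Corollary~\ref{corr:find_suitable_shadow}, asserting that two uncapturable families which cross-avoid the relevant flat configuration with a kernel cannot both have non-negligible measure. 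This yields the needed cross-agreement and completes the proof. The main obstacle is precisely this cross-shadow step: extending Lemmas~\ref{lem:shadow_lower} and~\ref{lem:shadow_uncap} from one family to two by symmetrising the recursive deletion procedure, while retaining quantitative bounds strong enough to beat the $m > 2hn/\eps$ threshold of Claim~\ref{claim:cross_contain_match} after the extra restrictions imposed by the fairness step.
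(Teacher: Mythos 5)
Your high-level plan (regularity, rule out co-dimension $<t$, then show uniqueness) matches the paper, but both main steps have genuine gaps that mirror exactly the parts of the paper's argument that are non-obvious.

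In your step to rule out co-dimension $s < t$, you take $|I| = t-s$ iterated shadows so that the uncapturability parameter of $\pl_I(\mathcal{F}_{R\to\alpha})$ has no $1/m$ factor, and then you claim the shadow is $\mathcal{H}'$-free where $\mathcal{H}'$ contains a $2$-matching. Neither piece holds. Corollary~\ref{corr:kernel_down} only delivers $\mathcal{H}'$-freeness for $|I|$ equal to the \emph{kernel size} $t-1-s$, not $t-s$; the extra shadow that kills the $1/m$ factor also kills the known configuration-freeness of the shadow, so there is no forbidden configuration to contradict. And if you instead stop at $|I| = t-1-s$ (as the paper does, in its dual parameterisation), the uncapturability parameter is $\eps/O_t(mn^{t-1-s})$, which is too small for a direct application of Lemma~\ref{corr:cros_contain_hyp} or Claim~\ref{claim:cross_contain_match}: the $1/m$ factor in the density exactly cancels the $m$ in those lemmas' hypotheses. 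Moreover, $\mathcal{H}'$ is not ``a flat configuration whose expansion contains a $2$-matching''; by the proof of Lemma~\ref{lem:kernel_down} it is ${\sf flat}([m_1]^{n_1})$, a configuration with $\OO_t(1)$ edges, and showing the shadow contains it requires an argument. The paper bridges this gap with the dictator dichotomy: either $\geq h'$ coordinates have dense restrictions of $\pl_I(\mathcal{F}_{R\to\alpha})$ (whence Lemma~\ref{corr:cros_contain_hyp} applies to those restrictions directly), or they do not, in which case uncapturability gives noticeable measure outside the dense dictators and one takes one more shadow, partitions it by the new coordinate, merges parts to comparable measure, and applies Lemma~\ref{corr:cros_contain_hyp}. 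This whole argument is missing from your sketch, and it is the crux of the step.

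For the uniqueness step, you propose a ``two-family analogue of Corollary~\ref{corr:find_suitable_shadow}'' and yourself flag it as the main obstacle. That machinery is unnecessary: once the two co-dimension-$t$ subcubes $D_{R_1\to\alpha_1}, D_{R_2\to\alpha_2}$ agree on $t-1-s$ coordinates, one works with $\mathcal{G}_j = \mathcal{F}_{R_j\to\alpha_j}\setminus\mathcal{F}_{R_{3-j}\to\alpha_{3-j}}$ which have $\mu(\mathcal{G}_j)\geq\eps$ by uncapturability, applies Proposition~\ref{prop:fairness} to restrict on a random $s$-set $S$, averages over $R_2\setminus R_1$ and $R_1\setminus R_2$, and then the two resulting families sit in the same coordinate frame, have measure $\Omega(\eps)$, and must be cross-intersecting; Lemma~\ref{lem:hoffman1} with $m\geq 2^{n/N}\gg\eps^{-1}$ gives the contradiction directly. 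No shadow or two-family extension is needed.Your high-level plan (regularity, rule out co-dimension $<t$, then show uniqueness) matches the paper's, but both main steps have genuine gaps, and they are precisely the parts of the paper's argument that are non-routine.

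In your step ruling out co-dimension $s < t$, you take $|I| = t-s$ iterated shadows so that the uncapturability parameter of $\partial_I(\mathcal{F}_{R\to\alpha})$ has no $1/m$ factor, and then claim the shadow is $\mathcal{H}'$-free where $\mathcal{H}'$ is essentially a $2$-matching. Neither piece holds. Corollary~\ref{corr:kernel_down} only delivers $\mathcal{H}'$-freeness for $|I|$ equal to the \emph{kernel size} $t-1-s$, not $t-s$; the extra shadow that removes the $1/m$ factor also removes the known configuration-freeness of the shadow, so there is nothing left to contradict. If you instead stop at $|I| = t-1-s$ shadows (as the paper does, in its dual parameterisation), the shadow's density is only $\eps/O_t(mn^{t-1-s})$, and the $1/m$ factor exactly cancels the $m$ in the hypotheses of Claim~\ref{claim:cross_contain_match} and Lemma~\ref{corr:cros_contain_hyp}, so a direct application of those results fails. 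Moreover, $\mathcal{H}'$ is not a $2$-matching: by the proof of Lemma~\ref{lem:kernel_down} it is ${\sf flat}([m_1]^{n_1})$, a flat configuration with $\OO_t(1)$ edges, and showing the shadow contains it is non-trivial. The paper bridges this with a dictator dichotomy: either at least $h'=|\mathcal{H}'|$ dictator restrictions of the shadow have measure $>\eps^2/n^2$ (and these cross-contain $\mathcal{H}'$ by Lemma~\ref{corr:cros_contain_hyp}), or fewer do, in which case uncapturability gives noticeable mass off these few dictators, Lemma~\ref{lem:shadow_lower} produces a good further shadow, one partitions it by the new coordinate and merges parts to comparable measure, then applies Lemma~\ref{corr:cros_contain_hyp}. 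This dichotomy argument is entirely absent from your sketch, and it is the crux of the step.

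For the uniqueness step, you propose a ``two-family analogue of Corollary~\ref{corr:find_suitable_shadow}'' and flag it yourself as the main obstacle. That machinery is unnecessary: once the two co-dimension-$t$ subcubes $D_{R_1\to\alpha_1}$, $D_{R_2\to\alpha_2}$ agree on $t-1-s$ coordinates, set $\mathcal{G}_j = \mathcal{F}_{R_j\to\alpha_j}\setminus\mathcal{F}_{R_{3-j}\to\alpha_{3-j}}$; these have $\mu(\mathcal{G}_j)\geq\eps$ by uncapturability. Apply Proposition~\ref{prop:fairness} to restrict on a random $s$-set $S$, average over the coordinates in $R_2\setminus R_1$ and $R_1\setminus R_2$ respectively, and the two resulting families live in the same coordinate frame, have measure $\Omega(\eps)$, and must be cross-intersecting since their defining restrictions agree on exactly $t-1$ coordinates. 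Lemma~\ref{lem:hoffman1} with $m\geq 2^{n/N}\gg\eps^{-1}$ then gives the contradiction directly. No shadow argument or two-family extension is needed for this step.
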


\begin{proof}
We apply Lemma~\ref{lem:regularity_large_m}
with $r=n^t$, $k=t$ and $\eps = 2^{-2n/K} \ge 1/m$,
where $K,n_0 \gg t,N$, obtaining
a collection $\mathcal{D}$ of
at most $r^k = n^{t^2}$ subcubes of co-dimension
at most $t$ such that ${\cal F}_{R \to \aA}$ is
$(r,\eps \mu(D)^{-1} m^{-t})$-uncapturable
for each $D = D_{R \to \aA} \in {\cal D}$ and
$\mu({\cal F} \sm \bigcup {\cal D})
\le n^{2t^2} \eps m^{-t}$.
We let ${\cal D}_d$ be the set of
subcubes in ${\cal D}$ of co-dimension $d$.
To prove the theorem, it suffices to show that
(a) ${\cal D}_d = \es$ for $d<t$, and
(b) $|{\cal D}_t| \le 1$.

To see (a), suppose for a contradiction that
$D_{R \to \alpha} \in \mathcal{D}_{t-1-s}$ with $s \ge 0$.
As $\mathcal{F}$ is $(t-1)$-avoiding,
${\cal F}_{R \to \aA}$ is $s$-avoiding,
i.e.\ is ${\cal H} \oplus [s]$-free,
where ${\cal H}$ is the flat $0$-configuration
consisting of two copies of the empty set.
By Corollary \ref{corr:kernel_down}, there is some
flat configuration ${\cal H}'$ such that
$\pl_I({\cal F}_{R \to \aA})$ is ${\cal H}'$-free
for any $I \in \tbinom{[n] \sm R}{s}$.
By Corollary \ref{corr:find_suitable_shadow}, as
${\cal F}_{R \to \aA}$ is $(n^t,\eps m^{-s-1})$-uncapturable,
there is some $I \in \tbinom{[n] \sm R}{s}$ such that
${\cal G}:=\pl_I({\cal F}_{R \to \aA})$
is $(n^{t-s},\eps/O_t(mn^s))$-uncapturable.

To obtain the required contradiction
we will show that ${\cal G}$ contains ${\cal H}'$.
Write $|{\cal H}'|=h'=O_t(1)$. Let ${\cal J}$ be
the set of all dictators $D_{i \to a}$ such that
$\mu({\cal G}_{i \to a}) > \eps^2/n^2$.
We claim that $|{\cal J}|<h'$. To see this,
suppose on the contrary that ${\cal J}$ contains
$D_{i^1 \to a^1},\dots,D_{i^{h'} \to a^{h'}}$.
Let $I' = \{i^1,\dots,i^{h'}\}$.
By averaging, we can fix
$x^j \in [m]^{I'}$ for $j \in [h']$
such that $x^j_{i^j}=a^j$,
$x^j_{i^{j'}} \ne a^{j'}$ for all $j' \ne j$,
so that ${\cal F}_j = {\cal G}_{I' \to x^j}$
has $\mu({\cal F}_j) > \eps^2/2n^2$.
However, then ${\cal H}'$ is cross contained in
${\cal F}_1,\dots,{\cal F}_{h'}$
by Lemma \ref{corr:cros_contain_hyp},
applied with $\eps^2/2n^2$ in place of $\eps$
(using $n > C\log(2n^2/\eps^2)$
and $m > 2hn \cdot 2n^2/\eps^2$ for large $K$).
Thus $|{\cal J}|<h'$, as claimed.

By uncapturability of ${\cal G}$, writing
${\cal G}' = {\cal G} \sm \bigcup {\cal J}$ we have
$\mu({\cal G}') \ge \eps/O_t(mn^s) > \eps^2/m$.
By Lemma \ref{lem:shadow_lower} we can fix
$i^* \in [n] \sm (R \cup I)$ with
$|{\cal G}'|/h'n \le |\pl_{i^*}({\cal G}')|$.
We fix any partition $({\cal F}'_a: a \in [m])$
of $\pl_{i^*}({\cal G}')$ such that
each ${\cal F}'_a \sub \pl_{i^* \to a}({\cal G}')$.
Then $\sum_a \mu({\cal F}'_a) = \mu(\pl_{i^*} {\cal G}')
\ge \mu({\cal G}') m/h'n > \eps^2 /h'n$.
Also, by definition of ${\cal J}$ each
$\mu({\cal F}'_a) < \eps^2/n^2$.
By repeated merging we can form a partition
${\cal P}$ of $[m]$ such that each $S \in {\cal P}$
has $\sum_{a \in S} \mu({\cal F}'_a)
\in (\eps^2/n^2,2\eps^2/n^2)$.
Then $|{\cal P}| \ge h'$, so we can choose
$S_1,\dots,S_{h'}$ in ${\cal P}$ and
apply Lemma \ref{corr:cros_contain_hyp}
to see that ${\cal F}_1,\dots,{\cal F}_{h'}$
cross contain ${\cal H}'$, where each
${\cal F}_i = \bigcup_{a \in S_i} {\cal F}'_a$.
This completes the proof of (a).

To see (b), suppose for contradiction that we have
distinct subcubes $D_{R_j \to \aA_j}$ for $j=1,2$
of co-dimension $t$. Suppose they agree on
$t-1-s$ coordinates, for some $s \ge 0$. Consider
$\mathcal{G}_1 = \mathcal{F}_{R_1\ra \alpha_1}
\sm \mathcal{F}_{R_2\ra \alpha_2}$ and
$\mathcal{G}_2 = \mathcal{F}_{R_2\ra \alpha_2}
\sm \mathcal{F}_{R_1\ra \alpha_1}$. By uncapturability,
both $\mu({\cal G}_j)\geq \eps > e^{-n/C}$, where
$C=C(s,0.1)$ is as in Proposition \ref{prop:fairness},
as $K$ is large. Consider uniformly random
${\bf S}\sim {[n]\sm (R_1\cup R_2) \choose s}$
and ${\bf x}\in[m]^{\bf S}$.

By Proposition~\ref{prop:fairness} both
$\mb{P}[ \mu((\mathcal{G}_j)_{{\bf S}\ra {\bf x}})
\geq .9 \mu(\mathcal{G}_j) ] \ge .9-o(1)$,
as ${\bf S}$ is total variation distance $o(1)$
from uniform on $\tbinom{[n] \sm R_j}{s}$.
Thus we can fix $S,x$ so that both
${\cal G}'_j = ({\cal G}_j)_{S \to x}$
have $\mu({\cal G}'_j) > .9 \eps$.
By averaging, we can fix some
${\cal G}''_1 = ({\cal G}'_1)_{R_2 \sm R_1 \to a^1}$
with $\mu({\cal G}''_1) \ge \mu({\cal G}'_1) > .9\eps$,
and similarly some ${\cal G}''_2$.
Then ${\cal G}''_1$, ${\cal G}''_2$ are defined by
restrictions of ${\cal F}$ to $R_1 \cup R_2 \cup S$
with agreement exactly $t-1$, so must be cross intersecting.
However, $m\geq 2^{n/N} \gg \eps^{-1}$ for large $K$,
so this contradicts Lemma~\ref{lem:hoffman1}.
\end{proof}

\subsection{Bootstrapping}

We conclude this part with the bootstrapping step that
completes the proof of our main theorem for huge alphabets,
which we restate as follows.

\begin{thm}\label{thm:exact_huge_m}
For any $t,N\in\mathbb{N}$ there is $n_0 \in \mathbb{N}$
such that if $n\geq n_0$, $m\geq 2^{n/N}$
and ${\cal F} \sub [m]^n$ is $(t-1)$-avoiding
then $|{\cal F}| \le m^{n-t}$, with equality
only when ${\cal F}$ is a subcube of co-dimension $t$.
\end{thm}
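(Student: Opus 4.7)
The plan is to bootstrap the junta approximation of Theorem~\ref{thm:approx_huge_m} to exact equality using a counting argument based on Shearer's entropy inequality (in its Loomis--Whitney projection form). First I would apply Theorem~\ref{thm:approx_huge_m} with parameters $(t,N)$ to obtain $K$ and $n_0$ such that any $(t-1)$-avoiding $\mathcal{F}\sub[m]^n$ with $n\geq n_0$ and $m\geq 2^{n/N}$ is approximated by a subcube $D$ of co-dimension $t$ satisfying $|\mathcal{F}\sm D|\leq 2^{-n/K}m^{n-t}$. After relabelling, take $D=\{x\in[m]^n:x_{[t]}=1^t\}$, so $|D|=m^{n-t}$. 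The identity $|\mathcal{F}|-|D|=|\mathcal{F}\sm D|-|D\sm\mathcal{F}|$ then reduces the theorem to proving $|D\sm\mathcal{F}|\geq|\mathcal{F}\sm D|$, with strict inequality whenever $\mathcal{F}\sm D\neq\es$.

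Next I would stratify $\mathcal{F}\sm D=\bigcup_{s=0}^{t-1}A_s$ by $s(y)=|\{i\in[t]:y_i=1\}|$. For each $y\in A_s$ and each $T\in\binom{[n]\sm[t]}{t-1-s}$, the $(t-1)$-avoidance forces the entire fibre
\[ F_T(y)\ :=\ \{z\in D:z_T=y_T,\ z_i\neq y_i\text{ for all }i\in[n]\sm[t]\sm T\}, \]
of size $(m-1)^{n-2t+1+s}$, into $D\sm\mathcal{F}$. Let $B_s\sub[m]^{[n]\sm[t]}$ be the restriction of $A_s$ to the last $n-t$ coordinates, and let $\pi_T(B_s)\sub[m]^T$ denote the further restriction to $T$. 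Since the fibres $F_T(w)$ are disjoint as $w_T$ varies over $[m]^T$, one has
\[ |D\sm\mathcal{F}|\ \geq\ |\pi_T(B_s)|\,(m-1)^{n-2t+1+s}\qquad\text{for every such }T. \]

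To exploit this I would apply Shearer's inequality in its Loomis--Whitney form to the family of all $(t-1-s)$-subsets of $[n]\sm[t]$, obtaining $\max_T|\pi_T(B_s)|\geq|B_s|^{(t-1-s)/(n-t)}$; a direct fibre count also gives $|B_s|\geq|A_s|/\bigl(\binom{t}{s}(m-1)^{t-s}\bigr)$. Combining,
\[ |D\sm\mathcal{F}|\ \geq\ \Bigl(\tfrac{|A_s|}{\binom{t}{s}(m-1)^{t-s}}\Bigr)^{(t-1-s)/(n-t)}(m-1)^{n-2t+1+s}. \]
Using $|A_s|\leq|\mathcal{F}\sm D|\leq 2^{-n/K}m^{n-t}$ together with $m\geq 2^{n/N}$, a short calculation (essentially $(m-1)^{n-t}>|A_s|$, since $(1-1/m)^{n-t}$ is nearly $1$ for huge $m$) shows that the right-hand side strictly exceeds $|\mathcal{F}\sm D|$ whenever $A_s\neq\es$. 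Picking the level $s$ with $|A_s|\geq|\mathcal{F}\sm D|/t$ (which exists if $\mathcal{F}\sm D\neq\es$) then yields $|D\sm\mathcal{F}|>|\mathcal{F}\sm D|$, hence $|\mathcal{F}|<m^{n-t}$. Thus $\mathcal{F}\sub D$ and $|\mathcal{F}|\leq m^{n-t}$, with equality only when $\mathcal{F}=D$, a subcube of co-dimension $t$.

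The hard part will be executing the Shearer/Loomis--Whitney step at the lowest level $s=0$, where both the per-fibre size $(m-1)^{n-2t+1}$ and the fibre-counting loss $(m-1)^{-t}$ in passing from $A_0$ to $B_0$ are at their worst; the hypothesis $m\geq 2^{n/N}$ is precisely what rescues the calculation, and this is the ``subtle application of Shearer's entropy inequality'' flagged in the introduction.
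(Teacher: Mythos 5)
Your high-level plan -- junta approximation from Theorem~\ref{thm:approx_huge_m}, stratifying by the level $s$, noting that each $y\in{\cal F}\sm D$ plus a $(t-1-s)$-set $T$ forces a fibre $F_T(y)\sub D\sm{\cal F}$, and then invoking Shearer -- is close in spirit to the paper. In particular the per-fibre bound you write is essentially the paper's bound $\mu(\Pi_S{\cal G}_T)\le 2\eps$ in disguise. But the way you apply Shearer/Loomis--Whitney does not close, and there is a genuine gap. The issue is the $(m-1)^{t-s}$ loss in passing from $A_s$ to $B_s$. After unwinding, the inequality you need at a level $s\le t-2$ is equivalent to
\[
(1-1/m)^{n-t}\ >\ 2^{-n/K}\,(m-1)^{(t-1-s)(t-s)/(n-2t+1+s)}\bigl(t\tbinom{t}{s}\bigr)^{(t-1-s)/(n-2t+1+s)}.
\]
The exponent $(t-1-s)(t-s)/(n-2t+1+s)$ is $\Theta_t(1/n)$, so the offending factor is roughly $\exp\bigl(\Theta_t(1)\cdot\tfrac{\log m}{n}\bigr)$. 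The hypothesis $m\geq 2^{n/N}$ gives a \emph{lower} bound on $\log m$ but no upper bound; once $\log m\gtrsim n^2/K$ this factor swamps $2^{n/K}$ and the inequality fails. (Your parenthetical heuristic, ``essentially $(m-1)^{n-t}>|A_s|$,'' handles only the level $s=t-1$, where there is no fibre loss; at $s\le t-2$ the actual requirement is the displayed one.) So the calculation is not rescued for all $m\geq 2^{n/N}$, as the theorem requires.

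The paper fixes this by applying Shearer in the other direction, and by not lifting back to $A_s$ at all. It proves a \emph{uniform} bound $\mu(\Pi_S{\cal G}_T)<2\eps$ for every $d$-set $S$ disjoint from $[t]$ (this is the same fibre count you do, but read off as an upper bound on the projection's density, using $|{\cal F}|\geq|D|$), and then runs Shearer on the iterated shadow $\pl_I{\cal G}_T$ over all such $S$ to obtain $\mu(\pl_I{\cal G}_T)<\eps^2$ -- a bound that is a large power of $\eps$ and, crucially, does not grow with $m$. The step you are missing is the iterated-shadow inequality Corollary~\ref{corr:find_suitable_shadow}, which says $|{\cal G}_T|\leq O_t(1)\sum_{I\in\binom{[n]}{d+1}}|\pl_I{\cal G}_T|$ for a $d$-avoiding code (this is the one place $(t-1)$-avoidance of the \emph{remaining} coordinates enters, beyond the fibre argument); combining this with $\mu(\pl_I{\cal G}_T)<\eps^2$ yields $\mu({\cal G}_T)<m^{|T|-t}\eps/n$ and a self-referential contradiction $\eps<\eps$, valid for all $m\geq 2^{n/N}$. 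I would suggest replacing your Loomis--Whitney lower bound on $\max_T|\pi_T(B_s)|$ by the two ingredients above: Shearer as an upper bound on the shadow density, and the shadow lemma to recover $\mu({\cal G}_T)$ from it.
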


We require Shearer's entropy lemma  \cite{Shearer},
as applied to the projection operators
$\Pi_S = \pl_{[n] \sm S}$ on $[m]^n$.
\begin{lemma}\label{lem:shearer}
For ${\cal F} \sub [m]^n$ and $k \in [n]$ we have
$\card{\mathcal{F}}^{{n-1\choose k-1}}\leq
\prod_{\card{S} = k}{\card{\Pi_S(\mathcal{F})}}$.
\end{lemma}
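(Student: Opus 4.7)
The plan is to deduce this from Shearer's entropy inequality in its classical form, which states that for any random vector $X = (X_1,\ldots,X_n)$ and any family $\mathcal{C}$ of subsets of $[n]$ in which every coordinate $i$ appears in at least $t$ members of $\mathcal{C}$, one has $t \cdot H(X) \le \sum_{S \in \mathcal{C}} H(X_S)$. I would take $\mathcal{C} = \binom{[n]}{k}$, the family of all $k$-subsets of $[n]$, and observe that every $i \in [n]$ lies in exactly $\binom{n-1}{k-1}$ sets of this family, so the covering multiplicity is $t = \binom{n-1}{k-1}$.

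Next, I would let $\bm{X}$ be a uniformly random element of $\mathcal{F}$, so that $H(\bm{X}) = \log |\mathcal{F}|$. For each $S \in \binom{[n]}{k}$, the marginal $\bm{X}_S$ takes values in $\Pi_S(\mathcal{F})$ (since $\Pi_S = \pl_{[n]\sm S}$ is precisely the projection of $\mathcal{F}$ onto the $S$-coordinates), and therefore $H(\bm{X}_S) \le \log |\Pi_S(\mathcal{F})|$ by the standard bound that entropy of a discrete variable is at most the logarithm of the size of its support. Substituting these into Shearer's inequality yields
\[
\binom{n-1}{k-1} \log |\mathcal{F}| \;\le\; \sum_{|S|=k} \log |\Pi_S(\mathcal{F})| \;=\; \log \prod_{|S|=k} |\Pi_S(\mathcal{F})|,
\]
and exponentiating gives the desired bound $|\mathcal{F}|^{\binom{n-1}{k-1}} \le \prod_{|S|=k} |\Pi_S(\mathcal{F})|$.

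There is no real obstacle here, since the statement is the standard Loomis--Whitney-type corollary of Shearer's lemma; the only things to verify carefully are the covering multiplicity of $\binom{[n]}{k}$ and the identification of the support of $\bm{X}_S$ with $\Pi_S(\mathcal{F})$. If one prefers a self-contained proof rather than citing Shearer directly, I would instead expand $H(\bm{X}) = H(\bm{X}_1,\ldots,\bm{X}_n)$ via the chain rule and, for each $S$, write $H(\bm{X}_S)$ as a sum of conditional entropies $\sum_{i \in S} H(\bm{X}_i \mid \bm{X}_{S \cap [i-1]})$; the inequality $H(\bm{X}_i \mid \bm{X}_{S \cap [i-1]}) \ge H(\bm{X}_i \mid \bm{X}_{[i-1]})$ (conditioning reduces entropy) then gives Shearer's inequality in this specific instance after summing over $S$ and counting.
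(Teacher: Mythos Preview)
Your proposal is correct and matches the paper's approach exactly: the paper simply cites Shearer's entropy lemma \cite{Shearer} and states this projection inequality as its direct consequence, without spelling out the derivation. Your write-up supplies precisely the standard details (uniform $\bm{X}$ on $\mathcal{F}$, covering multiplicity $\binom{n-1}{k-1}$ for $\binom{[n]}{k}$, and $H(\bm{X}_S)\le\log|\Pi_S(\mathcal{F})|$) that the paper leaves implicit.
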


\begin{proof}[Proof of Theorem \ref{thm:exact_huge_m}]
Suppose ${\cal F} \sub [m]^n$ is $(t-1)$-avoiding
with $|{\cal F}| \geq m^{n-t}$.
By Theorem~\ref{thm:approx_huge_m} there is a subcube $D$
of co-dimension $t$ such that ${\cal G}:={\cal F} \sm D$
has $\eps := \mu({\cal G}) m^t \leq 2^{-n/K}$,
for some $K=K(N,t)$.
We may assume $D=\sett{x\in[m]^n}{x_1=\ldots=x_t=1}$.
Suppose for contradiction that $\eps>0$.
For each $T \subn [t]$ let ${\cal G}_T$
be the set of all $x_{[n] \sm [t]}$ where
$x \in {\cal G}$ with $T = \{i \in [t]: x_i=1\}$.
We have $\eps = \mu({\cal G}) m^t
\leq \sum_T m^{t-|T|} \mu({\cal G}_T)$,
so for a contradiction it suffices to show that
each $\mu({\cal G}_T) < m^{|T|-t} \eps/n$.

As ${\cal F}$ is $(t-1)$-avoiding,
each ${\cal G}_T$ is $(t-1-|T|)$-avoiding.
In particular, if $|T|=t-1$ then ${\cal G}_T$
is intersecting, so by Lemma \ref{lem:hoffman1}
we have the required bound
$\mu({\cal G}_T) < 2\eps/m^2 < m^{-1} \eps/n$.

Now fix any $T \sub [t]$
where $|T|=t-1-d$ with $d \ge 1$.
As ${\cal G}_T$ is $d$-avoiding, it is free
of a configuration with kernel size $d$,
so by Corollary \ref{corr:find_suitable_shadow}
we have $|{\cal G}_T| \leq O_t(1)
\sum_{I \in \tbinom{[n]}{d+1}} |\pl_I({\cal G}_T)|$.
Fix any $I \in \tbinom{[n]}{d+1}$.
To complete the proof
it suffices to establish the following claim,
as this will imply $\mu({\cal G}_T)
< O_t(1) (n/m)^{d+1} \eps^2 <  m^{|T|-t} \eps/n$.

\begin{claim}
$\mu(\pl_I {\cal G}_T) < \eps^2$.
\end{claim}

We will prove this claim using Shearer's inequality
with $k=d$, so we now analyse the projections
$\Pi_S \pl_I {\cal G}_T = \Pi_S {\cal G}_T$
for $S \in \tbinom{[n] \sm I}{d}$.
For such $S$ with $S \cap [t] \ne \es$
we use the trivial bound
$|\Pi_S {\cal G}_T| \le m^d$.
Now fix $S$ with $S \cap [t] = \es$.
We will show that $\mu(\Pi_S {\cal G}_T) < 2\eps$.

To see this, we first show for any
$x \in \Pi_S {\cal G}_T$ that
${\cal F}'_x := {\cal F}_{[t] \to {\bf 1}, S \to x}$
has $\mu({\cal F}'_x) \le n/m$. Suppose not,
and fix $y \in {\cal G}$ with  $\pi_S(y)=x$
and $T = \{i \in [t]: y_i=1\}$.  By a union bound
$\mu({\cal F}'_x \sm \bigcup_{i \in [n] \sm ([t] \cup S)}
D_{i \to y_i}) > 0$, so we can choose $z \in {\cal F}'_x$
that disagrees with $y$ on $[n] \sm ([t] \cup S)$.
However, extending $z$ with $x \in [m]^S$
and ${\bf 1} \in [m]^t$ gives $z^+ \in {\cal F}$
with ${\sf agr}(z^+,y)=t-1$, which is impossible,
so indeed $\mu({\cal F}'_x) \le n/m$.

As $|{\cal F}| \ge \card{D_{[t] \to {\bf 1}}}$ this implies
$|{\cal G}| \ge |D_{[t] \to {\bf 1}} \sm {\cal F}|
\ge |\Pi_S {\cal G}_T| \cdot (1-n/m) m^{n-t-d}$,
so $\eps = \mu({\cal G}) m^t \ge
(1-n/m) \mu(\Pi_S {\cal G}_T)$,
giving $\mu(\Pi_S {\cal G}_T) < 2\eps$.
Finally, writing $n'=|[n] \sm I|=n-(d+1)$,
Lemma \ref{lem:shearer} gives
\[ \card{\pl_I {\cal G}_T}^{n'-1\choose d-1}\leq
\prod_S \card{\Pi_S \pl_I {\cal G}_T)} \leq
(m^d)^{{n' \choose d}-{n'-t\choose d}}
(2\eps m^d)^{n'-t\choose d}
=(2\eps)^{n'-t\choose d} (m^d)^{{n'\choose d}},\]
so $|\pl_I {\cal G}_T|
\le (2\eps)^{n'/2d}  (m^d)^{n'/d} < \eps^2 m^{n'}$.
This completes the proof of the claim,
and so of the theorem.
\end{proof}

\section{Configurations}

In this section  we briefly consider
generalisations to excluded configurations
(as in the previous section).
Our aim is not a systematic study,
but just to illustrate the further potential
applications of our methods.
We start with a general junta approximation
result for small alphabets.

\begin{thm}\label{thm:junta+small}
For every $\eta>0$, configuration ${\cal H}$
and $m\in\mathbb{N}$ with $m > |{\cal H}|$
there are $J,n_0 \in \mathbb{N}$ such that
if $\mathcal{F} \sub [m]^n$ is ${\cal H}$-free
with $n \ge n_0$ then there is an ${\cal H}$-free $J$-junta
$\mathcal{J}\subset[m]^n$ such that
$\mu({\cal F} \sm {\cal J}) \le \eta$.
\end{thm}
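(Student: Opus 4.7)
The plan is to adapt the proof of Theorem~\ref{thm:junta_approx_small_m}, replacing the pairwise cross-disagreement input (Theorem~\ref{thm:MOS}) by an $h$-family analogue that finds pairwise disagreements among $h$ pseudorandom codes, where $h=|\mathcal{H}|$. Fix $0<n_0^{-1}\ll D^{-1}\ll r^{-1},\eps\ll\eta,h^{-1},m^{-1}$ and apply Lemma~\ref{lem:regularity_small_m} to obtain $T\subset[n]$ with $|T|\leq D$ such that at most an $\eta/2$-fraction of $\alpha\in[m]^T$ yield a restriction $\mathcal{F}_{T\to\alpha}$ that fails to be $(r,\eps)$-pseudorandom. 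Define
\[
J = \{\alpha\in[m]^T : \mathcal{F}_{T\to\alpha}\text{ is }(r,\eps/2)\text{-pseudorandom and }\mu(\mathcal{F}_{T\to\alpha})\ge\eta/2\}
\]
and $\mathcal{J} = \{x\in[m]^n : x_T\in J\}$. The bound $\mu(\mathcal{F}\setminus\mathcal{J})\le\eta$ follows exactly as in Theorem~\ref{thm:junta_approx_small_m}, since that part of the argument is insensitive to which configuration is forbidden.

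The heart of the proof is showing that $\mathcal{J}$ is $\mathcal{H}$-free. Suppose for contradiction there exist $y^1,\dots,y^h\in\mathcal{J}$ and an injection $\Phi\colon[\ell]\to[n]$ realising $\mathcal{H}$, where $\ell$ is the number of parts of $\mathcal{H}$ with parts $U_1,\dots,U_\ell$. Set $\alpha_i:=y^i_T\in J$ and $I=\Phi^{-1}([n]\setminus T)$. On the coordinates $T\cap\Phi([\ell])$ the $\alpha_i$'s already realise the sub-configuration $\mathcal{H}|_{[\ell]\setminus I}$, and on $T\setminus\Phi([\ell])$ they pairwise disagree. It therefore suffices to produce $x^i\in\mathcal{F}_{T\to\alpha_i}$ whose pairwise agreement pattern on $\Phi([\ell])\setminus T$ realises $\mathcal{H}|_I$ and which pairwise disagree on $[n]\setminus(T\cup\Phi([\ell]))$, since then $(\alpha_i,x^i)_{i=1}^h$ realises $\mathcal{H}$ in $\mathcal{F}$, contradicting $\mathcal{H}$-freeness.

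To handle the required agreements on $\Phi([\ell])\setminus T$ (at most $\ell$ coordinates), for each such coordinate $c=\Phi(k)$ with $k\in I$ and each $i\in[h]$ fix a value $w^i_c\in[m]$ so that $w^i_c=w^j_c$ if and only if the vertices $e_i\cap U_k$ and $e_j\cap U_k$ coincide; this is possible because $m>|\mathcal{H}|\ge h$ provides enough distinct values to encode the agreement classes of $U_k$. Each restricted family $\mathcal{G}_i:=\mathcal{F}_{T\to\alpha_i,\,(\Phi([\ell])\setminus T)\to w^i}\subset[m]^{[n]\setminus(T\cup\Phi([\ell]))}$ remains $(r-\ell,O(\eps))$-pseudorandom of measure at least $\eta/3$, using that pseudorandomness is preserved by a bounded number of restrictions. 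All that remains is to find $x^i\in\mathcal{G}_i$ that pairwise disagree coordinatewise.

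This last step is the multi-family analogue of Theorem~\ref{thm:MOS}, which can be established by extending the invariance-principle argument behind Theorem~\ref{thm:MOSMarkov} to the $h$-fold correlated setting: take the tensor power of the reversible Markov chain on $[m]^h$ whose stationary distribution is uniform on $h$-tuples of pairwise distinct entries (ergodic with positive absolute spectral gap precisely because $m>h$), and apply the noise-stability / reverse hypercontractivity computation for $h$ correlated Gaussians to conclude that pseudorandom codes of non-negligible measure must collide under this chain. The main obstacle is proving this multi-family Mossel-type result; once granted, the rest of the argument is a direct adaptation of the proof of Theorem~\ref{thm:junta_approx_small_m}.
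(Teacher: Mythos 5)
Your proposal follows the same strategy as the paper: construct $\mathcal{J}$ from the regularity lemma exactly as in Theorem~\ref{thm:junta_approx_small_m}, note the measure bound is unchanged, and reduce $\mathcal{H}$-freeness of $\mathcal{J}$ to a multi-family cross-disagreement statement among pseudorandom restrictions. Your reduction is essentially the paper's, up to a cosmetic difference: rather than re-encoding the agreement classes on $\Phi([\ell])\setminus T$ via fresh values $w^i_c$, the paper simply restricts $\mathcal{F}$ on $T\cup[\ell]$ to the actual values $x^j_{T\cup[\ell]}$ of the given realisation (after relabelling so that $\Phi([\ell])=[\ell]$), which already has the required agreement pattern; both versions work. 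You also correctly identify the missing ingredient as the paper's Theorem~\ref{thm:MOS+}.

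Where your sketch goes astray is in the framing of that ingredient. You describe ``the reversible Markov chain on $[m]^h$ whose stationary distribution is uniform on $h$-tuples of pairwise distinct entries,'' with a positive ``absolute spectral gap.'' For $h>2$ there is no two-step Markov chain here: the object is a single correlated probability space on $[m]^h$ (the uniform distribution on $h$-tuples with pairwise distinct entries), whose tensor power gives the joint law of $(x^1,\dots,x^h)$. The notion of absolute spectral gap is a property of a transition operator and does not carry over; the correct hypothesis in Mossel's framework for correlated product spaces is a bound away from $1$ on the maximal correlation between one coordinate and the remaining coordinates of the space. The paper verifies exactly this: for $f,g:[m]^h\to\mathbb{R}$ with $\mathbb{E}f=\mathbb{E}g=0$, $\mathbb{E}f^2=\mathbb{E}g^2=1$, where $f$ depends only on the first coordinate and $g$ does not depend on it, one shows $\mathbb{E}[f(\mathbf{a})g(\mathbf{a})]<1$ by inspecting the equality case of Cauchy--Schwarz, which fails precisely when $m>h$. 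With the maximal-correlation condition in place, the rest of the derivation of Theorem~\ref{thm:MOS+} (invariance principle plus reverse hypercontractivity for Gaussian tuples, now $h$-fold rather than pairwise) goes through as you indicate, so the remainder of your argument is sound. In short: same approach as the paper, but the ``Markov chain with spectral gap'' language for the $h$-family Mossel input should be replaced by the correlated-spaces/maximal-correlation formulation.
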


The proof requires the following
generalisation of Theorem \ref{thm:MOS}.

\begin{thm}\label{thm:MOS+}
For every $h,m \in \mb{N}$ with $m>h$ and $\mu>0$
there are $\eps,c>0$ and $r \in \mb{N}$ such that
if ${\cal F}_1,\dots,{\cal F}_h\sub [m]^n$
are $(r,\eps)$-pseudorandom with
each $\mu({\cal F}_j) > \mu$
and $(x_1,\dots,x_h) \in ([m]^n)^h$
is uniformly random subject to
${\sf agr}(x_j,x_{j'}) = 0$ whenever $j \ne j'$ then
$\mb{P}(x_1 \in \mc{F}_1, \dots,x_h \in \mc{F}_h)>c$.
\end{thm}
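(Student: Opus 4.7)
The plan is to extend the derivation of Theorem~\ref{thm:MOS} from Theorem~\ref{thm:MOSMarkov} to $h$ pseudorandom families, by working with an $h$-ary correlated distribution instead of a two-step Markov chain. Since $m>h$, the set $\Delta\subseteq [m]^h$ of all tuples with pairwise distinct entries is nonempty; let $\nu$ be the uniform distribution on $\Delta$. Then the joint distribution of $(x_1,\dots,x_h)$ as described in the theorem is exactly $\nu^{\otimes n}$. Note that each single-coordinate marginal of $\nu$ is uniform on $[m]$, and the two-coordinate marginals are exactly those used in the derivation of Theorem~\ref{thm:MOS}; in particular, the associated ``pairwise'' Markov chains have absolute spectral gap $\lambda_*>0$ uniformly bounded away from $0$ for $m\ge h+1$.

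The approach is to lower-bound $\Expect{}{\prod_{j=1}^h 1_{\mathcal{F}_j}(x_j)}$ by transferring to the Gaussian analogue, exactly as in the derivation outlined after Theorem~\ref{thm:MOSMarkov}. First I would set up an Efron--Stein-type orthonormal decomposition for each marginal (uniform on $[m]$), with trivial eigenfunction $b_1=1$ and other basis functions $b_2,\dots,b_m$, and independent copies $b^i_j$ for each coordinate $i\in[n]$. Each $1_{\mathcal{F}_j}$ is then represented by a multilinear polynomial in the $b^i_\cdot$'s. The correlation structure between distinct copies is governed by the joint law $\nu$, and the relevant ``correlation matrix'' (indexed by ordered pairs of tuples $(j,\sigma)$ with $j\in[h]$, $\sigma\in[m]$) has spectral radius strictly less than $1$; this follows because the induced two-marginal chain between any two coordinates $j\ne j'$ is precisely the ``move to uniformly random different symbol'' chain used in Theorem~\ref{thm:MOS}, which has a positive spectral gap. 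Following~\cite{Mossel} (the multi-function version of~\cite{MOO,MosselGeneral}, which is stated and proved there for general $h$ in the ``correlated spaces'' framework), one defines the Gaussian analogues $\bm{z}^{(1)},\dots,\bm{z}^{(h)}$ with matching covariance structure, truncates the low-noise extensions $\tilde{f}_j=\widetilde{T_{1-\eta}1_{\mathcal{F}_j}}$ to $[0,1]$, and applies the multivariate invariance principle: for every $\delta>0$ there exist $r\in\mathbb{N}$ and $\eps>0$ depending on $h,m,\lambda_*,\mu,\delta$ such that if all $1_{\mathcal{F}_j}$ are $(r,\eps)$-pseudorandom (``resilient'' in Mossel's terminology) then
\[
\Big|\Expect{}{\textstyle\prod_j 1_{\mathcal{F}_j}(x_j)}-\Expect{}{\textstyle\prod_j \tilde f_j(\bm{z}^{(j)})}\Big|\le \delta.
\]
For the Gaussian side, using first the invariance principle with all-but-one factor replaced by $1$ gives $\Expect{}{\tilde f_j}\ge\mu/2$ for each $j$, and then Gaussian reverse hypercontractivity (in the multi-correlated form, see e.g.~\cite[Theorem A.78]{Hazla} and its obvious $h$-ary extension, which applies because the pairwise correlations are strictly less than $1$) yields $\Expect{}{\prod_j\tilde f_j(\bm{z}^{(j)})}\ge c_0(h,m,\lambda_*,\mu)>0$. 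Choosing $\delta=c_0/2$ gives $\Expect{}{\prod_j 1_{\mathcal{F}_j}(x_j)}\ge c_0/2=:c$, as required; the ``in particular'' conclusion follows immediately.

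The main obstacle is the $h$-ary reverse hypercontractivity / small-correlation step: one needs to know that the $h$-fold correlated Gaussian product of nonnegative functions with constant $L^1$-norm has a constant lower bound on its expectation. For $h=2$ this is the standard reverse hypercontractive inequality used in the $h=2$ case; for general $h$ one iterates pairwise reverse hypercontractivity or invokes the multi-function version from the correlated-spaces literature, exploiting the bound $\lambda_*>0$ on every pair of coordinates of $\nu$. The multivariate invariance principle is routine once set up in the correlated framework of~\cite{Mossel,MosselGeneral}, and the pseudorandomness hypothesis translates directly into the resilience condition there, so the conceptual novelty compared with Theorem~\ref{thm:MOS} is only in handling $h$ factors rather than $2$.
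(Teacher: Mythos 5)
Your high-level plan matches the paper's: view the joint distribution of $(x_1,\dots,x_h)$ as the $n$-fold product of the uniform distribution $\nu$ on tuples with distinct entries, represent the $1_{\mathcal{F}_j}$ via an Efron--Stein basis, transfer to Gaussians via Mossel's correlated-spaces invariance principle (using pseudorandomness in place of small influences), and finish with reverse hypercontractivity for correlated Gaussians. This is exactly what the paper does.

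However, there is a genuine gap in the way you verify the non-degeneracy condition that Mossel's framework requires. You assert that ``the relevant correlation matrix has spectral radius strictly less than $1$'' and that ``this follows because the induced two-marginal chain between any two coordinates $j\neq j'$ is the move-to-a-different-symbol chain, which has a positive spectral gap.'' This reduces the correlated-spaces condition to a \emph{pairwise} property of $\nu$, which is not the condition Mossel actually requires. The invariance principle (and the accompanying Gaussian lower bound) needs the \emph{one-vs-rest} correlation $\rho(\Omega_j,\prod_{j'\neq j}\Omega_{j'};\nu)$ to be strictly less than $1$ for each $j$: that is, for any $f$ depending only on the $j$-th coordinate and $g$ independent of the $j$-th coordinate, both mean-zero and variance-one, one must have $\mathbb{E}_{\nu}[fg]<1$. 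Pairwise spectral gaps do not imply this in general --- e.g.\ for uniform measure on $\{(a,b,a\oplus b)\}$ the pairwise marginals are independent, yet $\rho(\Omega_3,\Omega_1\times\Omega_2)=1$. The paper verifies the correct condition directly: equality in Cauchy--Schwarz would force $f\equiv g$ on the support of $\nu$, and because $f$ depends only on coordinate $j$ while $g$ is independent of it, one can vary coordinate $j$ while holding the rest fixed \emph{provided} $m>h$ (so that two choices of the $j$-th coordinate are compatible with some fixed value of the other $h-1$), forcing $f$ to be constant, contradicting $\mathbb{E}f=0$, $\mathbb{E}f^2=1$. Note this is also where the precise hypothesis $m>h$ enters, whereas a pairwise gap only requires $m\geq 3$; your argument would not detect that $m>h$ is needed. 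Beyond this, the ``spectral radius of the $(hm)\times(hm)$ covariance matrix strictly less than $1$'' statement is not literally correct either (the diagonal entries are $1$, so the largest eigenvalue is at least $1$); the relevant quantity is the operator norm of the off-diagonal block restricted to mean-zero functions, i.e.\ precisely the one-vs-rest $\rho$. Once you replace your pairwise reasoning with the one-vs-rest Cauchy--Schwarz check (and the compactness/finite-dimensionality of the function space to get a uniform bound $\rho<1$), the rest of your proposal goes through and coincides with the paper's argument.
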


The proof of Theorem \ref{thm:MOS+}
is the same as that of Theorem \ref{thm:MOS},
except that the absolute spectral gap condition
must be replaced by a more general condition on
`correlated spaces', which specialises to our
situation as follows. Given $f,g:[m]^h \to \mb{R}$
with $\mb{E}f=\mb{E}g=0$ and $\mb{E}f^2=\mb{E}g^2=1$,
such that $f$ depends only on the first coordinate
and $g$ does not depend on the first coordinate,
and  ${\bf a} \in [m]^h$ with distinct coordinates
chosen uniformly at random, we need to show that
$\mb{E} f({\bf a})g({\bf a}) < 1$. By considering
the equality conditions for Cauchy-Schwarz,
it is not hard to see that this holds when $m>h$.

\begin{proof}[Proof of Theorem \ref{thm:junta+small}]
The proof is the same as that of
Theorem \ref{thm:junta_approx_small_m},
except that instead of showing that
${\cal J}$ is $t$-intersecting we need to show
that ${\cal J}$ is ${\cal H}$-free.
To see this, we suppose for a contradiction
that ${\cal J}$ contains ${\cal H}$ and
show that ${\cal F}$ contains ${\cal H}$.
We suppose ${\cal H} = \{e_1,\dots,e_h\}$
is an $\ell$-configuration with
parts $(U_1,\dots,U_\ell)$ realised by
$x^1,\dots,x^h \in {\cal J}$.
By relabelling we can assume that ${\cal H}$
is realised on coordinate set $[\ell]$,
i.e.\ for any $j,j' \in [h]$ and $i \in [n]$
we have $x^j_i=x^{j'}_i$ exactly when $i \in [\ell]$
and $e_j \cap e_{j'} \cap U_i \ne \es$.
For $j \in [h]$ we let ${\cal G}_j =
{\cal F}_{J \cup [\ell] \to x^j_{J \cup [\ell]}}$.
Then each ${\cal G}_j$ is $(r-\ell,\eps)$-pseudorandom
with density at least $\eta/3$,
so by Theorem \ref{thm:MOS+}
we find $w_j \in {\cal G}_j$ for $j \in [h]$
with ${\sf agr}(w_j,w_{j'})=0$ whenever $j \ne j'$.
However, $((x^j_{J \cup [\ell]},w_j): j \in [h])$
realise ${\cal H}$ in ${\cal F}$, contradiction.
\end{proof}

Next we will turn to large alphabets,
for which we require the following
generalised Hoffman bound.

\begin{lemma}\label{lem:hoffhyp}
Let $m > hb$ and suppose that $\nu$ is a $b$-balanced product measure on $[m]^n$
and ${\cal F}_1,\dots,{\cal F}_h \sub [m]^n$
with $\prod_{j=1}^h \nu({\cal F}_j)
> 2^h b/(m-hb) > 0$.
Then ${\cal F}_1,\dots,{\cal F}_h$
cross contain an $h$-matching.
\end{lemma}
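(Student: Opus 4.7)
I plan to prove this generalization of Lemma~\ref{lem:hoffman} by induction on $h$.

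For the base case $h = 2$, Lemma~\ref{lem:hoffman} implies that if ${\cal F}_1, {\cal F}_2$ do not cross contain a $2$-matching---that is, they are cross-intersecting---then $\nu({\cal F}_1)\nu({\cal F}_2) \leq (\lambda/(1-\lambda))^2$ with $\lambda = b/m$. A short calculation gives $(b/(m-b))^2 \leq 4b/(m-2b)$ for $m > 2b$, matching the desired bound $2^2 b/(m - 2b)$.

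For the inductive step, suppose the lemma holds for $h$ and consider ${\cal F}_1, \dots, {\cal F}_{h+1} \sub [m]^n$ with $\prod_{j=1}^{h+1} \nu({\cal F}_j) > 2^{h+1} b/(m - (h+1)b)$. Suppose for contradiction that no $(h+1)$-matching exists. Then for each $y \in {\cal F}_{h+1}$, the restricted families $({\cal F}_j \cap D(y))_{j \leq h}$---viewed in the product space $\prod_i ([m] \sm \{y_i\})$ under the normalized measure $\tilde\nu_y$---contain no $h$-matching. Since $\tilde\nu_y$ is $b'$-balanced with $b'/(m-1) \leq b/(m-b)$ and $m - 1 > hb'$ (which follows from $m > (h+1)b$), a direct calculation shows $2^h b'/(m-1-hb') = 2^h b/(m-(h+1)b)$, and the induction hypothesis then yields, for each $y \in {\cal F}_{h+1}$,
\[
\prod_{j \leq h} \nu({\cal F}_j \cap D(y)) = \nu(D(y))^h \prod_{j \leq h} \tilde\nu_y({\cal F}_j \cap D(y)) \leq \frac{2^h b}{m - (h+1)b}\, \nu(D(y))^h.
\]

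The remaining step is to lift this pointwise bound to a bound on $\prod_{j=1}^{h+1}\nu({\cal F}_j)$, gaining the additional factor $2$ required by the target constant $2^{h+1}$. My plan is to average the bound over $y \sim \nu$ weighted by $1_{{\cal F}_{h+1}}$ and apply an Efron-Stein/spectral argument analogous to the proof of Lemma~\ref{lem:hoffman}. Concretely, the Hoffman chain $U$ on $[m]$ given by $(U_i)_{ab} = \nu_i(b)/(1-\nu_i(a))$ for $b \neq a$ couples $y$ with a random $x \sim \nu$ disagreeing with $y$ in every coordinate, and has absolute spectral gap at least $1 - b/(m-b)$. Used in parallel against each $f_j = 1_{{\cal F}_j}$, decomposing via Efron-Stein and applying Cauchy-Schwarz with Parseval (as in Lemma~\ref{lem:hoffman}) should bound the multilinear integral $\mb{E}_{y, x^1, \dots, x^h \sim \nu}\bigl[1_{{\cal F}_{h+1}}(y) \prod_j 1_{{\cal F}_j}(x^j) \prod_{i,j} 1_{x^j_i \ne y_i}\bigr]$ from below by $\prod_{j=1}^{h+1}\nu({\cal F}_j)$ up to a spectral correction of order $b/(m-(h+1)b)$, yielding the contradiction $\prod_{j=1}^{h+1}\nu({\cal F}_j) \leq 2^{h+1}b/(m-(h+1)b)$.

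The main obstacle will be this final spectral step: the Hoffman operator $U$ must be applied simultaneously (in parallel, not sequentially) against $h$ different target functions, and the Efron-Stein cross-terms for distinct indices $j \ne j'$ must be shown to collapse via orthogonality. Keeping the resulting bound linear in $b/(m-(h+1)b)$---so that the inductive constant only doubles from $2^h$ to $2^{h+1}$ rather than blowing up---will require a careful multilinear Cauchy-Schwarz that uses the crude bounds $\nu({\cal F}_j), \nu({\cal F}_{h+1}) \leq 1$ to absorb the extra variance factors that appear in Parseval.
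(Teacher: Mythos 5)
Your high-level plan — induct on $h$ by conditioning on a vertex $y$ of ${\cal F}_{h+1}$, apply the inductive hypothesis inside the conditional space $D(y)$ under the renormalized measure, and close the gap with a spectral bound for the Hoffman chain — is precisely the structure of the paper's proof (which phrases it as a quantitative lower bound $\mb{P}(x^1\in\mathcal{F}_1,\dots,x^h\in\mathcal{F}_h)\ge\prod_j\nu(\mathcal{F}_j)-2^hb/(m-hb)$ for mutually disagreeing consecutive states of the product Hoffman chain, inducted from $h-1$ to $h$ by conditioning on $x^h$). Your pointwise inductive bound and balancedness bookkeeping are correct. However, the multilinear integral you propose to bound in the lifting step is the wrong quantity: $\mb{E}_{y,x^1,\dots,x^h\sim\nu}\bigl[1_{{\cal F}_{h+1}}(y)\prod_j 1_{{\cal F}_j}(x^j)\prod_{i,j}1_{x^j_i\ne y_i}\bigr]$ equals $\mb{E}_y\bigl[f_{h+1}(y)\,\nu(D(y))^h\prod_j (Uf_j)(y)\bigr]$, and the unnormalized factor $\nu(D(y))^h=\prod_i(1-\nu_i(y_i))^h$ is exponentially small in $n$, so this integral cannot be bounded below by $\prod_j\nu({\cal F}_j)$ minus a small correction — it already fails when all ${\cal F}_j=[m]^n$. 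The quantity that the induction actually controls pointwise, and that you need, is the normalized one $\mb{E}_y\bigl[f_{h+1}(y)\prod_j (Uf_j)(y)\bigr]$, where $(Uf_j)(y)=\tilde\nu_y(\mathcal{F}_j\cap D(y))$ is the conditional density.

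Once the integral is corrected, the spectral step is also considerably simpler than you anticipate: no Efron–Stein decomposition, Parseval, or control of cross-terms between distinct $j$'s is needed. Simply expand each $Uf_j = \nu(\mathcal{F}_j) + (Uf_j - \nu(\mathcal{F}_j))$, write $\prod_j Uf_j$ as a sum over $S\subseteq[h]$, and for each nonempty $S$ bound the corresponding term by peeling off a single index $s\in S$ via Cauchy–Schwarz against $\norm{Uf_s-\nu(\mathcal{F}_s)}_2\le b/(m-b)$ (the spectral gap of the product Hoffman chain applied to a mean-zero Boolean function), while bounding all remaining factors — including $f_{h+1}$, the other $(Uf_j-\nu(\mathcal{F}_j))$'s, and the constants $\nu(\mathcal{F}_j)$ — crudely by $1$ in $L^\infty$. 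This gives $\mb{E}_y[f_{h+1}\prod_j Uf_j]\ge\prod_{j\le h+1}\nu(\mathcal{F}_j)-2^hb/(m-b)$, and combining with the inductive upper bound $\prod_j(Uf_j)(y)\le 2^hb/(m-(h+1)b)$ for $y\in\mathcal{F}_{h+1}$ yields $\prod_{j\le h+1}\nu(\mathcal{F}_j)\le 2^hb/(m-(h+1)b)+2^hb/(m-b)\le 2^{h+1}b/(m-(h+1)b)$, completing the induction.
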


\begin{proof}
We show the following statement by induction on $h$:
if $(x^1,\dots,x^h) \in ([m]^n)^h$ is distributed
as $\nu^h$ conditioned on ${\sf agr}(x^j,x^{j'}) = 0$
whenever $j \ne j'$ then
$\mb{P}(x^1 \in {\cal F}_1,\dots, x^h \in {\cal F}_h)
\ge \nu({\cal F}_1) \dots \nu({\cal F}_h) - 2^h b/(m-hb)$.
The case $h=1$ is trivial.

For the induction step, as in the proof of
Lemma \ref{lem:hoffman} we consider the
product Markov chain $T$ on $[m]^n$
where each $T_i$ is the Markov chain on $[m]$
with transition probabilities $(T_i)_{xx}=0$
and $(T_i)_{xy} = \nu_i(y)/(1-\nu_i(x))$ for $y \ne x$.
We also consider $y^1,\dots,y^{h-1},x^h$,
where $x^h$ is chosen according to $\nu$
and each $y^j$ is chosen independently according
to $\nu$ conditioned on ${\sf agr}(y^j,x^h) = 0$.
We write $f_j$ for the characteristic functions
of ${\cal F}_j$ for $j \in [h]$. We have
\[\mb{E}[f_1(y^1) \dots f_{h-1}(y^{h-1})f_h(x^h)]
= \mb{E}_x [ Tf_1(x) \dots Tf_{h-1}(x) f_h(x) ]
=  \mb{E}f_1 \dots \mb{E}f_h
+ \sum_{\es \ne S \sub [h-1]} \mb{E} g_S,\]
where $g_S(x) = \prod_{i \in S} (Tf_i-\mb{E}f_i)(x)
\prod_{i \in [h-1] \sm S} f_i(x)$.
For each such $S$ we fix some $s \in S$
and write $g_S(x) = (Tf_s-\mb{E}f_s)(x)h_S(x)$.
As $Tf_s-\mb{E}f_s = T(f_s-\mb{E}f_s)$
and $\mb{E}(f_s-\mb{E}f_s)=0$,
as in the proof of Lemma \ref{lem:hoffman}
we have the spectral bound
\[ \|Tf_s-\mb{E}f_s\|_2 \le \frac{b/m}{1-b/m}
= \frac{b}{m-b}. \]
Then $|\mb{E}g_S(x)| \le b/(m-b)$ by Cauchy-Schwarz,
so $\mb{E}[f_1(y^1) \dots f_{h-1}(y^{h-1})f_h(x^h)]
\ge \mb{E}f_1 \dots \mb{E}f_h - 2^{h-1}b/(m-b)$.

Now we write
$\mb{P}(x^1 \in {\cal F}_1,\dots, x^h \in {\cal F}_h)
= \mb{E} \prod_{j=1}^h f_j(x^j) = \mb{E}_x f_h(x)
\mb{E}[ \prod_{j=1}^{h-1} f_j(x^j) \mid x^h=x]$.
For each $x$ we apply the induction hypothesis
to $f_1,\dots,f_{h-1}$ on
$\{x \in [m]^n: {\sf agr}(x,x^h)=0\}$,
which is isomorphic to $[m-1]^n$,
according to the product measure $\nu[x^h]$ with each
$\nu[x^h]_i(a) = \nu_i(a)/(1-\nu_i(x^h_i))
\le \frac{b/m}{1-b/m} = \frac{b}{m-b}$,
so $\nu[x^h]$ is $b'$-balanced,
where $b' = b(m-1)/(m-b)$.
As $b'/(m-1-(h-1)b') = b/(m-hb)$,
by induction hypothesis
$\mb{E}[ \prod_{j=1}^{h-1} f_j(x^j) \mid x^h=x]
\ge \mb{E}[ \prod_{j=1}^{h-1} f_j(y^j) \mid x^h=x]
- 2^{h-1} b/(m-hb)$, so
$\mb{E} \prod_{j=1}^h f_j(x^j) \ge \mb{E}_x f_h(x) \big [
\mb{E}[ \prod_{j=1}^{h-1} f_j(y^j) \mid x^h=x]
- 2^{h-1} b/(m-hb) \big ]
\ge \mb{E}f_1 \dots \mb{E}f_h - 2^h b/(m-hb)$.
\end{proof}

For moderate alphabets, we have the following
generalised form of our earlier lemma on
fixed agreements between uncapturable families:
we show that uncapturable families
cross contain any configuration.

\begin{thm} \label{thm:uncapagree+}
For any configuration ${\cal H}$ of size $h$ and
$s,k \in \mb{N}$ there are $r,m_0,N \in \mb{N}$
such that if $m \ge m_0$, $n \ge N\log m$
and ${\cal A}_j \sub [m]^{[n] \sm R_j}$
are $(r,m^{-k})$-uncapturable with $|R_j| \le k$ for $j \in [h]$
then there is a realisation $y^1,\dots,y^h$ of ${\cal H}$
with $y^j = x^j_{[n] \sm T}$ for some
$x^j \in {\cal A}_j$ for $j \in [h]$,
where $T = \bigcup_j R_j$.
\end{thm}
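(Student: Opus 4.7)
The plan is to follow the four-step strategy from the proof of Theorem~\ref{thm:uncapagree}, namely globalness, fairness, expansion via gluings, and a generalised Hoffman cross-matching, adapted to $h$ families and the more intricate agreement pattern prescribed by $\mathcal{H}$. Write $\mathcal{H} = \{e_1,\dots,e_h\}$ with parts $(U_1,\dots,U_\ell)$, so each $e_j \cap U_k$ is a single vertex. We aim to produce $x^j \in \mathcal{A}_j$, an injection $\Phi: [\ell] \to [n] \setminus T$, and injections $\phi_k: U_k \to [m]$ with $x^j_{\Phi(k)} = \phi_k(e_j \cap U_k)$ and with the $x^j$ pairwise disagreeing on $[n] \setminus (T \cup \Phi([\ell]))$; these conditions exactly encode a realisation of $\mathcal{H}$. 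The globalness step is carried out sequentially: for $j = 1,\dots,h$, set $\mathcal{A}_j'$ to be $\mathcal{A}_j$ with all dictators corresponding to restriction values from $\mathcal{B}_1,\dots,\mathcal{B}_{j-1}$ removed. Since $r \gg h,k$, the cumulative number of removed dictators stays below $\gamma m/4$, so inductive use of Claim~\ref{claim:global_to_uncap} (whose hypothesis is inherited from the globalness built in previous stages) gives $\mu(\mathcal{A}_j') \geq \mu(\mathcal{A}_j)/2$. Apply Lemma~\ref{lem:make_global} with $\gamma = m^{-1/10}$ and parameter $r/100hk$ to obtain $\mathcal{B}_j = (\mathcal{A}_j')_{R_j'' \to \alpha_j''}$ that is $(r/100hk, \mu(\mathcal{B}_j)/\gamma)$-global with $|R_j''| \leq r/10h$.

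For the fairness step, sample uniformly an injection $\Phi: [\ell] \to [n] \setminus T'$, where $T'$ absorbs all $R_j$ and $R_j''$, together with uniform injections $\phi_k: U_k \to [m]$ for each $k$. For each $j$, set $(z_j)_{\Phi(k)} = \phi_k(e_j \cap U_k)$ and let $\mathcal{C}_j = (\mathcal{B}_j)_{\Phi([\ell]) \to z_j}$. By Proposition~\ref{prop:fairness} applied to each $\mathcal{B}_j$ with parameter $1/10h$ and a union bound over $j$, we can fix $\Phi$ and the $\phi_k$'s so that each $\mathcal{C}_j$ retains a constant fraction of the measure of $\mathcal{B}_j$, and in particular remains global in $[m]^{[n] \setminus (T' \cup \Phi([\ell]))}$. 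By injectivity of each $\phi_k$ we have $(z_j)_{\Phi(k)} = (z_{j'})_{\Phi(k)}$ iff $e_j \cap U_k = e_{j'} \cap U_k$, so the agreements prescribed by $\mathcal{H}$ on $\Phi([\ell])$ are realised; it remains to find $w^j \in \mathcal{C}_j$ that pairwise disagree on $[n] \setminus (T' \cup \Phi([\ell]))$.

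For the expansion and Hoffman steps, iterate Lemma~\ref{lem:sharp_thresh_gen} over $j \in [h]$: at stage $j$, boost $\mathcal{C}_j$ to measure at least $m^{-\eps'}$ (with a small $\eps' = \eps'(h)$) via a joint gluing $\pi_j \in \Pi_{m_j, m_{j+1}, 4}$ and a restriction; for each other family, first remove the $O(\log m)$ dictators that would conflict with the new restriction (a negligible cost by globalness and Claim~\ref{claim:global_to_uncap}) and then apply $\pi_j$ via Claim~\ref{claim:glue_basic_biasing}. After all $h$ stages and a final averaging step to descend to a common coordinate set $[m^*]^R$, the families $\mathcal{G}_1,\dots,\mathcal{G}_h$ each have $\nu$-measure at least $m^{-1/2}$ under the induced $O_h(1)$-balanced product measure $\nu$, with $m^*$ of order $m$. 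Since $\prod_j \nu(\mathcal{G}_j) \geq m^{-h/2}$ while the threshold $2^h b/(m^* - hb)$ of Lemma~\ref{lem:hoffhyp} tends to $0$ much faster for $m$ large, that lemma produces a cross-matching in $\mathcal{G}_1,\dots,\mathcal{G}_h$, which lifts through the gluings and restrictions to yield $x^j \in \mathcal{A}_j$ realising $\mathcal{H}$.

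The main obstacle will be the sequential globalness upgrade: the cumulative size $|R_1''| + \cdots + |R_h''|$ must stay well below $r$, and the cumulative dictator removals must stay below $\gamma m / 4$, so that uncapturability survives all $h$ stages. This forces the globalness parameter in Step~1 to scale as $r/100hk$ and demands an inductive verification that each new $\mathcal{B}_j$ inherits sufficient uncapturability from $\mathcal{A}_j$ after the removals; a related but easier book-keeping is needed in Step~3 to maintain the balanced-measure hypothesis of Lemma~\ref{lem:sharp_thresh_gen} as the alphabet shrinks through repeated gluings.
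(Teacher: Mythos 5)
Your approach tracks the paper's proof of Theorem~\ref{thm:uncapagree+} step by step: sequential globalness upgrade, a fairness restriction to embed the agreement pattern of $\mathcal{H}$, an iterated gluing/restriction boost, and the generalised Hoffman bound (Lemma~\ref{lem:hoffhyp}). Two concrete points need fixing. First, in Step~1 the dictator removal must go in both directions: when family $t$ is globalised via a restriction $R_t'' \to \alpha_t''$, the dictators $\{D_{i \to \alpha_t''(i)} : i \in R_t''\}$ have to be removed from \emph{every} other family $j \ne t$, including the already-globalised $j < t$ (which is cheap precisely because those are global, hence highly uncapturable by Claim~\ref{claim:global_to_uncap}). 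Your version only removes earlier families' restriction values from $\mathcal{A}_j'$; the earlier families are never cleaned of later restrictions, so when you average down to a common coordinate set in Step~4 you cannot guarantee the lifted $x^j$ avoid spurious agreements on $\bigcup_j R_j''$. Second, the arithmetic in Step~4 is inverted and would fail as stated once $h \ge 3$: you need each $\mathcal{G}_j$ to have measure at least $m^{-1/2h}$ (achieved by taking $\eps'$ of order $1/kh$ in Step~3, as in the paper), so that $\prod_j \nu(\mathcal{G}_j) \ge m^{-1/2}$, which beats the threshold $2^h b/(m^*-hb) = O_{h,k}(1/m)$ of Lemma~\ref{lem:hoffhyp}. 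Your stated per-family bound of $m^{-1/2}$ gives $\prod_j \nu(\mathcal{G}_j) \ge m^{-h/2}$, which falls below that threshold and does not yield the cross-matching. Both issues are local and do not change the architecture, but as written the argument does not close.
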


\begin{proof}
We follow the proof of Theorem \ref{thm:uncapagree}.

{\bf Step 1: Globalness.}
We define ${\cal A}^t_j$ for $j \in [h]$,
$0 \le t \le h$ as follows.
Initially all ${\cal A}^0_j={\cal A}_j$.
At step $t \in [h]$ we apply
Lemma~\ref{lem:make_global} to ${\cal A}^{t-1}_t$,
which will have $\mu({\cal A}^{t-1}_t) \ge m^{-k}$,
with $\gamma = m^{-1/10}$ and $r/100kh$ in place of $r$
we obtain ${\cal A}^t_t = ({\cal A}^{t-1}_t)_{R_t'\ra \aA_t'}$
that is $(r/100kh,\mu({\cal A}^{t-1}_t)/\gamma)$-global
with $\mu({\cal A}^t_t) \geq \mu({\cal A}^{t-1}_t)$,
where $|R'_1| \le
 (r/100kh) \log_{1/\gamma}(1/\mu({\cal A}^{t-1}_t)) \le r/10h$.
For each $j \in [h] \sm \{t\}$ we let
${\cal A}^t_j =  {\cal A}^{t-1}_j \sm
\bigcup_{i\in R_t'}D_{i\ra \aA_t'(i)}$.
Then uncapturability implies the above assumption
$\mu({\cal A}^{t-1}_t) \ge m^{-k}$.
By  Claim~\ref{claim:global_to_uncap}, each ${\cal A}^t_t$
is $(\gamma m/4, \mu({\cal A}^t_t)/2)$-uncapturable,
so $\mu({\cal A}^h_t)\geq \half\mu(\mathcal{A}^t_t)$,
which implies that $\mathcal{A}^h_t$
is $(r/100kh,2\mu(\mathcal{A}^h_t)/\gamma)$-global.

{\bf Step 2: Fairness.}
As $n \ge N\log m$ and $N$ is large, each
$\mu({\cal A}^h_j) \ge \half m^{-k} \ge e^{-n/C}$,
where $C=C(s,1/2h)$ is as in Proposition \ref{prop:fairness}.
Consider uniformly random
${\bf L} \in \tbinom{[n]\sm \bigcup_j(R_j\cup R_j')}{\ell}$
and let ${\bf z}_1,\dots,{\bf z}_\ell \in [m]^{\bf L}$
be a uniformly random copy of ${\cal H}$.
By Proposition~\ref{prop:fairness} each
$\mb{P}[ \mu((\mathcal{A}^h_j)_{{\bf L}\ra {\bf z}_j})
\geq \half \mu(\mathcal{A}^h_j) ] \ge 1-1/2h-o(1)$.
Thus we can fix $L$ and $z_1,\dots,z_\ell$ so that all
${\cal C}_j = ({\cal A}^h_j)_{L \to z_j}$
have $\mu({\cal C}_j) \ge \half \mu({\cal A}^h_j)$,
so are $(r/100kh,4\mu(\mathcal{C}_j)/\gamma)$-global.

{\bf Step 3: Expansion.}
We define ${\cal C}^t_j \sub [m_t]^n$
for $j \in [h]$, $0 \le t \le h$ as follows.
Initially all ${\cal C}^0_j={\cal C}_j$ and $m_0=m$.
At step $t \in [h]$ we apply
Lemma~\ref{lem:sharp_thresh_gen}
with $\eps=1/4kh$ and $b=b_t=4^{2^t}$,
to ${\cal C}^{t-1}_t$,
which will have $\mu^{\pi_{t-1}}(\mathcal{C}^{t-1}_t)
\geq \tfrac{1}{8}m^{-k}$,
obtaining $\pi_t \in \Pi_{m_{t-1},m_t,b_t}$
with $m_t = \OO_k(m_{t-1})$,
$\aA_t'' \in [m_t]^{R_t''}$, where
$R_t'' \sub [n]\sm(R_t\cup R_t'\cup L)$ with
$|R_t''| < O_k(\log m) \ll n$, such that ${\cal C}^t_t :=
({\cal C}^{t-1}_t)^{\pi_t}_{R_t'' \to \aA_t''}$
has $\mu^{\pi_t}({\cal C}^t_t) \ge 2m^{-1/2h}$.
For each $j \in [h] \sm \{t\}$ we let
${\cal C}^t_j = \pi_t({\cal C}^{t-1}_j)
\sm \bigcup_{i \in R_t''} D_{i \to \aA_t''}$.

For $j>t$ we can write
${\cal C}^t_j = \pi_{\circ t}({\cal X})$,
where $\pi_{\circ t} = \pi_t \circ \dots \circ \pi_1$
and ${\cal X} = {\cal C}_j \sm \bigcup_{t' \le t}
\bigcup \{ D_{i \to a}: i \in R_{t'}'',
(\pi^{\circ t'}_i(a)=(\aA_{t'}'')_i \} $.
As $\mu({\cal C}_j)$ is
$(r/100kh,4\mu(\mathcal{C}_j)/\gamma)$-global,
it is $(\gamma m/8, \mu({\cal C}_j)/2)$-uncapturable,
so $\mu({\cal X}) \ge \mu({\cal C}_j)/2
\geq \tfrac{1}{8}m^{-k}$.
By Claim~\ref{claim:glue_basic_biasing}
this implies the above assumption
$\mu^{\pi_{t-1}}(\mathcal{C}^{t-1}_t)
\geq \tfrac{1}{8}m^{-k}$.
At the end of the process, each
$\mu^{\pi_{\circ h}}(\mathcal{C}^h_j)
\geq \mu^{\pi_{\circ h}}({\cal C}^j_j)
- O_{k,h}(m^{-1}\log m) \ge m^{-1/2h}$.

{\bf Step 4: Generalised Hoffman bound.}
By averaging, we can choose restrictions
${\cal G}_j \sub [m_1]^R$ of ${\cal C}^h_j$ for $j \in [h]$
where $R  = L \cup \bigcup_j (R_j\cup R_j'\cup R_j'')$
such that all $\nu({\cal G}_j) \geq m^{-1/2h}$,
where $\nu=\mu^{\pi_{\circ h}}$ is $b_h$-balanced.
By construction, the elements of ${\cal G}_j$
are of the form $\pi_{\circ h}(x^j_{[n] \sm R})$
where $x^j \in {\cal A}_j$ form a copy of ${\cal H}$
on $L$ and have no other agreements in
$R \sm \bigcup_j (R_j\cup R_j'\cup R_j'')$.
As $\prod_j \nu({\cal G}_j) \ge m^{-1/2}
> 2^h b_h/(m_h-hb_h) > 0$,
by Lemma \ref{lem:hoffhyp}
we can find a cross matching
in ${\cal G}_1,\dots,{\cal G}_h$,
which corresponds to $x^j \in {\cal A}_j$
such that $y^j = x^j_{[n] \sm T}$ realise ${\cal H}$.
\end{proof}

We conclude this section with a junta approximation
result for configurations over large alphabets,
where for simplicity we restrict attention
to flat configurations with no kernel.
For this case we obtain a result
that is analogous to our junta approximation
result in terms of `crosscuts'
of expanded hypergraphs in \cite{KLLM}.

First we give the appropriate definition
of the crosscut for configurations.
Let ${\cal H}$ be an $\ell$-configuration of size $h$.
The crosscut $\sS({\cal H})$ is the minimum number $s$
such that there is a collection $\bigcup{\cal D}$ of $s$
co-dimension $1$ subcubes such that ${\cal H}\subseteq \bigcup{\cal D}$,
among all collection ${\cal D}$ of $s$ co-dimension $1$
and each edge $e\in\mathcal{H}$ is contained in exactly
one subcube in $\mathcal{D}$.
Note that $\sS({\cal H})>1$ if and only if
${\cal H}$ has no kernel, i.e.\ $K({\cal H})=\es$.

\begin{thm} \label{thm:K0}
For every $\eta>0$
and flat configuration ${\cal H}$ with no kernel,
there is $C$ such that if $m,n>C$ and
$\mathcal{F} \sub [m]^n$ is ${\cal H}$-free,
then there is a collection ${\cal D}$
of fewer than $\sS({\cal H})$
subcubes of co-dimension $1$ such that
$\mu({\cal F}\sm \bigcup {\cal D}) \leq \eta/m$.
\end{thm}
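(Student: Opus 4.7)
The plan is to apply the regularity lemma (Lemma~\ref{lem:regularity_large_m}) at co-dimension $k=1$ to decompose ${\cal F}$ into uncapturable pieces inside dictators or the whole cube, and then to argue that the number of dictator-pieces must be less than $\sS({\cal H})$, using the cross-realisation theorem for uncapturable families (Theorem~\ref{thm:uncapagree+}).

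Concretely, setting $s = \sS({\cal H})$ and $h = |{\cal H}|$, I will take $r, N$ as the parameters produced by Theorem~\ref{thm:uncapagree+} applied to ${\cal H}$ with $k = 2$, and apply Lemma~\ref{lem:regularity_large_m} with $k = 1$ and $\eps = \eta/(3r^2)$. This will yield a collection ${\cal D}$ of at most $r$ subcubes of co-dimension at most $1$, each $D = D_{R \to \aA}$ with ${\cal F}_{R \to \aA}$ being $(r, \eps\mu(D)^{-1}/m)$-uncapturable, and $\mu({\cal F} \sm \bigcup {\cal D}) \le \eta/m$. If $[m]^n \in {\cal D}$, then ${\cal F}$ itself is $(r, m^{-2})$-uncapturable for large $m$; applying Theorem~\ref{thm:uncapagree+} to $h$ copies of ${\cal F}$ (with each $R_j = \es$) produces a realisation of ${\cal H}$ in ${\cal F}$, contradicting ${\cal H}$-freeness. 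Hence all pieces of ${\cal D}$ are dictators, and it suffices to show $|{\cal D}| < s$.

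Assuming for contradiction that ${\cal D}$ contains dictators $D_{i_1 \to a_1}, \ldots, D_{i_s \to a_s}$ with each ${\cal F}_{i_k \to a_k}$ being $(r,\eps)$-uncapturable, I will use these to realise ${\cal H}$ in ${\cal F}$. The idea is to choose an abstract crosscut $\{(j_k, c_k)\}_{k=1}^s$ of ${\cal H}$ (partitioning its edges into classes $E_1,\ldots,E_s$) together with an injection $\Phi:[\ell]\to[n]$ satisfying $\Phi(j_k)=i_k$ and a value assignment sending each centre $c_k$ to $a_k$. For each edge $e\in E_k$, I will define ${\cal A}_e := \{y\in{\cal F}_{i_k\to a_k} : y_{i_{k'}}\ne a_{k'}\text{ for all }k'\ne k\}$, which remains uncapturable after the negligible $O(s/m)$ measure loss from removing finitely many dictators from an $(r,\eps)$-uncapturable family. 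Applying Theorem~\ref{thm:uncapagree+} to $({\cal A}_e)_{e\in{\cal H}}$ with $R_e=\{i_{k(e)}\}$ then produces elements $x^e\in{\cal A}_e$ realising ${\cal H}$ on the complement of $\{i_1,\ldots,i_s\}$; extending each $x^e$ by the dictator-fixed value gives a copy of ${\cal H}$ in ${\cal F}$, the desired contradiction.

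The main obstacle is the combinatorial step of matching an abstract crosscut of size $s$ to the concrete dictators: the equality pattern of the parts $j_k$ must match that of the coordinates $i_k$, and similarly for the centres $c_k$ and values $a_k$, so that a consistent injection $\Phi$ and value map exist. This requires exploiting the flat, no-kernel structure of ${\cal H}$ to produce a crosscut with the desired pattern for any configuration of $s$ concrete dictators, and is analogous to the argument for expanded hypergraphs in~\cite{KLLM}.
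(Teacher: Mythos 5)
Your moderate-alphabet strategy (reduce to heavy dictators via capturability, then show $\sS({\cal H})$ of them force a copy of ${\cal H}$ via Theorem~\ref{thm:uncapagree+}) is in the same spirit as the paper's, but has two genuine gaps. The first is that the theorem imposes no upper bound on $m$ in terms of $n$, so it must cover the huge regime $m\geq 2^{n/N}$, where Theorem~\ref{thm:uncapagree+} simply does not apply (it requires $n\geq N\log m$); indeed the ${\cal E},{\cal O}$ example from the introduction shows that cross-agreement for uncapturable families is genuinely false when $m$ is exponential in $n$, so no amount of tuning will make this route work there. The paper's proof of Theorem~\ref{thm:K0} is explicitly split into two cases, and for huge $m$ it uses an entirely different mechanism: it shows $|{\cal D}|<\sS({\cal H})$ and then bounds $\mu({\cal F}\sm\bigcup{\cal D})$ via the shadow lemma (Lemma~\ref{lem:shadow_lower}) and cross-containment from Lemma~\ref{corr:cros_contain_hyp}, by partitioning a shadow of ${\cal F}'={\cal F}\sm\bigcup{\cal D}$ into $h$ pieces of comparable density.

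The second gap is the one you flag as the "main obstacle", and it is indeed the crux of the moderate case — flatness and the no-kernel hypothesis do real work here. The paper resolves it by modifying the realisation $\phi$ so that for each part carrying a (unique, by flatness) centre vertex $c_i$, the collection ${\cal D}$ either contains $D_{i\to\phi_i(c_i)}$ or no dictator on coordinate $i$ at all, re-embedding non-centre parts into fresh coordinates as needed; this preprocessing is what makes the subsequent choice of restrictions consistent. It then fixes explicit restrictions $y^j\in[m]^I$ on the whole set $I$ of dictator coordinates (using uncapturability to keep $\mu({\cal F}_{I\to y^j})\geq\eta/3r$ while avoiding all unwanted agreements), and cross-contains the residual configuration on the common domain $[m]^{[n]\sm I}$ via the fairness proposition and the generalised Hoffman bound (Lemma~\ref{lem:hoffhyp}) — not a second invocation of Theorem~\ref{thm:uncapagree+}. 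Your plan to re-apply Theorem~\ref{thm:uncapagree+} to families ${\cal A}_e$ on the varying domains $[n]\sm\{i_{k(e)}\}$ would, as stated, realise the full ${\cal H}$ entirely outside $T=\{i_1,\ldots,i_s\}$ (not the reduced configuration with crosscut vertices removed, which is what you need) and gives no control over spurious agreements among the $x^e$ on coordinates of $T\sm\{i_{k(e)}\}$, about which Theorem~\ref{thm:uncapagree+} says nothing; so the final step does not close as written.
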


\begin{proof}
Let $\mathcal{F}\sub[m]^n$ be ${\cal H}$-free,
where ${\cal H}=\{e_1,\dots,e_h\}$
is an $\ell$-configuration
with parts $(U_1,\dots,U_\ell)$
and $K({\cal H})=\es$.

First we consider moderate alphabet sizes,
i.e.\ $n\geq N\log m$, with $m,N \gg h,\ell$.
We can assume that ${\cal F}$
is $(r,m^{-2})$-capturable,
with $h,\ell \ll r \ll m$,
otherwise we find ${\cal H}$ by Theorem \ref{thm:uncapagree+},
applied with all ${\cal A}_j={\cal F}$.
Thus we find a collection ${\cal J}$ of at most $r$
subcubes of co-dimension $1$ such that
$\mu({\cal F} \sm \bigcup {\cal J}) \le m^{-2}$.
Let ${\cal D}$ be the set of $D_{i \to a} \in {\cal J}$
such that $\mu({\cal F}_{i \to a}) \ge \eta/2r$.
Then $\mu(\bigcup {\cal J} \sm \bigcup {\cal D})
< \eta/2m$, so it suffices to show $|{\cal D}|<\sS({\cal H})$.

Suppose for a contradiction
that $|{\cal D}| \ge \sS({\cal H})$.
Then by definition ${\cal D}$ contains a copy of ${\cal H}$,
without loss of generality realised
on coordinate set $[\ell]$ by injections
$\phi_i:U_i \to V_i = \{(i,a): a \in [m]\}$,
such that for each $j \in [h]$ there is
$D_{i^j \to a^j} \in {\cal D}$ such that
$\phi_{i^{j'}}(e_j \cap U_{i^{j'}})= (i_{j'}, a^{j'})$ iff $j=j'$.

Let $C$ be the set of $i \in [\ell]$ such that
$U_i$ contains a vertex in the centre of ${\cal H}$.
For $i \in C$ let $c_i$ be the vertex of $U_i$ in the
centre of ${\cal H}$ (which is unique by flatness).
We may assume for any $i \in C$ that ${\cal D}$
either contains $D_{i \to \phi_i(c_i)}$
or does not contain any $D_{i \to a}$;
indeed, if ${\cal D}$ does not contain
$D_{i \to \phi_i(c_i)}$ then each $D_{i \to a}$
is $D_{i^j \to a^j}$ for at most one $j \in [h]$,
so we can obtain an alternative realisation
replacing $\phi_i$ by $\phi'_i:U_i \to V_{i'}$
for some new $i' \in [m]$, where
$\phi'_i(v)=(i',a)$ whenever $\phi_i(v)=(i,a)$.

Let $I$ be the set of all $i \in [n]$
such that ${\cal D}$ contains some $D_{i \to a}$.
We claim that we can fix $y^j \in [m]^I$ for $j \in [h]$
such that (a) $\mu({\cal F}_{I \to y^j}) \ge \eta/3r$ for all $j\in[h]$,
(b) $y^j_{i^j} = a^j$ for all $j\in [h]$, and
(c) for all $j\in [h]$, $j'\neq j$, if $i^{j'} =  i^j$
then $y^j_i \ne y^{j'}_i$ for all $i\neq i^j$, and otherwise
$y^j_i \ne y^{j'}_i$ for all $i\in I$ (in words, $y^j$ and $y^{j'}$ may only
agree on $i^j$ if the $i$'s corresponding to $j,j'$ coincide, and must disagree on any
other coordinate).
To see this we apply a greedy algorithm.
To define $y^j$, we consider
\[
{\cal G}_j = {\cal F}_{i^j \to a^j} \sm
 \bigcup_{\substack{j'<j, i\in I\\ (i, y^{j'}_i)\neq (i^j, a^j) } }
 \left(\sett{x\in [m]^{n}}{x_i = y_i^{j'}}\right)_{i^j\rightarrow a^j},
\]
which has $\mu({\cal G}_j) \geq \mu({\cal F}_{i^j \to a^j}) - \frac{h\card{I}}{m}
\geq \frac{\eta}{2r} - \frac{h\ell}{m} > \frac{\eta}{3r}$.
By averaging we can fix a restriction ${\cal F}_{I \to y^j}$
of ${\cal G}_j$ with at least this measure,
so the claim holds.

It remains to show that ${\cal G}_1,\dots,{\cal G}_h$
cross contain the configuration ${\cal H}'$
obtained from ${\cal H}$ by deleting the parts
corresponding to $I$. As in the proof of
Lemma \ref{corr:cros_contain_hyp},
by Proposition \ref{prop:fairness}
we can reduce to the case that
${\cal H}'$ is a matching,
which holds by Lemma \ref{lem:hoffhyp}.
Thus the theorem holds for
moderate alphabet sizes,

Now we consider huge alphabets, i.e.\
$n\geq n_0$ and $m\geq 2^{n/N}$,
where $K,n_0 \gg N$.
We let ${\cal D}$ be
the set of all dictators $D_{i \to a}$ such that
$\mu({\cal F}_{i \to a}) > \eta^2/n^2$.
Similarly to the proof of (a)
in Theorem \ref{thm:approx_huge_m}
we have $|{\cal D}|<\sS({\cal H})$.
Let ${\cal F}' = {\cal F} \sm \bigcup {\cal D}$.
It suffices to show $\mu({\cal F}')<\eta/m$.

Suppose $\mu({\cal F}') \ge \eta/m$.
Similarly to the proof of (a)
in Theorem \ref{thm:approx_huge_m}
we fix $i^* \in [n]$ with
$|{\cal F}'|/hn \le |\pl_{i^*}({\cal F}')|$
and partition $\pl_{i^*}({\cal G}')$
into $({\cal F}'_a: a \in [m])$ so that
$\sum_a \mu({\cal F}'_a) = \mu(\pl_{i^*} {\cal G}')
\ge \mu({\cal G}') m/hn \ge \eta/hn$.
By definition of ${\cal J}$ and repeated merging
we can form ${\cal F}_1,\dots,{\cal F}_h$ of the form
${\cal F}_i = \cup_{a \in S_i} {\cal F}'_a$ with each
$\mu({\cal F}_i) \in (\eta^2/n^2,2\eta^2/n^2)$.
However, these cross contain ${\cal H}$
by Lemma \ref{corr:cros_contain_hyp}.
\end{proof}

\section{Concluding remarks}

An open problem is to decide whether our
main theorem holds for the binary alphabet $m=2$.
Here our junta approximation method cannot work,
as the (conjectural) extremal examples are not juntas:
they are balls depending on all coordinates.
Despite this, it is still plausible that a result
can be obtained by a stability method,
by adapting the methods of \cite{KeevashLongVIP}
in proving stability for Katona's intersection theorem.

Another natural open problem is to obtain an infinitary version
of our main theorem. Say $A \sub \mb{R}^n$ is $(t-1)$-avoiding
if it contains no pair $x,y$ with $|\{i: x_i=y_i\}|=t-1$.
What is the maximum possible Hausdorff dimension of $A$?
At first one might think that the answer is $n-t$,
and that this would follow from our theorems for large finite alphabets
via a standard limiting argument if one assumes that $A$ is closed.
One must make some assumption on $A$ for
any non-trivial result, as there are pathological examples
of $A \sub \mb{R}^n$ of Hausdorff dimension $n$ in which
any distinct $x$, $y$ have $x_i \ne y_i$ for all $i \in [n]$.
However, even when $A$ is closed there are some surprises.
For example, although it is not hard to see that a $1$-avoiding set
in $[m]^3$ has size $O(m)$, there is a closed $1$-avoiding set
$A \sub \mb{R}^3$ with Hausdorff dimension $2$:\
this can be achieved by $A = \{(x,f(x),f(x)): x \in [0,1]\}$
for a suitably pathological continuous function $f$.

\subsection*{Acknowledgments}
We thank Ben Green for helpful remarks regarding
the infinitary forbidden intersection problem.

\bibliographystyle{plain}
\bibliography{ref}

\end{document}